\documentclass[10pt,a4paper, reqno]{article}
\usepackage{graphicx}
\usepackage{amsmath, color, verbatim}
\usepackage[colorlinks=true]{hyperref}

\usepackage[numbers,sort&compress]{natbib}
\usepackage{amsfonts, mathtools, enumitem} 
\usepackage{enumitem}
\usepackage{titlesec}
\usepackage[hyperref, dvipsnames]{xcolor}
\hypersetup{
  colorlinks=true,
}

\usepackage{amssymb}
\usepackage{amsthm}
\usepackage{thmtools, thm-restate}
\usepackage[left=2cm,right=2cm,top=2cm,bottom=2cm]{geometry}
\DeclareMathOperator*{\esssup}{ess\,sup}

\newcommand{\norm}[2]{\left\lVert #1\right\rVert_{#2}}

\newcommand{\md}{\partial^\bullet}

\newtheorem{theorem}{Theorem}[section]
\newtheorem{defn}[theorem]{Definition}
\newtheorem{lem}[theorem]{Lemma}
\newtheorem{prop}[theorem]{Proposition}

\newtheorem{ass}[theorem]{Assumption}

\newtheorem{remark}[theorem]{Remark}
\newtheorem{eg}[theorem]{Example}
\newcommand{\R}{\mathbb{R}}

\renewcommand{\bar}{\overline}

\newcommand{\grad}{\nabla}
\newcommand{\sgrad}{\nabla_g}

\newcommand{\cts}{\hookrightarrow}
\newcommand{\ctsDense}{\xhookrightarrow{d}}
\newcommand{\ctsCompact}{\xhookrightarrow{c}}

\makeatletter
\newcommand*{\T}{%
  {\mathpalette\@T{}}%
}
\newcommand*{\@T}[2]{%
  \raisebox{\depth}{$\m@th#1\intercal$}%
}
\makeatother

\titleformat{\part}
  {\centering\Large\scshape}{}{1em}{Part \thepart: }


\newcommand{\W}{\mathbb W} 
\newcommand{\N}{\mathbb N}
\theoremstyle{definition}
\newtheorem{definition}[theorem]{Definition}


\setcounter{secnumdepth}{3}

\allowdisplaybreaks

            
  \numberwithin{figure}{section}
\numberwithin{table}{section}

\numberwithin{equation}{section}
\numberwithin{theorem}{section}

\begin{document}
\hypersetup{
  urlcolor     = blue, 
  linkcolor    = Bittersweet, 
  citecolor   = Cerulean
}
\title{Function spaces, time derivatives and compactness for  evolving families of Banach spaces with applications to PDEs}
\author{Amal Alphonse\thanks{Weierstrass Institute, Mohrenstrasse 39, 10117 Berlin, Germany ({\tt alphonse@wias-berlin.de})} 
\and Diogo Caetano\thanks{Mathematics Institute, University of Warwick, Coventry CV4 7AL, United Kingdom ({\tt Diogo.Caetano@warwick.ac.uk})}  \and Ana Djurdjevac\thanks{Institut f\"ur Mathematik, Freie Universit\"at Berlin, Arnimallee 6, 14195 Berlin, Germany ({\tt adjurdjevac@zedat.fu-berlin.de})}  \and Charles M. Elliott\thanks{Mathematics Institute, University of Warwick, Coventry CV4 7AL, United Kingdom ({\tt C.M.Elliott@warwick.ac.uk})} }
\maketitle

\begin{abstract}
    We develop a functional framework suitable for the treatment of partial differential equations and variational problems on evolving families of Banach spaces. We propose a definition for the weak time derivative that does not rely on the availability of a Hilbertian structure and explore conditions under which spaces of weakly differentiable functions (with values in an evolving Banach space) relate to classical Sobolev--Bochner spaces. An Aubin--Lions compactness result is proved. We analyse concrete examples of function spaces over time-evolving spatial domains and hypersurfaces for which we explicitly provide the definition of the time derivative and verify isomorphism properties with the aforementioned Sobolev--Bochner spaces. We conclude with the proof of well posedness for a class of nonlinear monotone problems on an abstract evolving space (generalising the evolutionary $p$-Laplace equation on a moving domain or surface) and identify some additional problems that can be formulated with the setting developed in this work. 
\end{abstract}
\tableofcontents

\section{Introduction}
In this paper, we provide a theory and analysis of time-dependent function spaces suitable for posing and solving evolutionary variational problems on families of time-evolving Banach spaces. We further demonstrate our theory via examples and applications of partial differential equations on moving domains and surfaces.


By way of illustration, for each  $t\ge 0$, let $H(t)$ be a Hilbert space and $X(t)$ be a Banach space with dual $X^*(t)$ such that \[X(t) \subset H(t) \subset X^*(t)\]
is  a Gelfand triple. We say that $H(t)$ is the \emph{pivot space}. 
Let   $A(t)\colon X(t) \to X^*(t)$ be an elliptic operator and $\dot u$ an appropriate time derivative (to be defined later) of $u$. With this, we can consider the abstract problem
\begin{equation}\label{eq:abstractEquation}
\begin{aligned}
\dot u(t) + A(t) u(t) &= f(t) &&\text{in $X^*(t)$},\\
u(0) &= u_0 &&\text{in $H(0)$}.
\end{aligned}
\end{equation}
One possible weak formulation concept for this problem would ask for the solution to satisfy
\[\int_0^T \langle \dot u(t), \eta(t) \rangle_{X^*(t),X(t)} + \langle A(t)u(t), \eta(t) \rangle_{X^*(t),X(t)} = \int_0^T \langle f(t), \eta(t) \rangle_{X^*(t), X(t)}\]
for every appropriate test function $\eta$, as well as a given initial condition. To make this precise, one needs to specify 
\begin{enumerate}[label=(\roman*)]
    \item the exact function spaces that the solutions lie in, \vspace{-0.2cm}
    \item how to define the time derivative in an abstract evolving Banach  space setting,
    \vspace{-0.2cm}
    \item the properties of the above-mentioned spaces and objects that allow for analysis (e.g. existence of solutions) to be performed.
\end{enumerate}
Our motivation comes from the study of partial differential equations on moving or evolving domains and manifolds. Such equations  have received considerable attention in part due to their wide applicability in the biological and physical sciences. We mention applications in biomembranes \cite{venkataraman2011modeling}, cell interactions \cite{AlpEllTer17}, cardiovascular biomechanics \cite{liu2020fluid}, fluid mechanics \cite{vcanic2020moving}, chemotaxis \cite{EllStiVen12}, to name but a few. In addition to modelling aspects, the analysis \cite{AlpEllSti15a,  AlpEllFractional, AlpEllTer17, AlpEll15, sauer1970existence, Cortez, MR2409519, KnobKrech, djurdjevac2017advection, crauel2010stochastic} and numerics and simulation  \cite{EllRan12, EllRan15, MR2877965, MR1699243, olshanskii2014error, zbMATH07063744,elliott2017unified, long2012fluid} of such problems is challenging and an active area of research.

In the case that $X(t)$ is a Hilbert space, such issues have been considered. In particular, in \cite{AlpEllSti15a} an abstract framework for the formulation and well posedness of solutions of equations of the form \eqref{eq:abstractEquation} was provided for linear parabolic problems in the Hilbert triple setting; for this, Lions-type solution spaces $\W^{p,q}(X,X^*)$ (referring to the set of $p$-integrable functions that have values in $X(t)$ with $q$-integrable weak time derivatives with values in $X^*(t)$) were defined and rigorously justified to have certain properties that are necessary for the existence theory. See also \cite{AlpEllSti15b} for several concrete examples of applications of this theory.

In this work, our setup involves not necessarily Gelfand triples but in fact more general families of Banach spaces \[X(t) \subset Y(t)\] with no intermediate inner product structure available. As there is no pivot space to work with, the formulation and properties of the weak time derivative and evolving function spaces become more complicated. It is the aim of this paper to provide the theoretical background for constructing these spaces in the fully Banach space setting, to study their properties, and to provide examples that will cover most cases of interest to practitioners working with evolutionary variational problems on moving domains and surfaces. We will also provide an Aubin--Lions type compactness result (a tool widely used in the study of nonlinear problems) for these spaces. A crucial point in achieving the Aubin--Lions result (as well as other results and properties) is an intermediary result in which we give conditions under which the space
    $\W^{p,q}(X,Y)$ 
is isomorphic to the standard Sobolev--Bochner space (or Lions space)
\begin{equation*}
    \mathcal{W}^{p,q}(X_0,Y_0) := \{ u \in L^p(0,T;X_0) : u' \in L^q(0,T;Y_0)\},
\end{equation*}
where $X_0:=X(0)$ and $Y_0 := Y(0)$. Expending effort in achieving this isomorphism property is worthwhile since it has the advantage of allowing for a simple transferral of the properties of $\mathcal{W}^{p,q}(X_0,Y_0)$ onto the time-evolving version $\W^{p,q}(X,Y)$. In particular, it leads to a relatively straightforward proof for the extension of the standard Aubin--Lions result to the evolving setting.

\medskip

\noindent In summary, the novelty of the work is the following:
\begin{enumerate}
    \vspace{-.2cm}
    \item[(1)] we consider and define weak time derivatives in a fully Banach space setting (separability and reflexivity are not assumed); with no inner product or Gelfand triple structure to aid us, the formulation of such a time derivative is non-trivial and requires care and justification;
    \vspace{-.2cm}
    \item[(2)] we provide conditions that can be checked to ensure the isomorphism with equivalence of norms between the standard Sobolev--Bochner space $\mathcal{W}^{p,q}(X_0,Y_0)$ and the evolving Sobolev--Bochner spaces $\W^{p,q}(X,Y)$ under consideration in this paper;
    \vspace{-.2cm}
    \item[(3)] we provide an Aubin--Lions result also in this generality (with no restriction needed for the evolving spaces to be related to domains or manifolds);
    \vspace{-.2cm}
    \item[(4)] we study a number of concrete examples involving function spaces of moving domains and surfaces that fit our abstract framework;
    \vspace{-.2cm}
    \item[(5)]  we prove existence and uniqueness of solutions to a monotone first-order evolution equation (of the form \eqref{eq:abstractEquation}) in a Gelfand triple setting using the theory developed in this paper.
\end{enumerate}
Under the assumptions on the evolution of the spaces in this paper, it is always possible to pull back equations such as \eqref{eq:abstractEquation} onto a reference space $X_0$ and apply standard theory on fixed spaces once the relevant assumptions have been verified. However, our approach --- which enables the problem to be treated directly in its natural formulation --- offers a certain elegance and simplicity and is also of use in numerical and finite element analysis \cite{elliott2017unified} on moving domains/surfaces (in addition to being an interesting mathematical problem in its own right). Furthermore, pulling back onto a reference domain nonetheless requires the checking of regularity of the resulting coefficients in order to apply standard theory and the analogue of this is performed for some rather general cases in \S \ref{sec:examples}, which we believe has a wide appeal for a variety of problems on moving domains and surfaces.

\paragraph{Organisation of the paper.} The paper is split into two parts. Part I focuses on the abstract theory and Part II contains applications of the theory. Beginning in \S \ref{sec:timedep}, we define and study properties of the evolving Bochner spaces $L^p_X$ and their dual spaces. We move onto defining a weak time derivative in \S \ref{sec:generalCase}, as well as defining spaces of functions with weak time derivatives and their relation to the standard Sobolev--Bochner spaces. We study the conditions under which the two spaces are isomorphic. Proceeding in \S \ref{sec:gelfandtriple}, we specialise the above theory to the setting where we have a Gelfand triple, which leads to a simplification in the statement of the assumptions that are required. We generalise the Aubin--Lions result to our setting in \S \ref{sec:aubinlions}, concluding Part I. Part 2 is devoted to examples and applications. In \S \ref{sec:examples}, we study several concrete examples of the abstract theory. Finally, in \S \ref{sec:application}, we provide an application to a nonlinear parabolic equation.

\paragraph{Notation and conventions}
\begin{itemize}
    \item We will always work with \emph{real} Banach spaces.
       \vspace{-.2cm}
    \item
       The action of the linear map $x^*\in X^*$ on $x\in X$ is denoted by
    $$\langle x^*,x\rangle_{X^*,X}=\langle x,x^*\rangle_{X,X^*}.$$
     \vspace{-.8cm}
    \item Continuous, dense and compact embeddings of spaces will be denoted by $\cts,$ $\ctsDense$ and $\ctsCompact$ respectively.
    \vspace{-.2cm}
    \item We will usually leave out the differential in integrals, i.e., we write $\int_0^T f(t)$ rather than $\int_0^T f(t)\;\mathrm{d}t.$
    \vspace{-.2cm}
    \item For a function $f\colon [0,T]\to X$ onto a Banach space, we denote the difference quotient 
        \begin{align*}
            \delta_h f(t) := \dfrac{f(t+h)-f(t)}{h}. 
        \end{align*}
    \vspace{-.6cm}
    \item Given $a,b\in\R$, $a\land b := \min(a,b)$. 
    \vspace{-.2cm}
    \item The letters $p$ and $q$ will typically be used for (not necessarily conjugate) integrability exponents in $L^p$-type spaces; the conjugate of $p$ will always be denoted by $p':= p\slash (p-1)$.
    \item We write $\mathcal D(\Omega) \equiv C_c^\infty(\Omega)$ to refer to the set of infinitely differentiable functions with compact support in the open set $\Omega \subset \mathbb{R}^n$. Likewise, for a Banach space $X$,  $\mathcal{D}((0,T);X) \equiv C_c^\infty((0,T);X)$ denotes the space of smooth, compactly supported functions on $(0,T)$ with values in $X$. The dual space of $\mathcal{D}(\Omega)$ will be denoted $\mathcal{D}^*(\Omega)$, which is the space of continuous linear functionals on $\mathcal{D}(\Omega)$ (i.e., the space of distributions) endowed with the strong dual topology. The space $\mathcal{D}^*((0,T);X)$ will stand for the space of continuous linear mappings from $\mathcal{D}(0,T)$ into $X$, i.e.,
    \[\mathcal{D}^*((0,T);X) = \mathcal{L}(\mathcal{D}(0,T),X),\]
see \cite{MR2961861} for further details. 

\end{itemize}

\part{Theory}

This part is devoted to establishing the abstract theory necessary for the analysis of function spaces and the treatment of partial differential equations on evolving surfaces or bulk domains. We will assume familiarity with the classical theory of standard Bochner spaces $L^p(0,T;X)$; useful texts on this topic are \cite{Zeidler, Rou05, Boy13, DautrayLions, MR2961861}.

\section{Time-evolving Bochner spaces $L^p_X$}\label{sec:timedep}
The aim in this section is to define a generalisation of the Bochner spaces $L^p(0,T;X)$ to describe integrable (in time) functions with values in a Banach space that itself depends on time. In \cite{AlpEll15, AlpEllSti15a}, two of the present authors defined and studied properties of spaces $L^p_{X}$ given a sufficiently smooth parametrised family of Banach spaces $\{X(t)\}_{t \in [0,T]}$. These spaces were generalisations to the abstract Banach space setting of spaces introduced by Vierling in \cite{Vie11} in the context of Sobolev spaces on evolving surfaces. We now recall (and in some places, refine) the theory in \cite{AlpEll15} so that the presentation is essentially self-contained.

For each $t \in [0,T]$, let $X(t)$ be a real Banach space with $X_0 := X(0)$ and let 
\[\phi_t\colon X_0 \to X(t)\]
be a bounded, linear, invertible map with inverse 
\[\phi_{-t}\colon X(t) \to X_0.\]
It follows that the inverse is also bounded. These maps `link' the time-dependent spaces and we call $\phi_t$ the \textit{pushforward} map and $\phi_{-t}$ the \textit{pullback} map. We assume these satisfy the following properties.

\begin{ass}[Compatibility]\label{ass:def_compatibility}
Suppose that
\begin{enumerate}[label=({\arabic*})]
\item $\phi_0$ is the identity,
\vspace{-0.2cm}\item there exists a constant $C_X$ independent of $t \in [0,T]$ such that
\begin{equation*}
\begin{aligned}
\norm{\phi_t u}{X(t)} &\leq C_X\norm{u}{X_0} &&\forall u \in X_0,\\
\norm{\phi_{-t} u}{X_0} &\leq C_X\norm{u}{X(t)} &&\forall u \in X(t),
\end{aligned}
\end{equation*}
\vspace{-0.4cm}
\item for all $u \in X_0$, the map $t \mapsto \norm{\phi_t u}{X(t)}$ is measurable.
\end{enumerate}
We say that the pair $(X(t), \phi_t)_t$ is \emph{compatible}.
\end{ass}
In what follows, we always assume that $(X(t), \phi_t)_t$ satisfies Assumption \ref{ass:def_compatibility} and we (formally) identify the family $\{X(t)\}_{t \in [0,T]}$ with the symbol $X$. 
\begin{remark}
Under this compatibility assumption, note that for $s,t\in [0,T]$, the map $U(t,s):=\phi_s  \phi_{-t} \colon X(t)\to X(s)$ defines a 2-parameter semigroup in the sense of \cite[Definition 1.1.1]{Pot10}.
\end{remark}
Let us define the disjoint union
\[\mathcal{X}_T := \bigcup_{t \in [0,T]}\!\!\!\! X(t) \times \{t\}.\]
\begin{defn}[The space $L^p_X$]
For $p \in [1,\infty]$, define the space
\begin{align*}
L^p_X &:= \left\{u:[0,T] \to \mathcal{X}_T, \;\; t \mapsto (\hat u(t), t)  \;\; \mid \;\; \phi_{-(\cdot)} \hat u(\cdot) \in L^p(0,T;X_0 )\right\}.
\end{align*}
Identifying $u(t) = (\hat u(t), t)$ with $\hat u(t)$, endow the space with the norm
\begin{equation*}
\begin{aligned}
\norm{u}{L^p_X} &:=
\begin{cases}
\left({\int_0^T \norm{u(t)}{X(t)}^p}\right)^{\frac 1 p} &\text{for $p \in [1,\infty)$},\\ 
\esssup_{t \in [0,T]}\norm{u(t)}{X(t)} &\text{for $p=\infty$}.
\end{cases}
\end{aligned}
\end{equation*}
\end{defn}
\begin{theorem}\label{thm:LpX}
Under Assumption \ref{ass:def_compatibility},  $L^p_X$ is a Banach space. If $X$ is a family of Hilbert spaces, then $L^2_X$ is a Hilbert space with the canonical inner product
\[(u,v)_{L^2_X} := \int_0^T (u(t),v(t))_{X(t)}.\]
Furthermore, $L^p(0,T;X_0)$ and $L^p_X$ are isomorphic via $\phi_{(\cdot)}$ with an equivalence of norms:
\begin{equation}
\begin{aligned}
\frac{1}{C_X}\norm{u}{L^p_X} &\leq \norm{\phi_{-(\cdot)}u(\cdot)}{L^p(0,T;X_0)} \leq C_X\norm{u}{L^p_X}&&\text{for all $u \in L^p_X$}.\label{eq:equivalenceBetweenLpXAndStdBochner}
\end{aligned}
\end{equation}
\end{theorem}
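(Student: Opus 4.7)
The plan is to derive all three assertions from the single observation that the map $\Phi\colon L^p(0,T;X_0) \to L^p_X$ defined by $(\Phi v)(t) := \phi_t v(t)$, with formal inverse $(\Phi^{-1}u)(t) := \phi_{-t} u(t)$, is a bijective linear isomorphism with \eqref{eq:equivalenceBetweenLpXAndStdBochner}. Once the norm equivalence is in hand, completeness of $L^p_X$ and the Hilbert space structure in the $p=2$ case follow by transporting the corresponding structures from $L^p(0,T;X_0)$, which is classical.

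First I would verify the norm equivalence \eqref{eq:equivalenceBetweenLpXAndStdBochner}. For $u\in L^p_X$, writing $u(t) = \phi_t(\phi_{-t}u(t))$ and applying Assumption \ref{ass:def_compatibility}(2) pointwise gives
\[
\tfrac{1}{C_X}\|u(t)\|_{X(t)} \le \|\phi_{-t}u(t)\|_{X_0} \le C_X\|u(t)\|_{X(t)}
\]
for a.e.\ $t$, and integrating (or taking $\esssup$) yields \eqref{eq:equivalenceBetweenLpXAndStdBochner}. The bijectivity of $\Phi$ is built into the definition of $L^p_X$, since $u\in L^p_X$ iff $\phi_{-(\cdot)}u(\cdot) \in L^p(0,T;X_0)$ and $\phi_t\phi_{-t} = \mathrm{id}_{X(t)}$.

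For completeness, a Cauchy sequence $(u_n)\subset L^p_X$ yields a Cauchy sequence $(\phi_{-(\cdot)}u_n(\cdot)) \subset L^p(0,T;X_0)$ by the lower bound of \eqref{eq:equivalenceBetweenLpXAndStdBochner}; by completeness of $L^p(0,T;X_0)$ it converges to some $v$, and then $u(t) := \phi_t v(t)$ lies in $L^p_X$ with $u_n \to u$ by the upper bound. For the Hilbert case, the formula $(u,v)_{L^2_X} := \int_0^T (u(t),v(t))_{X(t)}$ is finite by Cauchy--Schwarz in $X(t)$ and H\"older in time, and clearly induces the $L^2_X$-norm via $(u,u)_{L^2_X} = \|u\|_{L^2_X}^2$; bilinearity, symmetry and positive definiteness are pointwise and immediate.

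The main technical obstacle is measurability: the definition of $L^p_X$ refers only to measurability of the pullback in the fixed space $X_0$, while the norm $\|u\|_{L^p_X}$ and the candidate inner product involve $t\mapsto \|u(t)\|_{X(t)}$ and $t\mapsto (u(t),v(t))_{X(t)}$, which are objects living on the moving spaces. This is why one needs Assumption \ref{ass:def_compatibility}(3). The plan is to reduce to simple functions: for $w\in L^p(0,T;X_0)$ approximated by simple functions $w_n = \sum_k x_k\mathbf{1}_{E_k}$ with $w_n\to w$ pointwise a.e., linearity of $\phi_t$ and Assumption \ref{ass:def_compatibility}(3) give that $t\mapsto \|\phi_t w_n(t)\|_{X(t)} = \sum_k\mathbf{1}_{E_k}(t)\|\phi_t x_k\|_{X(t)}$ is measurable, and continuity of the norm together with continuity of $\phi_t$ yields measurability of $t\mapsto \|\phi_t w(t)\|_{X(t)}$ in the limit. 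Applying this to $w = \phi_{-(\cdot)}u(\cdot)$ gives measurability of $t \mapsto \|u(t)\|_{X(t)}$, and, in the Hilbert case, measurability of $t\mapsto (u(t),v(t))_{X(t)}$ follows via the polarisation identity applied in each $X(t)$.
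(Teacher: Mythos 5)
Your proof is correct and is the natural argument for all three claims; it essentially fills in the content that the paper outsources to \cite{AlpEllSti15a} and \cite{AlpEll15} rather than proving itself. The organisational observation—treat $\phi_{(\cdot)}$ as a bijection between $L^p(0,T;X_0)$ and $L^p_X$, establish the two-sided norm bound first, then transport completeness and (in the $p=2$ case) the inner product structure—is exactly what those references do, and your treatment of the measurability issue via approximation by simple functions (and polarisation for the inner product) is the right way to close the one nonobvious gap, supplying the place where Assumption~\ref{ass:def_compatibility}(3) is actually used. Two small remarks: in the simple-function step the identity $\|\phi_t w_n(t)\|_{X(t)} = \sum_k \mathbf{1}_{E_k}(t)\|\phi_t x_k\|_{X(t)}$ requires the $E_k$ to be pairwise disjoint (which is harmless since simple functions can always be written that way, but is worth saying); and logically the measurability argument should precede the norm-equivalence and inner-product claims, since they presuppose that $t\mapsto\|u(t)\|_{X(t)}$ and $t\mapsto (u(t),v(t))_{X(t)}$ are legitimate integrands, though this is a matter of presentation rather than correctness.
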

\begin{proof}
For the first two claims, see \cite[Theorem 2.8]{AlpEllSti15a} for the Hilbertian case and  the paragraph after Definition 2.1 in \cite{AlpEll15} for the general Banach setting. The equivalence of norms is proved in \cite[Lemma 2.3]{AlpEll15}.
\end{proof}



\paragraph{Spaces of smooth functions.}The following $C^k$-type spaces will also be of use later. We start by defining,   
for $k\in \mathbb{N} \cup \{0\}$, the spaces $C^k_X$ of $k$-times continuously differentiable functions (on the closed interval $[0,T]$)
\begin{align*}
C^k_X &= \left\{\eta\colon [0,T] \to \mathcal{X}_T,\;\; t \mapsto (\eta(t), t)\;\; \mid \;\; \phi_{-(\cdot)}\eta(\cdot) \in C^k([0,T];X_0)\right\}.
\end{align*}
We will also need the space $\mathcal{D}_X$ of smooth, compactly supported functions (but now on the \emph{open} interval $(0,T)$)
\begin{align*}
\mathcal{D}_X &= \left\{\eta\colon [0,T] \to \mathcal{X}_T,\;\; t \mapsto ( \eta(t), t)\;\; \mid \;\; \phi_{-(\cdot)}\eta(\cdot) \in \mathcal D((0,T);X_0)\right\}.
\end{align*}
\subsection{Dual spaces}\label{sec:dualSpaces}
In this section, we study the dual space of $L^p_X$ for appropriate $p$. First, we shall see that given a compatible pair $(X(t), \phi_t)_{t \in [0,T]}$, we can also define the space $L^p_{X^*}$ associated to the family $\{X^*(t)\}$ by using dual maps. Indeed, denote by 
\[\phi_{-t}^*\colon X_0^*\to X^*(t)\]
the dual operator of $\phi_{-t}\colon X(t)\to X_0$. Under the condition
\begin{equation}\label{ass:measurabilityOfDual}
    t \mapsto \norm{\phi_{-t}^*f}{X^*(t)} \text{ is measurable for all $f \in X_0^*$},
\end{equation}
it is not difficult to verify that the pair $(X^*(t), \phi_{-t}^*)_{t\in [0,T]}$ is also compatible in the sense of the definition above (see \cite[Remark 2.4]{AlpEll15}). This justifies the next definition.

\begin{defn}[The space $L^p_{X^*}$]\label{defn:dual} Given a compatible pair $(X(t), \phi_t)_{t\in [0,T]}$, under \eqref{ass:measurabilityOfDual}, we define the space $L^p_{X^*}$ using the dual spaces $\{X^*(t)\}_{t\in [0,T]}$ and the dual maps $\{\phi_{-(\cdot)}^*\colon X_0^* \to X^*(t)\}$. 
\end{defn}

\begin{remark}
Note that if $X_0$ is separable, then so is $X(t)$ for every $t\in [0,T]$  and the condition \eqref{ass:measurabilityOfDual} follows from Assumption \ref{ass:def_compatibility}. 
\end{remark}

Regarding the relationship between the dual of a Bochner space and the Bochner space of the dual, recall that if $Z$ is a reflexive Banach space, then $Z^*$ is also reflexive and hence it possesses the Radon--Nikodym property, which is key to identifying the dual of $L^p(0,T;Z)$ as $L^{p'}(0,T;Z^*)$ whenever $p\neq \infty$.
\begin{theorem}[Identification of the dual of $L^p_X$ with $L^{p'}_{X^*}$]\label{thm:dualSpaceIdentification} Suppose that   the family of reflexive Banach spaces $\{X(t)\}_{t \in [0,T]}$ satisfies Assumption \ref{ass:def_compatibility}, \eqref{ass:measurabilityOfDual} holds and let $p \in [1,\infty)$. 
The dual space $(L^p_X)^*$ is isometrically isomorphic to $L^{p'}_{X^*}$ (taken as in Definition \ref{defn:dual}) with duality pairing 
\[\langle f, u \rangle_{L^{p'}_{X^*},L^p_{X}} = \int_0^T \langle f(t), u(t) \rangle_{X^*(t), X(t)}.\]
Furthermore, if $p \in (1,\infty)$, then $L^p_X$ is reflexive.
\end{theorem}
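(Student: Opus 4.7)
The plan is to bootstrap from the classical Bochner-space duality $(L^p(0,T;X_0))^* \cong L^{p'}(0,T;X_0^*)$ (which holds isometrically because $X_0$ reflexive forces $X_0^*$ to inherit the Radon--Nikodym property) via the pullback isomorphism of Theorem~\ref{thm:LpX}, applied simultaneously to the compatible pair $(X(t), \phi_t)_t$ and to its dual companion $(X^*(t), \phi_{-t}^*)_t$ (itself compatible thanks to \eqref{ass:measurabilityOfDual}). The key pointwise identity that ties both sides together is
\[
\langle f(t), u(t)\rangle_{X^*(t), X(t)} \;=\; \bigl\langle (\phi_{-t}^*)^{-1} f(t),\, \phi_{-t} u(t)\bigr\rangle_{X_0^*, X_0},
\]
which is simply the definition of the dual operator $\phi_{-t}^*$.

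Concretely, I would first define $\Lambda\colon L^{p'}_{X^*}\to (L^p_X)^*$ by $\Lambda(f)(u):= \int_0^T \langle f(t),u(t)\rangle_{X^*(t),X(t)}$. Measurability of the integrand and the bound $\|\Lambda(f)\|_{(L^p_X)^*}\le \|f\|_{L^{p'}_{X^*}}$ follow from the identity above together with H\"older applied to the scalar maps $\|f(\cdot)\|_{X^*(\cdot)} \in L^{p'}(0,T)$ and $\|u(\cdot)\|_{X(\cdot)} \in L^p(0,T)$. For surjectivity, given $F\in (L^p_X)^*$ I would pull it back to $\tilde F(v):=F(\phi_{(\cdot)} v(\cdot)) \in (L^p(0,T;X_0))^*$, invoke the classical duality to obtain a representing $g \in L^{p'}(0,T;X_0^*)$, and then push forward by setting $f(t):=\phi_{-t}^* g(t)$; membership $f\in L^{p'}_{X^*}$ is immediate from the definition of that space, and the identity above gives $\Lambda(f)=F$. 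Injectivity of $\Lambda$ is automatic once the isometry below is established, but may also be obtained directly by testing $\Lambda(f)=0$ against elements of the form $\chi_E(\cdot)\phi_{(\cdot)} x$ with $x\in X_0$ and $E\subset[0,T]$ measurable.

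To upgrade the norm inequality to an isometry, I would construct near-extremal test functions directly: for each $t$, reflexivity of $X(t)$ provides $v(t)\in X(t)$ with $\|v(t)\|_{X(t)}=1$ and $\langle f(t),v(t)\rangle \ge (1-\varepsilon)\|f(t)\|_{X^*(t)}$; setting $u(t):=\|f(t)\|_{X^*(t)}^{p'-1}v(t)$ yields $\|u\|_{L^p_X}=\|f\|_{L^{p'}_{X^*}}^{p'/p}$ and $\Lambda(f)(u) \ge (1-\varepsilon)\|f\|_{L^{p'}_{X^*}}^{p'}$, so that $\|\Lambda(f)\|\ge (1-\varepsilon)\|f\|_{L^{p'}_{X^*}}$. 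The main obstacle here is ensuring that this pointwise selection produces a $u$ lying in $L^p_X$---equivalently, that the pullback $\phi_{-(\cdot)}v(\cdot)$ is strongly measurable into $X_0$. I would handle this via the Kuratowski--Ryll-Nardzewski measurable selection theorem applied to the reference-space multifunction $t \mapsto \{x\in X_0 : \|\phi_t x\|_{X(t)}\le 1,\; \langle (\phi_{-t}^*)^{-1} f(t), x\rangle \ge (1-\varepsilon)\|f(t)\|_{X^*(t)}\}$, noting that by the Pettis theorem the pullbacks of interest take values in a separable subspace of $X_0$, which is the setting where the selection theorem applies. Finally, reflexivity of $L^p_X$ for $p\in (1,\infty)$ is immediate from the classical reflexivity of $L^p(0,T;X_0)$ (which follows from $X_0$ being reflexive) together with the topological isomorphism furnished by Theorem~\ref{thm:LpX}, since reflexivity passes through linear homeomorphisms.
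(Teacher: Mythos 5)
Your bootstrap through the pullback isomorphisms $u \mapsto \phi_{-(\cdot)}u(\cdot)$ and $f \mapsto \phi_{(\cdot)}^* f(\cdot)$ correctly yields that the map you call $\Lambda$ is a bounded linear bijection onto $(L^p_X)^*$, and the surjectivity, injectivity and reflexivity reasoning is sound; the identity $\langle f(t),u(t)\rangle_{X^*(t),X(t)} = \langle \phi_t^* f(t), \phi_{-t}u(t)\rangle_{X_0^*,X_0}$ does exactly what you need there. The paper, by contrast, cites a direct adaptation of the Diestel--Uhl Radon--Nikod\'ym proof to the evolving norms (deferred to an earlier reference), rather than bootstrapping from the reference-space duality, and this choice matters precisely because of the point below.

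The gap is in the isometry step. Because $\phi_{-(\cdot)}$ and $\phi_{(\cdot)}^*$ are only bi-Lipschitz with constant $C_X$ and not isometries, the isometry of the classical duality does \emph{not} transfer for free --- the pairing is preserved exactly, but the norms on the two sides are distorted by compensating factors, so the equality $\|\Lambda(f)\|_{(L^p_X)^*}=\|f\|_{L^{p'}_{X^*}}$ needs a fresh argument, which you rightly attempt via a near-extremal construction. But the appeal to Kuratowski--Ryll-Nardzewski is where it breaks: that theorem requires the multifunction to take closed values in a \emph{separable} complete metric space, while the theorem here assumes $X_0$ reflexive and the measurability condition \eqref{ass:measurabilityOfDual}, not separability. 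Your invocation of Pettis shows that $\phi_{(\cdot)}^* f(\cdot)$ is essentially separably valued in $X_0^*$; it gives no control over the multifunction $t \mapsto \{x \in X_0: \|\phi_t x\|_{X(t)}\le 1,\ \langle \phi_t^* f(t), x\rangle_{X_0^*,X_0} \ge (1-\varepsilon)\|f(t)\|_{X^*(t)}\}$, whose fibres are large convex subsets of $X_0$ with no reason to lie in any fixed separable subspace. Asserting that the selection is essentially separably valued is exactly what you are trying to establish (it is equivalent to the selected function defining an element of $L^p_X$), so as written the step is circular. To close it you would need either to impose separability of $X_0$, or to follow the classical RNP route by simple functions, computing the total variation of the $X_0^*$-valued set function $E \mapsto F(\chi_E\,\phi_{(\cdot)}\,\cdot\,)$ against the pointwise norms $\|\phi_{-t}^*(\cdot)\|_{X^*(t)}$ rather than $\|\cdot\|_{X_0^*}$; that is where the ``modifications'' the paper alludes to actually live.
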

\begin{proof}
The proof follows the classical proof for the corresponding result for Bochner spaces \cite[\S IV]{MR0453964} with modifications, see Theorem 2.5 in \cite{AlpEll15}.
\end{proof}

We now establish a version of the fundamental theorem of calculus of variations for the evolving space setting. The proof is simple but we provide it to illustrate the kind of argument required when working with these kinds of spaces.
\begin{lem}\label{lem:evolvingfundamental}
If $u\in L^1_X$ is such that
\begin{align*}
    \int_0^T \langle u(t), \eta(t)\rangle_{X(t), X^*(t)} = 0 \quad \forall \eta\in\mathcal{D}_{X^*},
\end{align*}
then $u\equiv 0$.
\end{lem}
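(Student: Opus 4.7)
The plan is to pull everything back to the reference space $X_0$ via the isomorphism of Theorem~\ref{thm:LpX} and then reduce to the classical fundamental lemma for Bochner spaces. Write $\hat u(t) := \phi_{-t} u(t)$, so that $\hat u \in L^1(0,T;X_0)$ by Theorem~\ref{thm:LpX}. Any test function $\eta \in \mathcal{D}_{X^*}$ is of the form $\eta(t) = \phi^*_{-t}\hat\eta(t)$ for some $\hat\eta \in \mathcal{D}((0,T);X_0^*)$, and conversely every such $\hat\eta$ induces an admissible $\eta$. The definition of the dual map gives
\begin{equation*}
\langle u(t), \eta(t)\rangle_{X(t),X^*(t)} = \langle \phi^*_{-t}\hat\eta(t), \phi_t \hat u(t)\rangle_{X^*(t),X(t)} = \langle \hat\eta(t), \phi_{-t}\phi_t \hat u(t)\rangle_{X_0^*,X_0} = \langle \hat\eta(t), \hat u(t)\rangle_{X_0^*,X_0},
\end{equation*}
so the hypothesis becomes $\int_0^T \langle \hat\eta(t), \hat u(t)\rangle_{X_0^*,X_0} = 0$ for every $\hat\eta \in \mathcal{D}((0,T);X_0^*)$.

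Next, I would specialise to tensor-product test functions: for arbitrary $\psi \in \mathcal{D}(0,T)$ and $f \in X_0^*$, the choice $\hat\eta(t) := \psi(t)f$ is admissible, and pulling $f$ outside the pairing and the integral (using the Bochner integral's commutation with bounded linear functionals) gives
\begin{equation*}
\left\langle f,\; \int_0^T \psi(t) \hat u(t)\, \d t \right\rangle_{X_0^*,X_0} = 0 \quad \text{for all } f \in X_0^*.
\end{equation*}
Since $X_0^*$ separates points on $X_0$ (Hahn--Banach), this forces $\int_0^T \psi(t)\hat u(t)\,\d t = 0$ in $X_0$ for every $\psi \in \mathcal{D}(0,T)$.

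Finally, the classical fundamental lemma of the calculus of variations for Bochner-integrable functions (a consequence of the Lebesgue differentiation theorem for $L^1(0,T;X_0)$; see, e.g., \cite{DautrayLions}) yields $\hat u = 0$ almost everywhere in $X_0$. The isomorphism of Theorem~\ref{thm:LpX} then gives $u \equiv 0$ in $L^1_X$, as required.

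The only delicate point I anticipate is the second step: one might worry that the tensor-product test functions $\psi(t)f$ form too restrictive a class. However, they are already enough precisely because, after the Bochner integral in $t$ has been taken, the remaining obstruction is an element of $X_0$, and a single Hahn--Banach argument (rather than a density argument in $\mathcal{D}((0,T);X_0^*)$) closes the gap. The rest is a routine transfer of the classical Bochner result through the compatibility isomorphism.
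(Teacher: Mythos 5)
Your proof is correct and follows essentially the same route as the paper's: pull back to the reference space $X_0$ via the flow maps, note the bijective correspondence $\eta \leftrightarrow \phi_{(\cdot)}^*\eta$ between $\mathcal{D}_{X^*}$ and $\mathcal{D}((0,T);X_0^*)$, and invoke the fundamental lemma of the calculus of variations for $L^1(0,T;X_0)$. The only difference is that you spell out the final step through tensor-product test functions $\psi(t)f$ and Hahn--Banach, whereas the paper simply invokes the Banach-valued fundamental lemma directly; both are valid and this is a presentational rather than substantive distinction.
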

\begin{proof}
Given $\eta\in\mathcal D_{X^*}$, by writing $ \big\langle u(t), \eta(t)\big\rangle_{X(t),  X^*(t)} = \big\langle \phi_{-t} u(t), \phi_t^* \eta(t)\big\rangle_{X_0,  X_0^*}$ and setting  $\varphi :=\phi_{(\cdot)}^*\eta(\cdot) \in \mathcal D((0,T); X_0^*)$ it follows by the arbitrariness of $\eta \in \mathcal{D}_{X^*}$ that 
\begin{align*}
    \int_0^T \big\langle \phi_{-t} u(t), \varphi(t) \rangle_{X_0, X_0^*} = 0 \quad \forall \varphi\in \mathcal D((0,T); X_0^*),
\end{align*}
from where $\phi_{-(\cdot)} u(\cdot) \equiv 0$, hence $u\equiv 0$.
\end{proof}
\begin{remark}[Relation between the Riesz maps in the Hilbert space case]
Suppose that $\{H(t)\}_{t\in [0,T]}$ is a family of Hilbert spaces compatible with a family of maps $\{\phi_t\}_{t\in [0,T]}$ as above. Let us discuss the relationship between the various Riesz isomorphisms that are present in this situation.

\medskip
 Let $\mathcal R\colon L^2_H \to L^2_{H^*}$ and $\mathcal S_t\colon H(t) \to H^*(t)$ be the associated Riesz maps. If $u,v \in L^2_H$, by definition
$t \mapsto \langle \mathcal S_tu(t), v(t)\rangle_{H^*(t), H(t)} = (u(t), v(t))_{H(t)}$ is measurable and we have
\begin{align*}
(u,v)_{L^2_H} = \int_0^T (u(t),v(t))_{H(t)} = \int_0^T \langle \mathcal S_tu(t), v(t) \rangle_{H^*(t), H(t)}
\end{align*}
but on the other hand, by Theorem \ref{thm:dualSpaceIdentification},
\begin{align*}
(u,v)_{L^2_H} = \langle \mathcal Ru, v \rangle_{L^2_{H^*}, L^2_H} = \int_0^T \langle (\mathcal Ru)(t), v(t) \rangle_{H^*(t), H(t)}.
\end{align*}
This implies that
\[\mathcal Ru = \mathcal S_{(\cdot)}u(\cdot) \quad \text{in $L^2_{H^*}$}\]
and thus $(\mathcal Ru)(t) = \mathcal S_tu(t) \in H^*(t)$ for almost all $t$.  This suggests that identifying $H(t)$ with $H(t)^*$ forces $L^2_H$ to be identified with $L^2_{H^*}$ and vice versa.
\end{remark}

\section{Time derivatives in evolving spaces}\label{sec:generalCase}
Having defined Bochner-type spaces to deal with evolving families of Banach spaces, we focus in this section on defining a notion of a weak time derivative for functions in such spaces. We recall the definition of a weak time derivative on a fixed setting: given $X\hookrightarrow Y$, a function $v'\in L^1(0,T; Y)$ is the weak time derivative of $v\in L^1(0,T;X)$ if 
\begin{align}\label{eq:classweakder}
    \int_0^T v' \varphi = - \int_0^T v \varphi' \quad \forall\varphi\in\mathcal D_{Y^*}.
\end{align}
Firstly, since the pullbacks of functions in $C^1_X$ (recall the definition in \S \ref{sec:timedep}) are differentiable, we are able to define a time derivative for such functions with a simple and natural formula.
\begin{defn}
A function $u\in C_X^1$ has a \emph{strong time derivative} $\dot u\in C_X^0$ defined by
\begin{align}\label{eq:defnStrongMaterialDerivative}
    \dot u(t) = \phi_t^X \left( (\phi^X_{-t} u)' \right),
    \end{align}
where $(\phi^X_{-t} u)'$ denotes the classical weak derivative of $\phi^X_{-t} u$ as in \eqref{eq:classweakder}.
\end{defn}
Evidently, this time derivative depends on the maps $\{\phi_t^X\}$. We will sometimes also use the notation $\md u$ in place of $\dot u$. A similar definition could be stated for higher order derivatives but we will not need it in this text. 
\begin{remark}
This definition implies the following simple \emph{transport property}: if $u\in C^1_X$ is of the form $u = \phi_t^X \eta$ for some $\eta\in X_0$, then $\dot u = 0$.
\end{remark}
The aim now is to look for a weaker notion of time derivative than the strong time derivative. Motivated by the integration by parts formula \eqref{eq:classweakder}, we expect the definition of the weak time derivative to be similar to the non-moving setting but in view of the fact that the spaces here are evolving, we expect an additional term in its definition. Such a weak time derivative was defined in the setting of Hilbert triples $X(t) \subset H(t) \subset X^*(t)$ (with each space a Hilbert space) in \cite{AlpEllSti15a}. Here, we aim to drop the assumption of an existing pivot Hilbert space and define the weak time derivative in the full generality of the classical Banach space setting. 

For the rest of this section, we work under the following assumptions: 

\begin{ass}\label{ass:sect4_assumption}
We fix families
\begin{align*}
    \big(X(t), \phi_t^X\colon X_0\to X(t)\big)_{t\in [0,T]} \qquad \text{ and } \qquad \big(Y(t), \phi_t^Y\colon Y_0\to Y(t)\big)_{t\in [0,T]}, \quad\quad
\end{align*}
where $X_0:= X(0)$ and $Y_0:=Y(0)$, satisfying Assumption \ref{ass:def_compatibility} and such that the Banach spaces $X(t)\cts Y(t)$ continuously for all $t\in [0,T]$. 
\end{ass}

\begin{remark}
We do \emph{not} assume that $\phi_t^X = \phi_t^Y|_{X_0}$! Doing so 
would lead to a simplified setting in what follows, see Remark \ref{rem:afterWeakDerDefn} (ii) for more details.
\end{remark}

\subsection{Definition and properties of the weak time derivative}
For a function $u\in L^p_X$, we wish to define an appropriate concept of a weak time derivative $\dot u\in L^q_Y$ motivated by the usual so-called transport formula in the non-moving setting. Taking a test function $\eta\in\mathcal D_{Y^*}$, we expect
\begin{align}
    \dfrac{d}{dt} \left\langle u(t), \eta(t)\right\rangle_{X(t), \, X^*(t)} = \left\langle \dot u(t), \eta(t)\right\rangle_{Y(t), \, Y^*(t)} + \left\langle u(t), \dot \eta(t)\right\rangle_{X(t), \, X^*(t)} + \text{ extra term,}\label{eq:somethingToShow}
\end{align}
where the extra term accounts for the time-dependence of the duality pairing. Integrating over $[0,T]$, and using the fact that $\eta$ is compactly supported, this would lead to a weak derivative formula of the integration by parts type, with an extra term which we now must identify. 
To isolate the effect of time-dependency that the evolution of the spaces induces in the associated duality product, we make the following assumption.

\begin{ass}\label{ass:generalcaseChanges}
We assume that
\begin{itemize}
    \item[(i)] the map 
    \[t\mapsto \big\langle \phi_t^X u_0, (\phi_{-t}^Y)^* v_0 \big\rangle_{X(t),  X^*(t)}\] is continuously differentiable for each fixed $u_0\in X_0$, $v_0\in Y_0^*$;
    \item[(ii)] for all $t\in [0,T]$, the map 
     \[ X_0 \times Y_0^* \ni(u_0, v_0)\mapsto \frac{\partial}{\partial t}\big\langle \phi_t^X u_0, (\phi_{-t}^Y)^* v_0 \big\rangle_{X(t),  X^*(t)}\]
    is continuous; 
    \item[(iii)] there exists $C>0$ such that, for almost all $t\in [0,T]$ and all $u_0\in X_0$, $v_0\in Y_0^*$, 
    \begin{equation*}
    \left|\frac{\partial}{\partial t}\big\langle \phi_t^X u_0, (\phi_{-t}^Y)^* v_0 \big\rangle_{X(t),  X^*(t)}\right| \leq C \, \|u_0\|_{X_0} \, \|v_0\|_{Y_0^*}.
    \end{equation*}
\end{itemize}
\end{ass}
Here, we have used  the fact that $Y^*(t)\cts X^*(t)$ continuously. It is convenient to define the bilinear form $\lambda(t;\cdot,\cdot)\colon X(t) \times Y^*(t) \to \mathbb{R}$ by
\begin{equation}
\lambda(t;u,v) := \frac{\partial}{\partial t}\big\langle \phi_t^X u_0, (\phi_{-t}^Y)^* v_0 \big\rangle_{X(t),  X^*(t)}\bigg\rvert_{(u_0,v_0) = (\phi_{-t}^X u, (\phi_t^Y)^* v)}.\label{eq:definitionOfLambdaNew}    
\end{equation}
This leads us to the following generalization of the weak time derivative for functions that take values in evolving Banach spaces. 
\begin{definition}[Weak time derivative]
We say $u\in L^1_X$ is \textit{weakly differentiable} with \textit{weak time derivative} $v\in L^1_Y$ if
\begin{align}\label{eq:genweakderiv}
\int_0^T  \langle u(t), \dot \eta(t) \rangle_{X(t),  X^*(t)}  &= -\int_0^T  \langle v(t), \eta(t) \rangle_{Y(t), Y^*(t)}  - \int_0^T \lambda(t; u(t), \eta(t)) \quad \forall \eta \in \mathcal{D}_{Y^*}.
\end{align}
\end{definition}In \S \ref{sec:L2pivotspace}, we will see that this definition recovers the well-established definition of the weak material derivative in the Gelfand triple setting where the pivot space is an $L^2$-type space on an evolving domain or surface. 

We note that this generalises to the fully Banach space case the definition in the work \cite{AlpEllSti15a} co-authored by the first and final authors where all spaces were assumed to be Hilbert spaces in a Gelfand triple setting.
\begin{remark}\label{rem:afterWeakDerDefn}
\begin{enumerate}[label=({\roman*})]
\item The first two parts of Assumption \ref{ass:generalcaseChanges} imply that $\lambda$ is a Carath\'eodory function, thus for $u\in L^1_X$ and $v\in L^1_{Y^*}$, the superposition map
    $t\mapsto \lambda (t; u(t), v(t)) \text{ is measurable.}$ 
       \item The expression for $\lambda$ suggests that our definition could lead to problems in the case $Y(t):=X(t)$ with the same maps $\phi_t^Y \equiv \phi_t^X$, in which case $\lambda \equiv 0$ and the extra term in the definition of a weak time derivative would vanish. But this is indeed the case for smooth functions $u\in C^1_X$. To wit, omitting the exponent in $\phi_t = \phi^X_t$, we have, for any $\eta\in\mathcal{D}_{X^*}(0,T)$,
        \begin{align*}
            \int_0^T \langle \dot u(t), \eta(t) \rangle_{X(t),  X^*(t)} &= \int_0^T \langle \phi_t (\phi_{-t} u(t))', \eta(t)\rangle_{X(t),  X^*(t)} = -\int_0^T \langle u(t), \dot \eta(t) \rangle_{X(t),  X^*(t)}.
        \end{align*}
    Hence, our setting includes the case $Y(t)\equiv X(t)$ and the calculation above shows that $u\in L^p_X$ is weakly differentiable (in the sense of \eqref{eq:genweakderiv}) if and only if $\phi_{-t}^X u$ is weakly differentiable in the classical sense, and it holds that $\dot u = \phi_t^X (\phi_{-t}^X u)'$.
    \item  Note that the above is different to the case where there is a Hilbert triple framework in place and the derivative has sufficient smoothness to lie in $L^q_X$: in such a case, we would still get a non-zero $\lambda$ term! That is, 
    \[\{u \in L^p_X : \dot u \in L^q_X\}\] and
    \[(X(t), H(t), X^*(t)) \text{ is a Gelfand triple; } \quad \{u \in L^p_X : \dot u \in L^q_{X^*}\cap L^q_X\}\]
    are fundamentally different since the derivative $\dot{(\cdot)}$ in each set is a different operator; in particular, the second is defined through the pivot space. One should take care to not confuse the two.
    \end{enumerate}
\end{remark}
By a simple application of Lemma \ref{lem:evolvingfundamental}, we can prove the next result.

\begin{prop}[Uniqueness of weak derivatives]\label{prop:genuniqueness}
Suppose $u\in L^1_X$ has weak time derivatives $v_1,v_2\in L^1_Y$. Then $v_1\equiv v_2$. 
\end{prop}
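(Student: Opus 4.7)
The plan is very direct: exploit linearity of \eqref{eq:genweakderiv} in the derivative slot and appeal to the fundamental lemma of the calculus of variations for evolving spaces (Lemma \ref{lem:evolvingfundamental}).

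First I would write down the defining identity \eqref{eq:genweakderiv} for $u$ with derivative $v_1$, and then again for $u$ with derivative $v_2$, both tested against an arbitrary $\eta \in \mathcal{D}_{Y^*}$. Subtracting the two identities, the terms involving $u$ on the left-hand side cancel exactly, and the $\lambda(t;u(t),\eta(t))$ terms on the right-hand side also cancel exactly, since $\lambda$ depends only on $u$ and $\eta$ (not on the derivative). What remains is
\begin{equation*}
\int_0^T \langle v_1(t) - v_2(t), \eta(t)\rangle_{Y(t),Y^*(t)} = 0 \qquad \forall\, \eta \in \mathcal{D}_{Y^*}.
\end{equation*}

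Now I would apply Lemma \ref{lem:evolvingfundamental} with the roles of $X$ and $Y$ swapped, i.e.\ to the function $v_1 - v_2 \in L^1_Y$ tested against elements of $\mathcal{D}_{Y^*}$. That lemma was stated for the pair $(X,X^*)$, but its proof rests only on pulling back via $\phi_{-t}^Y$ and $(\phi_t^Y)^*$ to reduce to the standard fundamental lemma on $(0,T)$ with values in the fixed space $Y_0$; the same argument applies verbatim to the pair $(Y,Y^*)$ under Assumption \ref{ass:sect4_assumption}. The conclusion is $v_1 - v_2 \equiv 0$ in $L^1_Y$, as desired.

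There is essentially no obstacle here: the only point to be slightly careful about is that the test-function class $\mathcal{D}_{Y^*}$ used in the definition \eqref{eq:genweakderiv} is exactly the class that appears in the hypothesis of (the $Y$-version of) Lemma \ref{lem:evolvingfundamental}, so no density or approximation argument is needed. The whole proof is two or three lines.
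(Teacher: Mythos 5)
Your proof is correct and is essentially the same argument the paper intends: the paper explicitly says the proposition follows ``by a simple application of Lemma \ref{lem:evolvingfundamental},'' which is exactly the subtract-and-apply-the-fundamental-lemma route you describe, including the (minor but worth noting) point that the lemma, stated for the pair $(X, X^*)$, applies verbatim to $(Y, Y^*)$.
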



\begin{prop}[Strong derivatives are also weak derivatives]\label{prop:genstrongisweak}
Let $u\in C^1_X$ and $\dot u\in C^0_X$ be its strong time derivative. Then $u$ is also weakly differentiable with weak time derivative $\dot u$.
\end{prop}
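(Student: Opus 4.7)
The plan is to pick an arbitrary test $\eta \in \mathcal{D}_{Y^*}$ and compute the derivative of the scalar function
\[
F(t) := \langle u(t), \eta(t)\rangle_{X(t),X^*(t)}
\]
using a chain rule, then integrate over $[0,T]$ and exploit that $\eta$ is compactly supported so $F(0)=F(T)=0$.

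First I would introduce the pulled-back trajectories $u_0(t) := \phi_{-t}^X u(t) \in X_0$ and $v_0(t) := (\phi_t^Y)^* \eta(t) \in Y_0^*$; by definition of $C^1_X$ and $\mathcal{D}_{Y^*}$ these are $C^1([0,T];X_0)$ and $\mathcal{D}((0,T);Y_0^*)$ respectively, with classical derivatives whose pushforwards recover $\dot u$ and $\dot \eta$, i.e.\ $\dot u(t) = \phi_t^X u_0'(t)$ and $\dot \eta(t) = (\phi_{-t}^Y)^* v_0'(t)$ (using that $(\phi_{-t}^Y)^*(\phi_t^Y)^* = \mathrm{Id}_{Y^*(t)}$ to check the second identity). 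Then I write $F(t) = G(t, u_0(t), v_0(t))$ where
\[
G(t,a,b) := \langle \phi_t^X a, (\phi_{-t}^Y)^* b\rangle_{X(t),X^*(t)}.
\]

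The second step is to apply the chain rule. For fixed $(a,b)$ the map $t \mapsto G(t,a,b)$ is $C^1$ by Assumption~\ref{ass:generalcaseChanges}(i), with $\partial_t G(t,a,b)\rvert_{a=u_0(t),b=v_0(t)} = \lambda(t;u(t),\eta(t))$ by the very definition \eqref{eq:definitionOfLambdaNew}; moreover the joint continuity of this partial derivative in $(a,b)$ (Assumption~\ref{ass:generalcaseChanges}(ii)) together with the bilinear (hence $C^\infty$) dependence of $G$ on $(a,b)$ is enough to legitimise differentiating the composition. The partial derivatives in $a$ and $b$ produce, after substituting $a=u_0(t)$, $b=v_0(t)$ and contracting with $u_0'(t)$, $v_0'(t)$, the terms $\langle \phi_t^X u_0'(t), \eta(t)\rangle_{X(t),X^*(t)} = \langle \dot u(t), \eta(t)\rangle_{X(t),X^*(t)}$ and $\langle u(t), (\phi_{-t}^Y)^* v_0'(t)\rangle_{X(t),X^*(t)} = \langle u(t), \dot\eta(t)\rangle_{X(t),X^*(t)}$ respectively. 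Finally, since $\dot u(t) \in X(t)\hookrightarrow Y(t)$ and $\eta(t) \in Y^*(t)\hookrightarrow X^*(t)$, the first of these pairings coincides with $\langle \dot u(t), \eta(t)\rangle_{Y(t),Y^*(t)}$ by compatibility of the dualities under the embedding. Putting these together yields
\[
F'(t) = \lambda(t;u(t),\eta(t)) + \langle \dot u(t), \eta(t)\rangle_{Y(t),Y^*(t)} + \langle u(t), \dot\eta(t)\rangle_{X(t),X^*(t)}.
\]

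The last step is integration: since $\eta$ has compact support in $(0,T)$ we have $F(0)=F(T)=0$, so integrating the identity above over $[0,T]$ and rearranging gives exactly the weak derivative identity \eqref{eq:genweakderiv} with $v = \dot u$. Because $\eta \in \mathcal{D}_{Y^*}$ was arbitrary this concludes the proof.

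The main obstacle I anticipate is the chain rule step: one has to justify rigorously that $G$ is jointly differentiable enough in $(t,a,b)$ for the composition with $(u_0(t),v_0(t))$ to be differentiable at almost every $t$, and that the substitution $\partial_t G(t,u_0(t),v_0(t)) = \lambda(t;u(t),\eta(t))$ is legitimate; for this, the joint continuity of $\partial_t G$ in $(a,b)$ from Assumption~\ref{ass:generalcaseChanges}(ii) is the crucial ingredient, paired with the bilinearity of $G(t,\cdot,\cdot)$. The remaining care is bookkeeping the embeddings $X(t)\hookrightarrow Y(t)$ and $Y^*(t)\hookrightarrow X^*(t)$ so that the duality pairings in the final identity are placed in the function spaces dictated by \eqref{eq:genweakderiv}.
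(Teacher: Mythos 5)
Your strategy is the one the paper uses: differentiate the pairing $t\mapsto\langle u(t),\eta(t)\rangle_{X(t),X^*(t)}$ via a product/chain rule for the pulled-back bilinear form $\pi$, integrate, and exploit the compact support of $\eta$. The paper isolates the differentiation step as Lemma~\ref{lem:generalprop}, which it proves by a direct difference-quotient argument (splitting into pieces I, II, III and invoking dominated convergence and Lebesgue differentiation); you instead appeal to the classical chain rule for the map $G(t,a,b)=\pi(t;a,b)$ evaluated along $(u_0(t),v_0(t))$. That shortcut is essentially fine, but the justification you give is incomplete: you cite only Assumption~\ref{ass:generalcaseChanges}(ii), i.e.\ continuity of $(a,b)\mapsto\partial_t G(t,a,b)$ for each \emph{fixed} $t$, together with bilinearity. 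The chain rule needs joint continuity of $\partial_t G$ in $(t,a,b)$, and separate continuity in $t$ and in $(a,b)$ does not give that on its own. What upgrades it is Assumption~\ref{ass:generalcaseChanges}(iii): the uniform bound $|\hat\lambda(t;a,b)|\le C\|a\|_{X_0}\|b\|_{Y_0^*}$ together with bilinearity of $\hat\lambda$ gives
\begin{align*}
|\hat\lambda(t;a,b)-\hat\lambda(t;a',b')|\le C\bigl(\|a-a'\|_{X_0}\|b\|_{Y_0^*}+\|a'\|_{X_0}\|b-b'\|_{Y_0^*}\bigr),
\end{align*}
i.e.\ equicontinuity in $(a,b)$ uniformly in $t$, which combined with continuity in $t$ from Assumption~\ref{ass:generalcaseChanges}(i) yields joint continuity, and likewise $\partial_aG,\partial_bG$ are jointly continuous. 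Once that is noted, $G$ is jointly $C^1$ and your chain-rule step and the subsequent integration by parts (using $F(0)=F(T)=0$) are correct, and the embedding bookkeeping you describe at the end is exactly what the paper does in its Proposition~\ref{prop:genstrongisweak} computation. So: same approach, with a slightly different mechanism for the key product formula, and the one thing missing from your write-up is the explicit appeal to Assumption~\ref{ass:generalcaseChanges}(iii) in justifying the chain rule.
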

We provide the proof later on page~\pageref{proof:siw} since we will need an additional result to prove it.

\begin{remark}
Proposition \ref{prop:genstrongisweak} shows that our notion of a weak derivative is indeed a generalisation of the strong derivative \eqref{eq:defnStrongMaterialDerivative}. It would perhaps seem more natural to define the weak derivative by pulling back to the reference domain with the maps $\phi_{-t}^X$, differentiating in the usual (weak) sense, and pushing forward with $\phi_t^Y$. Even though this is the case when $Y(t)\equiv X(t)$ (as per Remark \ref{rem:afterWeakDerDefn}), this approach does not lead to the same definition as above when the spaces do not coincide. On this topic, note further that:
\begin{itemize}
    \item[(i)] If $u\in L^p_X$ is weakly differentiable in the sense we defined above, it is not necessarily the case that $\phi_{-t}^X u$ has a weak time derivative (in the usual sense). Conditions under which this is true will be explored in \S \ref{sec:criteriaESE}.
    \item[(ii)] Even if $u\in L^p_X$ is such that $\phi_{-t}^X u$ is weakly differentiable, then a simple calculation shows that the function $\phi_t^Y (\phi_{-t}^X u)'$ does not satisfy an expression of the form \eqref{eq:genweakderiv} unless $Y(t) \equiv X(t)$. Indeed, it is easy to check that
    \begin{align*}
        \int_0^T \left\langle \phi_t^Y (\phi_{-t}^X u)', \eta\right\rangle_{Y(t), \, Y^*(t)} 
        &= -\int_0^T \left\langle \phi_t^Y \phi_{-t}^X u, \dot \eta\right\rangle_{Y(t), \, Y^*(t)} \qquad \forall \eta\in\mathcal{D}_{Y^*}.
    \end{align*}
\end{itemize}
\end{remark}
\subsection{Transport formula for smooth functions and further remarks}
Having defined a notion of weak time derivative, we now demonstrate that a transport formula of the form \eqref{eq:somethingToShow} holds for sufficiently smooth functions. 
\begin{lem}\label{lem:generalprop}
Let Assumption \ref{ass:generalcaseChanges} hold. Given $\sigma_1\in C^1_X$, $\sigma_2\in C^1_{Y^*}$, the map $t\mapsto \langle\sigma_1(t), \sigma_2(t)\rangle_{X(t),  X^*(t)}$ is absolutely continuous and for almost all $t\in [0,T]$,
        \begin{align*}
        \dfrac{d}{dt} \langle\sigma_1(t), \sigma_2(t)\rangle_{X(t),  X^*(t)} = \langle \dot \sigma_1(t), \sigma_2(t)\rangle_{X(t),  X^*(t)} + \langle\sigma_1(t), \dot \sigma_2(t)\rangle_{X(t),  X^*(t)} + \lambda(t; \sigma_1(t), \sigma_2(t)).
        \end{align*}
\end{lem}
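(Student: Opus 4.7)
The plan is to pull everything back to the reference spaces and reduce the claim to a chain rule computation on a scalar bilinear object governed by Assumption \ref{ass:generalcaseChanges}. Concretely, define
\[
u_0(t) := \phi_{-t}^X \sigma_1(t) \in C^1([0,T]; X_0), \qquad v_0(t) := (\phi_t^Y)^* \sigma_2(t) \in C^1([0,T]; Y_0^*),
\]
the second using that $(\phi_{-t}^Y)^*$ is invertible with inverse $(\phi_t^Y)^*$ (adjoint of the identity $\phi_{-t}^Y \phi_t^Y = \mathrm{id}$). Then introduce the auxiliary scalar function
\[
F(t,a,b) := \langle \phi_t^X a,\, (\phi_{-t}^Y)^* b\rangle_{X(t),X^*(t)}, \qquad (a,b)\in X_0 \times Y_0^*,
\]
so that $g(t) := \langle \sigma_1(t), \sigma_2(t)\rangle_{X(t),X^*(t)} = F(t, u_0(t), v_0(t))$. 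Note that for each fixed $t$, the map $(a,b) \mapsto F(t,a,b)$ is bilinear (linearity of $\phi_t^X, (\phi_{-t}^Y)^*$ and of the duality pairing); and by Assumption \ref{ass:generalcaseChanges}(i), for each fixed $(a,b)$, $F(\cdot,a,b)$ is $C^1$ in time with $\partial_t F(t,a,b) = \lambda(t; \phi_t^X a, (\phi_{-t}^Y)^*b)$.

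Next I would differentiate $g$ by hand using the standard three-term splitting of the difference quotient
\begin{align*}
g(t+h) - g(t)
&= \bigl[F(t+h, u_0(t+h), v_0(t+h)) - F(t, u_0(t+h), v_0(t+h))\bigr] \\
&\quad + F(t, u_0(t+h)-u_0(t), v_0(t+h)) + F(t, u_0(t), v_0(t+h) - v_0(t)),
\end{align*}
where in the last two brackets I have used bilinearity in $(a,b)$. Dividing by $h$ and passing to the limit: the first bracket tends to $\partial_t F(t, u_0(t), v_0(t))$ using Assumption \ref{ass:generalcaseChanges}(i) together with continuity of $(a,b) \mapsto \partial_t F(t,\cdot,\cdot)$ from (ii) (applied to the continuous perturbations $u_0(t+h) \to u_0(t)$ and $v_0(t+h) \to v_0(t)$); the second and third brackets tend respectively to $F(t, u_0'(t), v_0(t))$ and $F(t, u_0(t), v_0'(t))$ using continuity of $F(t,\cdot,\cdot)$ (which is a bounded bilinear form, a fact extracted from the bound in (iii) together with the bilinearity just observed, or directly from continuity of $\phi_t^X$ and $(\phi_{-t}^Y)^*$). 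Thus
\[
g'(t) = \partial_t F(t, u_0(t), v_0(t)) + F(t, u_0'(t), v_0(t)) + F(t, u_0(t), v_0'(t)).
\]

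It then remains to match each of these three terms with the claimed transport formula. By the very definition \eqref{eq:definitionOfLambdaNew}, the first summand equals $\lambda(t;\sigma_1(t), \sigma_2(t))$. For the second, the definition \eqref{eq:defnStrongMaterialDerivative} of the strong derivative gives $\dot \sigma_1(t) = \phi_t^X u_0'(t)$, so $F(t, u_0'(t), v_0(t)) = \langle \dot\sigma_1(t), \sigma_2(t)\rangle_{X(t), X^*(t)}$; applying the same identity to the family $(Y^*(t), (\phi_{-t}^Y)^*)$ yields $\dot \sigma_2(t) = (\phi_{-t}^Y)^* v_0'(t)$ and hence $F(t, u_0(t), v_0'(t)) = \langle \sigma_1(t), \dot \sigma_2(t)\rangle_{X(t), X^*(t)}$. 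This produces the displayed formula pointwise.

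Finally, for absolute continuity I would observe that each of the three summands in $g'$ is continuous in $t$: $\lambda(t;\sigma_1(t),\sigma_2(t))$ by the two-term $\epsilon/2$ argument using (i) in $t$ (with $u_0(t_0), v_0(t_0)$ frozen) and the bilinear bound from (iii) to absorb the perturbation in $(u_0, v_0)$; and the other two terms by continuity of $\sigma_i, \dot\sigma_i$ in their evolving spaces. Thus $g \in C^1([0,T])$, which is in particular absolutely continuous. The main subtlety I anticipate is keeping the bookkeeping straight when exchanging the order of the dual maps and the two different families of pushforwards $\phi^X$ and $\phi^Y$ (which, by the remark following Assumption \ref{ass:sect4_assumption}, need not agree on $X_0$); this is where the bilinear reformulation via $F$ pays off, since it isolates the only genuinely non-trivial infinitesimal contribution precisely in the bilinear form $\lambda$.
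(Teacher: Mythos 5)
Your proposal is essentially the paper's proof: your auxiliary $F$ is the paper's $\pi$, your $u_0, v_0$ are the paper's $\hat\sigma_1, \hat\sigma_2$, and the three-term difference-quotient splitting followed by matching each summand through the pushforward definitions of $\dot\sigma_1, \dot\sigma_2$ and of $\lambda$ via \eqref{eq:definitionOfLambdaNew} reproduces the argument. One small attribution slip worth flagging: the convergence of the first bracket (where the $(a,b)$-arguments $u_0(t+h), v_0(t+h)$ are themselves moving) genuinely relies on the \emph{uniform} bilinear bound (iii), not just on (ii)'s pointwise continuity at the fixed time $t$ — the paper deploys (iii) through the dominated-convergence step for its terms $\mathrm{I}$ and $\mathrm{II}$, and the mean-value-theorem variant you implicitly invoke needs (iii) to transport that continuity from the intermediate time $\xi_h\in(t,t+h)$ back to $t$.
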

For the proof, it becomes convenient to introduce the following notation and definitions.
\begin{definition}\label{defn:generalOperators}
For $t\in [0,T]$, we define the following objects:
\begin{itemize}
\item[(i)] the evolution of the duality pairing,
\begin{align*}
&\pi(t; \cdot, \cdot)\colon X_0 \times Y_0^* \to \R, \quad \pi(t;u,v) := \big\langle \phi_t^X u, (\phi_{-t}^Y)^* v \big\rangle_{Y(t),  Y^*(t)} = \big\langle \phi_t^X u, (\phi_{-t}^Y)^* v \big\rangle_{X(t),  X^*(t)};\\
&\hat{\lambda}(t;\cdot, \cdot)\colon X_0 \times Y_0^* \to \R, \quad \hat{\lambda}(t; u, v) := \frac{\partial}{\partial t} \, \pi(t; u, v).
\end{align*}
\item[(ii)] the map $\Pi_t\colon X_0\to Y_0$ defined by 
\[\Pi_t u := \phi_{-t}^Y \, \phi_t^X u,\]
which satisfies
\begin{align*}
\big\langle \Pi_t u, v\big\rangle_{Y_0,  Y_0^*} = \pi(t; u, v).
\end{align*}
\item[(iii)] the map $\hat\Lambda(t)\colon X_0 \to Y_0^{**}$ defined by 
   \begin{align*}
   \label{eq:defbiglambda}
            \qquad \langle \hat\Lambda(t) u_0, v_0\rangle_{Y_0^{**}, Y_0^*} := \hat\lambda(t; u_0, v_0). 
    \end{align*}

\end{itemize}
\end{definition}
The fact that $\pi(t; \cdot, \cdot)$ is defined over $X_0\times Y_0^*$ is motivated by the discussion preceding the definition of the weak time derivative above, allowing for the formulation in \eqref{eq:genweakderiv} with test functions in $\mathcal D_{Y^*}$. We see that $\lambda$, defined in \eqref{eq:definitionOfLambdaNew}, is the pushforward of the bilinear form $\hat{\lambda}$:
\begin{align*}
&\lambda(t; u,v) = \hat{\lambda}(t; \phi_{-t}^{X} u, (\phi_{t}^Y)^* v).
\end{align*}
For convenience, let us write Assumption \ref{ass:generalcaseChanges} in terms of the notation of $\pi$ and $\hat \lambda$.
\begin{remark}\label{rem:generalcase}
Assumption \ref{ass:generalcaseChanges} is equivalent to the following:
\begin{itemize}
    \item[(i)] the map $t\mapsto \pi(t; u, v)$ is continuously differentiable for each fixed $u\in X_0$, $v\in Y_0^*$ with derivative
    \begin{align*}
        \hat{\lambda}(t;\cdot, \cdot)\colon X_0 \times Y_0^* \to \R, \quad \hat{\lambda}(t; u, v) := \frac{\partial}{\partial t} \, \pi(t; u, v);
    \end{align*}
    \item[(ii)] for all $t\in [0,T]$, the map $(u, v)\mapsto \hat\lambda(t; u, v)$ is continuous; 
    \item[(iii)] there exists $C>0$ such that, for almost all $t\in [0,T]$ and all $u\in X_0$, $v\in Y_0^*$, 
    \begin{equation*}
    |\hat{\lambda}(t;u, v)| \leq C \, \|u\|_{X_0} \, \|v\|_{Y_0^*}.
    \end{equation*}
\end{itemize}
\end{remark}
With this, we obtain that $\hat\Lambda$ has a dual operator \[\hat\Lambda(t)^*\colon Y_0^{***}\to X_0^*.\]
It is worthwhile noting that for $u \in L^p(0,T;X_0)$, the map $t \mapsto \Pi_tu(t)$ is measurable from $(0,T) \to Y_0$ since $\phi_{(\cdot)}^X u(\cdot) \in L^p_X \subset L^p_Y$ and by definition of compatibility and \eqref{eq:equivalenceBetweenLpXAndStdBochner}, $\phi_{-(\cdot)}^Y\colon L^p_Y \to L^p(0,T;Y_0)$, ensuring measurability of the composition map.
\begin{remark}[The Gelfand triple case]\label{rem:observationsHatBetc}
Some observations regarding the definition above are timely.
\begin{itemize}
    \item[(i)] Consider $Y(t):=X^*(t)$ with maps $\phi_t^Y = (\phi_{-t}^X)^*$, and suppose that there exists a family of Hilbert spaces $H(t)$ such that $X(t)\ctsDense H(t)$. We suppose $H(t)$ evolves with maps $\phi_t^H$ satisfying $\phi_t^H|_{X_0}=\phi_t^X$ and that we have a Gelfand triple structure $X(t)\hookrightarrow H(t)\hookrightarrow X^*(t)$. In this case, the definition of the operator $\pi$ above becomes, for $u\in X_0$, $v\in X_0$,
        \begin{align*}
            \pi(t; u, v) = \langle \phi_t^X u, \phi_{t}^X v\rangle_{X(t),  X^*(t)} = \langle \phi_t^H u, \phi_t^H v\rangle_{X(t),  X^*(t)} = (\phi_t^H u, \phi_t^H v)_{H(t)},
        \end{align*}
    and this definition can be uniquely extended to $H_0\times H_0$ by density of $X_0$ in $H_0$. This also shows that the map $\Pi_t$ satisfies
    \begin{align*}
        \Pi_t\colon X_0 \to H_0 \subset X_0^*, \quad \Pi_t u = (\phi_t^H)^A \phi_t^X u,
    \end{align*}
    where $(\cdot)^A$ stands for the Hilbert adjoint. 
    We can extend the latter map to $H_0$ as the operator (still labelled $\Pi_t$)
    \begin{align*}
        \Pi_t\colon H_0 \to H_0 \subset X_0^*, \quad \Pi_t u = (\phi_t^H)^A 
        \phi_t^H u.
    \end{align*}
    In particular, when $X(t)$ is also a Hilbert space, we recover\footnote{In \cite{AlpEllSti15a}, the notations $T_t$ and $\hat b$ were used in place of $\Pi_t$ and $\pi$ respectively.} the definitions in \cite{AlpEllSti15a}.
    \item[(ii)] In the setting above, observe that the definition of the operator $\pi$, and consequently of $\Pi_t$ and $\hat \lambda$, can be expressed involving the flows and inner product solely of the intermediate Hilbert space $H(t)$, and as such all of these are independent of the base space $X(t)$ that is chosen.
\end{itemize}
\end{remark}



\begin{proof}[Proof of Lemma \ref{lem:generalprop}]
Let us first show that given $\hat{\sigma}_1\in C^1([0,T]; X_0)$ and $\hat{\sigma_2}\in C^1([0,T]; Y_0^*)$, the map $t\mapsto \pi(t; \hat{\sigma}_1(t), \hat{\sigma}_2(t))$ is in $C^1([0,T])$ and that for all $t\in [0,T]$,
        \begin{align}\label{eq:derivb}
        \dfrac{d}{dt} \pi(t; \hat{\sigma}_1(t), \hat{\sigma}_2(t)) = \pi(t; \hat{\sigma}_1'(t), \hat{\sigma}_2(t)) + \pi(t; \hat{\sigma}_1(t), \hat{\sigma}_2'(t)) + \hat{\lambda}(t; \hat{\sigma}_1(t), \hat{\sigma}_2(t)).
        \end{align}
To see this, start by considering for $h>0$  the difference quotient 
\begin{align*}
    \delta_h \pi (t; \hat \sigma_1(t), \hat\sigma_2(t))
    &= \dfrac{\pi(t + h; \hat \sigma_1(t+h), \hat\sigma_2(t+h)) - \pi (t; \hat \sigma_1(t+h), \hat\sigma_2(t+h))}{h}\\
    &\quad + \pi\left(t; \delta_h \hat\sigma_1(t), \hat\sigma_2(t+h)\right) + \pi\left(t; \sigma_1(t), \delta_h\hat\sigma_2(t)\right).
\end{align*}
The continuity of $\pi$ with respect to the second and third variables and the regularity of $\hat\sigma_1$, $\hat\sigma_2$ imply that, for all $t\in [0,T]$, the sum of the last two terms on the right-hand side above converges, as $h\to 0$, to 
\begin{align*}
 \pi(t; \hat\sigma_1'(t), \hat\sigma_2(t)) + \pi(t; \hat\sigma_1(t), \hat\sigma_2'(t)).
\end{align*}
We now use Assumption \ref{ass:generalcaseChanges} (or equivalently, the conditions in Remark \ref{rem:generalcase}) to
establish that for almost all $t\in [0,T]$,
\begin{align*}
    \dfrac{\pi(t + h; \hat \sigma_1(t+h), \hat\sigma_2(t+h)) - \pi (t; \hat \sigma_1(t+h), \hat\sigma_2(t+h))}{h} \to \dfrac{\partial \pi}{\partial t} (t; \hat\sigma_1(t), \hat\sigma_2(t)) = \hat\lambda(t; \sigma_1(t), \sigma_2(t)).
\end{align*}
Indeed, let us fix $t\in [0,T)$ and $h>0$ sufficiently small so that $t+h \leq T$. We have, using the absolute continuity of $s \mapsto \pi(s;u,v)$ for fixed $u$ and $v$,
\begin{align*}
    &\dfrac{\pi(t + h; \hat \sigma_1(t+h), \hat\sigma_2(t+h)) - \pi (t; \hat \sigma_1(t+h), \hat\sigma_2(t+h))}{h} - \hat\lambda(t; \sigma_1(t), \sigma_2(t)) = \\
    &\hskip 1cm = \dfrac{1}{h} \int_t^{t+h} \hat\lambda(s; \hat\sigma_1(t+h), \hat\sigma_2(t+h)) \;\mathrm{d}s - \hat\lambda(t; \hat\sigma_1(t), \hat\sigma_2(t)) \\
    &\hskip 1cm = \mathrm{I} + \mathrm{II} + \mathrm{III},
\end{align*}
where we have set 
\begin{align*}
    \mathrm{I} &:= \dfrac{1}{h} \int_t^{t+h} \hat\lambda(s; \hat\sigma_1(t+h), \hat\sigma_2(t+h)) - \hat\lambda(s; \hat\sigma_1(t), \hat\sigma_2(t+h) )\;\mathrm{d}s = \int_0^T \hat\lambda \left(s; \delta_h \hat\sigma_1(t), \hat\sigma_2(t+h)\right)\chi_{[t,t+h]}(s)\;\mathrm{d}s,
\end{align*}
\begin{align*}
    \mathrm{II} :=  \dfrac{1}{h} \int_t^{t+h} \hat\lambda(s; \hat\sigma_1(t), \hat\sigma_2(t+h)) - \hat\lambda(s; \hat\sigma_1(t), \hat\sigma_2(t) )\;\mathrm{d}s = \int_0^T\hat\lambda \left(s; \hat\sigma_1(t),\delta_h\hat\sigma_2(t)\right)\chi_{[t,t+h]}(s)\;\mathrm{d}s,
\end{align*}
and
\begin{align*}
    \mathrm{III}:= \dfrac{1}{h} \int_t^{t+h} \hat\lambda(s; \hat\sigma_1(t), \hat\sigma_2(t)) \;\mathrm{d}s - \hat\lambda(t; \hat\sigma_1(t), \hat\sigma_2(t)).
\end{align*}
Now observe that, for sufficiently small $h$, the integrands in $\mathrm{I}$ and $\mathrm{II}$ are uniformly bounded and converge pointwise to $0$, and so the Dominated Convergence Theorem implies that $\mathrm{I}, \mathrm{II} \to 0$ as $h\to 0$. Since for fixed $u, v$ the map $s\mapsto \hat\lambda(s; u,v)$ is integrable, it follows from Lebesgue's Differentiation Theorem that also $\mathrm{III}\to 0$ as $h\to 0$ 
for all $t\in [0,T]$ proving that $t\mapsto \pi(t; \sigma_1(t), \sigma_2(t))$ has a continuous derivative and is thus $C^1([0,T])$. We can reason similarly for $t\in (0,T]$ and $h>0$ with $t-h\ge 0$, and this will show \eqref{eq:derivb}. From here, the claimed statement can be obtained directly by taking $\hat\sigma_1(t):= \phi_{-t}^X \sigma_1(t)$ and $\hat\sigma_2(t) := \big(\phi_t^Y\big)^* \sigma_2(t)$. 
\end{proof}
With this transport formula at hand, we can prove our earlier claim that strong derivatives are also weak derivatives.

\begin{proof}[Proof of Proposition \ref{prop:genstrongisweak}]\label{proof:siw}
We start by observing that $\dot u\in C^0_X\subset C^0_Y\subset L^1_Y$. Given $\eta\in \mathcal{D}_{Y^*}$, we have
\begin{align*}
    \int_0^T \langle \dot u(t), \eta(t)\rangle_{Y(t), Y^*(t)} 
    &= -\int_0^T \pi(t; \phi_{-t}^X u(t), \left((\phi_{t}^Y)^* \eta(t)\right)') - \int_0^T \hat{\lambda}(t; \phi_{-t}^X u(t), (\phi_{t}^Y)^* \eta(t)) \quad \tag{by Lemma \ref{lem:generalprop}} \\
    &= -\int_0^T \langle u(t), \dot\eta(t) \rangle_{X(t),  X^*(t)} - \int_0^T \lambda(t; u(t), \eta(t)),
\end{align*}
which proves the claim. \qedhere
\end{proof}
\subsection{A characterisation of the weak time derivative}
We come now to an alternative characterisation of the weak time derivative related to the derivative of a duality product, which turns out to be useful in various situations (e.g. in the mechanics of applying the Galerkin method for existence of solutions to nonlinear PDEs, see \S \ref{sec:application}), cf. \cite[Lemma 1.1, \S III]{Tem77} for the non-moving case.
First, let us introduce some notation. For a Banach space $Z$, we denote by $\mathcal J_Z\colon Z\to Z^{**}$ the (linear and bounded) canonical injection into the double dual:
        \begin{align*}
            \left\langle \mathcal{J}_Z u, f \right\rangle_{Z^{**}, \, Z^*} := \left\langle f, u \right\rangle_{Z^*, \, Z}, \quad \forall f \in Z^*, \, u\in Z.
        \end{align*}
In part to avoid working with double and triple duals, it sometimes becomes useful to assume that
\begin{align}\label{ass:rangeOfJ}
 \hat\Lambda(t) u \in \mathrm{Range}(\mathcal{J}_{Y_0}), \quad \forall t\in [0,T], \,\, u\in X_0.
\end{align}
\begin{remark}\label{rem:newRemOnAss}
Regarding the assumption \eqref{ass:rangeOfJ}, note that
\begin{itemize}
    \item it is automatically satisfied if $Y_0$ is reflexive;
    \vspace{-0.2cm}
    \item the meaning of the assumption is that 
    \begin{equation}\label{eq:meaningOfJAss}
\forall u \in X_0,\; \exists y \in Y_0 : \langle \mathcal{J}_{Y_0}y, f \rangle_{Y_0^{**}, Y_0^*} = \langle f, y \rangle_{Y_0^*,  Y_0} = \langle \hat \Lambda (t) u, f \rangle_{Y_0^{**}, Y_0^*} \quad \forall f \in Y_0^*;        
    \end{equation}
\vspace{-0.6cm}
\item denoting the map $u \mapsto y$ in \eqref{eq:meaningOfJAss} by $y=Lu$, we can write
\[\langle f, Lu \rangle_{Y_0^*,  Y_0} = \langle \hat \Lambda (t) u, f \rangle_{Y_0^{**}, Y_0^*},\]
which suggests that $\hat \Lambda(t)$ can be identified as a map $\hat \Lambda(t)\colon X_0 \to Y_0$ and this is indeed what we shall do below whenever the assumption is in force. 
\end{itemize}
\end{remark}
\begin{prop}[Characterisation of the weak time derivative]\label{prop:temamType}
Assume \eqref{ass:rangeOfJ}. Let $u \in L^p_X$ and $g \in L^q_Y$. Then $\dot u = g$ if and only if 
\begin{equation}\label{eq:anotherAltForDerivative}
\frac{d}{dt}\langle u(t), (\phi_{-t}^Y)^* v  \rangle_{X(t),X^*(t)} = \langle g(t), (\phi_{-t}^Y)^* v \rangle_{Y(t), Y^*(t)} + \langle \Lambda(t)u(t), (\phi_{-t}^Y)^* v \rangle_{Y(t), Y^*(t)} \quad \forall  v \in Y_0^*.
\end{equation}
\end{prop}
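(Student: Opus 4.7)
My plan is to reduce this proposition to a tensor-product specialisation of the weak-derivative definition \eqref{eq:genweakderiv}. As a preliminary, \eqref{ass:rangeOfJ} together with Remark \ref{rem:newRemOnAss} lets me identify $\hat\Lambda(t)$ as a map $X_0 \to Y_0$, so that
\[\lambda(t; w, z) = \langle \Lambda(t) w, z\rangle_{Y(t),Y^*(t)}, \qquad \Lambda(t) := \phi_t^Y \, \hat\Lambda(t) \, \phi_{-t}^X \colon X(t) \to Y(t).\]
This recasts the $\lambda$-term of \eqref{eq:genweakderiv} in precisely the duality form appearing on the right-hand side of \eqref{eq:anotherAltForDerivative}.

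For the forward direction, I fix $v \in Y_0^*$ and $\psi \in \mathcal{D}(0,T)$ and take as test function $\eta(t) := \psi(t)(\phi_{-t}^Y)^* v$. The pullback $(\phi_t^Y)^*\eta(t) = \psi(t) v$ lies in $\mathcal{D}((0,T); Y_0^*)$, so $\eta \in \mathcal{D}_{Y^*}$ with strong derivative $\dot\eta(t) = \psi'(t)(\phi_{-t}^Y)^* v$. Substituting $\eta$ into \eqref{eq:genweakderiv}, pulling the scalar factor $\psi$ outside each pairing, and invoking the identification of $\lambda$ above yields exactly the scalar distributional identity \eqref{eq:anotherAltForDerivative} evaluated at the fixed $v$. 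Since $v$ is arbitrary, this gives \eqref{eq:anotherAltForDerivative}.

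For the reverse direction, testing \eqref{eq:anotherAltForDerivative} against $\psi \in \mathcal{D}(0,T)$ reverses the above computation, so \eqref{eq:genweakderiv} holds for every test function of the form $\eta(t) = \psi(t)(\phi_{-t}^Y)^* v$, and by linearity for any finite sum of such tensor products. For a general $\eta \in \mathcal{D}_{Y^*}$, I approximate the pullback $\tilde\eta(t) := (\phi_t^Y)^*\eta(t) \in \mathcal{D}((0,T); Y_0^*)$ by finite tensor-product sums $\tilde\eta_n$ with all $t$-derivatives convergent on a common compact support, which is possible by the density of $\mathcal{D}(0,T) \otimes Y_0^*$ in $\mathcal{D}((0,T); Y_0^*)$; see e.g.\ \cite[Thm.~39.2]{MR2961861}. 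Setting $\eta_n(t) := (\phi_{-t}^Y)^* \tilde\eta_n(t) \in \mathcal{D}_{Y^*}$, the identity \eqref{eq:genweakderiv} holds for each $\eta_n$, and passage to the limit uses (i) the uniform-in-$t$ boundedness of the pushforwards from Assumption \ref{ass:def_compatibility}, (ii) the bound $\|\Lambda(t)u(t)\|_{Y(t)} \leq C\|u(t)\|_{X(t)}$ coming from Assumption \ref{ass:generalcaseChanges}(iii) via \eqref{ass:rangeOfJ}, and (iii) the $L^p$/$L^q$ integrability of $u$ and $g$, yielding integrable envelopes so that dominated convergence applies.

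The main obstacle is this last approximation step: the forward direction is merely substitution, but the reverse requires the density of tensor products in $\mathcal{D}((0,T); Y_0^*)$ together with careful bookkeeping of the time-dependent pushforwards to ensure that dominated convergence may be applied. All necessary uniform bounds are readily available from the standing compatibility and $\hat\lambda$-boundedness assumptions, so no new analytic tool is needed beyond this classical density result.
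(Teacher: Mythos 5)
Your proof is correct, and the forward (substitution) direction matches the paper's. The interesting comparison is in the reverse direction.

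The paper's route is shorter: multiplying \eqref{eq:anotherAltForDerivative} by $\psi\in\mathcal{D}(0,T)$, commuting the Bochner integral with the duality pairing, and using that $\langle w,v\rangle_{Y_0,Y_0^*}=0$ for all $v\in Y_0^*$ forces $w=0$ in $Y_0$, it obtains the \emph{vector-valued} identity
\[
\frac{d}{dt}\,\phi_{-t}^Y u(t) = \phi_{-t}^Y\big(g(t)+\Lambda(t)u(t)\big) \quad\text{in } L^1(0,T;Y_0),
\]
and then invokes the standard equivalence between the tensor-test and Bochner-valued-test formulations of the weak derivative to test against arbitrary $\xi\in\mathcal{D}((0,T);Y_0^*)$. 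You instead keep the argument scalar throughout and upgrade from tensor-product test functions to general $\tilde\eta\in\mathcal{D}((0,T);Y_0^*)$ directly via the density of $\mathcal{D}(0,T)\otimes Y_0^*$ and dominated convergence. These are two implementations of the same underlying fact (both amount to the vector-valued fundamental lemma of the calculus of variations, in the spirit of the Temam reference the paper itself cites). What the paper's approach buys is brevity --- the Hahn--Banach step makes the reduction a one-liner and no convergence estimates need to be tracked; what yours buys is that the equivalence between scalar and Bochner-valued weak differentiation is not treated as known but derived on the spot, at the cost of importing the tensor-product density theorem and verifying the uniform bounds needed for the passage to the limit. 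Your identification $\lambda(t;w,z)=\langle\Lambda(t)w,z\rangle_{Y(t),Y^*(t)}$ with $\Lambda(t)=\phi_t^Y\hat\Lambda(t)\phi_{-t}^X$ under \eqref{ass:rangeOfJ} is also correct and makes explicit a notational shorthand that the paper leaves implicit.
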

\begin{proof}
Making the substitution $\eta = (\phi_{-(\cdot)}^Y)^* v$ for arbitrary $v \in Y_0^*$  in \eqref{eq:anotherAltForDerivative}, we find by definition of the  weak time derivative,
\[\int_0^T \psi'(t)\langle u(t), \eta(t) \rangle_{X(t),X^*(t)} = -\int_0^T \psi(t)\left(\langle g(t), \eta(t) \rangle_{Y(t), Y^*(t)} + \langle \Lambda(t)u(t), \eta(t) \rangle_{Y(t), Y^*(t)}\right)\quad \forall \psi \in \mathcal{D}(0,T).\]
Collecting terms, we may write this as
\begin{align*}
0
= \int_0^T \langle \psi'(t)\phi_{-t}^Y u(t) + \psi(t)\phi_{-t}^Y(g(t) + \Lambda(t)u(t)), v \rangle_{Y_0,Y^*_0}.
\end{align*}
Bringing the integral inside the first part of the duality pairing above, we get 
\[\frac{d}{dt}\phi_{-t}^Y u(t) = \phi_{-t}^Y (g(t)+\Lambda(t)u(t)).\]
Now, as $\phi_{-(\cdot)}^Y(g+\Lambda u) \in L^1(0,T;Y_0)$, this is equivalent to
\[\int_0^T \langle \phi_{-t}^Y u(t), \xi'(t) \rangle_{Y_0,Y_0^*} = -\int_0^T \langle \phi_{-t}^Y (g(t)+\Lambda(t)u(t)), \xi(t) \rangle_{Y_0,Y^*_0} \quad \forall \xi \in \mathcal D((0,T);Y_0^*).\]
Setting $\varphi := (\phi_{-(\cdot)}^Y)^*\xi\in \mathcal{D}_{Y^*}$ so that $\dot \varphi = (\phi_{-(\cdot)}^Y)^*\xi'$, we can pushforward the duality products above to obtain
\begin{align*}
\int_0^T \langle  u(t), \dot \varphi(t) \rangle_{Y(t),Y^*(t)} 
&= -\int_0^T \langle g(t), \varphi(t) \rangle_{Y(t),Y^*(t)} - \int_0^T \lambda (t; u(t), \varphi(t)).
\end{align*}
This being valid for every $\varphi \in \mathcal D_{Y^*}$ shows that $\dot u = g$ by definition. The reverse implication follows since every step in the above proof is an equivalence.
\end{proof}
\subsection{Evolving Sobolev--Bochner spaces}

Having defined an appropriate notion of weak time derivative, we consider in this section the definition and properties of evolving Sobolev--Bochner spaces, which are the spaces in which solutions to parabolic PDEs (on evolving spaces) typically lie in. These can be considered to be the time-evolving versions of $\mathcal{W}^{p,q}(X,Y)$ defined as 
\begin{align}
    \mathcal W^{p,q}(X_0,Y_0) = \{u\in L^p(0,T; X_0) \colon u' \in L^q(0,T;Y_0)\}.
\end{align}
To reiterate, we again are enforcing Assumption \ref{ass:sect4_assumption}.

\begin{defn}[The space $\W^{p,q}(X,Y)$]
For $p, q\in [1,\infty]$, define the space
\begin{align*}
\W^{p,q}(X,Y) &:= \left\{ u \in L^p_X \mid \dot u \in L^q_{Y}\right\}\quad\text{with norm}\quad     \|u\|_{\W^{p,q}(X,Y)} := \|u\|_{L^p_X} + \|\dot u\|_{L^q_Y}.
\end{align*}
\end{defn}

\begin{prop}
The space $\W^{p,q}(X, Y)$ is a Banach space.
\end{prop}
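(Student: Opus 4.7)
The plan is the standard one for Sobolev--Bochner-type spaces: verify that the norm is indeed a norm (which is straightforward from the corresponding properties in $L^p_X$ and $L^q_Y$ together with uniqueness of the weak derivative from Proposition \ref{prop:genuniqueness}), and then show completeness by identifying the limit of a Cauchy sequence and checking that the limiting candidate for the derivative really is the weak time derivative in the sense of \eqref{eq:genweakderiv}.

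Concretely, I would start with a Cauchy sequence $\{u_n\}$ in $\W^{p,q}(X,Y)$. By definition of the norm, $\{u_n\}$ is Cauchy in $L^p_X$ and $\{\dot u_n\}$ is Cauchy in $L^q_Y$. Theorem \ref{thm:LpX} (applied to the families $X$ and $Y$, using Assumption \ref{ass:sect4_assumption}) provides $u \in L^p_X$ and $v \in L^q_Y$ with $u_n \to u$ in $L^p_X$ and $\dot u_n \to v$ in $L^q_Y$. The only nontrivial step is to check that $v = \dot u$, i.e., that for every $\eta \in \mathcal D_{Y^*}$,
\begin{equation*}
\int_0^T \langle u(t), \dot\eta(t)\rangle_{X(t),X^*(t)} = -\int_0^T \langle v(t), \eta(t)\rangle_{Y(t),Y^*(t)} - \int_0^T \lambda(t; u(t), \eta(t)).
\end{equation*}

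To establish this, I would fix $\eta \in \mathcal D_{Y^*}$ and write down the identity \eqref{eq:genweakderiv} for each $u_n$. The first two terms pass to the limit trivially: for the left-hand side, $\dot\eta \in L^\infty_{Y^*} \subset L^{p'}_{X^*}$ (since $Y^*(t) \hookrightarrow X^*(t)$ continuously and $\eta$ has compact support), so the duality pairing in Theorem \ref{thm:dualSpaceIdentification} together with $u_n \to u$ in $L^p_X$ gives convergence; analogously for the right-hand side. For the $\lambda$ term, the bound in Assumption \ref{ass:generalcaseChanges}(iii) (equivalently Remark \ref{rem:generalcase}(iii)), pulled back via the compatibility maps, yields
\begin{equation*}
\left|\int_0^T \lambda(t; u_n(t)-u(t), \eta(t))\right| \leq C \int_0^T \|\phi_{-t}^X(u_n(t)-u(t))\|_{X_0}\|(\phi_t^Y)^*\eta(t)\|_{Y_0^*},
\end{equation*}
which by H\"older's inequality and the norm equivalence \eqref{eq:equivalenceBetweenLpXAndStdBochner} is controlled by $C'\|u_n-u\|_{L^p_X}$, and hence tends to zero. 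Passing to the limit in \eqref{eq:genweakderiv} for $u_n$ then yields the same identity for $u$ with $v$ in place of $\dot u$, so by definition $\dot u = v \in L^q_Y$, placing $u \in \W^{p,q}(X,Y)$ with $u_n \to u$ in its norm.

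I do not expect any serious obstacle here: the only subtle point is ensuring the integrability needed to pass to the limit in the $\lambda$ term, which is precisely what condition (iii) of Assumption \ref{ass:generalcaseChanges} is designed to give us, combined with Remark \ref{rem:afterWeakDerDefn}(i) guaranteeing measurability of the integrand.
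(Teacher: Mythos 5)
Your proof is correct and follows essentially the same route as the paper: obtain the limits $u \in L^p_X$ and $v \in L^q_Y$ by completeness of the evolving Bochner spaces, pass to the limit in the defining relation \eqref{eq:genweakderiv} for each $u_n$, and use the boundedness of $\lambda$ from Assumption \ref{ass:generalcaseChanges}(iii) to handle the extra term. The paper states this a bit more tersely (bounding the $\lambda$ term by $C'\norm{\eta}{L^\infty_{Y^*}}\norm{u_n-u}{L^1_X}$ directly and invoking Proposition \ref{prop:genuniqueness} to identify $w=\dot u$), but the argument is the same.
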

\begin{proof}
Let $\{u_n\}$ be a Cauchy sequence in $\W^{p,q}(X, Y).$ It follows that $u_n \to u$ in $L^p_X$ to some $u$ and $\dot u_n \to w$ in $L^q_Y$ to some $w$. We have for all $\eta\in \mathcal{D}_{Y^*}$, 
\begin{align*}
\int_0^T  \langle \dot u_n(t), \eta(t) \rangle_{Y(t), Y^*(t)}   &= - \int_0^T  \langle u_n(t), \dot \eta(t) \rangle_{X(t),  X^*(t)}  - \int_0^T \lambda(t; u_n(t), \eta(t)).
\end{align*}
It is immediate to pass to the limit in the first two terms, and for the last one we observe that, since $\lambda$ is bilinear,
\begin{align*}
    \left|\int_0^T \lambda(t; u_n(t), \eta(t)) - \int_0^T \lambda(t; u(t), \eta(t))\right| 
    &\leq C'\, \|\eta\|_{L^\infty_{Y^*}} \|u_n-u\|_{L^1_X} \to 0.
\end{align*}
We then have
\begin{align*}
\int_0^T  \langle w(t), \eta(t) \rangle_{Y(t), Y^*(t)} &= - \int_0^T  \langle u(t), \dot \eta(t) \rangle_{X(t),  X^*(t)}  - \int_0^T \lambda(t; u(t), \eta(t)),
\end{align*}
which shows, by uniqueness of weak derivatives (Proposition \ref{prop:genuniqueness}), that $w=\dot u$. 
\end{proof}

In Theorem \ref{thm:LpX}, we saw that $\phi_{(\cdot)}$ acts as an isomorphism between the spaces $L^p(0,T;X_0)$ and $L^p_X$ with an equivalence of norms. A natural question to ask is: under which conditions does $\phi_{(\cdot)}$ act as an isomorphism between $\mathcal W(X_0,Y_0)$ and $\W(X,Y)$ with an equivalence of norms? This question will be addressed in a later section. First, let us formalise this idea and give a simple density result under such an equivalence.

\begin{defn}\label{defn:equivalence}
We say there is an \emph{evolving space equivalence} between $\W^{p,q}(X,Y)$ and $\mathcal W^{p,q}(X_0, Y_0)$ if
$$v \in \W^{p,q}(X,Y) ~~\mbox{ if and only if}~~~  \phi^X_{-(\cdot)} v(\cdot) \in \mathcal W^{p,q}(X_0, Y_0),$$ and the following equivalence of norms holds:
\[C_1\norm{\phi^X_{-(\cdot)}v(\cdot)}{\mathcal W^{p,q}(X_0, Y_0)} \leq \norm{v}{\W^{p,q}(X,Y)} \leq C_2\norm{\phi^X_{-(\cdot)}v(\cdot)}{\mathcal W^{p,q}(X_0, Y_0)}.\]
\end{defn}
We may also say that $\W^{p,q}(X,Y)$ has the \emph{evolving space equivalence property} or that $\W^{p,q}(X,Y)$ and $\mathcal{W}^{p,q}(X_0,Y_0)$ are \emph{equivalent} instead of `evolving space equivalence'.

This notion of an evolving space equivalence is important as it ensures that properties of the classical spaces $\mathcal W^{p,q}(X_0, Y_0)$ carry over to the time-dependent $\W^{p,q}(X,Y)$. As mentioned, we investigate when such an equivalence exists in \S \ref{sec:criteriaESE}. For now, we prove the following useful lemma, which contains direct generalisations of classical embedding results.

\begin{lem}\label{lem:density}
    Suppose that there exists an evolving space equivalence between $\mathcal{W}^{p,q}(X_0, Y_0)$ and $\W^{p,q}(X, Y)$. 
    \begin{itemize}
        \item[(i)] The embedding $\W^{p,q}(X, Y)\hookrightarrow C^0_Y$ is continuous.
        \item[(ii)] The space $C^1_{X}$ is dense in $\W^{p,q}(X, Y)$.
    \end{itemize}
\end{lem}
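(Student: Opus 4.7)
The strategy is to pull back, apply the classical Sobolev--Bochner embedding theorem, and push forward. Given $u \in \W^{p,q}(X,Y)$, set $v := \phi^X_{-(\cdot)}u(\cdot)$. By the evolving space equivalence (Definition \ref{defn:equivalence}) we have $v \in \mathcal{W}^{p,q}(X_0, Y_0)$ with $\norm{v}{\mathcal{W}^{p,q}(X_0, Y_0)} \le C_1^{-1}\norm{u}{\W^{p,q}(X,Y)}$. Since the continuous embedding $X(t) \hookrightarrow Y(t)$ for each $t$ together with the compatibility bounds on $\phi^X_t$ and $\phi^Y_{-t}$ yields a continuous embedding $X_0 \hookrightarrow Y_0$, the classical embedding $\mathcal{W}^{p,q}(X_0, Y_0) \hookrightarrow C^0([0,T]; Y_0)$ applies, giving $v \in C^0([0,T]; Y_0)$ with a continuous bound.

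\textbf{Passing from $C^0([0,T];Y_0)$ to $C^0_Y$.} By definition, $u \in C^0_Y$ means $\phi^Y_{-(\cdot)}u(\cdot) \in C^0([0,T]; Y_0)$, and the natural identity is
\[
\phi^Y_{-t}u(t) \;=\; \phi^Y_{-t}\phi^X_t \big(\phi^X_{-t}u(t)\big) \;=\; \Pi_t v(t),
\]
with $\Pi_t\colon X_0 \to Y_0$ as in Definition \ref{defn:generalOperators}. The map $t\mapsto \Pi_t$ is uniformly bounded in $\mathcal{L}(X_0,Y_0)$ thanks to the compatibility bounds on both families, and it is strongly continuous in $t$ (for fixed input): indeed, Assumption \ref{ass:generalcaseChanges}(i) guarantees that $t\mapsto \langle \Pi_t u_0, w_0\rangle_{Y_0,Y_0^*}= \pi(t;u_0,w_0)$ is $C^1$ for each $u_0 \in X_0$, $w_0 \in Y_0^*$, yielding at least weak continuity; combined with the uniform operator bound and a standard $\varepsilon/3$ argument against $v\in C^0([0,T];Y_0)$, this should give continuity of $t\mapsto \Pi_t v(t)$ into $Y_0$, hence $u\in C^0_Y$. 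Chaining the bounds produces the continuous embedding $\W^{p,q}(X,Y) \hookrightarrow C^0_Y$. The main obstacle here is precisely verifying that $\Pi_t$ has enough continuity in $t$ to commute with the continuous trajectory $v(\cdot)$; one may need to upgrade weak continuity to strong continuity using the compactness/uniform boundedness framework given by Assumption \ref{ass:generalcaseChanges}, or alternatively exploit that by the equivalence both points of view of continuity must coincide on the common dense subset of smooth functions.

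\textbf{Plan for (ii).} The idea is identical in spirit: transfer the classical density result through the isomorphism. It is a standard fact that $C^1([0,T]; X_0)$ (indeed $C^\infty([0,T]; X_0)$) is dense in $\mathcal{W}^{p,q}(X_0, Y_0)$ for $p, q \in [1,\infty)$; see, e.g., mollification in the time variable combined with extension by reflection near the endpoints. Given $u \in \W^{p,q}(X,Y)$, set $v = \phi^X_{-(\cdot)}u(\cdot) \in \mathcal{W}^{p,q}(X_0, Y_0)$, pick an approximating sequence $v_n \in C^1([0,T]; X_0)$ with $v_n \to v$ in $\mathcal{W}^{p,q}(X_0, Y_0)$, and define $u_n := \phi^X_{(\cdot)} v_n(\cdot)$. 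By definition of $C^1_X$, $u_n \in C^1_X$, and the evolving space equivalence gives
\[
\norm{u_n - u}{\W^{p,q}(X,Y)} \;\le\; C_2 \, \norm{v_n - v}{\mathcal{W}^{p,q}(X_0, Y_0)} \;\longrightarrow\; 0.
\]
This step is essentially a direct consequence of the equivalence and is much more routine than (i); the only caveat is the integrability exponents, where the statement should be understood with the usual caveat that density of $C^1$ in the Sobolev--Bochner space requires $p,q<\infty$.
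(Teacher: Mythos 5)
Part (ii) of your argument is identical to the paper's proof, so there is nothing to compare: you pull back, use the classical density of $C^1([0,T];X_0)$ in $\mathcal W^{p,q}(X_0,Y_0)$, push the approximating sequence forward, and appeal to the norm equivalence. (The paper does not comment on the $p,q<\infty$ restriction either, so your caveat is a reasonable observation but not a discrepancy.)

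For part (i) you have correctly identified a point the paper passes over in silence. The paper's proof is the chain
\[
u\in\W^{p,q}(X,Y)\iff \phi^X_{-(\cdot)}u\in\mathcal W^{p,q}(X_0,Y_0)\implies \phi^X_{-(\cdot)}u\in C^0([0,T];Y_0)\iff u\in C^0_Y ,
\]
and the final equivalence uses $\phi^X$, whereas by the definition of $C^k_X$-type spaces the statement $u\in C^0_Y$ means $\phi^Y_{-(\cdot)}u\in C^0([0,T];Y_0)$. Since the paper explicitly does \emph{not} assume $\phi^X_t=\phi^Y_t|_{X_0}$, this step genuinely requires an argument; notice that the paper \emph{does} justify the analogous step in the Gelfand-triple version (Theorem \ref{thm:transportGelfandTriple} (i)) precisely by invoking $\phi^H_t|_{X_0}=\phi^X_t$. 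So your instinct that $\Pi_t=\phi^Y_{-t}\phi^X_t$ is what mediates between the two pull-backs is exactly right. Where your plan then becomes shaky is the proposed fix: Assumption \ref{ass:generalcaseChanges}(i) only gives differentiability of $t\mapsto\pi(t;u_0,v_0)=\langle \Pi_t u_0,v_0\rangle_{Y_0,Y_0^*}$, i.e.\ \emph{weak} operator continuity of $t\mapsto\Pi_t$, and your $\varepsilon/3$ splitting $\Pi_tv(t)-\Pi_sv(s)=\Pi_t(v(t)-v(s))+(\Pi_t-\Pi_s)v(s)$ needs the second term to go to zero in the $Y_0$-\emph{norm}, which weak continuity plus a uniform bound does not deliver in a non-reflexive Banach space. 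The cleaner route within the paper's own machinery is Proposition \ref{prop:newCrit1}: under the assumptions that go into establishing the evolving space equivalence in Theorem \ref{thm:equivalence} (in particular \eqref{ass:rangeOfJ}, \eqref{ass:onrangeTt}, \eqref{ass:measurabilityOnTt}), one has $\phi^Y_{-(\cdot)}u=\Pi\big(\phi^X_{-(\cdot)}u\big)\in\mathcal W^{p,\,p\wedge q}(Y_0,Y_0)$, and the classical embedding $\mathcal W^{p,\,p\wedge q}(Y_0,Y_0)\hookrightarrow C^0([0,T];Y_0)$ then gives $u\in C^0_Y$ with the desired continuous bound. This both repairs the gap and avoids any operator-continuity question.

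A minor residual issue (common to your write-up and the paper's): strictly read, the hypothesis ``there exists an evolving space equivalence'' is only a norm equivalence and does not by itself furnish the structural assumptions on $\Pi_t$ used above; in the paper that equivalence is always obtained via Theorem \ref{thm:equivalence}, so those assumptions are present in every use case, but it is worth noting that Lemma \ref{lem:density}(i) as literally stated is leaning on them implicitly.
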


\begin{proof}
The statement (i) is a consequence of the following series of implications:
\begin{align*}
    u\in \W^{p,q}(X,Y) \Longleftrightarrow \phi_{-(\cdot)}^X u(\cdot) \in \mathcal{W}^{p,q}(X_0, Y_0) \Longrightarrow \phi_{-(\cdot)}^X u(\cdot) \in C^0([0,T]; Y_0) \Longleftrightarrow u\in C^0_Y.
\end{align*}
To prove (ii), let $u\in\W^{p,q}(X,Y)$, so that $v(\cdot):=\phi^X_{-(\cdot)}u(\cdot)\in\mathcal W^{p,q}(X_0,Y_0)$. Take a sequence $(v_n)_n\subset C^1([0,T]; X_0)$ such that $v_n\to v$ in $\mathcal W^{p,q}(X_0,Y_0)$ as $n\to\infty$. Defining $u_n(\cdot):=\phi^X_{(\cdot)} v_n(\cdot)\in C^1_{X}$, we have, due to the evolving space equivalence,
\begin{align*}
    &\|u_n-u\|_{\W^{p,q}(X,Y)} \leq C \, \|\phi_{-(\cdot)}^X u_n - \phi_{-(\cdot)}^X u\|_{\mathcal{W}^{p,q}(X,Y)} = C \, \|v_n - v\|_{\mathcal{W}^{p,q}(X,Y)} \to 0 \quad \text{ as } n\to\infty. \qedhere
\end{align*}
\end{proof}
\subsection{Differentiating the duality product: transport theorem}

In this section we state and prove a transport theorem for general functions in the abstract spaces defined above.

\begin{theorem}[Transport theorem]\label{thm:transport}
Let either
\begin{itemize}
    \item[(i)] $p \in [2,\infty]$, $u\in \W^{p,p'}(X, Y)$ and $v\in \W^{p,p'}(Y^*, X^*)$
    
    \item[]or
    
    \item[(i')] $p \in [1,\infty]$, $u\in \W^{p,p}(X, Y)$ and $v\in \W^{p',p'}(Y^*, X^*)$,
\end{itemize}
and suppose that in either case the spaces involved have the evolving space equivalence property. Then the map 
\begin{align}\label{eq:transportfunction}
    t\mapsto \big\langle u(t), v(t)\big\rangle_{X(t),\, X^*(t)}
\end{align}
is absolutely continuous and we have, for almost all $t\in [0,T]$,
\begin{align}
\dfrac{d}{dt} \langle u(t), v(t)\rangle_{X(t),  X^*(t)} = \big\langle \dot u(t), v(t)\big\rangle_{Y(t),\, Y^*(t)} + \big\langle u(t), \dot v(t)\rangle_{X(t),\, X^*(t)} + \lambda(t; u(t), v(t))\label{eq:transportTheorem}.
\end{align}
\end{theorem}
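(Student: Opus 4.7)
The strategy is a density argument: establish the formula for smooth approximants via Lemma~\ref{lem:generalprop} and pass to the limit through a distributional reformulation. Since the hypotheses include the evolving space equivalence property for both $\W^{p,q}(X,Y)$ and $\W^{p,q}(Y^*,X^*)$ (with the $q$ appropriate to case (i) or (i')), Lemma~\ref{lem:density}(ii) provides sequences $u_n \in C^1_X$ with $u_n \to u$ in $\W^{p,q}(X,Y)$ and $v_n \in C^1_{Y^*}$ with $v_n \to v$ in $\W^{p,q'}(Y^*,X^*)$, where the exponents are as in the assumptions.

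For each $n$, Lemma~\ref{lem:generalprop} (applied to $\sigma_1 = u_n$, $\sigma_2 = v_n$) asserts that $t \mapsto \langle u_n(t), v_n(t)\rangle_{X(t),X^*(t)}$ is absolutely continuous and its derivative equals $\langle \dot u_n, v_n\rangle_{X,X^*} + \langle u_n, \dot v_n\rangle_{X,X^*} + \lambda(t;u_n,v_n)$. Using the continuous embeddings $X(t) \hookrightarrow Y(t)$ and $Y^*(t) \hookrightarrow X^*(t)$, the first term may equally be written as $\langle \dot u_n, v_n\rangle_{Y,Y^*}$ to match the target formula. Multiplying by $\psi \in \mathcal D(0,T)$ and integrating by parts in time gives, for every such $\psi$,
\begin{equation*}
-\int_0^T \psi'(t)\,\langle u_n(t), v_n(t)\rangle_{X,X^*}\,dt = \int_0^T \psi(t)\left[\langle \dot u_n, v_n\rangle_{Y,Y^*} + \langle u_n, \dot v_n\rangle_{X,X^*} + \lambda(t; u_n, v_n)\right]dt.
\end{equation*}

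The next step is to pass $n \to \infty$ in this identity, which reduces to showing that each integrand converges in $L^1(0,T)$. The key tool is H\"older's inequality combined with the boundedness of $\lambda$ from Assumption~\ref{ass:generalcaseChanges}(iii): for the cross-pairing terms, the exponents in (i), namely $\dot u \in L^{p'}_Y$, $v\in L^p_{Y^*}$ and $u\in L^p_X$, $\dot v\in L^{p'}_{X^*}$, are H\"older-conjugate, while in (i') the conjugacy is $(p,p)$ and $(p',p')$ combined with the bounded-interval embedding $L^p \hookrightarrow L^{p'}$. The restriction $p\ge 2$ in (i) is precisely what is needed to bound the $\lambda$-term, since $|\lambda(t;u,v)| \le C\|u\|_{X(t)} \|v\|_{Y^*(t)}$ with both factors in $L^p$ requires $p/2 \ge 1$; case (i') avoids this by placing $u$ and $v$ in H\"older-conjugate spaces. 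For the LHS, the standard splitting $\langle u_n, v_n\rangle - \langle u, v\rangle = \langle u_n - u, v_n\rangle + \langle u, v_n - v\rangle$, rewritten in the $(Y,Y^*)$ pairing for the first piece (valid because $v_n \in Y^*$) and in $(X,X^*)$ for the second, is controlled by the same H\"older bounds since $u_n - u \to 0$ in $L^p_Y$ and $v_n - v \to 0$ in the space with dual integrability.

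This yields the distributional identity
\begin{equation*}
-\int_0^T \psi'(t)\,\langle u(t), v(t)\rangle_{X(t),X^*(t)}\,dt = \int_0^T \psi(t)\left[\langle \dot u, v\rangle_{Y,Y^*} + \langle u, \dot v\rangle_{X,X^*} + \lambda(t; u, v)\right]dt
\end{equation*}
for all $\psi\in \mathcal D(0,T)$. Since the bracketed integrand on the right lies in $L^1(0,T)$ (by the same H\"older arguments) and the scalar function $t\mapsto \langle u(t),v(t)\rangle_{X(t),X^*(t)}$ is also in $L^1(0,T)$, the standard characterisation of absolutely continuous functions via $L^1$ distributional derivatives (cf.\ Brezis) implies that this function admits an a.c.\ representative on $[0,T]$ whose classical derivative a.e.\ equals the bracketed integrand, which is exactly \eqref{eq:transportTheorem}. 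The main technical hurdle is the careful bookkeeping of which pairing ($X,X^*$ versus $Y,Y^*$) is being used in each term when passing to the limit---the evolving space equivalence and the embeddings $X\hookrightarrow Y$, $Y^*\hookrightarrow X^*$ are what make the two pairings interchangeable exactly where needed.
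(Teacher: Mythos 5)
Your proposal follows essentially the same route as the paper: approximate $u$ and $v$ by $C^1$ functions via the evolving space equivalence (Lemma~\ref{lem:density}), apply Lemma~\ref{lem:generalprop} to the smooth approximants, and pass to the limit in the resulting distributional identity using the H\"older-type integrability checks for the three terms (with $p\ge 2$ supplying the needed integrability of the $\lambda$-term in case (i)). The only slip is the remark that case (i') needs a bounded-interval embedding $L^p\hookrightarrow L^{p'}$ — in fact (i') is the case that needs no such embedding since all pairings are directly H\"older-conjugate, and it is case (i) whose $\lambda$-term uses the restriction $p\ge 2$ (equivalently $L^p\subset L^{p'}$), exactly as you say later; this is a harmless misstatement, not a gap.
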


\begin{proof}
Under either of the assumptions it follows that both \eqref{eq:transportfunction} and the right-hand side of \eqref{eq:transportTheorem} define functions in $L^1(0,T)$. This is clear for case (i'), and in case (i) simply observe that $p'\leq 2\leq p$, and thus $u\in L^p_X\subset L^{p'}_X$, so \eqref{eq:transportfunction} and the last term in \eqref{eq:transportTheorem} are also integrable. 

It therefore remains to prove that the right hand side of \eqref{eq:transportTheorem} is the weak derivative of \eqref{eq:transportfunction}. But this follows by density. Indeed, take sequences $\{u_n\}_n\subset C^1_X$, $\{v_n\}_n\subset C^1_{Y^*}$ such that
\begin{align*}
    u_n\to u \,\, \text{ in } \W^{p,p'}(X,Y) \quad \text{ and } \quad v_n \to v \,\, \text{ in } \W^{p,p'} (Y^*, X^*).
\end{align*}
We then have, using Lemma \ref{lem:generalprop}, 
\begin{align*}
    \dfrac{d}{dt} \langle u_n(t), v_n(t)\rangle_{X(t),  X^*(t)} = \langle \dot u_n(t), v_n(t) \rangle_{Y(t), Y^*(t)} + \langle u_n(t), \dot v_n(t)\rangle_{X(t),  X^*(t)} + \lambda(t; u_n(t), v_n(t)).
\end{align*}
Writing this in terms of the definition of the weak derivative and then passing to the limit, we find that \eqref{eq:transportTheorem} holds in the weak sense, giving the conclusion.
\end{proof}

\begin{remark}
Let us motivate the conditions on the exponents in the statement above.
Assume that $u\in \W^{p_1, q_1}(X, Y)$ and $v\in \W^{p_2, q_2}(Y^*, X^*)$ and suppose that these spaces have the evolving space equivalence property. The displayed equations in Theorem \ref{thm:transport} above reveal that conditions on the exponents are necessary:
\begin{itemize}
    \item \eqref{eq:transportfunction} must define an integrable function, but this is the case for any exponents $p_1, q_1, p_2, q_2$, due to the extra regularity $v\in C^0_{X^*}$;
    \item the right-hand side of \eqref{eq:transportTheorem} must also be integrable: 
    \begin{itemize}
        \item the first and second terms show that we must have $L^{p_2}\subset L^{q_1'}$ and $L^{p_1}\subset L^{q_2'}$;
        \item the last term requires $L^{p_1}\subset L^{p_2'}$.
    \end{itemize}
\end{itemize}
This shows that the extra term $\lambda(t; u(t), v(t))$ --- which is not present in the classical setting --- holds us back from stating a general result for $u, v\in \W^{p,p'}$, though in some applications we can find a way around this obstacle (as we will see in Sections \ref{sec:gelfandtriple} and \ref{sec:examples}).
\end{remark}

\subsection{Criteria for evolving space equivalence}\label{sec:criteriaESE}

Here, we focus on obtaining conditions that can be checked ensuring an evolving space equivalence (see Definition \ref{defn:equivalence}) between $\mathcal{W}^{p,q}(X_0,Y_0)$ and $\W^{p,q}(X,Y)$. The main result is the next theorem which states the precise conditions required; the reader is also referred to Theorem \ref{thm:ESEGelfandTriple} for the statement (and proof) of this theorem applied to the particular case of a Gelfand triple (the setting of which results in some simplifications in the conditions that are needed). We recall the operators and bilinear forms in Definition \ref{defn:generalOperators} and we write $\mathcal{M}(0,T;Z)$ to stand for the set of Bochner measurable maps $f\colon (0,T) \to Z$ into a Banach space $Z.$
\begin{theorem}[Criteria for evolving space equivalence]\label{thm:equivalence}
Let Assumption \ref{ass:generalcaseChanges} hold, let $p,q\in [1,\infty]$ and suppose that for all $t \in [0,T]$,  the range condition 
\eqref{ass:rangeOfJ} and the following hold:
\begin{align}
     &\text{$\Pi_t\colon X_0 \to Y_0$ has a linear extension } \bar \Pi_t\colon Y_0 \to Y_0 \text{ which is bounded uniformly in $t$,}\label{ass:onrangeTt}\\
    &\bar \Pi_{(\cdot)}u \in \mathcal{M}(0,T;Y_0) \text{ for all $u \in X_0$}, \label{ass:measurabilityOnTt}\\
    &\bar \Pi_t^{-1}\colon Y_0 \to Y_0 \text{ exists and is uniformly bounded in $t$},\label{ass:TtInvExistsAndBdd}\\
    &\bar \Pi_{(\cdot)}^{-1} v \in \mathcal{M}(0,T;Y_0) \text{ for all $v \in Y_0$},
    \label{ass:measurabilityOnTtInv}\\
    &(\bar \Pi^\dagger)^{-1}\colon \mathcal{W}^{p,q}(Y_0^*, Y_0^*) \to \mathcal{W}^{p,\, p\land q}(Y_0^*, Y_0^*) \text{ where $((\bar \Pi^\dagger)^{-1} f)(t) := (\bar \Pi_t^{*})^{-1}f(t)$}.\label{ass:remainsInSpace}
\end{align}
 Then there is an evolving space equivalence between $\mathcal W^{p,q}(X_0,Y_0)$ and $\W^{p,q}(X,Y)$.

\medskip

\noindent More precisely, 
\begin{itemize}
    \item[(i)] under \eqref{ass:onrangeTt} and \eqref{ass:measurabilityOnTt}, if $u\in \mathcal{W}^{p,q}(X_0,Y_0)$, then $\phi_{(\cdot)}^X u\in \W^{p,q}(X,Y)$ and 
        \begin{align*}
            \md \phi_t^X u(t) = \phi_t^Y \bar \Pi_t u'(t).
        \end{align*}
    \item[(ii)] under \eqref{ass:rangeOfJ} and \eqref{ass:onrangeTt}--\eqref{ass:remainsInSpace},
    if $u\in \W^{p,q}(X,Y)$, then $\phi_{-(\cdot)}^X u\in \mathcal W^{p,q}(X_0,Y_0)$ and 
        \begin{align*}
            \big(\phi_{-t}^X u(t)\big)' = \bar \Pi_t^{-1} \phi_{-t}^Y \dot u(t).
        \end{align*}
\end{itemize}
\end{theorem}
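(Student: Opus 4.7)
The plan is to prove parts (i) and (ii) separately. Part (i) will follow cleanly from density in the classical Sobolev--Bochner space, while part (ii) is the harder direction and will require inverting the time-dependent operator $\bar\Pi_t$ to extract the classical derivative.

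For part (i), given $u\in \mathcal{W}^{p,q}(X_0,Y_0)$, I would set $U(t):=\phi_t^X u(t)$ (which lies in $L^p_X$ by the norm equivalence in Theorem \ref{thm:LpX}) and propose as weak derivative $V(t):=\phi_t^Y \bar\Pi_t u'(t)$, whose membership in $L^q_Y$ follows from the uniform bound \eqref{ass:onrangeTt} and the measurability \eqref{ass:measurabilityOnTt}. First I would handle the smooth case $u\in C^1([0,T]; X_0)$, where $U\in C^1_X$ has strong derivative $\phi_t^X u'(t) = \phi_t^Y\bar\Pi_t u'(t)$ (using $\bar\Pi_t u'(t)=\phi_{-t}^Y\phi_t^X u'(t)$), which Proposition \ref{prop:genstrongisweak} identifies as a weak derivative. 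The general case then follows by approximating $u$ by smooth $u_n$ (classical density in $\mathcal{W}^{p,q}(X_0, Y_0)$) and passing to the limit in the defining identity \eqref{eq:genweakderiv}; the $\lambda$-term is controlled using its bilinearity together with Assumption \ref{ass:generalcaseChanges}(iii).

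For part (ii), I would take $U\in\mathbb{W}^{p,q}(X,Y)$, set $u(t) := \phi_{-t}^X U(t)\in L^p(0,T;X_0)$, and propose the candidate classical weak derivative $w(t) := \bar\Pi_t^{-1} \phi_{-t}^Y \dot U(t)\in L^q(0,T;Y_0)$ (using \eqref{ass:TtInvExistsAndBdd} and \eqref{ass:measurabilityOnTtInv}). The first key step is to show that the auxiliary function $g(t) := \bar\Pi_t u(t) = \phi_{-t}^Y U(t)$ has classical weak derivative
\[g'(t) = \phi_{-t}^Y \dot U(t) + \hat\Lambda(t) u(t)\in L^{p\land q}(0,T; Y_0).\]
I would establish this by testing the evolving weak derivative relation for $U$ against the family of test functions $\eta(t) = (\phi_{-t}^Y)^* \xi(t)$ with $\xi\in\mathcal{D}((0,T); Y_0^*)$ (which do belong to $\mathcal{D}_{Y^*}$) and unpacking each term via the definitions of $\pi$ and $\hat\lambda$; crucially, the range condition \eqref{ass:rangeOfJ} is what allows $\hat\Lambda(t) u(t)$ to be interpreted as an element of $Y_0$ rather than of $Y_0^{**}$ only.

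The last step is to pass from the classical weak differentiability of $g$ to that of $u = \bar\Pi_t^{-1} g$. For $\xi\in\mathcal{D}((0,T); Y_0^*)$, assumption \eqref{ass:remainsInSpace} supplies $\tilde\xi(t) := (\bar\Pi_t^*)^{-1}\xi(t)\in\mathcal{W}^{p,p\land q}(Y_0^*, Y_0^*)$ (with compact support inherited from $\xi$), playing the role of a test function on the reference side. The plan is to rewrite $\int_0^T \langle u(t), \xi'(t)\rangle_{Y_0, Y_0^*} = \int_0^T \langle g(t), (\bar\Pi_t^*)^{-1}\xi'(t)\rangle_{Y_0, Y_0^*}$, apply classical Bochner integration by parts between $g$ and $\tilde\xi$, and use the formal product-rule identity $(\bar\Pi_t^*)^{-1}\xi'(t) = \tilde\xi'(t) + (\bar\Pi_t^*)^{-1}\hat\Lambda(t)^*\tilde\xi(t)$ (together with $\bar\Pi_t^{-1} g(t) = u(t)$); a direct calculation shows that the two $\hat\Lambda u$ contributions cancel and yield $\int_0^T \langle u, \xi'\rangle = -\int_0^T \langle w, \xi\rangle$, so $u'=w$ classically. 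The main obstacle will be making this product-rule cancellation rigorous without access to a pointwise operator derivative $\tfrac{d}{dt}[(\bar\Pi_t^*)^{-1}]$; to circumvent that, the argument should be organised so that the weak differentiability of $\tilde\xi$ (provided by \eqref{ass:remainsInSpace}) is used directly and the cancellation with the $\hat\Lambda u$ term in $g'$ is read off from the resulting integration by parts identity, without the operator-valued derivative ever appearing as a standalone object.
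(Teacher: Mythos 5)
Your proposal is correct and follows essentially the same path as the paper's proof: part (i) amounts to the differentiability of $\Pi u$ (Proposition~\ref{prop:newCrit1}), and your reduction of part (ii) to showing $g=\phi_{-(\cdot)}^Y U$ is classically differentiable and then inverting $\bar\Pi_t$ via the transferred test function $\tilde\xi=(\bar\Pi^\dagger)^{-1}\xi$ is precisely the content of Lemma~\ref{lem:splitLemma} and Proposition~\ref{prop:evolvingreverse}, just inlined rather than stated as separate lemmas. The ``formal product rule'' you flag as the obstacle is exactly what Lemma~\ref{lem:splitLemma} makes rigorous, and your proposed workaround (differentiate $\bar\Pi^\dagger\tilde\xi$ rather than $(\bar\Pi^\dagger)^{-1}$) is the same device the paper uses.
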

\begin{remark}\label{rem:measurabilityCanBeDropped}
Regarding these assumptions, let us make the following observations.
\begin{enumerate}[label=({\roman*})]
    \item If $X_0 \ctsDense Y_0$, then \eqref{ass:measurabilityOnTt} follows immediately from \eqref{ass:onrangeTt}. Indeed, for any $y \in Y_0$, take a sequence $x_n \in X_0$ with $x_n \to y$ in $Y_0$. As $\Pi_tx_n 
= \bar \Pi_t x_n \to \bar \Pi_t y$ in $Y_0$ and the pointwise limit of measurable functions is measurable,  the claim holds. 
 \item Assumptions \eqref{ass:TtInvExistsAndBdd} and \eqref{ass:measurabilityOnTtInv} imply that $(\bar \Pi^\dagger)^{-1} \colon L^p(0,T;Y_0^*) \to L^p(0,T;Y_0^*)$ is a bounded linear operator for any $p$.
\item Assumption \eqref{ass:remainsInSpace} is analogous to the assumption on the differentiability of $\Pi_t$ (or $\pi_t$).
\item One should bear in mind that $\Pi_t$ has an inverse  only on the set $\Pi_t(X_0)$ (i.e. its range):
\begin{align*}
    X_0 \overset{\phi_t^X}{\longrightarrow} X(t) \overset{\phi_{-t}^Y|_{X(t)}}{\longrightarrow} \phi_{-t}^Y \big(X(t)\big) \subset Y_0.
\end{align*}
It is not clear that $\Pi_t(X_0)$ is closed (and hence not necessarily a Hilbert space in its own right) so the dual of $\Pi_t^{-1}$ is not well defined in general. This is why we only talk about the inverses of $\bar \Pi$ and its dual operator.
\end{enumerate}
\end{remark}
The rest of this section is dedicated to proving this result, which will be done in a number of steps. 
We begin with some preliminaries: we have the pointwise dual maps
\[\Pi_t^*\colon Y_0^* \to X_0^* \quad\text{and}\quad \bar\Pi_t^*\colon Y_0^* \to Y_0^*\]
and it is not difficult to see that for every $f \in Y_0^*$,
\begin{equation}\label{eq:dualPiAndBarPiIdentity}
\langle \bar\Pi_t^* f, x \rangle_{Y_0^*, Y_0} =\langle \Pi_t^* f, x \rangle_{X_0^*,X_0} \quad\text{whenever $x \in X_0$}.    
\end{equation}
Let us construct the Nemytskii operators
\[(\Pi u)(t) := \Pi_t u(t), \quad (\bar \Pi  u)(t) = \bar \Pi_t u(t), \quad (\Pi^\dagger f)(t) = \Pi^*_t f(t), \quad (\bar \Pi^\dagger f)(t) = \bar \Pi^*_t f(t).\]
We have that
\begin{equation*}
    \begin{aligned}
    &\Pi\colon L^p(0,T;X_0) \to L^p(0,T;Y_0),\quad &&\bar \Pi  \colon L^p(0,T;Y_0) \to L^p(0,T;Y_0),\\
    &\Pi^\dagger \colon L^p(0,T;Y_0^*) \to L^p(0,T;X_0^*), &&\bar \Pi^\dagger \colon L^p(0,T;Y_0^*) \to L^p(0,T;Y_0^*),
    \end{aligned}
\end{equation*}
are all bounded and linear for any $p$: for the first two maps this follows respectively by definition (see \S \ref{sec:generalCase}) and by \eqref{ass:onrangeTt} and \eqref{ass:measurabilityOnTt}\footnote{The measurability of the image of the latter operator follows because $\bar \Pi_{(\cdot)}(\cdot)$ is by assumption a Carath\'eodory function.}, and the latter two because the dual of a bounded linear operator is also bounded and linear with the same operator norm. Note carefully that $\Pi^\dagger$ is in general \textit{not} the same as $\Pi^*$ (which is defined as the dual of $\Pi$) since we are not necessarily in the reflexive setting (and likewise for $\bar \Pi^\dagger$)! See the next remark for more on this.

\begin{remark}\label{rem:daggerVsDual}
\begin{enumerate}[label=({\roman*})]
\item Regarding $\Pi$ and $\bar \Pi$, we know that their dual operators satisfy by definition, for any $p \in [1,\infty],$
\begin{align*}
    &\Pi^* \colon L^p(0,T;Y_0)^* \to L^p(0,T;X_0)^*\quad\text{and}\quad 
    \bar \Pi^* \colon L^p(0,T;Y_0)^* \to L^p(0,T;Y_0)^*. 
\end{align*}
If it were the case that we could identify the duals of the above Bochner spaces with the expected spaces (which we can do for example in the reflexive setting for appropriate exponents $p$), then the above can be written as
\begin{align*}
    &\Pi^* \colon L^{p'}(0,T;Y_0^*) \to L^{p'}(0,T;X_0^*) \quad\text{and}\quad 
    \bar \Pi^* \colon L^{p'}(0,T;Y_0^*) \to L^{p'}(0,T;Y_0^*), 
\end{align*}
and it is easy to see in this case that 
\[\Pi^\dagger \equiv \Pi^* \quad\text{and}\quad \bar \Pi^\dagger \equiv \bar \Pi^*.\]
 \item Assumptions \eqref{ass:TtInvExistsAndBdd} and \eqref{ass:measurabilityOnTtInv} imply that $\bar \Pi^\dagger$ has an inverse given by
\[(\bar \Pi^\dagger)^{-1}\equiv (\bar \Pi^{-1})^\dagger \quad\text{where}\quad ((\bar \Pi^{-1})^\dagger f)(t) := (\bar \Pi_t^{-1})^*f(t)=(\bar \Pi_t^{*})^{-1}f(t),\]
and furthermore, both maps $$\bar \Pi^{-1}\colon L^p(0,T;Y_0) \to L^p(0,T;Y_0) \qquad\text{and}\qquad (\bar \Pi^\dagger)^{-1}\equiv (\bar \Pi^{-1})^\dagger  \colon L^p(0,T;Y_0^*) \to L^p(0,T;Y_0^*)$$ are bounded linear operators. 
\end{enumerate}
\end{remark}
The next proposition shows that $\Pi$ and $\bar \Pi^\dagger$ take differentiable functions into differentiable functions thanks to the assumptions on the differentiability of $\pi$ that were made earlier. Even though one does not usually distinguish between an element of a Banach space and its action as an element of the corresponding double dual space, in the proofs below, to emphasise that we do not assume reflexivity of neither $X_0$ nor $Y_0$, we will always write explicitly the canonical injections $\mathcal{J}_{X_0}, \mathcal{J}_{Y_0}, \mathcal{J}_{Y_0^*}.$ 

\begin{prop}[Differentiability of $\Pi u$]\label{prop:newCrit1}
Let $p,q\in [1,\infty]$ and suppose that \eqref{ass:onrangeTt} and \eqref{ass:measurabilityOnTt} hold.
If $u\in\mathcal{W}^{p,q}(X_0, Y_0)$, then $\Pi u$ 
satisfies 
\begin{align*}
\int_0^T \left\langle \Pi_t u(t), \varphi'(t) \right\rangle_{Y_0, \, Y_0^*} 
&= - \int_0^T \left\langle \bar \Pi_t u'(t), \varphi(t)\right\rangle_{Y_0, \, Y_0^*} +  \langle \hat\Lambda(t) u(t), \varphi(t) \rangle_{Y_0^{**}, \, Y_0^*}  \quad\forall \varphi\in \mathcal D((0,T); Y_0^*).
\end{align*}
In particular, if $\hat\Lambda(t)u(t)\in \mathrm{Range}(\mathcal{J}_{Y_0})$, then $\Pi u\in \mathcal{W}^{p, \, p\land q}(Y_0, Y_0)$ with 
\begin{align}
(\Pi u)'(t) 
= \bar \Pi_t u'(t) + \mathcal{J}_{Y_0}^{-1} \hat \Lambda(t) u(t).\label{eq:toProve1}
\end{align}
%
\end{prop}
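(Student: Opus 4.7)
The plan is to derive the distributional identity first for smooth $u\in C^1([0,T]; X_0)$ using the product rule of Lemma \ref{lem:generalprop} and then extend to general $u\in \mathcal{W}^{p,q}(X_0,Y_0)$ via temporal mollification. In the smooth case, formula \eqref{eq:derivb} applied to $\hat\sigma_1 = u$ and $\hat\sigma_2 = \varphi \in \mathcal D((0,T); Y_0^*)$ yields the pointwise identity
\begin{equation*}
\tfrac{d}{dt}\pi(t; u(t), \varphi(t)) = \pi(t; u'(t), \varphi(t)) + \pi(t; u(t), \varphi'(t)) + \hat\lambda(t; u(t), \varphi(t)).
\end{equation*}
Integrating over $[0,T]$ annihilates the left-hand side (as $\varphi$ is compactly supported in $(0,T)$); rewriting $\pi(t; w,v) = \langle \Pi_t w, v\rangle_{Y_0, Y_0^*}$, exploiting that $\bar\Pi_t$ extends $\Pi_t$ in order to recast the $u'$-term, and using the definition of $\hat\Lambda$ to identify the $\hat\lambda$-term as a pairing against $\hat\Lambda(t)u(t)\in Y_0^{**}$, one reads off the claimed integration-by-parts identity.

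Next I would extend to arbitrary $u \in \mathcal{W}^{p,q}(X_0,Y_0)$ (treating $p, q < \infty$ first) by producing a sequence $u_n \in C^\infty([0,T]; X_0)$ via temporal mollification of a suitable extension of $u$ outside $[0,T]$, so that $u_n \to u$ in $L^p(0,T; X_0)$ and $u_n' \to u'$ in $L^q(0,T; Y_0)$; commutation of the mollifier with the $Y_0$-valued weak derivative is justified by the continuous embedding $X_0 \hookrightarrow Y_0$. Passing to the limit is immediate for the $\Pi$- and $\bar\Pi$-terms by the boundedness of the Nemytskii operators $\Pi\colon L^p(0,T; X_0) \to L^p(0,T; Y_0)$ and $\bar\Pi\colon L^q(0,T; Y_0) \to L^q(0,T; Y_0)$, and the $\hat\lambda$-term converges thanks to the uniform estimate in Remark \ref{rem:generalcase}(iii):
\begin{equation*}
\Bigl| \int_0^T \hat\lambda(t; u_n(t) - u(t), \varphi(t))\, dt \Bigr| \leq C\, \|u_n - u\|_{L^p(0,T; X_0)}\, \|\varphi\|_{L^{p'}(0,T; Y_0^*)} \longrightarrow 0.
\end{equation*}
Endpoint exponents $p = \infty$ or $q = \infty$ can be reduced to the finite case by truncation and weak-$\ast$ convergence arguments applied to the smoother slot.

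Finally, under the range condition \eqref{ass:rangeOfJ}, the element $\mathcal{J}_{Y_0}^{-1} \hat\Lambda(t) u(t) \in Y_0$ is well defined with $\|\mathcal{J}_{Y_0}^{-1} \hat\Lambda(t) u(t)\|_{Y_0} \leq C\|u(t)\|_{X_0}$ by Remark \ref{rem:generalcase}(iii), so $\mathcal{J}_{Y_0}^{-1} \hat\Lambda(\cdot) u(\cdot) \in L^p(0,T; Y_0)$ (strong measurability being recovered from the Carath\'eodory nature of $\hat\lambda$ via a slotwise Pettis-type argument). Combined with $\bar\Pi_{(\cdot)} u'(\cdot) \in L^q(0,T; Y_0)$, this places the distributional derivative of $\Pi u$ in $L^{p \wedge q}(0,T; Y_0)$ and identifies it pointwise as $\bar\Pi_t u'(t) + \mathcal{J}_{Y_0}^{-1} \hat\Lambda(t) u(t)$, yielding $\Pi u \in \mathcal W^{p, p \wedge q}(Y_0, Y_0)$. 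The hard part will be the density step: in the general (non-reflexive) Banach setting, ensuring that the mollified sequence truly lies in $C^\infty([0,T]; X_0)$ \emph{and} that its derivatives converge in $L^q(0,T; Y_0)$ requires a careful extension of $u$ across the endpoints together with a verification that the standard Bochner-space mollification techniques persist without reflexivity.
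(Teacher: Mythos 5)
Your approach is essentially the same as the paper's: establish the distributional identity for $u\in C^1([0,T];X_0)$ via \eqref{eq:derivb} from Lemma \ref{lem:generalprop}, pass to general $u$ by density, and then upgrade to $\mathcal{W}^{p,p\land q}$ regularity under the range condition. One simplification you overlook, and which the paper uses, is that the density step only needs to be carried out in $\mathcal{W}^{1,1}(X_0,Y_0)$: the distributional identity is linear in $u$ and all three Nemytskii operators $\Pi$, $\bar\Pi$, $\hat\Lambda$ are continuous from $L^1(0,T;X_0)$ (resp.\ $L^1(0,T;Y_0)$) into their targets, so once the identity holds for all $u\in \mathcal{W}^{1,1}(X_0,Y_0)\supset\mathcal{W}^{p,q}(X_0,Y_0)$, the membership $\Pi u\in\mathcal{W}^{p,p\land q}(Y_0,Y_0)$ follows \emph{a posteriori} from $\bar\Pi_{(\cdot)}u'\in L^q(0,T;Y_0)$ and $\mathcal J_{Y_0}^{-1}\hat\Lambda(\cdot)u(\cdot)\in L^p(0,T;Y_0)$. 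This eliminates the endpoint-exponent truncation you sketch. Your closing worry about the density step failing in the non-reflexive setting is misplaced: the mollification argument you outline (extend across the endpoints, mollify, use $X_0\hookrightarrow Y_0$ to commute the mollifier with the $Y_0$-valued weak derivative) showing $C^1([0,T];X_0)$ dense in $\mathcal{W}^{1,1}(X_0,Y_0)$ needs only that $X_0$ and $Y_0$ be Banach spaces with a continuous embedding — no reflexivity, separability, or Radon--Nikodym property enters, which is precisely why the paper can invoke it as a standard fact.
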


\begin{proof}
Let us take $\varphi\in \mathcal D((0, T); Y_0^*)$ and $u\in C^1([0,T]; X_0)$ to obtain, by \eqref{eq:derivb} in the proof of Lemma \ref{lem:generalprop},
\begin{align*}
    \int_0^T \left\langle \varphi'(t), \Pi_tu(t) \right\rangle_{Y_0^*, Y_0} 
    &= - \int_0^T \left\langle \bar \Pi_t u'(t), \varphi(t) \right\rangle_{Y_0, Y_0^*} + \left\langle \hat \Lambda(t) u(t), \varphi(t) \right\rangle_{Y_0^{**}, Y_0^*}.
\end{align*}
If $\hat\Lambda u$ is in the range of $\mathcal{J}_{Y_0}$, then we can write the last term above as
\begin{align*}
   \int_0^T \left\langle  \mathcal{J}_{Y_0}^{-1} \hat \Lambda(t) u(t), \varphi(t) \right\rangle_{Y_0, Y_0^*},
\end{align*}
proving \eqref{eq:toProve1} (with the right-hand side belonging to $L^{p\land q}(0,T;Y_0)$) 
for $u\in C^1([0,T]; X_0)$. The conclusion now follows from the density of $C^1([0,T]; X_0)$ in $\mathcal W^{1,1}(X_0, Y_0)$ and the continuity of $\Pi, \bar \Pi$ and $\hat \Lambda$.
\end{proof}
Note that the assumption that was needed for \eqref{eq:toProve1} above is exactly \eqref{ass:rangeOfJ}.
Now we look for a converse of Proposition \ref{prop:newCrit1}. In order to do so, we need a preparatory result in the form of the next lemma.
\begin{lem}[Differentiability of $\bar \Pi^\dagger v$]\label{lem:splitLemma}
Let $p,q\in [1,\infty]$ and suppose that \eqref{ass:onrangeTt} and \eqref{ass:measurabilityOnTt} hold. If $v\in \mathcal{W}^{p,q}(Y_0^*, Y_0^*)$, then $\bar \Pi^\dagger v\in \mathcal W^{p, \, p\land q}(Y_0^*, X_0^*)$ with
\begin{align*}
(\bar \Pi^\dagger v)'(t) 
= \bar \Pi_t^* v'(t) + \hat \Lambda(t)^* \mathcal{J}_{Y_0^*}v(t).
\end{align*}
\end{lem}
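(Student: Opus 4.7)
This lemma is the dual counterpart of Proposition \ref{prop:newCrit1} and I would prove it by a transport-formula-plus-density argument, paralleling that result. The very first step is to verify that the claimed candidate derivative $t\mapsto \bar\Pi_t^* v'(t) + \hat\Lambda(t)^*\mathcal{J}_{Y_0^*}v(t)$ really lives in $L^{p\land q}(0,T;X_0^*)$: the first summand belongs to $L^q(0,T;Y_0^*)\hookrightarrow L^q(0,T;X_0^*)$ because $\bar \Pi^\dagger$ is bounded on $L^q(0,T;Y_0^*)$ (by \eqref{ass:onrangeTt}--\eqref{ass:measurabilityOnTt} and Remark \ref{rem:daggerVsDual}); the second summand belongs to $L^p(0,T;X_0^*)$ thanks to the uniform estimate of Assumption \ref{ass:generalcaseChanges}(iii), which controls $\|\hat\Lambda(t)^*\mathcal{J}_{Y_0^*}v(t)\|_{X_0^*}$ by $C\|v(t)\|_{Y_0^*}$. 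Similarly $\bar\Pi^\dagger v\in L^p(0,T;Y_0^*)$.

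The heart of the proof is to verify, for every test function $\xi\in \mathcal D((0,T);X_0)$, the weak-derivative identity
\begin{equation*}
\int_0^T \langle \bar\Pi_t^* v(t),\xi'(t)\rangle_{X_0^*,X_0} = -\int_0^T \langle \bar\Pi_t^* v'(t) + \hat\Lambda(t)^*\mathcal{J}_{Y_0^*}v(t),\xi(t)\rangle_{X_0^*,X_0}.
\end{equation*}
To establish this, I would start with the pointwise identity \eqref{eq:derivb} from the proof of Lemma \ref{lem:generalprop} applied to $\hat\sigma_1:=\xi\in C^1([0,T];X_0)$ (which is compactly supported in $(0,T)$) and $\hat\sigma_2:=v_n\in C^1([0,T];Y_0^*)$, where $\{v_n\}$ is an approximating sequence with $v_n\to v$ in $\mathcal W^{p,q}(Y_0^*,Y_0^*)$. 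Integrating \eqref{eq:derivb} over $[0,T]$ annihilates the left-hand side thanks to the compact support of $\xi$, leaving
\begin{equation*}
\int_0^T \langle \bar\Pi_t\xi'(t),v_n(t)\rangle_{Y_0,Y_0^*} + \int_0^T \langle \bar\Pi_t\xi(t),v_n'(t)\rangle_{Y_0,Y_0^*} + \int_0^T \langle \hat\Lambda(t)\xi(t),v_n(t)\rangle_{Y_0^{**},Y_0^*} = 0.
\end{equation*}

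The next step is to pass to the limit $n\to\infty$: each integrand is linear in $v_n$ or $v_n'$ and uniformly bounded in $t$ by quantities involving $\|\xi\|_{X_0}+\|\xi'\|_{X_0}$, so the convergences $v_n\to v$ in $L^p(0,T;Y_0^*)$ and $v_n'\to v'$ in $L^q(0,T;Y_0^*)$ give the same identity with $v$ in place of $v_n$. Finally, to recognise this as the required weak-derivative relation I would rewrite each duality bracket using the adjoint relation \eqref{eq:dualPiAndBarPiIdentity} (so that $\langle \bar\Pi_t\xi'(t),v(t)\rangle_{Y_0,Y_0^*}=\langle \bar\Pi_t^* v(t),\xi'(t)\rangle_{X_0^*,X_0}$, and similarly for the second term) and the definition of $\hat\Lambda(t)^*$ combined with the canonical injection $\mathcal{J}_{Y_0^*}$ to rewrite the last term as $\langle \hat\Lambda(t)^*\mathcal{J}_{Y_0^*}v(t),\xi(t)\rangle_{X_0^*,X_0}$. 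Rearranging yields the displayed identity above, which by definition means that $\bar\Pi^\dagger v$ has weak time derivative equal to $\bar\Pi_t^* v'(t) + \hat\Lambda(t)^*\mathcal{J}_{Y_0^*}v(t)$ in $L^{p\land q}(0,T;X_0^*)$.

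\textbf{Main obstacles.} The routine calculation is the dual bookkeeping: keeping track of the three canonical injections $\mathcal{J}_{X_0},\mathcal{J}_{Y_0},\mathcal{J}_{Y_0^*}$ and the distinction between $\Pi_t^*$ and $\bar\Pi_t^*$ (which coincide on $X_0\subset Y_0$ via \eqref{eq:dualPiAndBarPiIdentity}) without accidentally invoking reflexivity of $Y_0$ — in particular, the range condition \eqref{ass:rangeOfJ} is deliberately \emph{not} assumed here, so the weak derivative is only an element of $X_0^*$ (not $Y_0$). The genuinely delicate point is the density step: using $C^1([0,T];Y_0^*)\subset \mathcal W^{p,q}(Y_0^*,Y_0^*)$ densely when $p$ or $q$ equals $\infty$ is not automatic, but because the identity being extended is continuous (bilinearly) in $(v,v')\in L^p(0,T;Y_0^*)\times L^q(0,T;Y_0^*)$ against the fixed smooth $\xi$, one only needs approximation in a weaker topology, which is available.
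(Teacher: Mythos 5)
Your proposal is correct and follows essentially the same approach as the paper's proof: smooth in $v$, apply the transport formula \eqref{eq:derivb} from the proof of Lemma \ref{lem:generalprop} against a test pair $(\xi, v_n)$, integrate to kill the boundary term, pass to the limit, and recast the duality brackets via \eqref{eq:dualPiAndBarPiIdentity} and the definition of $\hat\Lambda(t)^*$. The only (harmless) difference is that the paper establishes an explicit intermediary claim for $\Pi^\dagger v$ before concluding for $\bar\Pi^\dagger v$, whereas you work with $\bar\Pi^\dagger v$ directly — which is fine since $\Pi_t^* v(t)$ and $\bar\Pi_t^* v(t)$ agree when paired against elements of $X_0$ — and your remark on using $\mathcal W^{1,1}$-density to cover $p=\infty,q=\infty$ is a correct reading of what the paper's density step implicitly relies on.
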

\begin{proof}
We will first prove the intermediary result that 
$\Pi^\dagger v\in \mathcal W^{p,\, p\land q}(X_0^*, X_0^*)$ with \begin{equation}\label{eq:prelim1}
(\Pi^\dagger v)'(t) = (\Pi_t^* v(t))'= \bar \Pi_t^*v'(t)+\hat\Lambda(t)^* \mathcal{J}_{Y_0^*}v(t) \quad\text{in $X_0^*$}
\end{equation}  
for $v$ taken as stated in the lemma. Indeed, approximating with $v\in C^1([0,T]; Y_0^*)$ and denoting a test function by $\varphi\in \mathcal D((0,T); X_0)$, we have
\begin{align*}
\int_0^T \left\langle \Pi_t^*v(t), \varphi'(t)\right\rangle_{X_0^*, \, X_0}  
	 &= -\int_0^T \left\langle \varphi(t), \Pi_t^*v'(t)+\hat\Lambda(t)^* \mathcal{J}_{Y_0^*}v(t)\right\rangle_{X_0, X_0^*}. 
\end{align*}
Take $\varphi(t) = \psi(t)x$ where $\psi \in \mathcal D(0,T)$ and $x \in X_0$; this becomes
\begin{align}
    \int_0^T \psi'(t)\left\langle \Pi_t^* v(t),   x\right\rangle_{X_0^*, \, X_0}
    &= -  \int_0^T \psi(t)\left\langle x, \Pi_t^*v'(t)+\hat\Lambda(t)^* \mathcal{J}_{Y_0^*}v(t)\right\rangle_{X_0, X_0^*}.\label{eq:toRef2}
\end{align}
Manipulating and pulling the integrals inside the duality pairing, we get
\begin{align*}
     \left\langle \int_0^T \psi'(t)\Pi_t^* v(t) + \psi(t)(\Pi_t^*v'(t)+\hat\Lambda(t)^* \mathcal{J}_{Y_0^*}v(t)), x\right\rangle_{X_0^*, \, X_0} = 0.
\end{align*}
Since this is true for every $x \in X_0$, this gives, by definition of the weak time derivative,
\[(\Pi_t^* v(t))'= \Pi_t^*v'(t) +  \hat\Lambda(t)^* \mathcal{J}_{Y_0^*}v(t)\]
and here, using the identity \eqref{eq:dualPiAndBarPiIdentity} relating $\bar\Pi_t^*$ and $\Pi_t^*$ as well as a density argument for $v$, we deduce that \eqref{eq:prelim1} is satisfied for each $v \in \mathcal{W}^{p,q}(Y_0^*,Y_0^*)$.
Now, let us conclude. Again with $v\in C^1([0,T]; Y_0^*)$ and a test function $\varphi\in \mathcal D((0,T); X_0)$, we calculate
\begin{align*}
    \int_0^T \left\langle \bar \Pi_t^*v(t), \varphi'(t)\right\rangle_{X_0^*,X_0}
    &= -\int_0^T \left\langle \varphi(t), \bar \Pi_t^* v'(t) + \hat\Lambda(t)^* \mathcal{J}_{Y_0^*} v(t)\right\rangle_{X_0, \, X_0^*},
\end{align*}
and from here we follow the same argument elucidated above (beginning with the derivation of \eqref{eq:toRef2}) and this will show that $\bar \Pi^\dagger v \in \mathcal{W}^{p,q}(X_0^*, X_0^*)$. Since we already know that $\bar \Pi^\dagger v \in L^p(0,T;Y_0^*)$, the claim follows.
\end{proof}
We are now ready to provide a converse to Proposition \ref{prop:newCrit1}.
\begin{prop}[``Differentiability of $\Pi^{-1}v$"]\label{prop:evolvingreverse} Let $p, q\in [1,\infty]$. Suppose \eqref{ass:rangeOfJ} and \eqref{ass:onrangeTt}--\eqref{ass:remainsInSpace} hold. 
If $u \in L^p(0,T;X_0)$ is such that $\Pi u \in \mathcal{W}^{p,\, p\land q}(Y_0,Y_0),$ then $u \in \mathcal{W}^{p,q}(X_0,Y_0)$ with
\begin{align*}
 u' = \bar \Pi^{-1}(\Pi u)' - \bar \Pi^{-1} \mathcal{J}_{Y_0}^{-1} \hat\Lambda u.   
\end{align*}
\end{prop}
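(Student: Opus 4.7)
The plan is to verify the formula $u' = \bar\Pi^{-1}(\Pi u)' - \bar\Pi^{-1}\mathcal{J}_{Y_0}^{-1}\hat\Lambda u$ directly from the classical definition of the weak derivative: pick an arbitrary $\varphi \in \mathcal D((0,T); Y_0^*)$ and aim to show
\[
\int_0^T \langle u(t), \varphi'(t)\rangle_{Y_0, Y_0^*} = -\int_0^T \langle \bar\Pi_t^{-1}(\Pi u)'(t) - \bar\Pi_t^{-1}\mathcal{J}_{Y_0}^{-1}\hat\Lambda(t) u(t),\, \varphi(t)\rangle_{Y_0, Y_0^*}.
\]

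The key reformulation is to move the map $\bar\Pi$ from $u$ to the test function. Since $u(t)\in X_0$ gives $\Pi_t u(t)= \bar\Pi_t u(t)$, one has $u(t)=\bar\Pi_t^{-1}(\Pi u)(t)$ and therefore $\int_0^T\langle u,\varphi'\rangle = \int_0^T\langle \Pi u,(\bar\Pi_t^*)^{-1}\varphi'\rangle$. Introduce $\psi:=(\bar\Pi^\dagger)^{-1}\varphi$: since $\varphi$ is smooth and compactly supported, hypothesis \eqref{ass:remainsInSpace} yields $\psi\in\mathcal W^{p,p\land q}(Y_0^*, Y_0^*)$, and $\psi$ inherits the compact support in $(0,T)$ because $(\bar\Pi^\dagger)^{-1}$ acts pointwise in $t$. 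The crucial step is now to apply Lemma~\ref{lem:splitLemma} to $\psi$, which, using $\bar\Pi^\dagger\psi=\varphi$, gives
\[
\varphi'(t) = \bar\Pi_t^*\psi'(t) + \hat\Lambda(t)^*\mathcal{J}_{Y_0^*}\psi(t),
\]
and hence the decomposition $(\bar\Pi_t^*)^{-1}\varphi'(t)=\psi'(t)+(\bar\Pi_t^*)^{-1}\hat\Lambda(t)^*\mathcal{J}_{Y_0^*}\psi(t)$.

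Substituting this splits the integral into two pieces. For the first, $\int_0^T\langle \Pi u,\psi'\rangle_{Y_0,Y_0^*}$, perform a classical integration by parts: although $\psi$ is not smooth, it has compact support in $(0,T)$ and both $\Pi u\in\mathcal W^{p,p\land q}(Y_0,Y_0)$ and $\psi\in\mathcal W^{p,p\land q}(Y_0^*,Y_0^*)$ have the required regularity, so a standard time-mollification argument produces no boundary terms and yields $-\int_0^T\langle (\Pi u)',\psi\rangle = -\int_0^T\langle \bar\Pi_t^{-1}(\Pi u)',\varphi\rangle$. For the second piece, peel the adjoints off successively and invoke \eqref{ass:rangeOfJ} to write $\hat\Lambda u=\mathcal{J}_{Y_0}(\mathcal{J}_{Y_0}^{-1}\hat\Lambda u)$; the duality chain
\[
\langle \Pi u,(\bar\Pi_t^*)^{-1}\hat\Lambda^*\mathcal{J}_{Y_0^*}\psi\rangle_{Y_0,Y_0^*} = \langle u,\hat\Lambda^*\mathcal{J}_{Y_0^*}\psi\rangle_{X_0,X_0^*} = \langle \mathcal{J}_{Y_0}^{-1}\hat\Lambda u,\psi\rangle_{Y_0,Y_0^*} = \langle \bar\Pi_t^{-1}\mathcal{J}_{Y_0}^{-1}\hat\Lambda u,\varphi\rangle_{Y_0,Y_0^*}
\]
reduces it to the second term on the right-hand side of the target identity.

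Combining these gives the weak derivative formula on the nose, and membership of $v:=\bar\Pi^{-1}(\Pi u)'-\bar\Pi^{-1}\mathcal{J}_{Y_0}^{-1}\hat\Lambda u$ in the appropriate Lebesgue space follows from the uniform boundedness of $\bar\Pi^{-1}$ (\eqref{ass:TtInvExistsAndBdd}--\eqref{ass:measurabilityOnTtInv}) together with the uniform bound $\|\hat\Lambda(t)u\|_{Y_0^{**}}\le C\|u\|_{X_0}$ from Remark~\ref{rem:generalcase}(iii). The main technical obstacle I anticipate is Step 6, the classical integration by parts applied to $\psi$, since $\psi$ only has Sobolev--Bochner regularity rather than $C^\infty$; the compact support of $\psi$ in the open interval $(0,T)$ is what makes the mollification argument clean, with no boundary contributions to worry about.
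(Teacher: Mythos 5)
Your proof is correct and follows essentially the same route as the paper's: introduce $\psi := (\bar\Pi^\dagger)^{-1}\varphi$, invoke \eqref{ass:remainsInSpace} to place it in $\mathcal{W}^{p,p\land q}(Y_0^*,Y_0^*)$, apply Lemma~\ref{lem:splitLemma} to decompose $\varphi'$, and chase the dualities to land on the claimed formula for $u'$. The only cosmetic differences are that you first transfer $\bar\Pi$ onto the test function before decomposing, and that you explicitly flag (and justify via mollification and the compact support of $\psi$) the classical integration-by-parts step between $\Pi u\in\mathcal{W}^{p,p\land q}(Y_0,Y_0)$ and $\psi$, which the paper compresses into a single displayed identity without comment.
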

\begin{proof}
Let $\varphi\in \mathcal D((0,T); Y_0^*)$ and define $v := (\bar \Pi^\dagger)^{-1} \varphi$. By \eqref{ass:remainsInSpace}, $v \in \mathcal{W}^{p, p\land q}(Y_0^*, Y_0^*)$ 
and we can apply Lemma \ref{lem:splitLemma} to get (noting that $p \land (p \land q) = p \land q$)
\begin{align*}
\varphi' = \bar \Pi^\dagger v' + \hat \Lambda^* \mathcal{J}_{Y_0^*} v \quad\text{in $L^{p \land q}(X_0^*)$}. 
\end{align*}
Taking $u$ as stated, noting that
\begin{align*}
\langle u(t), \hat \Lambda^*(t) \mathcal{J}_{Y_0^*} v(t) \rangle_{X_0, X_0^*}
&= \langle \hat \Lambda(t) u(t),  v(t) \rangle_{Y_0, Y_0^*}
\end{align*}
(with the final equality as explained in Remark \ref{rem:newRemOnAss}), we find
\begin{align*}
    \int_0^T \langle u(t), \varphi'(t)\rangle_{X_0,X_0^*}  
    &=  -\int_0^T \langle \bar \Pi_t^{-1}(\Pi_t u(t))', \varphi(t)\rangle_{Y_0,Y_0^*} - \langle \bar \Pi_t^{-1}\hat\Lambda(t) u(t), \varphi(t) \rangle_{Y_0, Y_0^*}.
\end{align*}
Assumptions \eqref{ass:TtInvExistsAndBdd}, \eqref{ass:measurabilityOnTtInv} and the assumptions on $\hat\Lambda$ imply that 
\[\bar \Pi^{-1} \left((\Pi u)'\right)\in L^q(0,T; Y_0) \quad\text{and}\quad   \bar \Pi^{-1} \hat\Lambda u\in L^p(0,T;Y_0)\]
and thus $u\in \mathcal{W}^{p, \, p\land q}(X_0, Y_0)$ as desired. 
\end{proof}

Finally, we are able to prove the main result.
\begin{proof}[Proof of Theorem \ref{thm:equivalence}]
Suppose $u\in\mathcal{W}^{p,q}(X_0, Y_0)$, then immediately $\phi^X_{(\cdot)} u(\cdot)\in L^p_X$, so it remains to prove that this function has a weak time derivative in $L^q_Y$. Let $\eta\in\mathcal{D}_{Y^*}$, then
\begin{align*}
\int_0^T \big\langle \phi_t^X u(t), \dot \eta(t)\big\rangle_{X(t), \, X^*(t)} 
&= - \int_0^T \left\langle \phi_t^{Y} \big( \bar{\Pi}_tu'(t)\big), \eta(t)\right\rangle_{Y(t),\, Y^*(t)}  - \int_0^T \lambda(t; \phi_t^X u(t), \eta(t)),
\end{align*}
from where we conclude that $t\mapsto \phi_t^X u(t)$ has a weak time derivative 
as desired. 

For the converse direction, we begin by fixing $u\in \W^{p,q}(X,Y)$. By definition, 
for any $\eta\in\mathcal{D}_{Y^*}$, 
\begin{align*}
\int_0^T \big\langle \dot u(t), \eta(t)\big\rangle_{Y(t), Y^*(t)}  = -\int_0^T \big\langle u(t), \dot \eta(t)\big\rangle_{X(t),  X^*(t)}  - \int_0^T \lambda(t; u(t), \eta(t)),
\end{align*}
which we can pull back, arguing as in the previous paragraph and rearrange to obtain
\begin{align*}
\int_0^T \left\langle \phi_{-t}^Y \dot u(t), \left(\phi_{t}^{Y}\right)^* \eta(t)\right\rangle_{Y_0, Y_0^*} + \Big\langle \hat{\Lambda}(t)\phi_{-t}^X &u(t), \left(\phi_{t}^{Y}\right)^* \eta(t)\Big\rangle_{Y_0^{**}, Y_0^*}  = - \int_0^T \left\langle \Pi_t\left(\phi_{-t}^X u(t)\right), \big(\left(\phi_{t}^{Y}\right)^* \eta(t)\big)'\right\rangle_{Y_0, Y_0^*}.  
\end{align*}
Letting $\varphi := \left(\phi_{(\cdot)}^{Y}\right)^* \eta \in \mathcal{D}((0,T); Y_0^*)$ and using assumption \eqref{ass:rangeOfJ}, this is equivalent to
\begin{align*}
\int_0^T \left\langle \phi_{-t}^Y \dot u(t) + \mathcal{J}_{Y_0}^{-1} \hat\Lambda(t)\phi_{-t}^X u(t), \varphi(t)\right\rangle_{Y_0, Y_0^*} = - \int_0^T \left\langle \Pi_t\left(\phi_{-t}^X u(t)\right), \varphi'(t) \right\rangle_{Y_0, Y_0^*} ,
\end{align*}
from where we conclude that 
\begin{align*}
    \big(\Pi_{t} \phi^X_{-t} u(t) \big)' = \phi_{-t}^Y \dot u(t) + \mathcal{J}_{Y_0}^{-1} \hat\Lambda(t) \phi_{-t}^X u(t)
\end{align*}
with $\Pi\phi^X_{-(\cdot)} u\in \mathcal{W}^{p,p \land q}(X_0, Y_0)$. By Proposition \ref{prop:evolvingreverse} it now follows that $\phi_{-(\cdot)}^X u\in \mathcal{W}^{p,q}(X_0, Y_0)$.

The equivalence of norms is a result of the uniform boundedness of the flow maps and their inverses, and from the assumptions on $\hat \Lambda$ and $\bar \Pi$.
\end{proof}





\section{The Gelfand triple $X(t)\subset H(t)\subset X^*(t)$ setting}\label{sec:gelfandtriple}
We now specialise the theory and results of \S \ref{sec:generalCase} to the important case of a Gelfand triple
\begin{align*}
    X(t) \ctsDense H(t) \cts X^*(t)
\end{align*}
for all $t \in [0,T]$, that is, $X(t)$ is a reflexive Banach space continuously and densely embedded into a Hilbert space $H(t)$ which has been identified with its dual via the Riesz map. This setup arises frequently in the study of evolutionary variational problems and several concrete examples will be given in \S \ref{sec:examples} and \S \ref{sec:application}.

In the context of \S \ref{sec:generalCase}, we are taking $Y(t) := X^*(t)$ with the inclusion of $X(t)$ into $Y(t)$ given through compositions of the maps involved in the Gelfand triple.  Naturally, we wish to make use of the theory developed in the previous sections and the basic assumptions that one needs (namely, Assumption \ref{ass:generalcaseChanges}) translated into this Gelfand triple framework are as follows.
\begin{ass}\label{ass:gelfandTripleCase}
For all $t \in [0,T]$, assume the existence of maps
\[\phi_t^H\colon H_0 \to H(t), \qquad \phi_t^X := \phi_t^H|_{X_0} \colon X_0 \to X(t)\] 
such that
\[(H(t), \phi_t^H)_{t \in [0,T]} \quad\text{and}\quad (X(t), \phi_t^X)_{t \in [0,T]} \text{ are  compatible pairs.}\]
We assume the measurability condition \eqref{ass:measurabilityOfDual}, i.e.,
\[t \mapsto \norm{\phi_{-t}^*f}{X^*(t)} \text{ is measurable for all $f \in X_0^*$}.\]
Furthermore, suppose that
    \begin{itemize}
    \item[(i)] for fixed $u\in H_0$, 
    \[t\mapsto \|\phi_t^H u\|_{H(t)}^2 \text{ is continuously differentiable};\]
    \item[(ii)] 
for fixed $t\in [0,T]$, 
    \[(u,v)\mapsto \dfrac{\partial }{\partial t}(\phi_t^H u, \phi_t^H v)_{H(t)} \text{ is continuous,}
    \]
    and there exists $C>0$ such that, for almost all $t\in [0,T]$ and for any $u, v\in H_0$,
    \begin{align}\label{eq:lambdaGF}
        \left|\dfrac{\partial }{\partial t}(\phi_t^H u, \phi_t^H v)_{H(t)}\right| \leq C \|u\|_{H_0} \|v\|_{H_0}.
    \end{align}
\end{itemize}
\end{ass}
It follows that
\[(X^*(t), (\phi_{-t}^X)^*)_{t\in [0,T]}\text{ is a compatible pair.}\] 
Under the final assumption above, the map $\hat{\Lambda}(t)\colon X_0 \to X_0^*$, defined in Definition \ref{defn:generalOperators}, is in fact such that $\hat{\Lambda}(t)\colon H_0\to H_0^*$ is bounded and linear with
\begin{align*}
 \langle\hat{\Lambda}(t)u, v\rangle_{H_0^*, H_0} = \hat{\lambda}(t; u, v) \quad \forall u, v\in H_0.
\end{align*}
\begin{remark}\label{rem:changesGelfandTriple}
Parts (i) and (ii) of Assumption \ref{ass:gelfandTripleCase} say that, with\footnote{See Remark \ref{rem:observationsHatBetc} (i).}
\begin{equation}\label{eq:PiTAsHMap}
\Pi_t = (\phi_t^H)^A \phi_t^H, \quad \pi(t;u,v) = (\phi_t^H u, \phi_t^H v)_{H(t)},
\end{equation}
the map $(u,v)\mapsto \hat\lambda(t; u, v) = \partial \pi(t; u, v)\slash \partial t  \text{ is continuous}$
    and there exists $C>0$ such that, for almost all $t\in [0,T]$ and for any $u, v\in H_0$,
    \begin{align*}
        |\hat\lambda(t; u,v)| \leq C \|u\|_{H_0} \|v\|_{H_0}.
    \end{align*}
\end{remark}
Taking into view the Hilbert structure, the definition of the weak time derivative in \eqref{eq:genweakderiv} becomes the following.
\begin{definition}[Weak time derivative]
We say $u\in L^1_X$ has a \emph{weak time derivative} $v\in L^1_{X^*}$ if 
\begin{align*}
\int_0^T  (u(t), \dot \eta(t) )_{H(t)}  &= -\int_0^T \langle v(t), \eta(t)\rangle_{X^*(t),  X(t)} - \int_0^T \lambda(t; u(t), \eta(t)) \quad \forall \eta\in \mathcal{D}_{X}.
\end{align*}
\end{definition}
It is convenient to state Proposition \ref{prop:temamType} applied to this setting.
\begin{prop}[Characterisation of the weak time derivative]\label{prop:temamTypeGT}
Assume \ref{ass:rangeOfJ}. Let $u \in L^p_X$ and $g \in L^q_{X^*}$. Then $\dot u =g$ if and only if
\begin{equation*}
\frac{d}{dt}(u(t), \phi_{t}^H v)_{H(t)} = \langle g(t), \phi_{t}^X v \rangle_{X^*(t), X(t)} + \lambda(t; u(t), \phi_{t}^H v) \quad \forall  v \in X_0.
\end{equation*}
\end{prop}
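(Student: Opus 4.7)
The plan is to deduce Proposition \ref{prop:temamTypeGT} directly from Proposition \ref{prop:temamType} by translating the abstract characterisation \eqref{eq:anotherAltForDerivative} into the Gelfand triple language. In the setting of this section, $Y(t):=X^*(t)$ with $\phi_t^Y = (\phi_{-t}^X)^*$, and the assumption \eqref{ass:rangeOfJ} is exactly the hypothesis required to apply Proposition \ref{prop:temamType}. It therefore remains only to reinterpret each of the three terms appearing in \eqref{eq:anotherAltForDerivative} in the present framework.

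First I would identify $(\phi_{-t}^Y)^*$ on $Y_0^*$. Since $\phi_t^Y = (\phi_{-t}^X)^*$, its inverse is $\phi_{-t}^Y = (\phi_t^X)^*$, and dualising once more gives $(\phi_{-t}^Y)^* = ((\phi_t^X)^*)^*$. By reflexivity of $X_0$ (which is in force in the Gelfand triple setting), the canonical injection $\mathcal{J}_{X_0}$ is an isometric isomorphism, and under this identification $Y_0^* \cong X_0$ and $(\phi_{-t}^Y)^* \equiv \phi_t^X\colon X_0 \to X(t)$. Moreover, by Assumption \ref{ass:gelfandTripleCase}, $\phi_t^X v = \phi_t^H v$ for every $v \in X_0$, so the test elements $(\phi_{-t}^Y)^* v$ in Proposition \ref{prop:temamType} can equivalently be written as $\phi_t^X v$ or $\phi_t^H v$ as $v$ ranges over $X_0$.

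Next I would translate each of the three duality pairings in \eqref{eq:anotherAltForDerivative}. The pairing on the left-hand side, $\langle u(t), (\phi_{-t}^Y)^* v\rangle_{X(t),X^*(t)}$, becomes $\langle u(t), \phi_t^H v\rangle_{X(t),X^*(t)}$; since $\phi_t^H v \in X(t) \ctsDense H(t) \cts X^*(t)$ and $H(t)$ is identified with its dual via Riesz, the standard Gelfand identification reduces this pairing to the inner product $(u(t), \phi_t^H v)_{H(t)}$. The second pairing $\langle g(t), (\phi_{-t}^Y)^* v\rangle_{Y(t),Y^*(t)}$ unpacks directly to $\langle g(t), \phi_t^X v\rangle_{X^*(t),X(t)}$ since $Y(t)=X^*(t)$ and $Y^*(t)=X(t)$. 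For the final term, I would use that $\Lambda(t)$ is by construction the pushforward operator realising $\langle \Lambda(t) u(t), \eta\rangle_{Y(t),Y^*(t)} = \lambda(t; u(t), \eta)$ (cf.\ Definition \ref{defn:generalOperators} and the paragraph after it, together with \eqref{ass:rangeOfJ}); choosing $\eta = \phi_t^X v = \phi_t^H v$ produces $\lambda(t; u(t), \phi_t^H v)$.

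Substituting these three translations into \eqref{eq:anotherAltForDerivative} yields exactly the characterisation stated in Proposition \ref{prop:temamTypeGT}, and since Proposition \ref{prop:temamType} is an \emph{if and only if}, the same holds here. The only step requiring genuine care is the first: tracking the chain of dualisations $(\phi_{-t}^Y)^* = \phi_t^X$ under reflexivity, and verifying that the Hilbert space identification $\langle x, h\rangle_{X(t),X^*(t)} = (x,h)_{H(t)}$ for $h \in X(t) \subset H(t) \subset X^*(t)$ applies to the specific element $\phi_t^H v$. Everything else is bookkeeping, and I do not anticipate any deeper obstacle.
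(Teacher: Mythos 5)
Your proposal is correct and takes essentially the same route as the paper: the paper explicitly introduces Proposition~\ref{prop:temamTypeGT} with the line ``It is convenient to state Proposition~\ref{prop:temamType} applied to this setting'' and offers no further proof, so the content \emph{is} the specialisation $Y(t)=X^*(t)$, $\phi_t^Y=(\phi_{-t}^X)^*$, together with the reflexivity identification $(\phi_{-t}^Y)^*\equiv\phi_t^X=\phi_t^H|_{X_0}$ and the Gelfand reduction $\langle u,\phi_t^H v\rangle_{X(t),X^*(t)}=(u,\phi_t^H v)_{H(t)}$, which is exactly what you carry out. The only caveat worth noting is that assumption~\eqref{ass:rangeOfJ} is in fact automatic here (since $Y_0=X_0^*$ is reflexive, as remarked in Remark~\ref{rem:newRemOnAss}), so citing it as a genuine hypothesis rather than a consequence of the setting is slightly redundant but harmless.
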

\subsection{Differentiating the inner product: transport theorem}
We now specialise Theorem \ref{thm:transport} to this setting. We first obtain the extra regularity $\W(X, X^*)\hookrightarrow C^0_H$ as a consequence of the evolving space equivalence property, and then use it to obtain a general statement.

\begin{theorem}[Transport theorem in the Gelfand triple setting]\label{thm:transportGelfandTriple}
Let $p \in [1,\infty]$ and suppose that there exists an evolving space equivalence between $\mathcal{W}^{p,p'}(X_0, X_0^*)$ and $\W^{p,p'}(X, X^*)$. Then
\begin{itemize}
    \item[(i)] the embedding $\W^{p,p'}(X, X^*)\hookrightarrow C^0_H$ is continuous;
    \item[(ii)] given $u, v\in \W^{p,p'}(X, X^*)$, the map 
\begin{align}\label{eq:TTgelfand1}
    t\mapsto (u(t), v(t))_{H(t)}
\end{align}
is absolutely continuous and we have, for almost all $t\in [0,T]$,
\begin{align}
\dfrac{d}{dt} (u(t), v(t))_{H(t)} = \big\langle \dot u(t), v(t)\big\rangle_{X^*(t),  X(t)} + \big\langle u(t), \dot v(t)\rangle_{X(t),  X^*(t)} + \lambda(t; u(t), v(t))\label{eq:transportTheoremGelfandTriple}.
\end{align}
\end{itemize}

\end{theorem}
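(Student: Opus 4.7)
The plan is to prove (i) by combining the evolving space equivalence with the classical Lions--Magenes embedding, and to prove (ii) via density of smooth functions, Lemma \ref{lem:generalprop}, and a limiting argument in which the $C^0_H$-embedding from (i) is exploited to handle the $\lambda$-term.

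For (i): given $u \in \W^{p,p'}(X, X^*)$, the evolving space equivalence produces $\tilde u := \phi^X_{-(\cdot)} u(\cdot) \in \mathcal{W}^{p,p'}(X_0, X_0^*)$ with equivalent norm. The standard Lions--Magenes result for the Gelfand triple $X_0 \subset H_0 \subset X_0^*$ gives $\tilde u \in C^0([0,T]; H_0)$ continuously. Since $\phi_t^X = \phi_t^H|_{X_0}$ and the $X$- and $H$-pullbacks therefore agree on $X(t)$, one has $\phi^H_{-(\cdot)} u(\cdot) = \tilde u$, so by the very definition of $C^0_H$ this yields $u \in C^0_H$ with $\|u\|_{C^0_H} \leq C \|u\|_{\W^{p,p'}(X, X^*)}$.

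For (ii): I would first establish, for smooth approximants $u_n, v_n \in C^1_X$, the integrated transport identity
\[
(u_n(t), v_n(t))_{H(t)} - (u_n(s), v_n(s))_{H(s)} = \int_s^t \!\left[\langle \dot u_n, v_n\rangle_{X^*(\tau), X(\tau)} + \langle u_n, \dot v_n\rangle_{X(\tau), X^*(\tau)} + \lambda(\tau; u_n, v_n)\right] d\tau.
\]
Since $X(t)$ is reflexive, $X^{**}(t) = X(t)$ and hence $C^1_X = C^1_{(X^*)^*}$, so Lemma \ref{lem:generalprop} applied with $Y := X^*$ directly yields the pointwise derivative identity in the $X(t)$--$X^*(t)$ duality; the Gelfand triple identification collapses each such pairing between elements of $X(t)$ to $(\cdot, \cdot)_{H(t)}$, and integrating in time gives the displayed equation. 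Applying Lemma \ref{lem:density} to select $u_n, v_n \in C^1_X$ with $u_n \to u$ and $v_n \to v$ in $\W^{p,p'}(X, X^*)$, I would then pass to the limit in that identity. The LHS converges uniformly in $s, t$ thanks to (i) and the bilinearity of $(\cdot, \cdot)_{H(t)}$. The first two integrals on the right converge by Hölder's inequality in time, using $\dot u_n \to \dot u$ in $L^{p'}_{X^*}$ together with $v_n \to v$ in $L^p_X$ (and symmetrically). The resulting limit identity exhibits $(u(\cdot), v(\cdot))_{H(\cdot)}$ as the antiderivative of an $L^1$-function, which delivers absolute continuity on $[0,T]$ together with the a.e.\ derivative formula.

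The main obstacle is the $\lambda$-term when $p \in [1,2)$: the naive Hölder exponent matching used in the general Theorem \ref{thm:transport} forces $p \geq 2$, which is precisely why the statement there was restricted. In our Gelfand setting the rescue comes from part (i): pushing forward the $H_0$-bound in Assumption \ref{ass:gelfandTripleCase}(ii) gives the pointwise estimate $|\lambda(\tau; u_n, v_n)| \leq C \|u_n(\tau)\|_{H(\tau)} \|v_n(\tau)\|_{H(\tau)}$, and the uniform $C^0_H$-convergence of $u_n$ and $v_n$ then yields $\int_s^t \lambda(\tau; u_n, v_n)\, d\tau \to \int_s^t \lambda(\tau; u, v)\, d\tau$ uniformly in $s, t$, completing the passage to the limit for all $p \in [1,\infty]$.
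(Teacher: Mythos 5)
Your proposal is correct and follows essentially the same route as the paper: part (i) via the evolving space equivalence, Lions--Magenes, and $\phi^H_t|_{X_0}=\phi^X_t$; part (ii) via density of $C^1_X$ (Lemma~\ref{lem:density}), Lemma~\ref{lem:generalprop}, and, crucially, using the $C^0_H$ embedding from (i) together with the $H_0$-level bound \eqref{eq:lambdaGF} on $\hat\lambda$ to control the $\lambda$-term and thereby extend the exponent range to $p\in[1,2)$. The only cosmetic difference is that you pass to the limit in the integrated identity (which directly yields absolute continuity) rather than in the distributional form, and you spell out the $Y^*\cong X$ identification; both are implicit in the paper's terser argument.
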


\begin{proof}
The proof of (i) follows from 
\begin{align*}
    u\in \W^{p,p'}(X,X^*) \Longleftrightarrow \phi_{-(\cdot)}^X u\in \mathcal W^{p,p'}(X_0, X_0^*) \Longrightarrow  \phi_{-(\cdot)}^X u\in C\left([0,T]; H_0\right) \Longleftrightarrow u\in C^0_H,
\end{align*}
where we have used the assumption and the fact that $\phi_t^H|_{X_0} = \phi_t^X$. We now turn to the proof of (ii). The fact that \eqref{eq:TTgelfand1} is an element of $L^1(0,T)$ and that \eqref{eq:transportTheoremGelfandTriple} is the weak time derivative of \eqref{eq:TTgelfand1} follows as in the proof of Theorem \ref{thm:transport}, so it suffices now to check that the right-hand side of \eqref{eq:transportTheoremGelfandTriple} is also in $L^1(0,T)$. Due to (i) and the stronger assumption \eqref{eq:lambdaGF} we may conclude with
\begin{equation*}
    \int_0^T |\lambda(t; u(t), v(t))| \;\mathrm{d}t \leq C \int_0^T \|u(t)\|_{H(t)} \|v(t)\|_{H(t)} \;\mathrm{d}t \leq C\, T \, \|u\|_{C^0_H} \|v\|_{C^0_H}.\qedhere
\end{equation*}
\end{proof}
Let us now study criteria for the spaces $\W(X,X^*)$ and $\mathcal{W}(X_0,Y_0)$ to be equivalent like in \S \ref{sec:criteriaESE}.
\subsection{Criteria for evolving space equivalence}

The evolving space equivalence criteria of Theorem \ref{thm:equivalence} tailored to the situation under consideration are as follows. It is worth pointing out that these conditions are considerably easier to check in practice than the ones given in \cite[Theorem 2.33]{AlpEllSti15a}.
\begin{theorem}[Criteria for evolving space equivalence in the Gelfand triple setting]\label{thm:ESEGelfandTriple}
Let Assumption \ref{ass:gelfandTripleCase} hold. If for all $t \in [0,T]$,
\begin{align}
    &\Pi_t\colon X_0 \to X_0 \text{ is bounded uniformly in $t$}\label{ass:gtTtrange},\\
    &\Pi_t^{-1}\colon X_0 \to X_0 \text{ exists and is bounded uniformly in $t$}\label{ass:gtTtInvertible},\\
    &\Pi_{(\cdot)}^{-1} u \in \mathcal{M}(0,T;X_0) \text{ for all $u \in X_0$} \label{ass:gtmeasurabilityOnTtAndInv},\\
    &\Pi^{-1}\colon \mathcal{W}^{p,q}(X_0, X_0) \to \mathcal{W}^{p, p\land q}(X_0, X_0) \label{ass:gtremainsInSpace},
\end{align}
then $\W^{p,q}(X,X^*)$ and $\mathcal{W}^{p,q}(X_0,X_0^*)$ are equivalent.

\medskip

\noindent More precisely,  with $\bar \Pi_t$ as in \eqref{eq:defnOfBarPiGT},
\begin{itemize}
    \item[(i)] under \eqref{ass:gtTtrange}, if $u\in \mathcal{W}^{p,q}(X_0,X_0^*)$, then $\phi_{(\cdot)}^X u\in \W^{p,q}(X,X^*)$ and 
        \begin{align}
            \md \phi_t^X u(t) = (\phi_{-t}^X)^* \bar \Pi_t u'(t).\label{eq:equiv1GT}
        \end{align}
    \item[(ii)]     
    under \eqref{ass:gtTtrange}--\eqref{ass:gtremainsInSpace},  if $u\in \W^{p,q}(X,X^*)$, then $\phi_{-(\cdot)}^X u\in \mathcal W^{p,q}(X_0,X_0^*)$ and 
        \begin{align}
            \big(\phi_{-t}^X u(t)\big)' = \bar \Pi_t^{-1} (\phi_{t}^X)^* \dot u(t).\label{eq:equiv2GT}
        \end{align}
\end{itemize}
\end{theorem}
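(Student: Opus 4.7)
The plan is to derive this result as a specialisation of Theorem \ref{thm:equivalence} to the choice $Y(t) := X^*(t)$ with pushforward $\phi_t^Y := (\phi_{-t}^X)^*$, so that $Y_0 = X_0^*$. Since $X_0$ is reflexive (being part of the Gelfand triple), $Y_0 = X_0^*$ is also reflexive, so the range condition \eqref{ass:rangeOfJ} is automatic by Remark \ref{rem:newRemOnAss}. It also remains to check that Assumption \ref{ass:generalcaseChanges} is met in this setting: this follows from Assumption \ref{ass:gelfandTripleCase}, since by \eqref{eq:PiTAsHMap} one has $\pi(t;u,v) = (\phi_t^H u, \phi_t^H v)_{H(t)}$, and the polarisation identity together with item (i) of Assumption \ref{ass:gelfandTripleCase} produces the differentiability of $t\mapsto \pi(t;u,v)$, with the continuity and boundedness of $\hat\lambda$ imported directly from item (ii).

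The first main task is to construct the extension $\bar \Pi_t\colon X_0^* \to X_0^*$ of $\Pi_t\colon X_0 \to X_0$ and verify that it satisfies \eqref{ass:onrangeTt}–\eqref{ass:measurabilityOnTtInv}. Using \eqref{eq:PiTAsHMap}, $\Pi_t = (\phi_t^H)^A \phi_t^H$ extends by continuity to a bounded self-adjoint operator on $H_0$ and, via the Gelfand triple structure and duality, further to a bounded linear map on $X_0^*$. Uniform boundedness \eqref{ass:onrangeTt} comes from \eqref{ass:gtTtrange} together with the uniform bounds on the flow maps in Assumption \ref{ass:def_compatibility}. Measurability \eqref{ass:measurabilityOnTt} then follows from Remark \ref{rem:measurabilityCanBeDropped}(i), exploiting the density of $X_0$ in $X_0^*$ through $H_0$. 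Applying the same procedure to $\Pi_t^{-1}$ using \eqref{ass:gtTtInvertible}–\eqref{ass:gtmeasurabilityOnTtAndInv} gives \eqref{ass:TtInvExistsAndBdd} and \eqref{ass:measurabilityOnTtInv}.

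The main obstacle will be verifying \eqref{ass:remainsInSpace}, which demands time regularity of $(\bar \Pi^\dagger)^{-1}$ acting on $\mathcal{W}^{p,q}(X_0^{**}, X_0^{**})$. Using reflexivity to identify $X_0^{**}\cong X_0$, the pointwise map $(\bar \Pi_t^*)^{-1}\colon X_0 \to X_0$ can, by the self-adjoint structure of $\Pi_t$ on $H_0$, be identified with $\Pi_t^{-1}\colon X_0 \to X_0$. Under this identification, the abstract hypothesis \eqref{ass:remainsInSpace} becomes precisely the postulated condition \eqref{ass:gtremainsInSpace}, which supplies the required Sobolev--Bochner regularity of the inverse Nemytskii map.

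With all hypotheses of Theorem \ref{thm:equivalence} in force, the evolving space equivalence between $\W^{p,q}(X,X^*)$ and $\mathcal{W}^{p,q}(X_0,X_0^*)$ follows, along with the equivalence of norms. The explicit derivative formulas \eqref{eq:equiv1GT} and \eqref{eq:equiv2GT} are then read off from items (i) and (ii) of Theorem \ref{thm:equivalence} by substituting $\phi_t^Y = (\phi_{-t}^X)^*$ and $\phi_{-t}^Y = (\phi_t^X)^*$.
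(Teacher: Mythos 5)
Your proposal is correct and follows essentially the same route as the paper: you reduce to Theorem~\ref{thm:equivalence} by constructing $\bar\Pi_t$ as the (unique) extension of $\Pi_t$ to $X_0^*$ — which is precisely the dual $\Pi_t^\#$ of $\Pi_t\colon X_0\to X_0$, with the self-adjointness of $\Pi_t$ on $H_0$ guaranteeing that it really is an extension — and then invoke reflexivity to identify $\bar\Pi_t^*$ with $\Pi_t$, so that \eqref{ass:remainsInSpace} collapses to \eqref{ass:gtremainsInSpace}. The paper carries out the same steps, with the extension and its inverse built explicitly as duals of $\Pi_t$ and $\Pi_t^{-1}$.
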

\begin{proof}
The idea is to verify the assumptions of Theorem \ref{thm:equivalence}. Since we are in the reflexive setting, assumption \eqref{ass:rangeOfJ} is automatic and Remark \ref{rem:daggerVsDual} applies and we do not need to distinguish between $\Pi^\dagger$ and $\Pi^*$. 
Assumption \eqref{ass:gtTtrange} implies that the existence of the dual $\Pi^\#_t\colon X_0^* \to X_0^*$ to $\Pi_t$ considered as an operator $\Pi_t\colon X_0 \to X_0$ which is defined (as usual) by
\[\langle \Pi_t^\# f, x \rangle_{X_0^*,X_0} := \langle f, \Pi_t x \rangle_{X_0^*, X_0}\qquad \forall f \in X_0^*,\; x \in X_0.\]
Now, if $f \in X_0$, the right-hand side equals $(f, \Pi_t x)_{H_0}.$ On the other hand, because \eqref{ass:gtTtrange} is in force, by the self-adjoint property of  $\Pi_t\colon X_0 \to X_0^*$, 
\[\langle \Pi_t f, x \rangle_{X_0^*,X_0} = (f, \Pi_t x)_{H_0} \qquad \forall f, x \in X_0.\]
This shows that $\Pi_t^\#|_{X_0}  \equiv \Pi_t$ and hence we may take as an extension (of $\Pi_t$)
\begin{equation}\label{eq:defnOfBarPiGT}
\bar \Pi_t := \Pi_t^\#.
\end{equation}
Observe that $\bar \Pi_t \colon X_0^* \to X_0^* \text{ is bounded uniformly in $t$}$ because $\Pi_t \colon X_0 \to X_0$ is bounded uniformly by assumption and taking the dual preserves norms. This gives \eqref{ass:onrangeTt}. The measurability assumption \eqref{ass:measurabilityOnTt} follows by Remark \ref{rem:measurabilityCanBeDropped}.

Let us now see that the inverse of $\bar \Pi_t$ exists and that \eqref{ass:TtInvExistsAndBdd} is verified. Thanks to \eqref{ass:gtTtInvertible}, we may define $(\Pi_t^{-1})^\#\colon X_0^* \to X_0^*$ as the dual of $\Pi_t^{-1}\colon X_0 \to X_0$. 
We also see that, arguing as above,
\[(\Pi_t^{-1})^\#|_{X_0} = (\Pi_t^{-1})^* = \Pi_t^{-1},\]
i.e., $(\Pi_t^{-1})^\#$ extends $\Pi_t^{-1}$. We claim that $(\Pi_t^{-1})^\#$ is indeed the inverse of $\bar \Pi_t$. To see this, take $y \in X_0^*$ and a sequence $x_n \in X_0$ with $x_n \to y$ in $X_0^*.$ It follows that 
\[(\Pi_t^{-1})^\#\bar\Pi_t x_n = \Pi_t^{-1}\Pi_t x_n = x_n \to y\]
but by continuity, the left-hand side converges to $(\Pi_t^{-1})^\#\bar\Pi_t y$, and hence we have shown that $(\Pi_t^{-1})^\# = (\bar\Pi_t)^{-1}$ (in the sense of the left inverse; the right inverse follows by the same argument). The remaining claims in assumption \eqref{ass:TtInvExistsAndBdd} follow by the same reasoning as above. Assumption \eqref{ass:measurabilityOnTtInv} on the measurability is implied by \eqref{ass:gtmeasurabilityOnTtAndInv} and (again) a density argument just as in Remark \ref{rem:measurabilityCanBeDropped}. 

By reflexivity, it follows that $\bar \Pi_t^* \equiv \Pi_t$ and hence $(\bar \Pi_t^*)^{-1} = \Pi_t^{-1}$, so that \eqref{ass:gtremainsInSpace} directly gives \eqref{ass:remainsInSpace}. The conclusion now follows from Theorem \ref{thm:equivalence}.
\end{proof}
\begin{remark}
It is important to emphasise that $\bar\Pi_t := \Pi^\#_t$ defined in the proof above is, in general, different to $\Pi_t^*$, the dual of $\Pi_t\colon X_0 \to X_0^*$. 
\end{remark}

\begin{remark}
The result above, and the more general Theorem \ref{thm:equivalence}, are a generalisation of the results previously obtained by the first and last authors in \cite[Theorem 2.33]{AlpEllSti15a}. Indeed, the assumptions in Theorems \ref{thm:equivalence} and \ref{thm:ESEGelfandTriple} imply the assumption in \cite[Theorem 2.33]{AlpEllSti15a} that $\Pi_t$ maps functions in $\mathcal W(X_0,X_0^*)$ to the same space, and are more detailed than those in \cite[Theorem 2.33]{AlpEllSti15a} making them easier to verify. With regards to the operators $\hat S(t), \hat D(t)$ appearing in \cite{AlpEllSti15a}, an analysis of our proof shows that we have $\hat S(t) = \bar\Pi_t$ and $\hat D(t)\equiv 0$, and thus the assumptions in \cite[Theorem 2.33]{AlpEllSti15a} on those operators are in fact guaranteed by those on $\bar\Pi_t$ in our result.
\end{remark}

\subsection{Alternative criteria for the assumption \eqref{ass:gtremainsInSpace}}
For some applications, it may turn out that \eqref{ass:gtremainsInSpace} (or \eqref{ass:remainsInSpace}) is too cumbersome or inconvenient to verify in practice (as will be the case in one of the examples we consider below), so we would like to have alternative criteria to replace it. This is what we focus on now. Defining the Hilbert adjoint $\xi_t := (\phi_{-t}^H)^A$, it follows that the pair $(H(t), \xi_t)_{t\in [0,T]}$ is compatible if
\begin{equation}\label{ass:forMeasOfAdj}
    \text{$H_0$ is separable or $t \mapsto \norm{(\phi_{-t}^H)^A u}{H(t)}$ is measurable for $u \in H_0$}.
\end{equation}
\begin{lem}\label{lem:altGTCriteria}
Under \eqref{ass:gtTtrange}, \eqref{ass:gtTtInvertible}, \eqref{ass:gtmeasurabilityOnTtAndInv}, \eqref{ass:forMeasOfAdj} and if for all $t \in [0,T]$,
\begin{align}
&\text{Assumption \ref{ass:generalcaseChanges} holds for the maps $\xi_t$,}\label{ass:newOneForcheck}\\
&\text{$\hat\Lambda^\xi(t)\colon X_0\to X_0^*$ satisfies $\hat\Lambda^\xi(t)(X_0)\subset X_0$,}\label{ass:rangeOfHatLambdaForNewCheck}\\
&(\Pi_t^{-1})^*\colon X_0 \to X_0 \text{ exists and is bounded uniformly in $t$}\label{ass:TtInvDualSmooth},
\end{align}
then assumption \eqref{ass:gtremainsInSpace} holds.
\end{lem}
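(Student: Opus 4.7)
My plan is to recognise $\Pi^{-1}$ as the natural ``$\Pi$''-operator associated with the compatible pair $(H(t), \xi_t)_t$ and then apply Proposition \ref{prop:newCrit1} to transfer its differentiability. The key algebraic observation is that the Hilbert adjoint is involutive, so $\xi_t^A = ((\phi_{-t}^H)^A)^A = \phi_{-t}^H$, and consequently
\begin{align*}
    \xi_t^A \xi_t = \phi_{-t}^H \, \xi_t = (\phi_t^H)^{-1}\big((\phi_t^H)^A\big)^{-1} = \Pi_t^{-1}.
\end{align*}
That is, in the sense of Remark \ref{rem:observationsHatBetc}(i), $\Pi^{-1}$ is precisely the ``$\Pi^\xi$''-operator of the pair $(H(t), \xi_t)_t$, which is compatible thanks to \eqref{ass:forMeasOfAdj}. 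The associated bilinear form $\pi^\xi(t; u, v) = (\xi_t u, \xi_t v)_{H(t)}$ satisfies Assumption \ref{ass:generalcaseChanges} by \eqref{ass:newOneForcheck}, producing the operator $\hat\Lambda^\xi(t)\colon H_0 \to H_0^*$ of the usual form.

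Given $u \in \mathcal{W}^{p,q}(X_0, X_0)$, the continuous embedding $X_0\cts H_0$ gives $u \in \mathcal{W}^{p,q}(H_0, H_0)$. I would then apply Proposition \ref{prop:newCrit1} in the single-family Hilbert setting with $X_0 = Y_0 = H_0$ (where the range condition \eqref{ass:rangeOfJ} is automatic by reflexivity) to conclude $\Pi^{-1} u \in \mathcal{W}^{p, p\land q}(H_0, H_0)$ with
\begin{align*}
    (\Pi^{-1} u)'(t) = \Pi_t^{-1} u'(t) + \mathcal{J}_{H_0}^{-1}\hat\Lambda^\xi(t) u(t) \quad \text{in } H_0.
\end{align*}
To lift this to $\mathcal{W}^{p, p\land q}(X_0, X_0)$: the inclusion $\Pi^{-1} u \in L^p(0,T; X_0)$ is immediate from \eqref{ass:gtTtInvertible}, while on the derivative side the first term $\Pi_t^{-1} u'(t)$ lies in $L^q(0,T; X_0)$ by \eqref{ass:gtTtInvertible} and the hypothesis $u' \in L^q(0,T; X_0)$, and the second term satisfies $\hat\Lambda^\xi(t) u(t) \in X_0$ pointwise by \eqref{ass:rangeOfHatLambdaForNewCheck}.

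The hard part will be upgrading this pointwise assertion to $\hat\Lambda^\xi u \in L^p(0,T; X_0)$, which requires a uniform-in-$t$ bound of $\hat\Lambda^\xi(t)\colon X_0 \to X_0$. For each fixed $t$ this follows from the closed graph theorem, since $\hat\Lambda^\xi(t)\colon X_0 \to X_0^*$ is continuous (Assumption \ref{ass:generalcaseChanges} applied to $\xi_t$) and its range is contained in $X_0\cts X_0^*$ by \eqref{ass:rangeOfHatLambdaForNewCheck}. For the uniformity in $t$, I plan to exploit the formal product-rule identity obtained by differentiating $\Pi_t^{-1}\Pi_t = I_{X_0}$, which suggests $\hat\Lambda^\xi(t) = -\Pi_t^{-1} \hat\Lambda(t) \Pi_t^{-1}$ with $\hat\Lambda$ the operator arising from $\phi_t^H$; combining the uniform bounds on $\Pi_t^{-1}$ from \eqref{ass:gtTtInvertible}, its dual from \eqref{ass:TtInvDualSmooth}, and on $\hat\Lambda$ from Assumption \ref{ass:gelfandTripleCase} should yield the required uniform $X_0\to X_0$ bound. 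Alternatively, a Banach--Steinhaus argument applied to the pointwise-bounded, measurable family $\{\hat\Lambda^\xi(t)\}_t$ should work. With this obstacle resolved, the three integrability assertions combine to give $(\Pi^{-1}u)' \in L^{p\land q}(0,T; X_0)$, hence $\Pi^{-1} u \in \mathcal{W}^{p, p\land q}(X_0, X_0)$, which is exactly \eqref{ass:gtremainsInSpace}.
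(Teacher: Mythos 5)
Your overall strategy of recognising $\Pi^{-1}$ as the $\Pi$-operator for the compatible pair $(H(t),\xi_t)$ and then invoking Proposition \ref{prop:newCrit1} is exactly the paper's idea, and your algebraic reduction $\xi_t^A\xi_t=\Pi_t^{-1}$ is correct. The decisive difference is where you apply the proposition: you take both reference spaces to be $H_0$, obtaining $\Pi^{-1}u\in\mathcal W^{p,p\land q}(H_0,H_0)$, and then try to lift this to $\mathcal W^{p,p\land q}(X_0,X_0)$; the paper instead applies Proposition \ref{prop:newCrit1} with both reference spaces already equal to $X_0$ (using the extension $\bar\Pi_t^\xi=(\Pi_t^{-1})^*$, whose boundedness on $X_0$ is precisely \eqref{ass:TtInvDualSmooth}). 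With that choice, the bound needed on the $\hat\Lambda^\xi$-term is the $X_0$-level estimate $|\hat\lambda^\xi(t;u,v)|\lesssim\|u\|_{X_0}\|v\|_{X_0^*}$, which is part of what Assumption \ref{ass:generalcaseChanges}(iii) for the $\xi_t$-maps provides at that level, so the conclusion $\Pi^{-1}u\in\mathcal W^{p,p\land q}(X_0,X_0)$ comes out in one shot. Your $H_0$-route discards this and therefore has to recover it afterwards, which is where the proposal runs into trouble.

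You correctly flag the ``hard part'' --- a uniform-in-$t$ bound for $\hat\Lambda^\xi(t)\colon X_0\to X_0$ --- but neither of your two suggested fixes actually delivers it. The product-rule identity $\hat\Lambda^\xi(t)=-\Pi_t^{-1}\hat\Lambda(t)\Pi_t^{-1}$ is formally right, but the chain of bounds only uses $\Pi_t^{-1}\colon X_0\to X_0$ (fine) and $\hat\Lambda(t)\colon H_0\to H_0$ (that is all Assumption \ref{ass:gelfandTripleCase} gives), so the composition yields a uniform $X_0\to H_0$ bound, not $X_0\to X_0$; you would additionally need a uniform $X_0\to X_0$ bound for $\hat\Lambda(t)$ itself, which is not assumed. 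The Banach--Steinhaus alternative runs into the same wall: the range condition \eqref{ass:rangeOfHatLambdaForNewCheck} only says $\hat\Lambda^\xi(t)x\in X_0$ for each $t$, not that $\sup_t\|\hat\Lambda^\xi(t)x\|_{X_0}<\infty$, and the regularity we know is only continuity of $t\mapsto\hat\Lambda^\xi(t)x$ in the $H_0$ (or $X_0^*$) topology, so compactness of $[0,T]$ does not give the pointwise bound you need as input to uniform boundedness. To make your route work you would either have to prove this missing uniform bound from the stated hypotheses (and it is not clear this is possible as they stand), or restructure as the paper does, applying Proposition \ref{prop:newCrit1} directly at the $X_0$-level so that the required $X_0\to X_0$ control on $\hat\Lambda^\xi(t)$ is encoded in the assumption \eqref{ass:newOneForcheck} rather than something you must derive.
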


\begin{proof}
These assumptions allow us to apply the theory developed in this section now with the maps $\xi_t$. The proof of Theorem \ref{thm:ESEGelfandTriple} above shows that $\bar \Pi_t := \Pi_t^\#\colon X_0^*\to X_0^*$ is an extension of $\Pi_t$ to $X_0^*$. Likewise, the map $\Pi^\xi = \xi_t^A \xi_t = \Pi_t^{-1}\colon X_0\to X_0$ has an extension $\bar \Pi^\xi_t = (\Pi_{t}^{-1})^*$ to $X_0^*$, which by \eqref{ass:TtInvDualSmooth} satisfies \eqref{ass:onrangeTt}, \eqref{ass:measurabilityOnTt}. We can, using \eqref{ass:rangeOfHatLambdaForNewCheck}, thus apply Proposition \ref{prop:newCrit1} to $\Pi^\xi=\Pi^{-1}$ (with the $X_0$ and $Y_0$ in the statement of the proposition chosen to be $X_0$), which implies \eqref{ass:gtremainsInSpace} 
with 
\begin{align*}
    &(\Pi^{-1}_t u(t))' = (\Pi^{-1}_t)^* u'(t) + \hat \Lambda^\xi(t) u(t) \quad \forall u\in \mathcal{W}^{p,q}(X_0, X_0).\qedhere
\end{align*}
\end{proof}

\begin{remark}
Note that the map $(\Pi_t^{-1})^*\colon X_0^*\to X_0^*$ relates to $\Pi_t^\#$ via   $(\Pi_t^{-1})^* = (\Pi_t^\#)^{-1}$.
\end{remark}

\subsection{Evolving space equivalence for the space $\W(X,H)$}

It can sometimes be the case that solutions to PDEs have the time derivative belonging not just to $L^q_{X^*}$ but the more regular space $L^q_H$. In this case, we say that solutions belong to  $\W(X,H)$ and it can be useful to know under which circumstances this space is equivalent to $\mathcal{W}(X_0,H_0)$. 
\begin{theorem}[Criteria for regularity of evolving space equivalence in the Gelfand triple setting]\label{thm:ESEGelfandTripleForH}
Let the assumptions of Theorem \ref{thm:ESEGelfandTriple} hold. 
Then $\W^{p,q}(X,H)$ and $\mathcal{W}^{p,q}(X_0,H_0)$ are equivalent.
\end{theorem}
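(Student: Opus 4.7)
The plan is to bootstrap off Theorem \ref{thm:ESEGelfandTriple} by tracking the extra $H$-regularity through the transformation formulas \eqref{eq:equiv1GT} and \eqref{eq:equiv2GT}. Since $H(t)\cts X^*(t)$ and $H_0\cts X_0^*$, every $u\in\W^{p,q}(X,H)$ automatically lies in $\W^{p,q}(X,X^*)$, and every $w\in\mathcal{W}^{p,q}(X_0,H_0)$ lies in $\mathcal{W}^{p,q}(X_0,X_0^*)$. Consequently Theorem \ref{thm:ESEGelfandTriple} already supplies the bijective correspondence $u\leftrightarrow \phi_{-(\cdot)}^X u(\cdot)$ together with explicit derivative formulas. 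The task reduces to checking that, whenever the derivative on one side lies in the pivot space, the transformed derivative inherits that regularity pointwise, with a uniform bound.

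The main identification I would use is that $\Pi_t = (\phi_t^X)^*\phi_t^X$, which a priori sends $X_0\to X_0^*$, factors through $H_0$ as $\Pi_t^H := (\phi_t^H)^A \phi_t^H\colon H_0 \to H_0$: for $u\in X_0$ and $v\in X_0$ one has, via the Gelfand identification $H_0\subset X_0^*$,
\[\langle \Pi_t u, v\rangle_{X_0^*,X_0} = (\phi_t^H u,\phi_t^H v)_{H(t)} = ((\phi_t^H)^A \phi_t^H u, v)_{H_0}.\]
Since $\phi_t^H$ is a uniform Banach isomorphism $H_0\to H(t)$, the self-adjoint operator $\Pi_t^H$ is uniformly coercive,
\[(\Pi_t^H v, v)_{H_0} = \norm{\phi_t^H v}{H(t)}^2 \geq C_X^{-2}\norm{v}{H_0}^2,\]
so $\Pi_t^H$ admits a bounded inverse on $H_0$ with norm at most $C_X^2$, uniformly in $t$. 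A short duality computation using the defining identity of $\bar\Pi_t=\Pi_t^\#$ then shows $\bar\Pi_t^{-1}|_{H_0} = (\Pi_t^H)^{-1}$. In the same way, $(\phi_t^X)^*|_{H(t)}$ is identified with $(\phi_t^H)^A\colon H(t)\to H_0$ and $(\phi_{-t}^X)^*|_{H_0}$ with $(\phi_{-t}^H)^A\colon H_0\to H(t)$, both uniformly bounded by $C_X$.

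With these identifications in place, both directions are direct. For the forward implication, given $u\in\W^{p,q}(X,H)$, Theorem \ref{thm:ESEGelfandTriple}(ii) yields
\[(\phi_{-t}^X u(t))' = \bar\Pi_t^{-1}(\phi_t^X)^*\dot u(t) = (\Pi_t^H)^{-1}(\phi_t^H)^A \dot u(t) \in H_0,\]
with pointwise bound $C_X^3\norm{\dot u(t)}{H(t)}$, hence the derivative lies in $L^q(0,T;H_0)$. For the converse, given $w\in\mathcal{W}^{p,q}(X_0,H_0)$, part (i) of the same theorem gives
\[\md \phi_t^X w(t) = (\phi_{-t}^X)^*\bar\Pi_t w'(t) = (\phi_{-t}^H)^A \Pi_t^H w'(t) \in H(t),\]
again with a pointwise bound, so $\md \phi_t^X w \in L^q_H$. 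Combining these pointwise estimates with the $L^p_X$--$L^p(0,T;X_0)$ equivalence of Theorem \ref{thm:LpX} yields the full norm equivalence.

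The only slightly delicate point I expect is verifying that $\bar\Pi_t^{-1}$, originally constructed only as a bounded map on $X_0^*$, actually preserves the subspace $H_0$ with a uniform $H_0\to H_0$ operator bound. This is not formally contained in the hypotheses inherited from Theorem \ref{thm:ESEGelfandTriple}, but is handled by the coercivity argument for $\Pi_t^H$ above, which uses nothing beyond the uniform compatibility constant $C_X$ of Assumption \ref{ass:gelfandTripleCase}. Everything else is a routine book-keeping of adjoints and the Gelfand triple identification.
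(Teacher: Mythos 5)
Your proof is correct and follows essentially the same route as the paper: both arguments rest on Theorem \ref{thm:ESEGelfandTriple}'s derivative formulas and on identifying $\bar\Pi_t|_{H_0}$ with the $H_0$-bounded operator $(\phi_t^H)^A\phi_t^H$, together with the adjoint identifications $(\phi_{\pm t}^X)^*|_{H}=(\phi_{\pm t}^H)^A$. The only cosmetic difference is that you establish invertibility of $\Pi_t^H$ on $H_0$ via uniform coercivity, whereas the paper reads off the explicit inverse $\Pi_t^{-1}=(\phi_t^H)^{-1}((\phi_t^H)^A)^{-1}$ and then simplifies the transformed derivatives all the way to $\md\phi_t^X u(t)=\phi_t^H u'(t)$ and $(\phi_{-t}^X u(t))'=\phi_{-t}^H\dot u(t)$; your bookkeeping stops one step earlier but the content is the same.
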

\begin{proof}
We first need some basic properties of the various adjoint and dual maps. An easy calculation shows that  $(\phi_t^X)^* |_{X(t)} \equiv (\phi_t^H)^A\colon X(t) \to H_0$ and hence, by density of $X(t) \subset H(t)$,
\begin{align}
(\phi_t^X)^* |_{H(t)} \equiv (\phi_t^H)^A.\label{eq:gtNiceRestr1}
\end{align}
By the same reasoning, 
\begin{align}
(\phi_{-t}^X)^* |_{H_0} \equiv (\phi_{-t}^H)^A.\label{eq:gtNiceRestr2}
\end{align}
Now, from the formula \eqref{eq:equiv1GT}, for $u \in \mathcal{W}^{p,q}(X_0, H_0) \subset \mathcal{W}^{p,q}(X_0, X_0^*)$, we have $\md \phi_t^X u(t) = (\phi_{-t}^X)^* \bar \Pi_t u'(t)$. Since $\bar\Pi_t = \Pi_t$ on $X_0$ and the right-hand side is well defined and bounded from $H_0$ into $H_0$ (see \eqref{eq:PiTAsHMap}), we have $\bar\Pi_t|_{H_0} = \Pi_t$ too. Thanks to this and utilising the additional regularity that $u' \in L^q(0,T;H_0)$, we get
\begin{align*}
    \md \phi_t^X u(t) &= (\phi_{-t}^X)^*  \Pi_t u'(t) = (\phi_{-t}^X)^*  (\phi_t^H)^A\phi_t^H u'(t) = \phi_t^H u'(t)
\end{align*}
where for the last equality we used \eqref{eq:gtNiceRestr2}.

In the other direction, taking $u \in \W^{p,q}(X,H)$, now the formula \eqref{eq:equiv2GT} gives\footnote{Let us note that $t \mapsto (\phi_t^H)^Aw(t)$ is measurable for every $w \in L^p_H$ from $(0,T)$ to $H_0$ since $(\phi_t^H)^Aw(t) = \Pi_t\phi_{-t}^H w(t)$ is measurable as remarked in \S \ref{sec:generalCase}.}
\[(\phi_{-t}^X u(t))' = \bar \Pi_t^{-1} (\phi_{t}^X)^* \dot u(t) = \bar \Pi_t^{-1} (\phi_t^H)^A\dot u(t) = \Pi_t^{-1} (\phi_t^H)^A\dot u(t) = ((\phi_t^H)^A\phi_t^H)^{-1} (\phi_t^H)^A\dot u(t) = \phi_t^H \dot u(t)\]
where we made use of \eqref{eq:gtNiceRestr1} and the fact that $\bar\Pi_t^{-1}$ can be defined on $H_0$ (just as we argued above).
\end{proof}
The proof reveals that a function $u \in \W(X,H)$ has a weak time derivative given by
\[\dot u(t) = \phi_{-t}^H (\phi_{-t}^X u(t))'\]
which is a natural generalisation of the formula for the strong time derivative.

\section{The Aubin--Lions lemma in evolving spaces}\label{sec:aubinlions}
Our aim is to generalise the following result (see e.g. \cite[Lemma 7.7]{Rou05}).

\medskip

\noindent \textbf{Aubin--Lions lemma}. Let $X$, $Y$ and $Z$ be Banach spaces such that $X$ is separable and reflexive. Suppose $X \ctsCompact Z$ is compact and $Z \cts Y$ is injective. Then $\mathcal W^{p,q}(X, Y)\ctsCompact L^p(0,T; Z)$ is also compact for any $1<p<\infty$ and $1\leq q\leq \infty$

\medskip

The Aubin--Lions lemma provides a compactness result which is often used in the study of nonlinear evolutionary equations. The first result on the compact embedding of spaces of Banach-valued functions was shown by Aubin \cite{aubin1963analyse}, then it was extended by Dubinski\u{i} \cite{dubinskii1965weak, BarSuli} and improved by Simon in \cite{simon1986compact}. For more details, see \cite{CJL}.

In recent years, motivated by applications in biology \cite{EllStiVen12} and fluid dynamics \cite{vcanic2020moving}, the topic of extending the previous results to the case when the target set is a family of time-evolving spaces has become very popular. We refer the interested reader to \cite{KnobKrech} for the discussion about the origin of time-varying problems and its applications. Among first tasks in this direction is to define a weak time derivative in the moving setting and to consider the corresponding Sobolev--Bochner spaces. This has been done for example in \cite{evseev2019sobolev} where the authors construct a generalisation of an $L^p$ direct integral. One of the first proofs of a compactness lemma in the case of a moving domain is considered for the treatment of incompressible Navier--Stokes equations in moving domains  and is presented  in  \cite{sauer1970existence}. For similar results, see \cite{moussa2016some, boudin2009global, lan2020quasilinear}.
 We now state and prove our Aubin--Lions-type compactness based on the spaces that we have introduced. We work under the following assumption:

\begin{ass}\label{ass:aubin_lions}
In addition to the compatible pairs
\begin{align*}
    \big(X(t), \phi_t^X\colon X_0\to X(t)\big)_{t\in [0,T]} \qquad \text{ and } \qquad \big(Y(t), \phi_t^Y\colon Y_0\to Y(t)\big)_{t\in [0,T]}, \quad\quad
\end{align*}
with $X(t) \subset Y(t)$ (just as in \S \ref{sec:timedep}), we assume the existence of an additional family of Banach spaces 
$$\big(\{Z(t)\}_{t\in [0,T]}, \big(\phi_t^Z\colon Z_0\to Z(t)\big)_{t\in [0,T]}\big)$$
such that $(Z(t), \phi_t^Z)_{t \in [0,T]}$ is compatible and $X_0\overset{c}{\hookrightarrow} Z_0\hookrightarrow Y_0$. We also assume $$\phi_t^Z|_{X_0} = \phi_t^X.$$
\end{ass}

\begin{theorem}[Aubin--Lions lemma]
Under Assumption \ref{ass:aubin_lions}, suppose that $\mathcal W^{p,q}(X_0,Y_0)$ and $\W^{p,q}(X,Y)$ are equivalent in the sense of Definition \ref{defn:equivalence}. For any $p\in (1,\infty)$ and $q\in [1,\infty]$, the embedding 
\[\W^{p,q}(X, Y)\ctsCompact L^p_Z\]
is compact.
\end{theorem}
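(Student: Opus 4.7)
The plan is to exploit the evolving-space equivalence to reduce to the classical Aubin--Lions lemma on the fixed reference spaces. As a preliminary step, observe that the compatibility assumption $\phi_t^Z|_{X_0} = \phi_t^X$ forces $\phi_{-t}^Z|_{X(t)} = \phi_{-t}^X$: any $\xi \in X(t)$ has a unique preimage $x \in X_0 \subset Z_0$ under $\phi_t^X$, which is also its preimage under $\phi_t^Z$. Consequently, the inclusion $\W^{p,q}(X,Y) \subset L^p_Z$ is well defined and bounded, since for $u \in \W^{p,q}(X,Y)$ we have $\phi_{-(\cdot)}^Z u(\cdot) = \phi_{-(\cdot)}^X u(\cdot) \in L^p(0,T;X_0) \hookrightarrow L^p(0,T;Z_0)$ by the continuous embedding $X_0 \hookrightarrow Z_0$ from Assumption \ref{ass:aubin_lions}.

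Now let $\{u_n\}_n \subset \W^{p,q}(X,Y)$ be bounded. By Definition \ref{defn:equivalence}, the pulled-back sequence $v_n := \phi_{-(\cdot)}^X u_n(\cdot)$ is bounded in $\mathcal{W}^{p,q}(X_0,Y_0)$. Since by Assumption \ref{ass:aubin_lions} we have $X_0 \ctsCompact Z_0 \hookrightarrow Y_0$, the classical Aubin--Lions lemma yields a subsequence (not relabelled) and some $v \in L^p(0,T;Z_0)$ such that $v_n \to v$ strongly in $L^p(0,T;Z_0)$.

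Set $u := \phi_{(\cdot)}^Z v(\cdot) \in L^p_Z$, which is well-defined by Theorem \ref{thm:LpX}. Using the preliminary identification together with the equivalence of norms \eqref{eq:equivalenceBetweenLpXAndStdBochner} applied to the compatible pair $(Z(t), \phi_t^Z)_t$, we compute
\[
\|u_n - u\|_{L^p_Z} \le C_Z \, \|\phi_{-(\cdot)}^Z (u_n - u)\|_{L^p(0,T;Z_0)} = C_Z \, \|v_n - v\|_{L^p(0,T;Z_0)} \to 0,
\]
which proves that the embedding is compact. The only non-routine point is the preliminary identification $\phi_{-t}^Z|_{X(t)} = \phi_{-t}^X$, without which one could not transport the $L^p(0,T;Z_0)$-convergence of $\{v_n\}_n$ back to $L^p_Z$-convergence of $\{u_n\}_n$; everything else is a direct transfer of the classical Aubin--Lions result through the isomorphisms supplied by the evolving-space equivalence and by $\phi_{(\cdot)}^Z$.
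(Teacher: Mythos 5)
Your proof is correct and follows essentially the same argument as the paper: pull back to $\mathcal W^{p,q}(X_0,Y_0)$ via the evolving space equivalence, apply the classical Aubin--Lions lemma on the fixed spaces $X_0 \ctsCompact Z_0 \hookrightarrow Y_0$, and push forward the resulting $L^p(0,T;Z_0)$-convergent subsequence using the uniform boundedness of $\phi_{(\cdot)}^Z$. The preliminary identification $\phi_{-t}^Z|_{X(t)} = \phi_{-t}^X$ that you spell out is the step the paper uses implicitly when writing $\phi_{(\cdot)}^Z \phi_{-(\cdot)}^X u_{n_k} = u_{n_k}$.
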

\begin{proof}
Suppose $(u_n)_n$ is a bounded sequence in $\W^{p,q}(X, Y)$, then by the equivalence of spaces $(\phi_{-(\cdot)}^X u_n(\cdot))_n$ is bounded in $\mathcal W^{p,q}(X_0, Y_0)$. By the classical Aubin--Lions lemma, it has a convergent subsequence in $L^p(0, T; Z_0)$, say $(\phi_{-(\cdot)}u_{n_k}(\cdot))_k$. Using the uniform boundedness of $\phi_{(\cdot)}^Z$, $(\phi_{(\cdot)}
^Z \phi_{-(\cdot)}^X u_{n_k})_k = (u_{n_k})_{k}$ also converges in $L^p_Z$, proving the result.
\end{proof}

\begin{remark}
As shown above, assuming the evolving space equivalence property makes the proof of the Aubin--Lions lemma straightforward. It is not the aim of this section to obtain the most general statement but rather to prove that, within the setting of an evolving space equivalence, the classical results on fixed domains carry over to the time-dependent framework. It is worthwhile mentioning that compactness results in the spirit of the Aubin--Lions lemma have been obtained in certain evolving space applications, with assumptions weaker than the ones we present above. See for instance \cite[Theorem 3.1]{MuhCan19}.  
\end{remark}

\part{Applications}

\section{Examples of function spaces on evolving domains and surfaces}\label{sec:examples}

In the following examples we consider spaces of Lebesgue integrable or Sobolev functions over evolving domains and surfaces. We will prove that the theory of this paper can be applied to these cases, which should be useful when studying a wide variety of evolutionary problems on moving domains and surfaces. In particular, we will show that evolving space equivalences hold, which can be rather non-trivial.

\paragraph{Evolving domains and surfaces.} Let us begin with the basic assumptions and notations that we need in order to describe evolving domains and surfaces. In what follows, $T\in (0,\infty)$ is a fixed positive real number.
\begin{ass}\label{ass:evolv_spaces_examples}
We assume the following.
\begin{itemize}
    \item[(i)] Let
\[\text{$\mathcal M_0$ be a bounded $C^2$ domain $\Omega_0 \subset \mathbb{R}^n$ or a $C^2$ $n$-dimensional hypersurface $\Gamma_0 \subset \mathbb{R}^{n+1}$},\]
with $\Omega_0$ connected and $\Gamma_0$ closed (i.e., compact and without boundary) and connected.
    \item[(ii)] Define 
\begin{align}\label{eq:dimension}
        d = 
        \begin{cases}
        n & \text{ if } \mathcal M_0 = \Omega_0 \\
        n + 1 & \text{ if } \mathcal M_0 = \Gamma_0
        \end{cases}.
    \end{align}
     Let $$\mathbf{w}\colon [0,T]\times \mathbb{R}^{d} \to \mathbb{R}^{d}\in C^0([0,T],C^2(\mathbb{R}^{d}, \mathbb{R}^{d}))$$ be a given vector field that we interpret to be a velocity field. 
     We define a \textit{flow map} $$\Phi^0_{(\cdot)}\colon [0,T] \times \mathbb{R}^{d} \to \mathbb{R}^{d}$$ via the ODE
         \begin{align*}
        \frac{d}{dt}\Phi^0_t(p) &= \mathbf{w}(t,\Phi^0_t(p)), \quad p\in\overline{\mathcal M_0},\\
        \Phi^0_0 &= \mathrm{Id}\quad \text{ on } \overline{\mathcal M_0}.
    \end{align*}
    \item[(iii)]Denoting $\mathcal M(t) := \Phi_t^0(\mathcal M_0)$,
    \begin{itemize}
        \item[(iii.a)] $\Phi_t^0\colon \overline{\mathcal M_0}\to \overline{\mathcal M(t)}$ is a $C^2$-diffeomorphism satisfying $\Phi_t^0(\mathcal M_0) = \mathcal M(t)$ and $\Phi_t^0(\partial\mathcal M_0) = \partial\mathcal M(t)$;
        \item[(iii.b)] $\Phi_t^0|_{\mathcal M_0}\colon \mathcal M_0\to \mathcal M(t)$ and $\Phi_t^0|_{\partial \mathcal M_0}\colon \partial\mathcal M_0\to \partial\mathcal M(t)$ are also $C^2$-diffeomorphisms.
    \end{itemize}
   %
\end{itemize}
\end{ass}

We refer to the family $\{\mathcal M(t)\}_{t\in [0,T]}$ as an \textit{evolving domain/surface}. It follows from the assumption above that $\Phi_{(\cdot)}^0 \in C^1([0,T], C^2(\mathbb{R}^d,\mathbb{R}^d))$. Furthermore we denote 
\[\Phi^t_0 := (\Phi^0_t)^{-1}.\] 

\begin{remark}
The regularity required in the assumption above is sufficient for the applications we have in mind, including cell biology or biomembranes, see e.g. \cite{venkataraman2011modeling, AlpEllTer17}, where one is led to consider PDEs on smooth surfaces. It would be natural to contemplate a more general framework in which the underlying domain is less regular or in which the transformations between the domains do not preserve the initial smoothness. This would be interesting from the point of view of applications, allowing for a treatment of more complex structures, as well as from the analysis side by including more ambitious systems arising from free boundary problems. We leave these considerations for future work.
\end{remark}

In the next sections, we study the following cases involving Gelfand triples:
\begin{enumerate}[label=({\roman*})]
    \item $H(t)=L^2(\mathcal M(t))$ with $X(t)=W^{1,r}(\mathcal  M(t))$,
    \item $H(t)=H^1(\mathcal M(t))$ with $X(t)=W^{2,r}(\mathcal  M(t))$,
    \item $H(t)=H^{-1}(\Omega(t))$ with $X(t)=L^p(\Omega(t)) \cap H^{-1}(\Omega(t))$,
    \end{enumerate}
    and the non-Gelfand triple examples
    \begin{enumerate}[resume, label=({\roman*})]
    \item $X(t)=W^{k,r}(\Gamma(t))$ with $Y(t)=L^1(\Gamma(t))$ (for $k=0,1$),
    \item $X(t)=W^{2,r}_0(\Omega(t))$ with $Y(t)=W^{1,1}_0(\Omega(t))$.
\end{enumerate} 
We stress that these spaces are independent of the flow map $\Phi_0^t$. Before we proceed, we need to introduce some more concepts and properties. 

\paragraph{Pushforward and pullback maps.} For functions $u\colon \mathcal{M}_0 \to \mathbb{R}$, we define the pushforward map $\phi_t$ by 
 \begin{equation}\label{eq:defnOfPhiEg}
\phi_t u := u \circ \Phi^t_0.     
 \end{equation}
Its inverse $\phi_{-t} v = v \circ \Phi^0_t$ acting on functions $v \colon \mathcal{M}(t) \to \mathbb{R}$ is called the pullback map. 

\paragraph{Differential operators and integration by parts} The notation $g(t)$ will be used to refer to the Riemannian metric tensor associated to $\mathcal{M}(t)$ and $\grad_{g(t)}$ will stand for the usual gradient when $\mathcal{M}(t)=\Omega(t)$ and the surface gradient (or tangential gradient) when $\mathcal{M}(t)=\Gamma(t)$; the latter can be seen as the projection of the gradient (of a suitable extension) of the function onto the tangent space. We write $\mathbf{D}\Phi^0_t$ for the Jacobian matrix of partial derivatives of $\Phi^0_t$ (which, in case $\mathcal{M}(t) = \Gamma(t)$, refers to the tangential partial derivatives with respect to the ambient space). Note that, in either case, this denotes an $(n+1)\times (n+1)$ matrix.

The integration by parts formula on surfaces \cite[Theorem 2.10]{DziEll13-a} for sufficiently smooth functions is
\begin{equation*}
    \int_{\Gamma(t)} u\partial_i v = -\int_{\Gamma(t)} v\partial_i u + \int_{\Gamma(t)} uvh_\Gamma(t)\nu_i(t),
\end{equation*}
where $\partial_i$ refers to the $i$th component of $\grad_{g(t)}$, $\nu(t)$ is the unit normal vector on $\Gamma(t)$, and $h_\Gamma(t)$ is the mean curvature of $\Gamma(t)$ defined as the sum of the principal curvatures.

Defining the determinant of the Jacobian matrix
\[J^0_t := |\mathrm{det}\mathbf{D}\Phi^0_t|,\]
from continuity and $J^0_0=1$ we have its uniform boundedness: there exists a constant $C_J > 0$ such that
\begin{equation*}
    0< C_J^{-1} \leq J^0_t \leq C_J \quad \forall t \in [0,T].
\end{equation*}
Moreover, from the regularity assumptions on the velocity field, it follows $J^0_{(\cdot)} \in C^1([0,T] \times \mathcal{M}_0)$ and 
\begin{equation}
    \frac{d}{dt} J_t^0 = \phi_{-t} (\nabla_{g(t)} \cdot \mathbf w(t))J^0_t.\label{eq:derivativeOfJ0t}
\end{equation}
We also sometimes use the following transport formula (see \cite[Equation (5.8)]{DziEll13-a} in the case of an evolving surface):
\begin{align}\label{eq:transportFormulaGradient}
    \frac{d}{dt}\int_{\mathcal{M}(t)} \grad_{g(t)} u \cdot \grad_{g(t)} v 
    &= \int_{\mathcal{M}(t)}\grad_{g(t)} \dot u \cdot \grad_{g(t)} v + \grad_{g(t)} u \cdot  \grad_{g(t)} \dot v + \grad_{g(t)} u^{\T} \mathbf{H}(t) \grad_{g(t)} v,
\end{align}
where the notation $(\cdot)^\T$ means the transpose of the matrix and we defined the deformation tensor
\begin{align*}
    \mathbf{H} := (\grad_{g} \cdot \mathbf{w})\mathrm{Id} - \left(\mathbf{D}_{g}\mathbf{w} + (\mathbf{D}_{g}\mathbf{w})^{\T}\right).
\end{align*}
We refer the reader to \cite{DziEll13-a, Dziri2001EulerianDF, AlpEllSti15b} and citations therein for full details on (evolving) hypersurfaces and their definitions in this context.

For later use it is convenient to introduce the following positive-definite (with a constant that is uniform in time) matrix
and its determinant
\begin{equation*}
    \mathbf{A}_t^0 :=
\begin{cases}
 (\mathbf{D}\Phi_t^0)^{\T} \mathbf{D}\Phi_t^0 \quad &\text{ if } \mathcal M(t)=\Omega(t), \\
 (\mathbf{D}_{g_0}\Phi_t^0)^{\T} \mathbf{D}_{g_0}\Phi_t^0 + \nu_0\otimes \nu_0 \quad &\text{ if } \mathcal M(t) = \Gamma(t),
\end{cases} 
\qquad \qquad a_t^0 := \det \mathbf{A}_t^0.
\end{equation*}
When $\mathcal M(t)=\Gamma(t)$, we have that (see Proposition 4.1 of \cite{ChuDjuEll20})
\[(\mathbf{A}_t^0)^{-1} = \phi_{-t}((\mathbf{D}_{g(t)}\Phi^t_0) (\mathbf{D}_{g(t)}\Phi^t_0)^\T) + \nu_0 \otimes \nu_0.\]
\paragraph{Transformation of differential operators}We record the following expressions (see  \cite[Proposition 2.29, Lemma 2.30, Lemma 2.62, Equation (2.91), p.~64]{SZShape} for the flat case, and \cite[Section 3]{ChuDjuEll20} for surfaces):
\begin{align*}
    J^t_0 &= \phi_t ((J^0_t)^{-1}),
\end{align*}
and, given sufficiently smooth functions $u\colon \mathcal M(t)\to \R$ and $v\colon \mathcal M_0\to \R$, we have:
\begin{itemize}
    \item[(i)] for the gradient operator, via the chain rule for tangential gradients,
    \begin{align*}
        \grad_{g_0} \left(\phi_{-t} u\right) = (\mathbf{D}_{g_0}\Phi_t^0)^\T \phi_{-t} \left(\grad_{g(t)} u\right),
    \end{align*}
    and to invert the formula in the case of a surface we need again to add the term corresponding to the normal component, yielding 
    \begin{align}
        \phi_{-t} \left(\grad_{g(t)} u\right) &= \mathbf{D}_{g_0}\Phi_t^0 (\mathbf A_t^0)^{-1}\grad_{g_0} \left(\phi_{-t}  u\right)\label{eq:pullbackOfGradient},
    \end{align}
    \item[(ii)] for the Laplace--Beltrami operator, 
    \begin{align}
    \phi_{-t}(\Delta_{g(t)} u) &= \dfrac{1}{\sqrt{a_t^0}} \grad_{g_0} \cdot\left(\sqrt{ a_t^0} (\mathbf{A}_t^0)^{-1}\grad_{g_0}\phi_{-t}u\right)\label{eq:pullbackOfLaplacian},\\
    \phi_{t}(\Delta_{g_0} v) &= \dfrac{1}{\sqrt{a_0^t}} \grad_{g(t)} \cdot\left(\sqrt{ a_0^t} (\mathbf{A}_0^t)^{-1}\grad_{g(t)}\phi_{t}v\right)\label{eq:pushforwardOfLaplacian}.
\end{align}
\end{itemize}
In the next two sections we explore some particular examples.
%

\subsection{Gelfand triple examples}\label{sec:gelf}

In the following, we omit the calculations and proofs of the evolving space equivalence property and refer to \S \ref{sec:proofs} for these details.

\subsubsection{$L^2(\mathcal{M}(t))$ pivot space}\label{sec:L2pivotspace}
In this subsection we present the most commonly occurring case where the pivot space is an $L^2$ space, namely
\[H(t) := L^2(\mathcal{M}(t)).\]
This example was already analysed (for $\mathcal{M}(t) = \Gamma(t)$ and various $X(t)$) in \cite{AlpEllSti15b} but due to its importance and universal role in many applications, we will treat it afresh here for the convenience of the reader and for completeness. 

Let $r \geq 2$ and define $X(t) := W^{1,r}(\mathcal{M}(t))$ and $Y(t) := X^*(t) = W^{1,r}(\mathcal{M}(t))^*$; for $X(t)$, we take the usual norm
\[\norm{u}{W^{1,r}(\mathcal{M}(t))} := \left(\int_{\mathcal{M}(t)}|u|^r + |\grad_{g(t)} u|^r\right)^{1\slash r}.\]
Hence, we have the Gelfand triple structure
\begin{equation*}
    W^{1,r}(\mathcal{M}(t)) \subset L^2(\mathcal{M}(t)) \subset W^{1,r}(\mathcal{M}(t))^*.
\end{equation*}
We denote by $\phi_t$ the pushforward map defined above in \eqref{eq:defnOfPhiEg}. It is an easy calculation to verify that, under Assumption \ref{ass:evolv_spaces_examples}, the pairs $(L^2, \phi_t)_{t \in [0,T]}$ and $(W^{1,r}, \phi_t)_{t \in [0,T]}$ are compatible. By using the transport formula, we can establish:

\begin{restatable}{lem}{gelfoneone}
Under Assumption \ref{ass:evolv_spaces_examples}, we have
\begin{equation*}
\lambda(t; u,v) 
       =        \int_{\mathcal{M}(t)} uv \nabla_{g(t)} \cdot \mathbf{w}(t).
\end{equation*}
\end{restatable}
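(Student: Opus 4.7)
The plan is to exploit the Gelfand triple simplification from Remark \ref{rem:observationsHatBetc}(i), which says that in this setting
\[\pi(t; u_0, v_0) = (\phi_t^H u_0, \phi_t^H v_0)_{H(t)}\]
with $H(t) = L^2(\mathcal{M}(t))$ and $\phi_t^H u_0 = u_0 \circ \Phi_0^t$. So the computation of $\lambda$ boils down to differentiating an $L^2(\mathcal{M}(t))$-inner product in time and then expressing the result with respect to moving-frame quantities.

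First I would fix $u_0, v_0 \in H_0 = L^2(\mathcal{M}_0)$ (by density it suffices to consider smooth representatives) and pull back the inner product to $\mathcal{M}_0$ via the change of variables formula:
\[\pi(t; u_0, v_0) = \int_{\mathcal{M}(t)} (u_0 \circ \Phi_0^t)(v_0 \circ \Phi_0^t) = \int_{\mathcal{M}_0} u_0\, v_0\, J_t^0.\]
Here $u_0$ and $v_0$ are time-independent, so differentiating under the integral only touches the Jacobian factor. Applying \eqref{eq:derivativeOfJ0t} yields
\[\frac{\partial}{\partial t}\pi(t; u_0, v_0) = \int_{\mathcal{M}_0} u_0\, v_0\, \phi_{-t}(\nabla_{g(t)} \cdot \mathbf{w}(t))\, J_t^0,\]
and the regularity of $\mathbf{w}$ and of $\Phi_t^0$ assumed in Assumption \ref{ass:evolv_spaces_examples} justifies the interchange of derivative and integral and shows the map $t \mapsto \pi(t;u_0,v_0)$ is $C^1$.

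Next, I would verify Assumption \ref{ass:generalcaseChanges}: bilinearity is immediate, continuity in $(u_0, v_0) \in H_0 \times H_0$ follows from Cauchy--Schwarz together with the uniform bound on $\phi_{-t}(\nabla_{g(t)} \cdot \mathbf{w}(t))J_t^0$, and the $L^\infty$ bound on this same factor yields the estimate $|\hat\lambda(t;u_0,v_0)| \le C\|u_0\|_{H_0}\|v_0\|_{H_0}$.

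Finally, to obtain the stated formula for $\lambda$, I would apply \eqref{eq:definitionOfLambdaNew} and push the integral forward to $\mathcal{M}(t)$. Substituting $u_0 = \phi_{-t} u$ and $v_0 = \phi_{-t} v$ and using $J_t^0 = \phi_{-t}((J_0^t)^{-1})$ (equivalently, the standard change of variables for pushforward) gives
\[\lambda(t; u, v) = \int_{\mathcal{M}(t)} u\, v\, \nabla_{g(t)} \cdot \mathbf{w}(t),\]
as claimed. No real obstacle arises here; the only point that needs a little care is keeping track of the pullback/pushforward composition when identifying $\lambda$ from $\hat\lambda$, but this is mechanical given the formulas recalled at the start of \S \ref{sec:examples}.
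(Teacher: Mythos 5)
Your proposal is correct and follows exactly the paper's route: identify $\pi(t;u_0,v_0)=\int_{\mathcal M_0}u_0v_0J_t^0$ via Remark \ref{rem:observationsHatBetc}(i), differentiate the Jacobian using \eqref{eq:derivativeOfJ0t}, and push forward. You also spell out the verification of Assumption \ref{ass:generalcaseChanges}, which the paper's one-line proof leaves implicit — a harmless addition that does not change the argument.
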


This leads to the definition:

\begin{definition}[$L^2(M)$ weak time derivative]\label{def:l2materialderivative}
A function $u\in L^2_{X}$ has a weak time derivative $\dot u \in L^2_{X^*}$ if and only if 
\begin{align*}
\int_0^T \langle \dot u(t), \eta(t) \rangle_{X^*t), X(t)} &=-\int_0^T\int_{\mathcal{M}(t)}u(t)\dot \eta(t) - \int_0^T\int_{\mathcal{M}(t)}u(t) \eta(t) \nabla_{g(t)} \cdot\mathbf{w}(t)\quad \forall \eta\in \mathcal{D}_{X}.
\end{align*}
\end{definition}

We can then prove:

\begin{restatable}{prop}{eseLTwo}Under Assumption \ref{ass:evolv_spaces_examples}, given $r\geq 2$ and for any $p,q\in [1,\infty]$, there exists an evolving space equivalence between the spaces $\mathcal{W}^{p,q}(W^{1,r}(\mathcal{M}_0), W^{1,r}(\mathcal{M}_0)^*)$ and $\W^{p, q}(W^{1,r}, (W^{1,r})^*)$.
\end{restatable}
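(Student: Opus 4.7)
The plan is to apply Theorem~\ref{thm:ESEGelfandTriple} to the Gelfand triple $W^{1,r}(\mathcal{M}(t)) \subset L^2(\mathcal{M}(t)) \subset W^{1,r}(\mathcal{M}(t))^*$ with the pushforward maps $\phi_t^H = \phi_t^X = \phi_t$ given by composition with the inverse flow $\Phi_0^t$. The first task is to compute $\Pi_t$ explicitly: by the change of variables formula, for any $u, v \in L^2(\mathcal{M}_0)$,
\[
(\Pi_t u, v)_{L^2(\mathcal{M}_0)} = (\phi_t u, \phi_t v)_{L^2(\mathcal{M}(t))} = \int_{\mathcal{M}(t)} (u\circ \Phi_0^t)(v\circ\Phi_0^t) = \int_{\mathcal{M}_0} u\, v\, J_t^0,
\]
so $\Pi_t$ acts as pointwise multiplication by $J_t^0$, with inverse $\Pi_t^{-1} u = u/J_t^0$.

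From the $C^2$-regularity of the velocity field, $J^0_{(\cdot)} \in C^1([0,T] \times \overline{\mathcal{M}_0})$ with $0 < C_J^{-1} \leq J_t^0 \leq C_J$ and $\nabla_{g_0} J_t^0$ uniformly bounded in $L^\infty(\mathcal{M}_0)$; the analogous bounds hold for $1/J_t^0$. Combined with the Leibniz rule, this immediately gives that both $\Pi_t$ and $\Pi_t^{-1}$ map $W^{1,r}(\mathcal{M}_0)$ into itself boundedly and uniformly in $t$, yielding conditions \eqref{ass:gtTtrange} and \eqref{ass:gtTtInvertible}. The continuity of $t \mapsto 1/J_t^0$ in $W^{1,\infty}(\mathcal{M}_0)$ then supplies the measurability condition \eqref{ass:gtmeasurabilityOnTtAndInv}.

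The remaining condition \eqref{ass:gtremainsInSpace} is the main step. Given $u \in \mathcal{W}^{p,q}(W^{1,r}(\mathcal{M}_0), W^{1,r}(\mathcal{M}_0))$, I will show that $v(t) := u(t)/J_t^0$ lies in $\mathcal{W}^{p,\, p\wedge q}$ with weak derivative
\[
v'(t) = \frac{u'(t)}{J_t^0} - \frac{u(t)\, \partial_t J_t^0}{(J_t^0)^2},
\]
where $\partial_t J_t^0$ is obtained via \eqref{eq:derivativeOfJ0t}. The strategy is to approximate $u$ by smooth $u_n \in C^1([0,T]; W^{1,r}(\mathcal{M}_0))$ (for which the product rule holds classically), and pass to the limit. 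The first term on the right-hand side is controlled in $L^q$ by $\|u'(t)\|_{W^{1,r}(\mathcal{M}_0)}$ and the uniform bounds on $1/J_t^0$ and $\nabla_{g_0}(1/J_t^0)$, while the second is controlled in $L^p$ via uniform bounds on $\partial_t J_t^0$ and $\nabla_{g_0}(\partial_t J_t^0)$ that follow from \eqref{eq:derivativeOfJ0t} and the $C^2$-regularity of $\mathbf{w}$. Theorem~\ref{thm:ESEGelfandTriple} then delivers the desired equivalence for every $p, q \in [1,\infty]$. The main obstacle is justifying the product rule rigorously in the Bochner sense, but this is a routine mollification argument given that all multipliers and their spatial/temporal first derivatives are uniformly bounded.
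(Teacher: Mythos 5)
Your proof is correct and follows the same route as the paper: you identify $\Pi_t$ (and $\Pi_t^{-1}$) as pointwise multiplication by $J_t^0$ (resp.\ $1/J_t^0$), use the $W^{1,\infty}$-regularity of the Jacobian to verify conditions \eqref{ass:gtTtrange}--\eqref{ass:gtmeasurabilityOnTtAndInv}, check \eqref{ass:gtremainsInSpace} via the product rule, and then invoke Theorem~\ref{thm:ESEGelfandTriple}. The only difference is that you spell out the product-rule computation for $(\Pi^{-1}u)'$ explicitly, which the paper states as ``not difficult to check.''
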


\paragraph{Applications.}\label{subsec:applications}
There are numerous examples of PDEs on evolving domains or surfaces with $L^2$ as the pivot space. Some equations are analysed in \cite{AlpEllSti15b, AlpEll15, AlpEllTer17}, and here we mention a few of them.

\medskip

\noindent (1) The archetypal equation (on a surface) is the surface advection-diffusion equation
\begin{equation*}
\begin{aligned}
\dot u - \Delta_g u + u \nabla_g  \cdot \mathbf{w} &= 0 &&\text{on } \Gamma(t),\\
    u(0) &= u_0 &&\text{on } \Gamma_0,
\end{aligned}
\end{equation*}
where $u_0 \in L^2(\Gamma_0)$.
In this case, $X(t) = H^1(\Gamma(t))$ and the evolving space equivalence and well posedness are proved in \cite{AlpEllSti15b}. 

\medskip

\noindent (2) Similar results can be derived for systems of equations with bulk-surface interactions. Here we mention the coupled bulk-surface system that was studied in \cite{AlpEllSti15b}, in which case both $\Omega(t), \Gamma(t)\subset \R^{n+1}$ and we have $\Gamma(t)=\partial\Omega(t)$:
\begin{equation*}
\begin{aligned}
   \dot u - \Delta_\Omega u + u \nabla_\Omega \cdot  \mathbf{w} &= f  &&\text{on } \Omega(t), \\
     \dot u - \Delta_\Gamma v + v \nabla_\Gamma \cdot  \mathbf{w} + \nabla_\Omega u \cdot \nu &= g  &&\text{on } \Gamma(t), \\
    \nabla_\Omega u \cdot \nu &= \beta v - \alpha u  &&\text{on } \Gamma(t), \\
    u(0) &= u_0 &&\text{on } \Omega_0, \\
    v(0) &= v_0  &&\text{on } \Gamma_0,
\end{aligned}
\end{equation*}
where $u_0 \in H^1(\Omega_0)$, $v_0 \in H^1(\Gamma_0)$, $\alpha, \beta > 0$ are given constants. Setting $X(t) = H^1(\Omega(t)) \times H^1(\Gamma(t))$ and $H(t) = L^2(\Omega(t)) \times L^2(\Gamma(t))$, one can show existence for the system (see \cite[\S 5.3]{AlpEllSti15b}). The analysis and properties of a more complicated and nonlinear coupled bulk-surface system can be found in \cite{AlpEllTer17}.

\medskip

\noindent (3) Moreover in \cite[\S 5.4.1]{AlpEllSti15b} the authors considered  the fractional Sobolev space $X(t) = H^{1/2}(\Gamma(t))$ and proved that $\W(X,X^*)$ and  $\mathcal{W}(X_0,X_0^*) $ are equivalent --- a fact which was used to aid with the study of the fractional porous medium equation 
\begin{equation*}
\begin{aligned}
    \dot u + (- \Delta_{g})^{1/2}(u^m) + u \nabla_{g} \cdot \mathbf{w} &= 0  &&\text{on $ \Gamma(t)$}, \\
    u(0) &= u_0  &&\text{on $\Gamma_0$},
\end{aligned}
\end{equation*}
in \cite{AlpEllFractional}. Here, $m \geq 1$, $u_0 \in L^\infty(\Gamma_0)$ , $u^m := |u|^{m-1} u$ and $(- \Delta_{g(t)})^{1/2}$ is a square root of the Laplace--Beltrami operator on $\Gamma(t)$. 

\medskip

\noindent (4) Another example is the Cahn--Hilliard system on an evolving surface $\{\Gamma(t)\}_{t\in [0,T]}$
\begin{equation*}
    \begin{aligned}
        \dot u + u \sgrad\cdot\mathbf{w} &= \Delta_g \mu \,\,\, &\text{ in } \Gamma(t),\\
        -\Delta_g u + W'(u) &= \mu \,\,\, &\text{ in } \Gamma(t),\\
        u(0) &= u_0,
    \end{aligned}
\end{equation*}
    where $W$ is a given potential. This is analysed in \cite{EllRan15} with $W(r)=(r^2-1)^2/4$, where the authors obtain, for $u_0\in H^2(\Gamma_0)$, $u\in \W^{\infty, 2}(H^1, L^2)$. This has been generalised in \cite{CaeEll21} by the second and last authors for a wider class of potentials and $u_0\in H^1(\Gamma_0)$, where conditions are obtained so that the solution $u\in\W^{\infty,2}(H^1, H^{-1})$ and $\mu\in L^2_{H^1}$. 
    
\subsubsection{$H^1(\mathcal{M}(t))$ pivot space}\label{sec:H1pivotspace}

Beside  the standard choice of $L^2$ as pivot space, another possibility for a pivot space is $H^1$.  A typical example is the bi-Laplace (also called biharmonic) equation which involves a fourth order elliptic operator and is important in applied mechanics, in particular in the theory of elasticity. The equation is analysed for example in  \cite[\S 3, 4.7.5, Example 5]{lions2012non}. 

Let $H(t) = H^1(\mathcal{M}(t))$ with $\phi_t\colon H_0 \to H(t)$ as in \eqref{eq:defnOfPhiEg}.
In this example we work with 
\[X(t)=W^{2,r}(\mathcal{M}(t))\quad \text{for $r \geq 2$}.\]
 We start by verifying that $\phi_t$ takes $X_0$ into $X(t)$. We require more regularity for $\mathbf{w}$ and $\Phi$, namely
\begin{align}\label{eq:extraregularityH1}
    \mathbf{w} \in C^0\left([0,T]; C^3(\R^{d}, \R^{d})\right) \quad \text { and } \quad \Phi_0^{(\cdot)} \in C^1\left([0,T]; C^3(\R^{d}, \R^{d})\right),
\end{align}
where $d$ is as in \eqref{eq:dimension}. As before, under Assumption \ref{ass:evolv_spaces_examples} and the extra regularity \eqref{eq:extraregularityH1}, it is easy to show that the pairs $(H^1, \phi_t)_{t \in [0,T]}$ and $(W^{2,r}, \phi_t)_{t \in [0,T]}$ are compatible. Again by using the differentiation formulas we can prove:

\begin{restatable}{lem}{gelftwoone}
For $u, v \in H(t)$,
\begin{align*}
    \lambda(t; u,v) 
    = \int_{\mathcal{M}(t)} uv \sgrad \cdot \mathbf{w}(t) + \sgrad u^{\T} \mathbf{H}(t)  \sgrad v.
\end{align*}
\end{restatable}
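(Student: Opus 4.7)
The plan is to compute $\lambda(t;u,v)$ directly from its definition in \eqref{eq:definitionOfLambdaNew}, which in the Gelfand triple setting of Section~\ref{sec:gelfandtriple} reduces to differentiating
\[\pi(t; u_0, v_0) = (\phi_t u_0, \phi_t v_0)_{H(t)} = \int_{\mathcal{M}(t)} (\phi_t u_0)(\phi_t v_0) + \int_{\mathcal{M}(t)} \nabla_{g(t)}(\phi_t u_0) \cdot \nabla_{g(t)}(\phi_t v_0)\]
in $t$ and then substituting $u_0 = \phi_{-t} u$, $v_0 = \phi_{-t} v$. I would handle the $L^2$ part and the Dirichlet part separately.

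The key observation driving the calculation is that the pushforward $\phi_t u_0 = u_0 \circ \Phi_0^t$ is precisely the type of ``transport-type'' function whose strong material derivative vanishes (cf.\ the remark following \eqref{eq:defnStrongMaterialDerivative}). Thus, applying Reynolds' transport theorem
\[\frac{d}{dt}\int_{\mathcal{M}(t)} \tilde u \tilde v = \int_{\mathcal{M}(t)} \dot{\tilde u}\,\tilde v + \tilde u\,\dot{\tilde v} + \tilde u \tilde v\, \nabla_{g(t)}\cdot\mathbf{w}(t)\]
(already used in the derivation of the previous lemma for the $L^2$ pivot) to the first summand and the gradient transport formula \eqref{eq:transportFormulaGradient} to the second summand, the material-derivative terms collapse, leaving
\[\frac{\partial}{\partial t}\pi(t; u_0, v_0) = \int_{\mathcal{M}(t)} (\phi_t u_0)(\phi_t v_0)\, \nabla_{g(t)}\cdot\mathbf{w}(t) + \int_{\mathcal{M}(t)} \nabla_{g(t)}(\phi_t u_0)^{\T}\, \mathbf{H}(t)\, \nabla_{g(t)}(\phi_t v_0).\]
Substituting $u_0 = \phi_{-t} u$ and $v_0 = \phi_{-t} v$, so that $\phi_t u_0 = u$ and $\phi_t v_0 = v$, produces the claimed identity.

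The only real subtlety I anticipate is a regularity issue: the transport formulas are typically derived for sufficiently smooth functions, so one must justify the extension to arbitrary $u, v \in H^1(\mathcal{M}(t))$. This should follow by a density argument using the continuity of $\hat{\lambda}$ in $(u_0, v_0)$ together with the uniform bound \eqref{eq:lambdaGF}; the extra regularity \eqref{eq:extraregularityH1} imposed on $\mathbf{w}$ ensures that the entries of $\mathbf{H}(t)$ are uniformly bounded in $t$, so that the right-hand side of the claimed formula defines a continuous bilinear form on $H^1(\mathcal{M}(t))\times H^1(\mathcal{M}(t))$, which agrees with $\lambda(t;u,v)$ on a dense subset (say, smooth functions), hence everywhere.
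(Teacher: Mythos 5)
Your proposal is correct and follows essentially the same route as the paper: expand the $H^1(\mathcal{M}(t))$-inner product $\pi(t;u_0,v_0)=(\phi_t u_0,\phi_t v_0)_{H(t)}$ into its $L^2$ and Dirichlet parts, apply Reynolds' transport theorem and the gradient transport formula \eqref{eq:transportFormulaGradient}, and observe that the material-derivative terms vanish by the transport property of $\phi_t u_0$. The paper states this more tersely (omitting the density step you flag), so your remark on extending from smooth functions to $H^1$ via the bound \eqref{eq:lambdaGF} is a reasonable fill-in of an implicit detail rather than a departure from the argument.
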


\begin{definition}[$H^1(M)$ weak time derivative]\label{eq:h1weakderiv}
A function $u\in L^p_{X}$ has a weak time derivative $\dot u \in L^q_{X^*}$ if and only if 
\begin{align*}
\int_0^T \langle \dot u(t), \eta(t) \rangle_{X^*(t), X(t)} &=-\int_0^T \int_{\mathcal{M}(t)}u(t)\dot \eta(t)- \int_0^T \int_{\mathcal{M}(t)}u(t)\eta(t)\sgrad \cdot \mathbf{w}(t)+\sgrad u(t)^{\T} \mathbf{H}(t) \sgrad  \eta(t)
\end{align*}
for all $\eta\in \mathcal{D}_{X}$.
\end{definition}

Also in this case we establish the evolving space equivalence property:
\begin{restatable}{prop}{eseHOne}\label{prop:eseH1}
Under Assumption \ref{ass:evolv_spaces_examples} and \eqref{eq:extraregularityH1}, for any $p,q\in [1,\infty]$, there exists an evolving space equivalence between the spaces $\mathcal{W}^{p,q}(W^{2,r}(\mathcal{M}_0), W^{2,r}(\mathcal{M}_0)^*)$ and $\W^{p, q}(W^{2,r}, (W^{2,r})^*)$.
\end{restatable}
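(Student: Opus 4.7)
The plan is to verify the hypotheses of Theorem \ref{thm:ESEGelfandTriple}, taking $X_0 = W^{2,r}(\mathcal{M}_0)$, $H_0 = H^1(\mathcal{M}_0)$, and $\phi_t$ as in \eqref{eq:defnOfPhiEg}. The starting point is an explicit characterisation of the operator $\Pi_t \colon H_0 \to H_0$. Using the pullback identities \eqref{eq:pullbackOfGradient}--\eqref{eq:pullbackOfLaplacian} in the defining relation $(\Pi_t u, v)_{H_0} = (\phi_t u, \phi_t v)_{H(t)}$, one obtains that $w := \Pi_t u$ is the weak solution of a second order elliptic equation of the form
\begin{equation*}
    -\Delta_{g_0} w + w = J_t^0\, u - \nabla_{g_0} \cdot \bigl(J_t^0\, (\mathbf{A}_t^0)^{-1} \nabla_{g_0} u\bigr) \quad \text{on } \mathcal{M}_0,
\end{equation*}
equipped with Neumann boundary conditions when $\mathcal{M}_0 = \Omega_0$. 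This PDE representation is the engine for the subsequent regularity arguments.

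First I would verify \eqref{ass:gtTtrange}, \eqref{ass:gtTtInvertible} and \eqref{ass:gtmeasurabilityOnTtAndInv} by elliptic regularity. Under the extra smoothness \eqref{eq:extraregularityH1}, the coefficients $J_t^0$ and $(\mathbf{A}_t^0)^{-1}$ lie in $C^2(\mathcal{M}_0)$, with bounds uniform in $t \in [0,T]$. For $u \in W^{2,r}(\mathcal{M}_0)$, the right-hand side of the above PDE is in $L^r(\mathcal{M}_0)$, and Calder\'on--Zygmund $L^r$-regularity for $-\Delta_{g_0} + I$ on the closed $C^2$ manifold (respectively on a $C^2$ domain with Neumann data) yields $w \in W^{2,r}(\mathcal{M}_0)$ with uniform estimate, establishing \eqref{ass:gtTtrange}. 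Conversely, given $w \in W^{2,r}(\mathcal{M}_0)$, the function $u = \Pi_t^{-1} w$ --- which exists in $H^1$ by Lax--Milgram applied to the pulled-back inner product --- solves the uniformly elliptic equation $-\nabla_{g_0} \cdot(J_t^0 (\mathbf{A}_t^0)^{-1} \nabla_{g_0} u) + J_t^0 u = -\Delta_{g_0} w + w \in L^r$; $W^{2,r}$ elliptic regularity again gives $u \in W^{2,r}$ uniformly in $t$, establishing \eqref{ass:gtTtInvertible}. Measurability \eqref{ass:gtmeasurabilityOnTtAndInv} then follows from the continuous dependence of solutions of these elliptic problems on their coefficients and right-hand sides, combined with the continuity of $t \mapsto J_t^0, (\mathbf{A}_t^0)^{-1}$ in $C^2(\mathcal{M}_0)$.

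The main obstacle is \eqref{ass:gtremainsInSpace}: differentiating $t \mapsto \Pi_t^{-1} v(t)$ directly would require a delicate analysis of the time-dependence of the solution operator of a time-dependent elliptic PDE. I would sidestep this by invoking Lemma \ref{lem:altGTCriteria}. Since $\Pi_t$ is self-adjoint on $H_0$, the uniform boundedness of $(\Pi_t^{-1})^*\colon X_0 \to X_0$ reduces to the uniform boundedness of $\Pi_t^{-1}$ on $X_0$ already established. What remains is Assumption \ref{ass:generalcaseChanges} for the adjoint-pullback family $\xi_t := (\phi_{-t}^H)^A \colon H(t) \to H_0$ and the inclusion $\hat\Lambda^\xi(t)(X_0) \subset X_0$. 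For the former, I would proceed as in Lemma \ref{lem:generalprop} using the transport formulas \eqref{eq:transportFormulaGradient} and \eqref{eq:derivativeOfJ0t}, yielding an explicit continuous bilinear form on $H_0 \times H_0$ whose coefficients are polynomial combinations of $\mathbf{w}$, $\nabla_{g(t)} \cdot \mathbf{w}$ and the deformation tensor $\mathbf{H}(t)$. The inclusion $\hat\Lambda^\xi(t)(X_0) \subset X_0$ is then once more an elliptic regularity statement for the $H_0$-Riesz representation of this bilinear form, and the $C^2$ spatial regularity of its coefficients afforded by \eqref{eq:extraregularityH1} is precisely what is needed to preserve $W^{2,r}$ regularity. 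With these checks in place, Theorem \ref{thm:ESEGelfandTriple} delivers the claimed evolving space equivalence.
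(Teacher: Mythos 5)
Your first stage (verifying \eqref{ass:gtTtrange}, \eqref{ass:gtTtInvertible}, \eqref{ass:gtmeasurabilityOnTtAndInv} by representing $\Pi_t$ as the solution operator of a second-order elliptic problem on $\mathcal{M}_0$ and invoking Calder\'on--Zygmund regularity) matches the paper's treatment in spirit and substance; the only small imprecision is that the second-order coefficient matrix is $\mathbf{B}_t^0 = (\mathbf{A}_t^0)^{-\T}(\mathbf{D}\Phi_t^0)^\T\mathbf{D}\Phi_t^0(\mathbf{A}_t^0)^{-1}J_t^0$, which simplifies to $J_t^0(\mathbf{A}_t^0)^{-1}$ only in the flat case $\mathcal{M}_0=\Omega_0$, not on a hypersurface.

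Where you diverge from the paper --- and where there is a genuine gap --- is in the hard condition \eqref{ass:gtremainsInSpace}. The paper does \emph{not} route through Lemma \ref{lem:altGTCriteria} here; instead it proves \eqref{ass:gtremainsInSpace} directly by a difference-quotient argument: for $w\in C^1([0,T];X_0)$, one differences the variational identity $\int_{\mathcal{M}_0} \Pi_t^{-1}w\,\eta\,J_t^0 + \nabla\Pi_t^{-1}w^\T\mathbf{B}_t^0\nabla\eta = (w,\eta)_{H_0}$ at times $t+h$ and $t$, passes to the limit in $H_0$ using the smoothness of $J_t^0$, $\mathbf{B}_t^0$, and then upgrades the derivative to $W^{2,r}$ by elliptic regularity. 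Your proposal to invoke Lemma \ref{lem:altGTCriteria} rests on the claim that $\hat\lambda^\xi$ is ``an explicit continuous bilinear form on $H_0\times H_0$ whose coefficients are polynomial combinations of $\mathbf{w}$, $\nabla_g\cdot\mathbf{w}$ and $\mathbf{H}(t)$.'' This is where the step fails. For the pullback maps $\phi_t$ (composition with the flow), $\hat\lambda$ is indeed local with such coefficients --- but $\xi_t=(\phi_{-t}^H)^A$ is the \emph{Hilbert adjoint} with respect to the $H^1$ inner product, hence a nonlocal solution operator of an elliptic problem. Consequently $\pi^\xi(t;u,v)=(\xi_t u,\xi_t v)_{H(t)}=(u,\Pi_t^{-1}v)_{H_0}$ and $\hat\lambda^\xi(t;u,v)=(u,\partial_t\Pi_t^{-1}v)_{H_0}$ are nonlocal as well; computing $\hat\lambda^\xi$ and checking $\hat\Lambda^\xi(t)(X_0)\subset X_0$ precisely requires differentiating the solution operator $\Pi_t^{-1}$ in $t$, which is the very difference-quotient analysis you wished to avoid. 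So Lemma \ref{lem:altGTCriteria} does not sidestep the work here; it buries it in a less transparent place. (The paper reserves Lemma \ref{lem:altGTCriteria} for the $H^{-1}$ pivot example, where $\xi_t$ has a more tractable factorisation through $\psi_t$ and $\mathrm{L}_0^{-1}$.) Your identification of $(\Pi_t^{-1})^*|_{X_0}=\Pi_t^{-1}$ via self-adjointness on $H_0$ is, however, correct and is also used in the paper's proof of Theorem \ref{thm:ESEGelfandTriple}.
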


\paragraph{Application.} We explore an example which motivates the choice of $H^1_0$ as a pivot space. We present it in the fixed domain setting for simplicity, but it can be easily generalised to an evolving domain or hypersurface. Let $\Omega \subset \mathbb{R}^n$ be a sufficiently regular bounded domain. We consider the bi-Laplace equation
\begin{align}
    \frac{\partial u}{\partial t} + \Delta^2 u &= f  \text{ in } \Omega \times (0,T), \label{biLapl}\\
    u= \frac{\partial \Delta u}{\partial \nu} &= 0  \text{ on } \partial \Omega \times (0,T), \nonumber\\
    u(x,0) &= u_0 \text{ in } \Omega \nonumber.
\end{align}
Let $H := H_0^1(\Omega)$ with the standard scalar product 
    $(u,v)_H :=  \int_\Omega \grad u \cdot \grad v$
and define the subspace
\begin{equation*}
    V:= \left\{ v  \in H :  \frac{\partial}{\partial x_i} \Delta v \in L^2(\Omega), \,\, i=1, \dots, n \right\}, \qquad 
    \| v \|_V^2 := \| v \|^2_H + \sum_{i=1}^n \left\| \frac{\partial}{\partial x_i} \Delta v \right\|^2_{L^2(\Omega)}.
\end{equation*}
The duality pairing between $V^*$ and $V$ is defined by 
\begin{align}\label{pairing}
    \langle g, v\rangle_{V^*, \, V} := \langle g, -\Delta v\rangle_{H^{-1}(\Omega), \, H^1_0(\Omega)}.
\end{align}
We select $u_0 \in H$ and $f \in L^2(0,T; V^*)$. Taking $v\in V$, we can formally multiply \eqref{biLapl} by $-\Delta v$, integrate by parts and use \eqref{pairing} to obtain 
\begin{align}\label{weakform}
    \langle u'(t), v\rangle_{V^*, \, V} + \int_{\Omega} \nabla (\Delta u(t)) \cdot \nabla (\Delta v) = \langle f(t), v\rangle_{V^*, \, V}  \quad \forall v\in V.
\end{align}
By \cite[\S, Prop. 4.5]{lions2012non}, there exists $u\in L^2(0,T; V)$ with $u'\in L^2(0,T; V^*)$ such that \eqref{weakform} holds, i.e.,
\begin{align*}
    u'(t) + \Delta^2 u(t) = f(t) \quad \text{ in } V^*.
\end{align*}
If we assume more regularity on the forcing term, namely $f \in L^2(0,T; H_0^1(\Omega))$, and then set $v := - \Delta g$, \eqref{weakform} reads as
\begin{equation}\label{transEq}
    (u'(t), v)_{L^2(\Omega)} + (-\grad (\Delta u), \grad v)_{L^2(\Omega)} = (f(t), v)_{L^2(\Omega)}.
\end{equation}
So the equation holds weakly for every $v$ in the set $W:=\{v : v=-\Delta g \text{ for some } g\in V\}$, which contains $H^1(\Omega)$, hence \eqref{transEq}
holds for all $v \in H^1(\Omega)$. By \cite[\S 2, Sect. 9.9]{lions2012non}, $u$ satisfies \eqref{biLapl}.

\subsubsection{$H^{-1}(\Omega(t))$ pivot space}\label{sec:H-1pivotspace}
The choice of $H^{-1}$ as a pivot space appears in the study of very weak solutions of certain evolutionary problems following an idea of Brezis \cite{MR0394323}, see for example \cite[\S 2.3]{Lio69}, \cite[\S III, Example 6.C]{Showalter} and \cite{PMEInDual}; once we have introduced some notation, we will motivate the study through the porous medium equation. Inspired by this as well as the aforementioned literature, we consider the case of \[X(t) = L^p(\Omega(t)) \cap H^{-1}(\Omega(t)) \qquad \text{and} \qquad H(t)=H^{-1}(\Omega(t)), \qquad \text{with } p\in (1,+\infty)\]
on a bounded evolving domain $\{\Omega(t)\}_{t\in [0,T]}$ in $\R^n$, where $H^{-1}(\Omega(t))$ is the dual space of $H^1_0(\Omega(t))$ which we endow with the inner product
\begin{align*}
    (u, v)_{H^1_0(\Omega(t))} = \int_{\Omega(t)} \nabla u \cdot \nabla v.
\end{align*}
With $-\Delta_t \colon H^1_0(\Omega(t)) \to H^{-1}(\Omega(t))$ denoting the Dirichlet Laplacian on $\Omega(t)$, we endow the pivot space $H(t)$ with the inner product defined by\footnote{Given $u\in H^{-1}(\Omega(t))$, the function $(-\Delta_t)^{-1}u\in H^1_0(\Omega(t))$ is the unique weak solution $w$ of the elliptic problem
\begin{equation*}
    \begin{aligned}
        -\Delta_t w &= u &&\text{on $\Omega(t)$},\\
        w &= 0 &&\text{on $\partial \Omega(t)$}.
    \end{aligned}
\end{equation*}} 
\[(u,v)_{H(t)} := \langle u, (-\Delta_t)^{-1}v \rangle_{H^{-1}(\Omega(t)), H^1_0(\Omega(t))}.\]
We then identify $H(t)\equiv H(t)^*$ via the Riesz map (with respect to this inner product). The norm of $f\in X(t)$ is defined as 
        \begin{align*}
            \|f\|_{X(t)} = \|f\|_{L^p(\Omega(t))} + \|f\|_{H^{-1}(\Omega(t))},
        \end{align*}
and with this, $X(t)$ is a separable and reflexive Banach space. Observe that $X(t)\ctsDense H(t)$ as $X(t)$ contains $\mathcal D(\Omega(t))$. For simplicity of notation, we will  denote the Laplacian by
\[\mathrm{L}_t = -\Delta_t.\] 

\begin{remark}
Some important observations are timely:
\begin{enumerate}[label=(\roman*)]
    \item  In contrast to the previous section, we do \textbf{not} identify $H^1_0(\Omega(t))$ with $H^{-1}(\Omega(t))$ via the Riesz map, but rather $H^{-1}(\Omega(t))$ with \emph{its} dual.

    \item   The inner product above indeed defines a norm on $H^{-1}(\Omega(t))$ which is equivalent to the usual dual norm. 
    \item Since $p\in (1,\infty)$ and $\mathrm{L}_t$ is uniformly elliptic we have the regularity 
        \begin{align}\label{eq:Lpregularity}
            u\in L^p(\Omega(t)) \quad \Longrightarrow \quad \mathrm{L}_t^{-1}u \in W^{2,p}(\Omega(t)) \,\, \text{ with } \,\, \|\mathrm{L}_t^{-1} u\|_{W^{2,p}(\Omega(t))} \leq C \|u\|_{L^p(\Omega(t))}
        \end{align}
    by Calder\'{o}n--Zygmund theory for elliptic equations (see for instance \cite[\S 9.2]{Jost}). The constant $C>0$ above can be taken to be independent of $t$. 
    \item We identify $L^p(\Omega(t))$ with $L^{p'}(\Omega(t))^*$ so that, in rigour, 
    \begin{align*}
        X(t)= L^p(\Omega(t))\cap H^{-1}(\Omega(t)) \equiv (L^{p'}(\Omega(t))^*\cap H^{-1}(\Omega(t)) \quad \text{ and } \quad X^*(t) = L^{p'}(\Omega(t)) + H^{-1}(\Omega(t)).
    \end{align*}
    Given $f\in X(t)$ and $g=g_1+g_2\in X^*(t)$, the duality pairing is given by 
    \begin{align*}
        \left\langle g, f\right\rangle_{X^*(t), \, X(t)} = \langle g_1, f\rangle_{L^{p'}(\Omega(t)), \, L^p(\Omega(t))} + (g_2, f)_{H(t)} = \int_{\Omega(t)} g_1 f  + (g_2, f)_{H(t)}. 
    \end{align*}
    This identification of $L^p(\Omega(t))$ with $L^{p'}(\Omega(t))^*$ (giving rise to a second identification!) does not lead to any contradictions as we do not identify $X(t)$ with $X^*(t)$. In fact, 
    $X^*(t)$ is strictly larger than $X(t)$. 
    \item If $n=1,2$ we have $X(t)\equiv L^p(\Omega(t))$, but in higher dimensions this space is generally strictly smaller than $L^p(\Omega(t))$. Observe however that we have 
        \begin{align*}
            X(t) = L^p(\Omega(t)) \quad \text{ if } p\geq 2n/(n+2),
        \end{align*}
    as in this case the well-known Sobolev embedding $H^1_0(\Omega(t))\hookrightarrow L^{p'}(\Omega(t))$ holds.
\end{enumerate}
\end{remark}

As a change of notation, let $\psi_t\colon H^1_0(\Omega_0) \to H^1_0(\Omega(t))$ be the map that we called $\phi_t$ (defined in \eqref{eq:defnOfPhiEg}) in the previous examples. Note that since we are working over flat domains $\Omega(t)$, we have $\mathbf{A}_t^0=(\mathbf D\Phi_t^0)^\T \mathbf{D}\Phi_t^0$, which simplifies the formulae \eqref{eq:pullbackOfGradient}, \eqref{eq:pullbackOfLaplacian}, \eqref{eq:pushforwardOfLaplacian}. In particular, we note that $\mathbf{D}\Phi_t^0$ is invertible and
\begin{align*}
    \psi_t (\mathbf{D}\Phi_t^0) = (\mathbf D\Phi_0^t)^{-1}.
\end{align*}
We again assume the extra regularity in \eqref{eq:extraregularityH1} in order to use the results of the previous section. We now define 
$$\phi_t \colon H_0 \to H(t) \quad \text{ by } \quad \phi_t  := (\psi_{-t})^*.$$ 
The action of this map is as follows: given $f\in H_0$, $u\in H^1_0(\Omega(t))$, we have
\begin{equation}\label{eq:actionOfMapH-1}
\langle \phi_t f, u \rangle_{H(t), H^1_0(\Omega(t))} := \langle f, \psi_{-t} u \rangle_{H_0, H^1_0(\Omega_0)} = \int_{\Omega_0} \nabla \mathrm{L}_0^{-1} f \cdot \nabla \psi_{-t} u = \int_{\Omega(t)} \psi_t\left((J_t^0)^{-1}\mathbf{D}\Phi_t^0\sgrad \mathrm{L}_0^{-1} f\right)\cdot \nabla u,
\end{equation}
allowing us to identify 
\begin{align}\label{eq:formulaPhiH-1}
    \phi_t f = - \sgrad \cdot \psi_t \left( (J_t^0)^{-1}\mathbf{D}\Phi_t^0\sgrad \mathrm L_0^{-1} f\right).
\end{align}
Analogously, for $g\in H^{-1}(\Omega(t))$, we have $$\phi_{-t} g = - \sgrad \cdot  \left(J_t^0\mathbf{D}\Phi_t^0 \psi_{-t} (\sgrad \mathrm L_t^{-1} g)\right).$$
We can perform similar calculations to compute the adjoint maps $\phi_t^A$ and $\phi_{-t}^A$: given $u\in H(t)$ and $v\in H_0$, 
\begin{align*}
    (u, \phi_t v)_{H(t)} 
    = \int_{\Omega(t)} \psi_t((J_t^0)^{-1}\mathbf{D}\Phi_t^0) \nabla (\psi_t \mathrm L_0^{-1} v) \cdot \nabla \mathrm{L}_t^{-1} u 
    &= \int_{\Omega_0} \nabla (\mathrm L_0^{-1} v) \cdot \nabla (\psi_{-t}\mathrm L_t^{-1} u) \\
    &= \langle v, \psi_{-t}\mathrm L_t^{-1} u \rangle_{H^{-1}(\Omega_0), \, H^1_0(\Omega_0)} \\
    &= (v, \mathrm L_0 \psi_{-t} \mathrm L_t^{-1} u)_{H_0},
\end{align*}
from where we obtain that $\phi_t^A \colon H(t) \to H_0$ and $\phi_{-t}^A \colon H_0 \to H(t)$ satisfy
\begin{align}
    \phi_t^A u &= \mathrm L_0 \psi_{-t} \mathrm L_t^{-1} u\quad\text{and}\quad  \phi_{-t}^A u = \mathrm L_t \psi_{t} \mathrm L_0^{-1} u.\label{eq:adjoint_H-1}
\end{align}
Due to \eqref{eq:Lpregularity} these also satisfy
\begin{align*}
    \phi_{t}^A|_{X(t)} \colon X(t) \to X_0 \quad \text{ and } \quad \phi_{-t}^A|_{X_0} \colon X_0 \to X(t).
\end{align*}
It is important to note that since we identify $H^{-1}(\Omega(t))$ with its dual, the maps $\phi_t^*$ are also defined with $\phi_t^*\colon H^{-1}(\Omega(t)) \to H^{-1}(\Omega_0)$, and up to composition with the Riesz map and its inverse they coincide with $\phi_t^A$ calculated above. In particular, the map $\phi_t^* = (\psi_{-t}^*)^*$ is not the same as $\psi_{-t}$. This is another manifestation of the fact that we are not identifying $H^1_0$ with its dual. 

We observe also that, if $f\in X_0$, then $f\in L^p(\Omega_0)$ and due to \eqref{eq:Lpregularity} we have $\mathrm L_0^{-1} f\in W^{2,p}(\Omega_0)$. In particular, we can integrate by parts in \eqref{eq:actionOfMapH-1} to obtain, for $u\in H^1_0(\Omega(t))\cap L^{p'}(\Omega(t))$, the simpler formula
\begin{align}\label{eq:L1distribution_H-1}
    \langle \phi_t f, u \rangle_{H(t), H^1_0(\Omega(t))} 
    = \int_{\Omega_0} f \psi_{-t} u = \int_{\Omega(t)} J^t_0\psi_t f u .
\end{align}
Under Assumption \ref{ass:evolv_spaces_examples} and \eqref{eq:extraregularityH1}, it follows that the pairs $(H^{-1}, \phi_t)_{t \in [0,T]}$ and $(L^p\cap H^{-1}, \phi_t)_{t \in [0,T]}$ are compatible. The proof of the next lemma is complicated and is given in \S \ref{sec:proofs}. In this example we need to assume the additional regularity
\begin{align*}
\Phi_0^{(\cdot)} \in C^2\left([0,T]; C^2(\R^{d}, \R^{d})\right).
\end{align*}
\begin{restatable}{lem}{defnOfLambdaHMO}\label{lem:defnOfLambdaHMO}Under Assumption \ref{ass:evolv_spaces_examples} and \eqref{eq:extraregularityH1}, we have
\begin{align*}
    \lambda(t; u,v) &= \int_{\Omega(t)} \mathbf{H}(t) \nabla (\mathrm{L}_t^{-1}u) \cdot \nabla (\mathrm{L}_t^{-1} v).
\end{align*}
\end{restatable}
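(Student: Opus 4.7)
}
The plan is to compute $\pi(t;u_0,v_0) = (\phi_t u_0, \phi_t v_0)_{H(t)}$ for fixed $u_0,v_0\in H_0$, differentiate in $t$, and then substitute $u_0 = \phi_{-t}u$, $v_0 = \phi_{-t}v$ to recover $\lambda(t;u,v)$ via the definition \eqref{eq:definitionOfLambdaNew} and the identification in Remark \ref{rem:observationsHatBetc}(i). Given $u_0,v_0\in X_0$, introduce
\[ U(t) := \mathrm{L}_t^{-1}\phi_t u_0 \in H^1_0(\Omega(t)), \qquad V(t) := \mathrm{L}_t^{-1}\phi_t v_0 \in H^1_0(\Omega(t)). \]
By definition of the inner product on $H(t) = H^{-1}(\Omega(t))$ one has
\[ \pi(t;u_0,v_0) = \langle \phi_t u_0, \mathrm{L}_t^{-1}\phi_t v_0\rangle_{H^{-1}(\Omega(t)), H^1_0(\Omega(t))} = \int_{\Omega(t)} \nabla U(t)\cdot\nabla V(t), \]
and the task is to differentiate this integral.

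The next step is to apply the transport formula \eqref{eq:transportFormulaGradient} to the right-hand side, which requires $U, V \in C^1_{H^1_0}$. To establish this, pull $U$ back to the reference domain: setting $\tilde U(t) := \psi_{-t}U(t)$, a change of variables together with \eqref{eq:actionOfMapH-1} shows that $\tilde U \in H^1_0(\Omega_0)$ solves the elliptic problem
\[ \int_{\Omega_0} J_t^0 (\mathbf{A}_t^0)^{-1}\nabla \tilde U(t)\cdot\nabla \varphi = \int_{\Omega_0}\nabla \mathrm{L}_0^{-1}u_0\cdot\nabla\varphi \qquad \forall \varphi\in H^1_0(\Omega_0), \]
and similarly for $\tilde V$. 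Since the coefficient matrix $J_t^0(\mathbf{A}_t^0)^{-1}$ is uniformly elliptic and depends $C^1$ in $t$ (thanks to Assumption \ref{ass:evolv_spaces_examples} together with the extra regularity \eqref{eq:extraregularityH1} and the additional time-smoothness $\Phi_0^{(\cdot)}\in C^2([0,T];C^2)$), standard elliptic theory yields $\tilde U,\tilde V \in C^1([0,T]; H^1_0(\Omega_0))$, hence $U,V \in C^1_{H^1_0}$.

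The core of the argument is now to evaluate the three terms produced by the transport formula
\[ \partial_t\pi = \int_{\Omega(t)}\nabla\dot U\cdot\nabla V + \int_{\Omega(t)}\nabla U\cdot\nabla\dot V + \int_{\Omega(t)}\nabla U^{\T}\,\mathbf{H}(t)\,\nabla V. \]
For the first term, since $\dot U \in H^1_0(\Omega(t))$, the weak form of $-\Delta V = \phi_t v_0$ gives $\int \nabla\dot U\cdot\nabla V = \langle \phi_t v_0, \dot U\rangle_{H^{-1},H^1_0}$. Applying \eqref{eq:actionOfMapH-1} and using $\psi_{-t}\dot U = \partial_t\tilde U$ yields $\langle \phi_t v_0,\dot U\rangle = \int_{\Omega_0}\nabla\mathrm{L}_0^{-1}v_0\cdot\nabla\partial_t\tilde U$. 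On the other hand, by symmetry of the inner product one also has $\pi(t;u_0,v_0) = \int_{\Omega_0}\nabla \mathrm{L}_0^{-1}v_0\cdot\nabla \tilde U(t)$, whose $t$-derivative is exactly this expression. Hence $\int\nabla\dot U\cdot\nabla V = \partial_t\pi$, and by the same argument with the roles of $U$ and $V$ exchanged, $\int\nabla U\cdot\nabla\dot V = \partial_t\pi$. Rearranging then isolates $\partial_t\pi$ in terms of the deformation-tensor integral.

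The final step is substitution and density. Setting $u_0=\phi_{-t}u$, $v_0=\phi_{-t}v$ gives $U=\mathrm{L}_t^{-1}u$ and $V=\mathrm{L}_t^{-1}v$, producing the claimed formula on the dense subset $u,v\in\phi_t(X_0)$; the general case $u,v\in H(t)$ follows from continuity of both sides in $(u_0,v_0)\in H_0\times H_0$, which is guaranteed by elliptic regularity and the uniform boundedness of $\bar\Pi_t$ and the flow maps. The main technical obstacle will be the regularity step: justifying that $\tilde U(t)$ depends $C^1$ on $t$ as a solution of an elliptic PDE with $t$-dependent coefficients, and that the strong material derivative $\dot U$ remains in $H^1_0(\Omega(t))$ so that it is admissible as a test function in the weak form for $V$ (and vice versa). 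Once that regularity is in hand, the manipulations above are essentially algebraic.
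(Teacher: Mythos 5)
Your approach is genuinely different from the paper's, and structurally cleaner. The paper proceeds in two stages: first it establishes, as a standalone lemma (Lemma~\ref{lem:identityForDualDerivative}), that $w(t):=\mathrm{L}_t^{-1}\phi_t u$ belongs to $C^1_{H^1_0}$ and that its material derivative satisfies $\int_{\Omega(t)}\nabla\dot w\cdot\nabla\varphi=-\int_{\Omega(t)}\nabla w^{\T}\mathbf{H}\nabla\varphi$ for all $\varphi\in H^1_0(\Omega(t))$; this is done by pulling back to the reference domain, differentiating the $t$-dependent elliptic problem for $\tilde w=\psi_{-t}w$ via difference quotients, and pushing forward through the matrix identities of Lemmas~\ref{lem:derivativeOfM} and~\ref{lem:actionOfMPrime}. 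Second, it inserts that identity into the transport formula for the \emph{quadratic} form $\|\phi_t u\|_{H(t)}^2=\int_{\Omega(t)}|\nabla w|^2$ and recovers the bilinear form $\hat\lambda(t;u_0,v_0)$ by polarisation. You instead work with the bilinear form $\pi(t;u_0,v_0)=\int_{\Omega(t)}\nabla U\cdot\nabla V$ from the outset, so polarisation is never needed, and your key insight is that the two material-derivative terms in the transport formula each equal $\partial_t\pi$ itself: you test the weak form for $V$ against $\dot U$, pull back with \eqref{eq:actionOfMapH-1}, and recognise $\langle\phi_t v_0,\dot U\rangle=\int_{\Omega_0}\nabla\mathrm{L}_0^{-1}v_0\cdot\nabla\partial_t\tilde U$ as the $t$-derivative of the symmetrised pullback representation of $\pi$. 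This pleasantly closes the algebra and obtains the deformation-tensor contribution by subtraction, avoiding the explicit computation of Lemma~\ref{lem:identityForDualDerivative}. The regularity requirement ($\tilde U\in C^1([0,T];H^1_0(\Omega_0))$, equivalently $U\in C^1_{H^1_0}$, so that $\dot U$ is an admissible test function) is an obstacle common to both routes, which you correctly identify.

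There is, however, one point that you must not gloss over: carrying out the final rearrangement explicitly gives $\partial_t\pi=\partial_t\pi+\partial_t\pi+\int_{\Omega(t)}\nabla U^{\T}\mathbf{H}\nabla V$, hence $\partial_t\pi=-\int_{\Omega(t)}\nabla U^{\T}\mathbf{H}\nabla V$, i.e.\ your argument produces $\lambda(t;u,v)=-\int_{\Omega(t)}\mathbf{H}\nabla(\mathrm{L}_t^{-1}u)\cdot\nabla(\mathrm{L}_t^{-1}v)$, which is the \emph{negative} of the formula asserted in the lemma. You should not call this ``the claimed formula'' without remarking on the sign. I note that the sign you obtain actually appears to be correct: starting from the transport formula \eqref{eq:transportFormulaGradient} (with $+\mathbf{H}$) and Lemma~\ref{lem:identityForDualDerivative} (with $-\mathbf{H}$), one obtains $\frac{d}{dt}\|\phi_t u\|_{H(t)}^2=2\int\nabla\dot w\cdot\nabla w+\int\nabla w^{\T}\mathbf{H}\nabla w=-\int\nabla w^{\T}\mathbf{H}\nabla w$, and a direct check for the uniform dilation $\mathbf{w}=x$ (where $\mathbf{H}=(n-2)\mathrm{Id}$ and $\|\phi_t u\|^2_{H^{-1}(\Omega(t))}$ decays like $e^{-(n-2)t}$) confirms the negative sign. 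The paper's displayed proof of this lemma appears to contain compensating sign errors in the two places where $\mathbf{H}$ enters. So your argument is sound and arguably exposes an error in the statement as printed, but you should flag the discrepancy explicitly rather than silently asserting agreement.
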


Thus the definition of a weak time derivative is the following.
\begin{definition}[$H^{-1}(\Omega)$ weak time derivative]
A function $u\in L^p_{X}$ has a weak time derivative $\dot u \in L^q_{X^*}$ if and only if 
\begin{align*}
\int_0^T \langle \dot u(t), \eta(t) \rangle_{X^*(t), X(t)} &= - \int_0^T (u(t), \dot{\eta}(t))_{H(t)} - \int_0^T \int_{\Omega(t)} \mathbf{H}(t) \nabla (\mathrm{L}_t^{-1}u) \cdot \nabla (\mathrm{L}_t^{-1} \eta) \quad \forall \eta\in \mathcal{D}_{X}.
\end{align*}

\end{definition}

We can finally conclude.

\begin{restatable}{prop}{eseHMO}
Under Assumption \ref{ass:evolv_spaces_examples} and \eqref{eq:extraregularityH1}, for any $p, q\in [1,\infty]$, there exists an evolving space equivalence between $\mathcal{W}^{p,q}(X_0, X_0^*)$ and $\W^{p, q}(X, X^*)$.
\end{restatable}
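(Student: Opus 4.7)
The plan is to invoke Theorem \ref{thm:ESEGelfandTriple} (or its companion Lemma \ref{lem:altGTCriteria}) for the Gelfand triple $X(t) \subset H(t) \subset X^*(t)$ with $X(t) = L^p(\Omega(t)) \cap H^{-1}(\Omega(t))$ and $H(t) = H^{-1}(\Omega(t))$. Assumption \ref{ass:gelfandTripleCase} has effectively been verified in Lemma \ref{lem:defnOfLambdaHMO} (the smoothness and continuous dependence of $\pi$ and the bound on $\hat\lambda$ follow from the explicit formula for $\lambda$ together with the uniform regularity of $\mathbf{H}(t)$ and the uniform ellipticity of $\mathrm{L}_t$). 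What remains is the verification of the four structural hypotheses \eqref{ass:gtTtrange}--\eqref{ass:gtremainsInSpace} on the transport operator $\Pi_t = \phi_t^A\phi_t \colon X_0 \to X_0$.

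The first step is an explicit description of $\Pi_t$. Applying \eqref{eq:L1distribution_H-1}, a function $u \in X_0 \subset L^p(\Omega_0)$ is pushed forward to $\phi_t u = J_0^t\, \psi_t u \in L^p(\Omega(t))$, whereupon \eqref{eq:adjoint_H-1} yields
\begin{equation*}
\Pi_t u \;=\; \mathrm{L}_0\,\psi_{-t}\,\mathrm{L}_t^{-1}(J_0^t\,\psi_t u).
\end{equation*}
By the Calder\'on--Zygmund estimate \eqref{eq:Lpregularity} (with constants uniform in $t$, since the coefficients of $\mathrm{L}_t$ are controlled uniformly) together with the $C^2$ regularity of $\Phi_t^0$, the pullback $\psi_{-t}\mathrm L_t^{-1}(J_0^t\psi_t u)$ lies in $W^{2,p}(\Omega_0) \cap H^1_0(\Omega_0)$ with norm bounded by $C\|u\|_{L^p(\Omega_0)}$, whence $\Pi_t u \in L^p(\Omega_0)$ uniformly in $t$. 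Combined with the $H^{-1}$ boundedness inherited from compatibility of the pair $(H(t),\phi_t)$, this gives \eqref{ass:gtTtrange}.

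For invertibility, observe that $(\phi_t^A)^{-1} = \phi_{-t}^A$ (a direct check using \eqref{eq:adjoint_H-1}: $\mathrm{L}_0\psi_{-t}\mathrm{L}_t^{-1}\cdot\mathrm{L}_t\psi_t\mathrm{L}_0^{-1} = \mathrm{Id}$) and $\phi_t^{-1} = \phi_{-t}$, so $\Pi_t^{-1} = \phi_{-t}\phi_{-t}^A$ has the completely symmetric expression $\Pi_t^{-1} v = \mathrm{L}_t\psi_t\mathrm{L}_0^{-1}\cdots$ composed analogously (with roles of $\Omega_0$ and $\Omega(t)$ swapped); the same chain of estimates yields \eqref{ass:gtTtInvertible} uniformly. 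Measurability in \eqref{ass:gtmeasurabilityOnTtAndInv} is immediate from the continuous dependence of all building blocks ($J_0^t$, $\psi_t$, $\mathrm{L}_t^{-1}$) on $t$, guaranteed by the regularity hypothesis \eqref{eq:extraregularityH1}.

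The main obstacle is \eqref{ass:gtremainsInSpace}, i.e.\ showing that $\Pi^{-1}$ maps $\mathcal W^{p,q}(X_0,X_0)$ into $\mathcal W^{p,p\wedge q}(X_0,X_0)$. The cleanest route is through Lemma \ref{lem:altGTCriteria} applied to $\xi_t := (\phi_{-t})^A = \mathrm{L}_t\psi_t\mathrm{L}_0^{-1}$: one checks that $(H(t),\xi_t)$ satisfies Assumption \ref{ass:gelfandTripleCase} (which amounts to differentiating $(\xi_t u,\xi_t v)_{H(t)} = \int_{\Omega(t)} \nabla\psi_t\mathrm{L}_0^{-1}u\cdot\nabla\psi_t\mathrm{L}_0^{-1}v$ in time via the surface transport formula \eqref{eq:transportFormulaGradient}, producing a $\hat\Lambda^\xi$ involving the deformation tensor $\mathbf{H}(t)$); that $\hat\Lambda^\xi(t)$ maps $X_0$ into $X_0$ (this is where the joint $L^p \cap H^{-1}$ structure is crucial, as the deformation tensor acts on $\nabla\mathrm{L}_0^{-1}u \in W^{1,p}$ and one verifies membership in $X_0$ using \eqref{eq:Lpregularity} and the $C^2$ regularity); and that $(\Pi_t^{-1})^* \equiv \Pi_t^{-1}$ is uniformly bounded on $X_0$, already established. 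The delicate point is checking that the time derivative resulting from Lemma \ref{lem:altGTCriteria} stays in $L^{p\wedge q}(0,T;X_0)$, which will follow from the explicit formula and the bounds on $\mathbf{H}$. Once all these hypotheses are in place, Theorem \ref{thm:ESEGelfandTriple} yields the evolving space equivalence with explicit formulas \eqref{eq:equiv1GT}--\eqref{eq:equiv2GT} for the correspondence of weak time derivatives.
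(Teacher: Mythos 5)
Your plan is correct and follows essentially the same route as the paper: identify $\Pi_t u = \mathrm{L}_0\psi_{-t}\mathrm{L}_t^{-1}\phi_t u$ and verify its uniform boundedness and invertibility on $X_0$ via the Calder\'on--Zygmund estimate \eqref{eq:Lpregularity}, then discharge \eqref{ass:gtremainsInSpace} by invoking Lemma \ref{lem:altGTCriteria} with $\xi_t := (\phi_{-t}^H)^A = \mathrm{L}_t\psi_t\mathrm{L}_0^{-1}$, checking that the pair $(H(t),\xi_t)$ satisfies the differentiability hypotheses (via $\pi^\xi(t;u,v)=\pi^\psi(t;\mathrm{L}_0^{-1}u,\mathrm{L}_0^{-1}v)$), that $\hat\Lambda^\xi(t)$ preserves $X_0$ (again by elliptic regularity), and that $(\Pi_t^{-1})^*$ is uniformly bounded on $X_0$. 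Your self-adjointness identification $(\Pi_t^{-1})^*|_{X_0}=\Pi_t^{-1}$ is a valid (and slightly slicker) alternative to the paper's explicit chain-of-duals computation for \eqref{ass:TtInvDualSmooth}.
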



\paragraph{Applications.}
Let us motivate, again in the simpler case of a fixed domain, this choice of pivot space by giving more details for the \emph{porous medium equation} (PME) as considered in \cite[\S III, Example 6.C]{Showalter}:
\begin{equation}\label{eq:PME}
    \begin{aligned}
    u' - \Delta \Psi(u) &= f&&\text{on $(0,T) \times \Omega$},\\
    \Psi(u) &=0 &&\text{on $(0,T) \times \partial \Omega$},\\
    u(0) &= u_0 &&\text{on $ \Omega$},
    \end{aligned}
\end{equation}
where $\Psi(u) := |u|^{m-1}u$ (or an appropriate generalisation) with $m:=p-1$ and $f \in L^{p'}(0,T;X^*)$ for $X=H^{-1}(\Omega) \cap L^p(\Omega)$. If we take the inner product of the equation in $H^{-1}(\Omega)$ with an element 
$g \in L^p(\Omega) \cap H^{-1}(\Omega),$ we get
\[(u'(t), g)_H + \int_\Omega \Psi(u(t))g= \int_\Omega (-\Delta)^{-1}f(t)g \qquad \forall g \in X.\]
Suppose that $p \geq 2n\slash (n+2)$ so that $X=L^p(\Omega)$. Define $\tilde f \in L^{p'}(0,T;X^*)$ by
\[\langle \tilde f(t), v \rangle := \int_\Omega f(t)v \quad \text{for $v \in X$}\]
and $A\colon X \to X^*$ and $\mathcal{B}\colon X \to X^*$  by 
\[\langle A(u),v \rangle := \int_\Omega \Psi(u)v \qquad\text{and for $u,v \in H$,}\qquad \langle \mathcal{B}u, v \rangle := (u,v)_{H},\]  it follows by \cite[Proposition 6.2, \S III.6]{Showalter} that there is a unique $u \in L^p(0,T;X)$ with $(\mathcal{B}u)' \in L^{p'}(0,T;X^*)$ such that
\[(\mathcal{B}u(t))' + A(u(t)) = \tilde f(t) \quad \text{in $X^*$}.\]
Since $\langle (\mathcal{B}u(t))', v \rangle = (u'(t), v)_H$, this implies that
\[\langle u'(t), (-\Delta)^{-1}g\rangle + \langle \Psi(u(t)), g \rangle = \langle \tilde f(t), g \rangle  \qquad \forall g \in X.\]
Setting $v:=(-\Delta)^{-1}g$, we get existence of solutions for the very weak formulation of \eqref{eq:PME}:
\[\int_\Omega u'(t)v  + \Psi(u(t))(-\Delta)v = \int_\Omega f(t)(-\Delta)v \qquad \forall v \in H^1_0(\Omega) : \Delta v \in L^p(\Omega).\]
Under the additional regularity $f \in L^{p'}(0,T;H)$, replacing the definition of $\tilde f$ above by
\[\langle \tilde f(t), v \rangle := \int_\Omega v(-\Delta)^{-1}f(t) \quad \text{for $v \in X$},\]
so that $\tilde f \in L^{p'}(0,T;H^1_0(\Omega))$, then by \cite[Corollary 6.2 and Proposition 6.3, \S III.6]{Showalter} we have existence of the equation in $L^{p'}(0,T;H)$ and $\Psi(u) \in L^{p'}(0,T;H^1_0(\Omega))$ (so the boundary condition is satisfied). The equation in \eqref{eq:PME} holds pointwise a.e. in time in $H^{-1}(\Omega)$ and the initial condition is satisfied in the sense that $u(t) \to u_0$ as $t \to 0$ in $H^{-1}(\Omega)$. This concept of solution is called the $H^{-1}$-solution of the PME. See \cite[\S 6.7]{MR2286292} in this context. 

Of a similar form to this problem is the \emph{Stefan problem} on a moving domain $\{\Omega(t)\}_{t\in [0,T]}$:
    \begin{align*}
        \dot e - \Delta_g u + e\sgrad\cdot\mathbf{w} &= f \,\,\, \text{ in } \Omega(t), \\
        e(0) &= e_0, \\
        e&\in \mathcal{E}(u),
    \end{align*} 
    where the maximal monotone graph $\mathcal{E}$ is defined via
    \begin{align*}
        \mathcal{E}(r) = 
        \begin{cases} 
        r &\text{ for } r<0 \\
        [0,1] &\text{ for } r=0 \\
        r+1 &\text{ for } r>0
        \end{cases},
    \end{align*}
    which was considered by the first and final authors in \cite{AlpEll15}. For $f\in L^1_{L^1}$ and $e_0\in L^1(\Omega_0)$, the authors look for $u, e\in L^1_{L^1}$, and for $f\in L^\infty_{L^\infty}$ and $e_0\in L^\infty(\Omega_0)$ one looks for $u\in L^2_{H^1}$ and $e\in L^\infty_{L^\infty}$.

\subsection{Non-Gelfand triple examples}\label{sec:nongelf}

In the previous examples we obtained the definition of the weak derivative for three different cases in which there is a pivot Hilbert space, whose inner product structure we could exploit to establish the evolving space equivalence property of the evolving Sobolev--Bochner spaces. To conclude this section, we now consider several examples in which we do not assume the existence of a pivot space. We fix, for all the examples below, 
\[r\in (1,2).\]
Again, all proofs are relegated to \S \ref{sec:proofs}.

\subsubsection{$L^r(\Gamma(t))\hookrightarrow L^1(\Gamma(t))$}
The simplest example one can consider is obtained by taking $X(t)=L^r(\Gamma(t))$ and $Y(t)=L^1(\Gamma(t))$, where the evolution of $\{\Gamma(t)\}$ is determined by the flow map \eqref{eq:defnOfPhiEg}. As in Remark \ref{rem:afterWeakDerDefn}, we have $\Pi_t = \text{Id}_{X_0}$ for all $t$, and it is immediate to see:

\begin{restatable}{lem}{nongelf11}\label{lem:nongelf11}
Under Assumption \ref{ass:evolv_spaces_examples}, we have
\begin{align*}
    \lambda(t; u,v) &= 0.
\end{align*}
\end{restatable}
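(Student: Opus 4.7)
The plan is to observe that with the choices $X(t) = L^r(\Gamma(t))$, $Y(t) = L^1(\Gamma(t))$ and the common pushforward $\phi_t u := u \circ \Phi_0^t$ used for both families, we have $\phi_t^Y\rvert_{X_0} = \phi_t^X$. Consequently the operator
\[
\Pi_t = \phi_{-t}^Y \phi_t^X \colon X_0 \to Y_0
\]
applied to $u_0 \in L^r(\Gamma_0)$ yields
\[
\Pi_t u_0 = (u_0 \circ \Phi_0^t) \circ \Phi_t^0 = u_0,
\]
so that $\Pi_t = \mathrm{Id}_{X_0}$ for every $t \in [0,T]$. Substituting into the expression $\pi(t; u_0, v_0) = \langle \Pi_t u_0, v_0\rangle_{Y_0,Y_0^*}$ from Definition \ref{defn:generalOperators}(i), we obtain
\[
\pi(t; u_0, v_0) = \langle u_0, v_0\rangle_{L^1(\Gamma_0),\, L^\infty(\Gamma_0)} = \int_{\Gamma_0} u_0 v_0,
\]
which is manifestly independent of $t$.

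It then follows that $\hat\lambda(t; u_0, v_0) = \partial_t \pi(t; u_0, v_0) \equiv 0$ for all admissible $u_0, v_0$. Applying the pushforward identity $\lambda(t; u, v) = \hat\lambda(t; \phi_{-t}^X u, (\phi_t^Y)^* v)$ recorded just after \eqref{eq:definitionOfLambdaNew}, we conclude that $\lambda(t;u,v) = 0$ for all $u \in X(t)$ and $v \in Y^*(t)$.

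There is no genuine obstacle here: the content of the lemma is essentially the observation that when two space-families share the same flow map acting by composition, the time-dependence of the associated duality pairing is only apparent and cancels exactly, in the same spirit as Remark \ref{rem:afterWeakDerDefn}(ii) for the special case $Y(t) \equiv X(t)$.
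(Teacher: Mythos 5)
Your proof is correct and is exactly the paper's argument: since both families share the pushforward $\phi_t u = u\circ\Phi_0^t$, one has $\Pi_t = \mathrm{Id}_{X_0}$, so $\pi(t;\cdot,\cdot)$ is constant in $t$ and $\hat\lambda\equiv 0$, as in Remark \ref{rem:afterWeakDerDefn}(ii). You have merely written out the steps the paper leaves implicit.
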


We then have the usual integration by parts formula:

\begin{definition}
A function $u\in L^p_{L^r}$ has a weak time derivative $\dot u \in L^q_{L^1}$ if and only if 
\begin{align*}
\int_0^T\int_{\Gamma(t)}\dot u(t) \eta(t) &= - \int_0^T \int_{\Gamma(t)} u(t) \dot{\eta}(t) \quad \forall \eta\in \mathcal{D}_{X}.
\end{align*}
\end{definition}

It follows immediately that:

\begin{restatable}{prop}{nongelf12}
Under Assumption \ref{ass:evolv_spaces_examples}, for any $p, q\in [1,\infty]$, there exists an evolving space equivalence between $\mathcal{W}^{p,q}(L^r(\Gamma_0), L^1(\Gamma_0))$ and $\W^{p, q}(L^r, L^1)$.
\end{restatable}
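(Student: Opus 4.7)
The plan is to invoke the general machinery of Theorem \ref{thm:equivalence}, whose hypotheses collapse to essentially trivial statements in this setting. The key observation is that under Assumption \ref{ass:evolv_spaces_examples}, the pushforward maps used for $L^r(\Gamma(t))$ and $L^1(\Gamma(t))$ are both given by composition with $\Phi^t_0$, so $\phi_t^X = \phi_t^Y|_{X_0}$. Consequently $\Pi_t = \phi_{-t}^Y \phi_t^X$ is simply the canonical inclusion $X_0 \hookrightarrow Y_0$, i.e., the restriction to $X_0$ of the identity map on $Y_0$.

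First I would set $\bar{\Pi}_t := \mathrm{Id}_{Y_0}$, which is manifestly the required linear extension of $\Pi_t$ to $Y_0$; it is uniformly bounded, its own inverse, and constant (hence measurable) in $t$. This at once gives \eqref{ass:onrangeTt}--\eqref{ass:measurabilityOnTtInv}. Since Lemma \ref{lem:nongelf11} furnishes $\lambda \equiv 0$, the associated operator $\hat{\Lambda}(t)$ vanishes identically, so the range condition \eqref{ass:rangeOfJ} is satisfied trivially. The Nemytskii operator $(\bar{\Pi}^\dagger)^{-1}$ is the identity on $L^p(0,T;Y_0^*)$ and therefore maps $\mathcal{W}^{p,q}(Y_0^*,Y_0^*)$ into itself; combined with the continuous embedding $L^q(0,T) \hookrightarrow L^{p\wedge q}(0,T)$ on the bounded interval, this yields \eqref{ass:remainsInSpace}.

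With every hypothesis of Theorem \ref{thm:equivalence} verified, we obtain the desired evolving space equivalence between $\mathcal{W}^{p,q}(L^r(\Gamma_0), L^1(\Gamma_0))$ and $\W^{p,q}(L^r, L^1)$, with the weak time derivative identified explicitly via parts (i)--(ii) of that theorem as
\[
\dot{\phi_t^X u}(t) = \phi_t^Y u'(t) \qquad \text{and} \qquad \bigl(\phi_{-t}^X u(t)\bigr)' = \phi_{-t}^Y \dot{u}(t).
\]
There is no genuine obstacle here; the purpose of the example is to illustrate that the abstract theory trivialises when $\phi_t^X = \phi_t^Y|_{X_0}$ and the transport bilinear form vanishes, so that the evolving Sobolev--Bochner space reduces cleanly to the standard one via pullback.
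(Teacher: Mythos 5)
Your proof is correct and follows the same essential route as the paper's: observe that $\phi_t^X=\phi_t^Y|_{X_0}$ forces $\Pi_t$ to be the inclusion $X_0\hookrightarrow Y_0$ (so $\bar\Pi_t=\mathrm{Id}_{Y_0}$), combine with $\lambda\equiv 0$ from Lemma~\ref{lem:nongelf11}, and conclude. The paper is terser, remarking (as in Remark~\ref{rem:afterWeakDerDefn}) that $\Pi_t=\mathrm{Id}_{X_0}$ and $\lambda\equiv 0$ collapse the weak-derivative definition to the classical one after pullback, and then stating that the equivalence ``follows immediately''; you instead explicitly walk through the five hypotheses of Theorem~\ref{thm:equivalence}, which amounts to the same argument verified item by item. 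One small but worthwhile precision you correctly supply that the paper leaves implicit: since $L^1(\Gamma_0)$ is \emph{not} reflexive, the range condition \eqref{ass:rangeOfJ} does not come for free, but it does hold here because $\hat\Lambda(t)\equiv 0$ lies trivially in $\mathrm{Range}(\mathcal J_{Y_0})$.
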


\subsubsection{$W^{1,r}(\Gamma(t))\hookrightarrow L^1(\Gamma(t))$}
Consider $X(t) = W^{1,r}(\Gamma(t))$ and $Y(t)=L^1(\Gamma(t))$ where the flow maps of each are defined as in \eqref{eq:defnOfPhiEg} but are different to each other, say 
\begin{align*}
    \phi_t^X u = u\circ \Phi_0^t \quad \text{ and } \quad \phi_t^Y = u \circ \widetilde{\Phi}_0^t,
\end{align*}
where $\Phi_t^0$ and $\widetilde{\Phi}_t^0$ are flows determined by given velocity fields $\mathbf{w}$ and $\widetilde{\mathbf{w}}$, respectively. We assume that these have the same normal component (indeed they must otherwise the surfaces will be different) but with potentially different tangential parts, say $\mathbf w_\tau$ and $\widetilde{\mathbf w}_\tau$. In general, we denote quantities of interest (such as the determinant of the Jacobian) using the notation $\widetilde{(\cdot)}$ for the corresponding quantity derived from $\widetilde \Phi^t_0$. We have the following expression for the extra term in the definition of the weak derivative.

\begin{restatable}{lem}{nongelf21}\label{lem:nongelf21}
Under Assumption \ref{ass:evolv_spaces_examples}, we have
\begin{align*}
    \lambda(t;u,v)&= \int_{\Gamma(t)} \left(\phi_t^X(\mathbf D\Phi^0_t)^\T \sgrad u\right) \cdot  \left(\mathbf D\Phi_0^t\left(\widetilde{\mathbf w}_\tau(t)- \mathbf w_\tau(t)\right)  \right)  v.
\end{align*}
\end{restatable}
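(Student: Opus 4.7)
The plan is to obtain an explicit pointwise realization of $(\phi_{-t}^Y)^* v_0$ as an element of $Y^*(t) = L^\infty(\Gamma(t))$ and then differentiate the resulting integral representation of $\pi$ via the Reynolds transport theorem. For $v_0 \in L^\infty(\Gamma_0)$ and $w \in L^1(\Gamma(t))$, the $L^1$--$L^\infty$ duality combined with the change of variables $y = \widetilde{\Phi}_t^0(x)$ gives
\[\langle (\phi_{-t}^Y)^* v_0, w\rangle_{Y^*(t), Y(t)} = \int_{\Gamma_0} v_0(x)\, w(\widetilde{\Phi}_t^0(x))\,dx = \int_{\Gamma(t)} \phi_t^Y v_0 \cdot w \cdot \widetilde{J}^t_0,\]
so $(\phi_{-t}^Y)^* v_0$ is represented by the function $\phi_t^Y v_0 \cdot \widetilde{J}^t_0$ on $\Gamma(t)$. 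Since $X(t)\hookrightarrow Y(t)$, the $X(t)$--$X^*(t)$ duality reduces to the integral pairing on elements of $Y^*(t)$, yielding
\[\pi(t; u_0, v_0) = \int_{\Gamma(t)} \phi_t^X u_0 \cdot \phi_t^Y v_0 \cdot \widetilde{J}^t_0.\]

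Next, I differentiate using the Reynolds transport theorem with velocity $\widetilde{\mathbf{w}}$ (the velocity generating the $Y$-flow); this choice is crucial because it makes the behaviour of the factor $\phi_t^Y v_0 \cdot \widetilde{J}^t_0$ especially simple, isolating all of the time dependence in the $X$-factor. The $\widetilde{\mathbf w}$-material derivative of $\phi_t^Y v_0 = v_0\circ \widetilde{\Phi}_0^t$ vanishes, and $\dot{(\widetilde{J}^t_0)}^{\widetilde{\mathbf w}} = -\widetilde{J}^t_0 \sgrad\cdot\widetilde{\mathbf w}$ follows from \eqref{eq:derivativeOfJ0t}, while
\[\dot{(\phi_t^X u_0)}^{\widetilde{\mathbf w}} = \dot{(\phi_t^X u_0)}^{\mathbf w} + (\widetilde{\mathbf w}_\tau - \mathbf w_\tau)\cdot\sgrad(\phi_t^X u_0) = (\widetilde{\mathbf w}_\tau - \mathbf w_\tau)\cdot\sgrad(\phi_t^X u_0),\]
since $\phi_t^X u_0$ has zero $\mathbf w$-material derivative. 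The $\sgrad\cdot\widetilde{\mathbf w}$ contribution from the Reynolds boundary term cancels against the one coming from $\dot{(\widetilde{J}^t_0)}^{\widetilde{\mathbf w}}$, producing
\[\hat\lambda(t;u_0,v_0) = \int_{\Gamma(t)} (\widetilde{\mathbf w}_\tau - \mathbf w_\tau)\cdot\sgrad(\phi_t^X u_0) \cdot \phi_t^Y v_0 \cdot \widetilde{J}^t_0.\]

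Finally, I recover $\lambda(t;u,v)$ by substituting $(u_0, v_0) = (\phi_{-t}^X u, (\phi_t^Y)^* v)$ as prescribed in \eqref{eq:definitionOfLambdaNew}. Then $\phi_t^X u_0 = u$ and, by the realization obtained in the first paragraph applied to $v = (\phi_{-t}^Y)^* v_0$, one has $\phi_t^Y v_0 \cdot \widetilde{J}^t_0 = v$ as an element of $L^\infty(\Gamma(t))$, so
\[\lambda(t;u,v) = \int_{\Gamma(t)} (\widetilde{\mathbf w}_\tau - \mathbf w_\tau)\cdot \sgrad u \cdot v.\]
The stated factored form in the lemma then follows from differentiating $\Phi_t^0\circ \Phi_0^t = \mathrm{id}$, which gives the chain-rule identity $(\mathbf D\Phi_t^0\circ \Phi_0^t)\cdot \mathbf D\Phi_0^t = I$ pointwise on $\Gamma(t)$; inserting this cancelling pair of Jacobians between $\sgrad u$ and $(\widetilde{\mathbf w}_\tau - \mathbf w_\tau)$ reproduces the stated expression exactly. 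The main delicacy is careful bookkeeping with two distinct flow maps, and the crucial idea is that choosing $\widetilde{\mathbf w}$ (rather than $\mathbf w$) as the Reynolds velocity makes the $\widetilde{J}^t_0$ cancellation transparent and avoids a proliferation of Jacobian terms.
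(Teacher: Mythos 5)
Your proof is correct but takes a genuinely different route from the paper's. The paper stays on the reference surface $\Gamma_0$: it writes $\pi(t;u_0,v_0) = \int_{\Gamma_0}\Pi_t u_0\,v_0$, differentiates $\Pi_t u_0 = u_0\circ\Phi_0^t\circ\widetilde\Phi_t^0$ pointwise via the two-flow chain rule (equation \eqref{eq:difficultchainrule}), pushes the result forward to $\Gamma(t)$, and only at the end removes $\partial_t\Phi_0^t$ using the ODE identity $\partial_t\Phi_0^t + \mathbf D\Phi_0^t\,\mathbf w = 0$. You instead realize $(\phi_{-t}^Y)^*v_0$ as the density $\phi_t^Y v_0\cdot\widetilde J^t_0$, move the whole pairing to $\Gamma(t)$, and apply the surface Reynolds transport theorem with velocity $\widetilde{\mathbf w}$: the transport property of $\phi_t^Y v_0$ and the Jacobian identity for $\widetilde J^t_0$ make the divergence terms cancel, and the $\mathbf w$--$\widetilde{\mathbf w}$ discrepancy is isolated through the one-line relation between the two material derivatives involving $(\widetilde{\mathbf w}_\tau-\mathbf w_\tau)\cdot\sgrad$. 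This is arguably a cleaner organisation than unwinding the composed flow, since the role of the tangential velocity mismatch is manifest from the start; both arguments ultimately invoke the same ODE identity for $\partial_t\Phi_0^t$. Both reduce the integrand to $(\widetilde{\mathbf w}_\tau-\mathbf w_\tau)\cdot\sgrad u\,v$, and the factored form in the lemma is recovered by inserting the cancelling pair of Jacobians. One point worth making explicit if you write this up: on a hypersurface $\phi_t^X(\mathbf D\Phi^0_t)\,\mathbf D\Phi_0^t$ is the tangential projection $P_{\Gamma(t)}$, not the full identity matrix, so the re-insertion is exact only because $\widetilde{\mathbf w}_\tau-\mathbf w_\tau$ is tangential --- your argument uses this implicitly, but it deserves a sentence.
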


Therefore in this case:

\begin{definition}
A function $u\in L^p_{W^{1,r}}$ has a weak time derivative $\dot u \in L^q_{L^1}$ if and only if 
\begin{align*}
\int_0^T \int_{\Gamma(t)} \dot u(t) \eta(t) &= - \int_0^T \int_{\Gamma(t)} u(t) \dot{\eta}(t) +  \int_0^T\int_{\Gamma(t)} \left(\phi_t^X(\mathbf D\Phi^0_t)^\T \sgrad u\right) \cdot  \left(\mathbf D\Phi_0^t\left(\widetilde{\mathbf w}_\tau(t)- \mathbf w_\tau(t)\right)  \right)  \eta(t) \quad \forall \eta\in \mathcal{D}_{X}.
\end{align*}
\end{definition}

\begin{remark}
Observe that in the case where $\mathbf w$ and $\widetilde {\mathbf w}$ have the same tangential component, we do indeed recover the situation of the previous example. 
\end{remark}

It is useful to note here that
\begin{align*}
    \Pi_t \colon W^{1,r}(\Gamma_0) \to L^1(\Gamma_0), \quad & \Pi_t u = u \circ \Phi_0^t \circ \widetilde \Phi_t^0.
\end{align*}

\begin{restatable}{prop}{nongelf22}
Under Assumption \ref{ass:evolv_spaces_examples}, for any $p, q\in [1,\infty]$, there exists an evolving space equivalence between $\mathcal{W}^{p,q}(W^{1,r}(\Gamma_0), L^1(\Gamma_0))$ and $\W^{p, q}(W^{1,r}, L^1)$.
\end{restatable}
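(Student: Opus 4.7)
The plan is to verify the hypotheses of Theorem \ref{thm:equivalence} in the present setting, with $X_0 = W^{1,r}(\Gamma_0)$ and $Y_0 = L^1(\Gamma_0)$. The formula for $\lambda$ in Lemma \ref{lem:nongelf21} together with the regularity of $\Phi$ and $\widetilde{\Phi}$ from Assumption \ref{ass:evolv_spaces_examples} already delivers Assumption \ref{ass:generalcaseChanges}. The observation that drives the rest of the argument is that
$$\Psi_t := \Phi_0^t \circ \widetilde{\Phi}_t^0 \colon \Gamma_0 \to \Gamma_0$$
is a $C^2$-diffeomorphism of $\Gamma_0$ onto itself with $\Psi_0 = \mathrm{id}$ and $C^1$ dependence on $t$. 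One checks directly that $\Pi_t u = u \circ \Psi_t$, so the entire problem reduces to analysing composition with a smooth one-parameter family of diffeomorphisms of the \emph{fixed} surface $\Gamma_0$.

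I would then extend $\Pi_t$ to $\bar\Pi_t \colon L^1(\Gamma_0) \to L^1(\Gamma_0)$ by the identical formula. The change-of-variables identity $\|\bar\Pi_t u\|_{L^1(\Gamma_0)} = \int_{\Gamma_0} |u|\,|J_{\Psi_t^{-1}}|$, together with the uniform-in-$t$ bound on $|J_{\Psi_t^{-1}}|$ guaranteed by Assumption \ref{ass:evolv_spaces_examples}, yields \eqref{ass:onrangeTt}; the dense inclusion $W^{1,r}(\Gamma_0) \hookrightarrow L^1(\Gamma_0)$ and Remark \ref{rem:measurabilityCanBeDropped}(i) then produce \eqref{ass:measurabilityOnTt}. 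The inverse $\bar\Pi_t^{-1}v = v \circ \Psi_t^{-1}$ is handled identically, which gives \eqref{ass:TtInvExistsAndBdd} and \eqref{ass:measurabilityOnTtInv}. For the range condition \eqref{ass:rangeOfJ}, one pulls back the expression from Lemma \ref{lem:nongelf21} through $\phi_{-t}^X$ and $(\phi_t^Y)^*$ to obtain a representation
$$\hat\lambda(t; u_0, v_0) = \int_{\Gamma_0} h(t; u_0)\, v_0,$$
in which the kernel $h(t;u_0)$ is a contraction of $\sgrad u_0$ against $C^1$ coefficient fields. Hence $h(t;u_0) \in L^r(\Gamma_0) \subset L^1(\Gamma_0) = Y_0$, so $\hat\Lambda(t)u_0 \in \mathrm{Range}(\mathcal{J}_{Y_0})$ as required.

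The main obstacle is \eqref{ass:remainsInSpace}, owing to the non-reflexivity of $Y_0^* = L^\infty(\Gamma_0)$. The duality identity
$$\int_{\Gamma_0} (f \circ \Psi_t)\,|J_{\Psi_t}|\, g = \int_{\Gamma_0} f\,(g \circ \Psi_t^{-1})$$
identifies $(\bar\Pi_t^*)^{-1}f = (f \circ \Psi_t)\,|J_{\Psi_t}|$, whose $L^p(0,T;L^\infty)$ boundedness is immediate from the uniform regularity of $\Psi_t$. To establish differentiability in time, given $v \in \mathcal{W}^{p,q}(L^\infty, L^\infty)$ and a test function $\eta \in \mathcal{D}((0,T);L^1(\Gamma_0))$, I would first reduce by density to $\eta$ taking values in a dense subspace with enough spatial regularity (say $W^{1,\infty}(\Gamma_0)$), use the duality identity above to transfer the composition onto $\eta$, differentiate $t \mapsto \eta(t) \circ \Psi_t^{-1}$ classically in $L^\infty(\Gamma_0)$ using the $C^1$-in-$t$/$C^2$-in-space regularity of $\Psi_t^{-1}$, and then invoke the defining relation of the weak derivative of $v$. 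Rearranging yields an identity of the form
$$\bigl((\bar\Pi^\dagger)^{-1} v\bigr)'(t) = (\bar\Pi_t^*)^{-1} v'(t) + R_t\, v(t),$$
where $R_t$ is a bounded operator on $L^\infty(\Gamma_0)$, uniformly in $t$, collecting the contributions of $\partial_t \Psi_t$ and $\partial_t |J_{\Psi_t}|$. The two summands belong to $L^q(0,T;L^\infty)$ and $L^p(0,T;L^\infty)$ respectively, giving membership in $\mathcal{W}^{p,\,p\wedge q}(L^\infty, L^\infty)$ and hence \eqref{ass:remainsInSpace}. An invocation of Theorem \ref{thm:equivalence} then delivers both the evolving space equivalence and the equivalence of norms, the latter following from the uniform bounds collected along the way.
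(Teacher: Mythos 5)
Your overall strategy coincides with the paper's: you identify $\Pi_t u = u\circ\Psi_t$ for the fixed-base diffeomorphism $\Psi_t = \Phi_0^t\circ\widetilde\Phi_t^0\colon\Gamma_0\to\Gamma_0$, extend to $\bar\Pi_t$ on $L^1(\Gamma_0)$ by the same formula, compute $\bar\Pi_t^*$ and $(\bar\Pi_t^*)^{-1}$ via the $L^1$--$L^\infty$ duality, and then invoke Theorem~\ref{thm:equivalence}. The explicit formulas you obtain agree with those displayed in the paper, and the handling of \eqref{ass:onrangeTt}--\eqref{ass:measurabilityOnTtInv} and of the range condition \eqref{ass:rangeOfJ} is fine.

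The gap is in the verification of \eqref{ass:remainsInSpace}. Writing $((\bar\Pi^\dagger)^{-1}v)(t) = v(t)\circ\Psi_t\,|J_{\Psi_t}|$, the formal chain rule gives
\begin{equation*}
\partial_t\bigl(v(t)\circ\Psi_t\,|J_{\Psi_t}|\bigr)
 = v'(t)\circ\Psi_t\,|J_{\Psi_t}|
 + \bigl(\grad_{g_0}v(t)\bigr)\circ\Psi_t\cdot\partial_t\Psi_t\,|J_{\Psi_t}|
 + v(t)\circ\Psi_t\,\partial_t|J_{\Psi_t}|,
\end{equation*}
so the operator you call $R_t$ necessarily contains the middle term, which is a \emph{first-order} differential operator acting on $v(t)$; it does not map $L^\infty(\Gamma_0)$ into $L^\infty(\Gamma_0)$. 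One sees the same thing in the distributional version of the computation: after transferring the composition onto a smooth test function $\eta$ and differentiating $\eta(t)\circ\Psi_t^{-1}$, the leftover functional $\eta\mapsto\int_0^T\int_{\Gamma_0}v(t)\,\bigl(\grad_{g_0}\eta(t)\bigr)\circ\Psi_t^{-1}\cdot\partial_t\Psi_t^{-1}$ cannot be integrated by parts onto $v(t)$ (which is only $L^\infty$), so its natural representation lives in $X_0^* = W^{1,r}(\Gamma_0)^*$, not in $Y_0^* = L^\infty(\Gamma_0)$ as \eqref{ass:remainsInSpace} demands. There is also a measurability obstruction: for the admissible element $v(t)\equiv\mathbf{1}_A$ (constant in time), $(\bar\Pi_t^*)^{-1}v(t) = \mathbf{1}_{\Psi_t^{-1}(A)}|J_{\Psi_t}|$ fails to be essentially separably valued in $L^\infty(\Gamma_0)$ as soon as $\Psi_t$ genuinely moves $A$ (the values are pairwise at $L^\infty$-distance one), so it does not even lie in $L^p(0,T;L^\infty(\Gamma_0))$. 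In fairness, the paper is equally terse at this point, asserting without argument that the displayed adjoints ``satisfy \eqref{ass:remainsInSpace}''; but the concrete step you wrote --- that $R_t$ is a bounded operator on $L^\infty(\Gamma_0)$ collecting only zeroth-order contributions of $\partial_t\Psi_t$ and $\partial_t|J_{\Psi_t}|$ --- is incorrect and would not survive being made precise.
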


\subsubsection{$W_0^{2,r}(\Omega(t))\hookrightarrow W_0^{1,1}(\Omega(t))$}

As a final example we take $X(t) = W_0^{2,r}(\Omega(t))$ and $Y(t)=W_0^{1,1}(\Omega(t))$  under the same assumptions as the previous case. In this case, $\Pi_t$ has the same formula as above, but we note that
\begin{align*}
    \pi(t; u, v) = \left\langle \Pi_t u, v\right\rangle_{W^{1,1}_0(\Omega_0), \, W^{-1, \infty}(\Omega_0)}
\end{align*}
and so we need a representation for elements of $W^{-1,\infty}(\Omega_0)$. By \cite[Proposition 9.20]{Bre11}, given $f\in W^{-1,\infty}(\Omega_0)$, there exist $f_1, \dots, f_n\in L^\infty(\Omega_0)$ such that 
\begin{align}\label{eq:actionex3}
    \left\langle f, u\right\rangle_{W^{-1,\infty}(\Omega_0), \, W^{1,1}_0(\Omega_0)} = -\sum_{i=1}^n \int_{\Omega_0} f_i D_i u.
\end{align}
In other words, writing $\underline{\mathbf f}=(f_1,\dots, f_n)$, the functional $f$ acts on $W^{1,1}$ as the operator $\nabla\cdot \mathbf \underline{\mathbf f}$; in what follows we always identify $f\equiv \underline{\mathbf f}$ and define its action on $W^{1,1}_0(\Omega_0)$ by \eqref{eq:actionex3}. 
\newcommand{\vv}{\underline{\mathbf v}}
\begin{restatable}{lem}{nongelf31}\label{lem:nongelf31}
Under Assumption \ref{ass:evolv_spaces_examples}, we have
\begin{align*}
    \lambda(t; u, \vv) = \int_{\Omega(t)} \vv \cdot \nabla \left( (\mathbf D \Phi_0^t)^{-\T} \nabla u \cdot \mathbf D\Phi_0^t \left( \widetilde{\mathbf w}_\tau(t)-\mathbf w_\tau(t)  \right) \right).
\end{align*}
\end{restatable}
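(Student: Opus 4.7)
The plan is to compute $\hat\lambda$ on the reference domain by unpacking $\pi(t;u_0,v_0)$ explicitly, differentiating in $t$ with $(u_0,v_0)$ held fixed, and then substituting $(u_0,v_0) = (\phi_{-t}^X u, (\phi_t^Y)^* v)$ and pushing the resulting duality pairing forward to $\Omega(t)$. To begin, set $F_t := \Phi_0^t \circ \widetilde\Phi_t^0 \colon \Omega_0 \to \Omega_0$, so that $\Pi_t u_0 = u_0 \circ F_t$. Representing $v_0 \in W^{-1,\infty}(\Omega_0)$ by a vector field $\underline{\mathbf f} \in L^\infty(\Omega_0)^n$ through \eqref{eq:actionex3}, the chain rule gives
$$\pi(t; u_0, v_0) = -\int_{\Omega_0} \underline{\mathbf f} \cdot (\mathbf D F_t)^{\T} (\nabla u_0 \circ F_t).$$

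The key step is to differentiate under the integral with $u_0$, $v_0$, $\underline{\mathbf f}$ held fixed. The algebraic identity
$$\partial_t\bigl[(\mathbf D F_t)^{\T}(\nabla u_0\circ F_t)\bigr] = \nabla_x\bigl[(\nabla u_0\circ F_t)\cdot\partial_t F_t\bigr],$$
which follows from $\partial_t \mathbf D F_t = \mathbf D(\partial_t F_t)$ together with the symmetry of $\nabla^2 u_0$, allows me to re-express the time derivative as an action of $v_0$ on the function $(\nabla u_0\circ F_t)\cdot\partial_t F_t \in W^{1,1}_0(\Omega_0)$:
$$\hat\lambda(t;u_0,v_0) = \bigl\langle v_0,\, (\nabla u_0\circ F_t)\cdot\partial_t F_t\bigr\rangle_{W^{-1,\infty}(\Omega_0), W^{1,1}_0(\Omega_0)}.$$
The vector $\partial_t F_t$ is computed by differentiating the identity $\Phi_t^0\circ\Phi_0^t = \mathrm{Id}$ to obtain $\partial_t\Phi_0^t(y) = -\mathbf D\Phi_0^t(y)\,\mathbf w(t,y)$, which combined with $\partial_t\widetilde\Phi_t^0(x) = \widetilde{\mathbf w}(t,\widetilde\Phi_t^0(x))$ yields
$$\partial_t F_t(x) = \mathbf D\Phi_0^t(\widetilde\Phi_t^0(x))\,\bigl(\widetilde{\mathbf w}-\mathbf w\bigr)(t,\widetilde\Phi_t^0(x)).$$

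To conclude I substitute $u_0 = u\circ\Phi_t^0$ and transport everything forward to $\Omega(t)$. Using $F_t(\widetilde\Phi_0^t(y)) = \Phi_0^t(y)$ and the chain rule $\nabla u_0(\Phi_0^t(y)) = (\mathbf D\Phi_0^t(y))^{-\T}\nabla u(y)$, the function $(\nabla u_0 \circ F_t)\cdot \partial_t F_t$ composed with $\widetilde\Phi_0^t$ becomes, as a function of $y \in \Omega(t)$,
$$(\mathbf D\Phi_0^t)^{-\T}\nabla u \cdot \mathbf D\Phi_0^t(\widetilde{\mathbf w}_\tau - \mathbf w_\tau).$$
Pushing the duality pairing forward using $\langle v_0, \phi_{-t}^Y(\cdot)\rangle_{Y_0^*,Y_0} = \langle v, \cdot\rangle_{Y^*(t),Y(t)}$ and then applying the representation of $v \in W^{-1,\infty}(\Omega(t))$ by the vector field $\vv$ via \eqref{eq:actionex3} produces the claimed expression for $\lambda(t;u,\vv)$.

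The two technical points that demand care are the verification of the algebraic identity in the second paragraph (in particular that its right-hand side lies in $W^{1,1}_0(\Omega_0)$, which is where the $W^{2,r}$-regularity of $u$ enters, since $\nabla^2 u_0$ must act on vectors pointwise in an $L^1$ sense), and the tracking of the vector-field representatives of the $W^{-1,\infty}$-elements under $(\phi_t^Y)^*$ together with the Jacobian factors from the change of variables, so that the end result is expressed in terms of the given $\vv$ on $\Omega(t)$ rather than a reference-domain surrogate. Once these are in place the argument runs parallel in spirit to that of Lemma~\ref{lem:nongelf21}, with the extra outer gradient in the final integral reflecting the distributional nature of the representation of $v$.
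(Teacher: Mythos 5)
Your proof is correct and follows essentially the same route as the paper: write $\pi(t;u_0,v_0)$ through the $W^{-1,\infty}$--$W^{1,1}_0$ pairing as an integral against $\nabla(\Pi_t u_0)$ with $\Pi_t u_0 = u_0\circ F_t$, interchange $\partial_t$ with $\nabla$, compute $\partial_t F_t$ from the flow identities $\partial_t\Phi_0^t = -\mathbf D\Phi_0^t\,\mathbf w$ and $\partial_t\widetilde\Phi_t^0 = \widetilde{\mathbf w}\circ\widetilde\Phi_t^0$, and push forward to $\Omega(t)$. The only substantive difference is presentational: you verify the $\partial_t$--$\nabla$ exchange explicitly via the symmetry of $\nabla^2 u_0$ and track the $W^{-1,\infty}$ representative through each step, whereas the paper invokes the chain-rule formula \eqref{eq:difficultchainrule} (derived for the preceding surface example) and differentiates under the gradient and the integral sign in a single stroke.
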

This leads to the definition:

\begin{definition}
A function $u\in L^p_{W_0^{2,r}}$ has a weak time derivative $\dot u \in L^q_{W_0^{1,1}}$ if and only if 
\begin{align*}
    \int_0^T \int_{\Omega(t)} \dot u \,  \underline{\pmb\eta} = - \int_0^T \int_{\Omega(t)} u \, \dot{\underline{\pmb\eta}} \, - \int_0^T \int_{\Omega(t)}\nabla \left( (\mathbf D \Phi_0^t)^{-\T} \nabla u \cdot \mathbf D\Phi_0^t \left( \widetilde{\mathbf w}(t)-\mathbf w(t)  \right) \right) \cdot \underline{\pmb\eta}  \quad \forall \underline{\pmb{\eta}}\in \mathcal D_{W^{-1, \infty}}.
\end{align*}
\end{definition}

Similar calculations as before lead to the main result:

\begin{restatable}{prop}{nongelf32}
Under Assumption \ref{ass:evolv_spaces_examples}, for any $p, q\in [1,\infty]$, there exists an evolving space equivalence between $\mathcal{W}^{p,q}(W^{2,r}_0(\Omega_0), W^{1,1}_0(\Omega_0))$ and $\W^{p, q}(W^{2,r}_0, W^{1,1})$.
\end{restatable}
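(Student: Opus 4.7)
The plan is to apply Theorem \ref{thm:equivalence} with $X_0 = W^{2,r}_0(\Omega_0)$ and $Y_0 = W^{1,1}_0(\Omega_0)$, so $Y_0^* = W^{-1,\infty}(\Omega_0)$ identified via \eqref{eq:actionex3}. Since neither $X_0$ nor $Y_0$ is reflexive, every hypothesis of that theorem must be checked by hand. First I would verify Assumption \ref{ass:generalcaseChanges}. The bilinear form $\lambda$ has been computed in Lemma \ref{lem:nongelf31}, and continuous differentiability in $t$ of $\pi(t;u,v)$, continuity of $\hat\lambda$, and the uniform estimate all follow from the $C^2$-regularity of both flows $\Phi, \widetilde\Phi$, the uniform bounds on their Jacobians, and the fact that $\mathbf{w}, \widetilde{\mathbf w} \in C^0([0,T]; C^2)$; one simply differentiates the composition $u \circ \Phi_0^t \circ \widetilde\Phi_t^0$ in $t$.

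Next I would handle the range condition \eqref{ass:rangeOfJ}. Pulling back Lemma \ref{lem:nongelf31} to $\Omega_0$ using the change of variables $x = \Phi_0^t(y)$ and \eqref{eq:pullbackOfGradient}, one obtains an expression of the form
\[
\hat\lambda(t;u_0,\vv_0) \;=\; \int_{\Omega_0} \vv_0 \cdot \nabla \bigl(\Theta_t u_0\bigr)\, \widetilde J_t^0,
\]
where $\Theta_t$ is a smooth linear differential operator in $u_0$ of order at most one with $C^1$ coefficients in $t$. Using \eqref{eq:actionex3}, this identifies $\hat\Lambda(t) u_0$ with the element $-\widetilde J_t^0\, \Theta_t u_0 \in W^{1,1}_0(\Omega_0) = Y_0$ (up to sign), hence $\hat\Lambda(t)u_0 \in \mathrm{Range}(\mathcal J_{Y_0})$.

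For the remaining assumptions involving $\bar\Pi_t$, observe that $\Pi_t u = u \circ \Phi_0^t \circ \widetilde\Phi_t^0$ extends by the same formula to a bounded linear map $\bar\Pi_t : W^{1,1}_0(\Omega_0) \to W^{1,1}_0(\Omega_0)$, and $\bar\Pi_t^{-1}$ is composition with the inverse $C^2$-diffeomorphism $\widetilde\Phi_0^t \circ \Phi_t^0$. The chain rule and uniform bounds on $\mathbf{D}\Phi_0^t, \mathbf{D}\widetilde\Phi_0^t$ and the Jacobians $J_t^0,\widetilde J_t^0$ yield \eqref{ass:onrangeTt} and \eqref{ass:TtInvExistsAndBdd}; the measurability conditions \eqref{ass:measurabilityOnTt}, \eqref{ass:measurabilityOnTtInv} follow from Remark \ref{rem:measurabilityCanBeDropped}(i) and the continuity of $t \mapsto \Phi_0^t$.

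The main obstacle is the last assumption \eqref{ass:remainsInSpace}, since it involves the time regularity of $(\bar\Pi_t^*)^{-1}$ acting on functions valued in $W^{-1,\infty}$. Using the representation \eqref{eq:actionex3}, an element $f \in W^{-1,\infty}(\Omega_0)$ with representative vector field $\underline{\mathbf f}$ is mapped by $(\bar\Pi_t^*)^{-1}$ to the element whose representative vector field is obtained from $\underline{\mathbf f}$ by an explicit change of variables involving $\mathbf{D}\Phi_0^t \circ \widetilde\Phi_t^0$, $(\mathbf{D}\widetilde\Phi_t^0)^{-\T}$, and the Jacobian determinant of $\widetilde\Phi_0^t \circ \Phi_t^0$. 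These coefficients are $C^1$ in time with bounded spatial $L^\infty$ norm, so differentiating in $t$ in the distributional sense produces a vector field in $L^\infty$ for each fixed time, giving the needed time derivative in $W^{-1,\infty}$. A routine computation then shows the integrability $(\bar\Pi^\dagger)^{-1} f \in \mathcal W^{p,p\wedge q}(W^{-1,\infty}, W^{-1,\infty})$ whenever $f \in \mathcal W^{p,q}(W^{-1,\infty}, W^{-1,\infty})$. With all hypotheses of Theorem \ref{thm:equivalence} verified, the desired evolving space equivalence follows, together with the explicit formulas \eqref{eq:equiv1GT}--\eqref{eq:equiv2GT} in the reflexive-free form given in the theorem.
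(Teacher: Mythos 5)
Your proof follows essentially the same route as the paper: compute $\lambda$ and $\hat\lambda$ explicitly, observe that $\hat\Lambda(t)$ maps $W^{2,r}_0$ into $W^{1,1}_0$ so that the range condition \eqref{ass:rangeOfJ} holds despite non-reflexivity, extend $\Pi_t$ by the same composition formula to get $\bar\Pi_t$ and its inverse with the required uniform bounds and measurability, and then identify $(\bar\Pi_t^*)^{-1}$ explicitly via the representation \eqref{eq:actionex3} to check the time-regularity hypothesis \eqref{ass:remainsInSpace} from $C^1$-in-time coefficients. This matches the paper's argument; the only inaccuracy is a stray Jacobian factor $\widetilde J_t^0$ in your intermediate formula for $\hat\lambda$ (the paper's direct computation from $\hat\lambda = \partial_t\pi$ yields $\int_{\Omega_0}\vv_0\cdot\nabla(\Theta_t u_0)$ with no extra Jacobian), but this is cosmetic and does not affect the conclusion.
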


\begin{remark}
The techniques of the previous examples can be extended to deal with the case of  $\mathbb W^{p,q}(W^{k,r}, W^{k-1, 1})$, $k\geq 2$. 
\end{remark}

\subsection{Proofs of evolving space equivalence}\label{sec:proofs}

We now provide the proofs of the results stated in \S \ref{sec:gelf} and \S \ref{sec:nongelf}. For readability we restate all the results.

\subsubsection{$L^2$ pivot space}

\gelfoneone*
\begin{proof}
Since we have a Gelfand triple structure, by Remark \ref{rem:observationsHatBetc}, the evolution of the duality pairing has the form
\begin{equation*}
    \pi(t;u, v) = \int_{\mathcal{M}_0} u_0 v_0 J_t^0.
\end{equation*}
By simply differentiating and using the formula \eqref{eq:derivativeOfJ0t} for differentiating the determinant of the Jacobian and then pushing forward, we obtain the desired expression.
\end{proof}

\eseLTwo*
\begin{proof}
From \S \ref{sec:L2pivotspace}, we see that $\Pi_ t\colon H_0 \to H_0$ 
is defined by 
$\Pi_t u := u J_t^0$ with inverse $\Pi_t^{-1} u :=u\slash J_t^0$. The regularity assumptions on the velocity field imply that $J^0_{(\cdot)}, (J_{(\cdot)}^0)^{-1} \in C^1([0,T], C^1(\mathbb R^d, \mathbb R^d))$ and hence
\begin{equation*}
    \|\Pi_t u \|_{W^{1,r}(\mathcal{M}_0)}  \leq C\| u \|_{W^{1,r}(\mathcal{M}_0)}
\end{equation*}
where $C$ depends on the $L^\infty(0,T;W^{1,\infty}(\mathcal{M}_0))$ norm of $J^0_t$. We can prove in the same way that the inverse $\Pi_t^{-1}$ is bounded as well.  It is not difficult to check that $\Pi^{-1}\colon \mathcal{W}^{p,q}(X_0, X_0) \to \mathcal{W}^{p,q}(X_0,X_0)$ due to the smoothness assumptions on $\Phi^0_t$ and hence the evolving space equivalence holds by Theorem \ref{thm:ESEGelfandTriple}.
\end{proof}

\subsubsection{$H^1$ pivot space}

\gelftwoone*

\begin{proof}
We see that for $u, v \in H_0,$ by using the formula  \eqref{eq:transportFormulaGradient} for differentiating the Dirichlet energy,
\begin{align*}
    \hat\lambda(t; u,v) 
    &= \frac{d}{dt}\left(\int_{\mathcal{M}(t)}\phi_t u \phi_t v + \sgrad \phi_t u^{\T} \sgrad \phi_t v\right) = \int_{\mathcal{M}(t)}\phi_t u\phi_t v\sgrad \cdot \mathbf{w}(t) + \sgrad \phi_t u^{\T} \mathbf{H}(t)\sgrad \phi_t v.
\end{align*}
This then immediately implies the result. 
\end{proof}

The proof of Proposition \ref{prop:eseH1} (evolving space equivalence between the spaces $\mathcal{W}^{p,q}(W^{2,r}(\mathcal{M}_0), W^{2,r}(\mathcal{M}_0)^*)$ and $\W^{p, q}(W^{2,r}, (W^{2,r})^*)$) requires us to check the conditions of Theorem \ref{thm:ESEGelfandTriple}, which we will do now in a series of lemmas.

Firstly, writing $(\Pi_tu,v)_{H_0} = (\phi_t u, \phi_t v)_{H(t)}$ and at the same time expanding the inner product on the left-hand side, 
\begin{align*}
    (\Pi_tu, v)_{H_0} = \int_{\mathcal{M}_0}\Pi_tuv + \sgrad \Pi_tu \cdot  \sgrad v &=  
    \int_{\mathcal{M}_0} u vJ^0_t + (\mathbf{D}\Phi_t^0(\mathbf{A}^0_t)^{-1}\sgrad u)^{\T} \mathbf D\Phi_t^0(\mathbf{A}^0_t)^{-1}\sgrad v J_t^0 \\ 
    &= \int_{\mathcal{M}_0} u vJ^0_t + \sgrad u^{\T} \mathbf{B}_t^0\sgrad v,
\end{align*}
where we denoted 
\begin{equation}\label{operatorB}
\mathbf{B}_t^0 := (\mathbf{A}^0_t)^{-\T}(\mathbf D\Phi_t^0)^\T \mathbf{D} \Phi_t^0 (\mathbf{A}_t^0)^{-1} J_t^0. 
\end{equation}
By comparing these two expressions, we are able to obtain relevant properties of $\Pi_t.$

\begin{lem}
Under Assumption \ref{ass:evolv_spaces_examples} and \eqref{eq:extraregularityH1}, we have $\Pi_t\colon X_0 \to X_0$ is a bounded linear map.
\end{lem}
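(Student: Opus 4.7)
The approach is to reinterpret the defining identity of $\Pi_t$ derived just above,
\begin{equation*}
(\Pi_t u, v)_{H_0} = \int_{\mathcal{M}_0} uv J_t^0 + \grad_{g_0} u^\T \mathbf{B}_t^0 \grad_{g_0} v \quad \forall v \in H^1(\mathcal{M}_0),
\end{equation*}
as saying that $\Pi_t u \in H^1(\mathcal{M}_0)$ is the unique $H^1$-weak solution on the fixed smooth manifold $\mathcal{M}_0$ of the linear elliptic boundary-value problem
\begin{equation*}
\Pi_t u - \lap_{g_0} \Pi_t u = u J_t^0 - \grad_{g_0} \cdot (\mathbf{B}_t^0 \grad_{g_0} u) \quad \text{in } \mathcal{M}_0,
\end{equation*}
supplemented, when $\mathcal{M}_0 = \Omega_0$, by the Neumann condition $\grad_{g_0} \Pi_t u \cdot \nu_0 = \mathbf{B}_t^0 \grad_{g_0} u \cdot \nu_0$ on $\partial\Omega_0$ (no boundary contribution arises when $\mathcal{M}_0 = \Gamma_0$ is closed). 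Crucially, the elliptic operator $\mathrm{Id} - \lap_{g_0}$ is $t$-independent, while the entire $t$-dependence resides in the right-hand side through $J_t^0$ and $\mathbf{B}_t^0$.

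I then plan to invoke classical Calder\'on--Zygmund $L^r$-theory. Under the extra regularity \eqref{eq:extraregularityH1}, the flow $\Phi_t^0$ lies in $C^3(\overline{\mathcal{M}_0})$ uniformly in $t \in [0,T]$, so that $J_t^0$ and $\mathbf{B}_t^0$ --- each an algebraic expression in $\mathbf{D}\Phi_t^0$, $(\mathbf{A}_t^0)^{-1}$ and $J_t^0$ (see \eqref{operatorB}) --- belong to $C^2(\overline{\mathcal{M}_0})$ with norms bounded uniformly in $t$. For $u \in W^{2,r}(\mathcal{M}_0)$ this yields
\begin{equation*}
\big\| u J_t^0 - \grad_{g_0} \cdot(\mathbf{B}_t^0 \grad_{g_0} u) \big\|_{L^r(\mathcal{M}_0)} \leq C\|u\|_{W^{2,r}(\mathcal{M}_0)},
\end{equation*}
with $C$ independent of $t$, and in the domain case, a completely analogous $t$-uniform bound on the Neumann data in $W^{1-1/r,r}(\partial\Omega_0)$ via the trace theorem. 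Applying the standard $W^{2,r}$-regularity estimate for $\mathrm{Id} - \lap_{g_0}$ on the $C^2$ manifold (or $C^2$ domain with Neumann boundary condition) $\mathcal{M}_0$ then delivers $\Pi_t u \in W^{2,r}(\mathcal{M}_0)$ together with $\|\Pi_t u\|_{W^{2,r}(\mathcal{M}_0)} \leq C \|u\|_{W^{2,r}(\mathcal{M}_0)}$ for a constant $C$ independent of $t$. Linearity of $\Pi_t$ is immediate from the defining identity.

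I do not anticipate significant obstacles: the elliptic operator is fixed in $t$ and the only $t$-dependent ingredients already carry $t$-uniform bounds under \eqref{eq:extraregularityH1}. The only mild subtlety is the treatment of the Neumann boundary contribution in the domain case, but this is a textbook consequence of the trace and elliptic regularity theorems on $C^2$ domains. The uniform-in-$t$ bound furnished by this argument is precisely what will be needed below to verify assumption \eqref{ass:gtTtrange} of Theorem \ref{thm:ESEGelfandTriple}.
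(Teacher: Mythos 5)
Your proposal is correct and follows essentially the same route as the paper: read the defining identity as the $H^1$-weak formulation of $(\mathrm{Id}-\Delta_{g_0})w = uJ_t^0 - \grad_{g_0}\cdot(\mathbf{B}_t^0\grad_{g_0} u)$, observe that the right-hand side lies in $L^r(\mathcal{M}_0)$ with a $t$-uniform bound thanks to the $C^2$ regularity of $J_t^0$ and $\mathbf{B}_t^0$ under \eqref{eq:extraregularityH1}, and invoke Calder\'on--Zygmund $W^{2,r}$ elliptic regularity. You are somewhat more explicit than the paper about the Neumann boundary contribution in the domain case $\mathcal{M}_0=\Omega_0$ (the paper states the surface case and refers to a patching argument), but this is a refinement of presentation rather than a different argument.
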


\begin{proof}
Given $u \in X_0$, setting $w=\Pi_t u,$ we have by the above displayed equation
\begin{align}
    \int_{\mathcal{M}_0} wv + \sgrad w^{\T} \sgrad v = \int_{\mathcal{M}_0} uv J^0_t + \sgrad u^{\T} \mathbf{B}_t^0\sgrad v \quad \forall v \in H_0.\label{eq:TtEquationH1example}
\end{align}
As a function of $v$, the right-hand side is clearly an element of $H^1(\mathcal{M}_0)^*$, so by the Lax--Milgram lemma, there exists a unique $w \in H^1(\mathcal{M}_0)$ satisfying the above equation. By smoothness, we can rewrite this as
\[
\int_{\mathcal{M}_0} wv + \sgrad w^{\T} \sgrad v = \int_{\mathcal{M}_0} (u J^0_t - \sgrad \cdot (\mathbf{B}_t^0\sgrad u))v \quad \forall v \in H_0,\]
i.e., $w$ is a weak solution $w-\Delta_\Gamma w = (u J^0_t -  \sgrad \cdot (\mathbf{B}_t^0\sgrad u)) \in L^r(\Gamma_0)$. 
We may apply elliptic regularity theory (by making use of the usual estimates, e.g. \cite[\S 9.2]{Jost} on Euclidean balls and using a patching argument to extend to the manifold case if $\mathcal{M}(t) = \Gamma(t)$, as is standard)  to this variational formulation to deduce that $w \in W^{2,r}(\mathcal M_0)$ as well as
\[\norm{w}{W^{2,r}(\mathcal M_0)} \leq C\norm{u J^0_t - \sgrad \cdot (\mathbf{B}_t^0\sgrad u)}{L^r(\mathcal M_0)}.\qedhere\]
\end{proof}

\begin{lem}
Under Assumption \ref{ass:evolv_spaces_examples} and \eqref{eq:extraregularityH1}, the map $\Pi_t\colon X_0 \to X_0$ is invertible with uniformly bounded inverse with $t \mapsto \Pi^{-1}_t w$ measurable. Hence $\Pi^{-1}\colon L^r(0,T;X_0) \to L^r(0,T;X_0).$
\end{lem}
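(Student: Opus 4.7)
The plan is to invert the variational equation \eqref{eq:TtEquationH1example} by rewriting it symmetrically: given $w \in X_0$, we seek $u \in X_0$ such that
\[
\int_{\mathcal{M}_0} u v J_t^0 + \nabla u^{\T} \mathbf{B}_t^0 \nabla v = \int_{\mathcal{M}_0} w v + \nabla w^{\T}\nabla v \qquad \forall v \in H_0.
\]
First I would check that the bilinear form on the left is continuous and coercive on $H_0 \times H_0$ with constants independent of $t$: continuity follows from the uniform $L^\infty$ bounds on $J_t^0$ and $\mathbf{B}_t^0$ which are consequences of the smoothness of $\Phi_t^0$ in Assumption \ref{ass:evolv_spaces_examples} together with \eqref{eq:extraregularityH1}, while coercivity follows from the uniform lower bound $J_t^0 \geq C_J^{-1}$ and the uniform positive-definiteness of $\mathbf{B}_t^0$ (inherited from $\mathbf{A}_t^0$ and the determinant bound). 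Consequently, Lax--Milgram produces a unique $u \in H_0$ with
\[
\|u\|_{H_0} \leq C \, \|w\|_{H_0}
\]
for a constant $C$ independent of $t$.

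Next, to upgrade to $W^{2,r}$-regularity, I would rewrite the variational equality as a weak form of the elliptic equation
\[
u J_t^0 - \nabla \cdot (\mathbf{B}_t^0 \nabla u) = w - \Delta w \quad \text{on } \mathcal{M}_0,
\]
whose right-hand side lies in $L^r(\mathcal{M}_0)$ because $w \in X_0 = W^{2,r}(\mathcal{M}_0)$. Since $\mathbf{B}_t^0$ is uniformly elliptic with $C^1$ coefficients by \eqref{eq:extraregularityH1}, standard Calder\'on--Zygmund / $W^{2,r}$ elliptic regularity (applied on Euclidean balls and patched via a partition of unity on the manifold when $\mathcal{M}_0 = \Gamma_0$, exactly as for the forward estimate) gives $u \in W^{2,r}(\mathcal{M}_0)$ with
\[
\|u\|_{W^{2,r}(\mathcal{M}_0)} \leq C \, \|w - \Delta w\|_{L^r(\mathcal{M}_0)} \leq C \, \|w\|_{W^{2,r}(\mathcal{M}_0)},
\]
where the constant $C$ depends only on the $L^\infty$-norms of $J_t^0$ and of the entries of $\mathbf{B}_t^0$ and their derivatives, and hence can be chosen independent of $t \in [0,T]$. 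This defines a bounded linear left and right inverse of $\Pi_t$, proving uniform invertibility.

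It remains to verify the measurability of $t \mapsto \Pi_t^{-1} w$ for each fixed $w \in X_0$. This is the main technical step. The cleanest route is to observe that the coefficient fields $t \mapsto J_t^0$ and $t \mapsto \mathbf{B}_t^0$ are continuous with values in $C^1(\mathcal{M}_0)$ by \eqref{eq:extraregularityH1}, and then to show that a sequence $t_n \to t$ yields $\Pi_{t_n}^{-1} w \to \Pi_t^{-1} w$ in $X_0$. Indeed, setting $u_n := \Pi_{t_n}^{-1} w$ and $u := \Pi_t^{-1} w$, the difference $u_n - u$ solves
\[
\int_{\mathcal{M}_0} (u_n - u) v J_{t_n}^0 + \nabla (u_n - u)^{\T} \mathbf{B}_{t_n}^0 \nabla v = \int_{\mathcal{M}_0} u v (J_t^0 - J_{t_n}^0) + \nabla u^{\T}(\mathbf{B}_t^0 - \mathbf{B}_{t_n}^0)\nabla v \quad \forall v \in H_0,
\]
whose right-hand side tends to zero as a functional on $H^1(\mathcal{M}_0)$ by dominated convergence. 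Uniform coercivity then gives $u_n \to u$ in $H_0$, and combining this with the uniform $W^{2,r}$-estimate and the $W^{2,r}$-convergence of the right-hand side $w - \Delta w$ (which is $t$-independent here) yields $u_n \to u$ in $X_0$. Continuity in $t$ implies Bochner measurability, and together with the uniform bound this gives the final conclusion $\Pi^{-1} \colon L^r(0,T;X_0) \to L^r(0,T;X_0)$. The main obstacle is the $W^{2,r}$-regularity / continuous-dependence argument on a manifold when $\mathcal{M}_0 = \Gamma_0$, which is handled via the standard localisation and patching procedure already used for $\Pi_t$ itself.
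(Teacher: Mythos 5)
Your proposal is correct and follows essentially the same route as the paper: the paper invokes the observation that the bilinear form $\int u v J_t^0 + \nabla u^{\T}\mathbf{B}_t^0 \nabla v$ is an equivalent inner product on $H_0$ (which is your continuity/coercivity check feeding Lax--Milgram), upgrades to $W^{2,r}$ by the same elliptic-regularity argument used for $\Pi_t$ itself, and deduces measurability from the continuity of $J_t^0$ and $\mathbf{B}_t^0$ in $t$, exactly as you spell out. You have merely written out explicitly the steps the paper compresses into ``almost identical to the previous lemma.''
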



\begin{proof}
In this case, one needs to show that given $w \in X_0$, there exists $u \in X_0$ such that \eqref{eq:TtEquationH1example} holds and the proof is almost identical to the previous lemma after realising that the right-hand side of \eqref{eq:TtEquationH1example} is an equivalent inner product on $H_0$. The measurability follows because $J^0_t$ and $\mathbf{B}_t^0$ are continuous.
\end{proof}

\begin{lem}
Under Assumption \ref{ass:evolv_spaces_examples} and \eqref{eq:extraregularityH1}, we have $\Pi^{-1}\colon \mathcal W^{p,q}(X_0,X_0) \to \mathcal W^{p, \, p\land q}(X_0,X_0)$ for any $p,q\in [1,\infty]$.
\end{lem}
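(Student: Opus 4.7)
The approach is to first establish the claim pointwise for smooth $u \in C^1([0,T]; X_0)$ by directly differentiating in time the variational equation characterising $\Pi_t^{-1}$, and then extend by density of smooth functions in $\mathcal{W}^{p,q}(X_0,X_0)$. From the identity $(\Pi_t u, v)_{H_0} = (\phi_t u, \phi_t v)_{H(t)}$ and the definition of $\mathbf{B}_t^0$ in \eqref{operatorB}, the function $w(t) := \Pi_t^{-1} u(t)$ is characterised as the unique solution in $H_0$ (and in fact in $X_0$, by the previous lemma) of
\begin{equation*}
    \int_{\mathcal{M}_0} w(t) v J_t^0 + \sgrad w(t)^{\T} \mathbf{B}_t^0 \sgrad v = \int_{\mathcal{M}_0} u(t)v + \sgrad u(t) \cdot \sgrad v \qquad \forall v \in H_0.
\end{equation*}
The regularity hypothesis \eqref{eq:extraregularityH1} ensures that $J_{(\cdot)}^0$ and $\mathbf{B}_{(\cdot)}^0$ belong to $C^1([0,T]; C^1(\mathcal{M}_0))$.

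For $u \in C^1([0,T]; X_0)$, I would take the difference of the above variational equation at times $t+h$ and $t$, divide by $h$, and rearrange to find that $\delta_h w(t)$ satisfies
\begin{align*}
    \int_{\mathcal{M}_0} \delta_h w(t) v J_{t+h}^0 + \sgrad \delta_h w(t)^{\T} \mathbf{B}_{t+h}^0 \sgrad v
    &= \int_{\mathcal{M}_0} \delta_h u(t)v + \sgrad \delta_h u(t) \cdot \sgrad v \\
    &\quad - \int_{\mathcal{M}_0} w(t) v\, \delta_h J_t^0 + \sgrad w(t)^{\T} (\delta_h \mathbf{B}_t^0) \sgrad v
\end{align*}
for every $v \in H_0$. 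The left-hand side is uniformly coercive in $h$ thanks to the uniform positive-definiteness of $\mathbf{B}_{t+h}^0$, while the right-hand side can be rewritten (after integration by parts in $v$) as the pairing of $v$ with an element of $L^r(\mathcal{M}_0)$ whose norm is controlled by $\|\delta_h u(t)\|_{X_0} + \|w(t)\|_{X_0}$. Hence the same Lax--Milgram plus Calder\'on--Zygmund regularity argument used in the previous lemma applies uniformly in $h$, yielding $\delta_h w(t) \in X_0$ with
\begin{equation*}
    \|\delta_h w(t)\|_{X_0} \leq C\bigl(\|\delta_h u(t)\|_{X_0} + \|w(t)\|_{X_0}\bigr) \leq C\bigl(\|\delta_h u(t)\|_{X_0} + \|u(t)\|_{X_0}\bigr),
\end{equation*}
where the second inequality uses the uniform boundedness of $\Pi_t^{-1}$. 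Passing to $h \to 0$ then shows $w \in C^1([0,T]; X_0)$ with $\|w'(t)\|_{X_0} \leq C(\|u'(t)\|_{X_0} + \|u(t)\|_{X_0})$.

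The conclusion then follows by density: $C^1([0,T]; X_0)$ is dense in $\mathcal{W}^{p,q}(X_0, X_0)$ by standard Sobolev--Bochner theory (using that $X_0 = W^{2,r}(\mathcal{M}_0)$ is separable and reflexive for $r \in (1,\infty)$). For $u \in \mathcal{W}^{p,q}(X_0,X_0)$, approximating by $u_n \in C^1([0,T]; X_0)$ and applying the pointwise estimate to $u_n - u_m$ shows $\{\Pi^{-1} u_n\}$ is Cauchy in $\mathcal{W}^{p, p\land q}(X_0, X_0)$; here I use that $L^p(0,T;X_0), L^q(0,T;X_0) \hookrightarrow L^{p \land q}(0,T;X_0)$ on the bounded interval $[0,T]$. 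Continuity of $\Pi^{-1}$ on $L^p(0,T;X_0)$ identifies the limit with $\Pi^{-1} u$, giving the desired conclusion together with the estimate $\|(\Pi^{-1}u)'\|_{L^{p \land q}(0,T;X_0)} \leq C(\|u\|_{L^p(0,T;X_0)} + \|u'\|_{L^q(0,T;X_0)})$. The main technical obstacle is the need for $C^1$-in-time regularity of the coefficient matrix $\mathbf{B}_t^0$ --- which involves $(\mathbf{A}_t^0)^{-1}$ and thus demands the stronger regularity \eqref{eq:extraregularityH1} --- together with the uniform coercivity required for the Lax--Milgram step to succeed with constants independent of both $t$ and $h$.
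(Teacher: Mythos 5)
Your proof follows the same overall strategy as the paper (characterise $w(t)=\Pi_t^{-1}u(t)$ by the variational identity, take difference quotients, apply coercivity and Calder\'on--Zygmund regularity, conclude by density), but there is a genuine gap in the passage from the a priori bound to differentiability.

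The estimate you obtain,
\[
\|\delta_h w(t)\|_{X_0} \leq C\bigl(\|\delta_h u(t)\|_{X_0} + \|u(t)\|_{X_0}\bigr),
\]
is only a uniform-in-$h$ bound on the difference quotients. That alone does not imply that $\lim_{h\to0}\delta_h w(t)$ exists: a Banach-space-valued map with uniformly bounded difference quotients is merely Lipschitz, and Lipschitz maps into a Banach space need not be everywhere differentiable (and even in reflexive spaces one only gets differentiability almost everywhere via the Radon--Nikodym property). Saying ``passing to $h\to0$ then shows $w\in C^1$'' therefore elides the crux of the argument. What is required is to identify a candidate derivative and prove actual convergence to it. The paper does this by defining $y(t)\in H_0$ as the unique solution of the limiting variational problem
\[
\int_{\mathcal{M}_0}y(t)vJ^0_t + \sgrad y(t)^{\T}\mathbf{B}_t^0\sgrad v = \int_{\mathcal{M}_0}u'(t)v + \sgrad u'(t)\cdot\sgrad v - w(t)\,v\,(J^0_t)' - \sgrad w(t)^{\T}(\mathbf{B}_t^0)'\sgrad v\quad\forall v\in H_0,
\]
inserting $\pm y(t)$ into the difference-quotient identity, and estimating $\|\delta_h w(t)-y(t)\|_{H_0}$ by Young's inequality in terms of $\|\delta_h u(t)-u'(t)\|_{H_0}$, $\|\delta_h J^0_t - (J^0_t)'\|_{L^\infty}$, $\|\delta_h\mathbf{B}_t^0 - (\mathbf{B}_t^0)'\|_{L^\infty}$, and $\|J^0_{t+h}-J^0_t\|_{L^\infty}$, $\|\mathbf{B}_{t+h}^0-\mathbf{B}_t^0\|_{L^\infty}$; all of these tend to $0$ because $u\in C^1([0,T];X_0)$ and, under \eqref{eq:extraregularityH1}, $J^0_{(\cdot)},\mathbf{B}_{(\cdot)}^0\in C^1$. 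Only once strong differentiability of $w$ in $H_0$ is established does elliptic regularity applied to the equation for $y(t)$ lift the derivative bound to $W^{2,r}$. You should insert this subtraction-and-convergence step; without it the claimed $C^1$ regularity (and hence the subsequent density argument) is unjustified. The remainder of your proof --- the structure of the difference-quotient equation, the coercivity of $\mathbf{B}_{t+h}^0$, the $L^r$ bound on the data after integration by parts, and the density/Cauchy argument --- is sound and matches the paper's reasoning.
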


\begin{proof}
We shall first show that $\Pi^{-1}\colon C^1([0,T];X_0) \to \mathcal{W}^{p, \, p\land q}(X_0,X_0)$ and then extend by density. Take $w \in C^1([0,T];X_0)$ and set $u=\Pi^{-1}w$. We have that $u(t)$ satisfies 
\begin{align*}
    \int_{\mathcal{M}_0} u(t)vJ^0_t + \sgrad u(t)^{\T} \mathbf{B}_t^0\sgrad v  = \int_{\mathcal{M}_0} w(t)v + \sgrad w(t)^{\T} \sgrad v\quad \forall v \in H_0.
\end{align*}
Taking the difference at times $t+h$ and $t$, this becomes, for all $v \in H_0$,
\begin{align*}
    \int_{\mathcal{M}_0} \delta_h u(t) v J^0_{t+h} + u(t)\delta_h J_t^0 v + \sgrad \delta_h u(t)^{\T} \mathbf{B}_{t+h}^0\sgrad v &+ \sgrad u(t)^{\T} \delta_h \mathbf{B}_t^0\sgrad v = \int_{\mathcal{M}_0} \delta_h w(t) v + \sgrad \delta_h w(t)^{\T}\sgrad v.
\end{align*}
Now, adding and subtracting $y(t)$ where $y(t)$ is defined as the solution of
\begin{equation}
\int_{\mathcal{M}_0}y(t)vJ^0_t + \sgrad y(t) \mathbf{B}_t^0\sgrad v = \int_{\mathcal{M}_0}w'(t)v + \sgrad w'(t) \sgrad v - u(t)v(J^0_t)' - \sgrad u(t) (\mathbf{B}_t^0)'\sgrad v \quad \forall v \in H_0\label{eq:defnOfGEquation},
\end{equation}
we obtain
\begin{align*}
    &\int_{\mathcal{M}_0} \left(\delta_h u(t)-y(t)\right)vJ^0_{t+h} + u(t)\delta_h J_t^0v + \sgrad \left(\delta_h u(t)^{\T} - y(t)\right) \mathbf{B}_{t+h}^0\sgrad v + \sgrad u(t)^{\T} \delta_h\mathbf{B}_t^0\sgrad v \\
    &\quad+ \int_{\mathcal{M}_0} y(t)vJ^0_{t+h} + \sgrad y(t)\mathbf{B}_{t+h}^0\sgrad v = \int_{\mathcal{M}_0} \delta_h w(t) v + \sgrad \delta_h w(t)^{\T} \sgrad v.
\end{align*}
Observe that, using the definition of $y(t)$, the final term on the left-hand side is
\begin{align*}
    \int_{\mathcal{M}_0} y(t)vJ^0_{t+h} + \sgrad y(t)\mathbf{B}_{t+h}^0\sgrad v &= \int_{\mathcal{M}_0} y(t)v(J^0_{t+h}-J^0_t) + \sgrad y(t)(\mathbf{B}_{t+h}^0-\mathbf{B}_t^0)\sgrad v\\
    &\quad+ \int_{\mathcal{M}_0}w'(t)v + \sgrad w'(t) \sgrad v - u(t)(J^0_t)'v - \sgrad u(t) (\mathbf{B}_t^0)'\sgrad v,
\end{align*}
so the above becomes
\begin{align*}
    \int_{\mathcal{M}_0} &\left(\delta_h u(t) -y(t)\right)vJ^0_{t+h} + u(t)\left(\delta_h J^0_t-(J^0_t)'\right)v + \sgrad \left(\delta_h u(t)^{\T} - y(t)\right) \mathbf{B}_{t+h}^0\sgrad v\\
    &+ \int_{\mathcal{M}_0} \sgrad u(t)^{\T}\left(\delta_h\mathbf{B}_t^0-(\mathbf{B}_t^0)'\right)\sgrad v + \int_{\mathcal{M}_0} y(t)v(J^0_{t+h}-J^0_t) + \sgrad y(t)(\mathbf{B}_{t+h}^0-\mathbf{B}_t^0)\sgrad v\\
    &\quad\quad= \int_{\mathcal{M}_0} \left(\delta_h w(t)-w'(t)\right)v + \sgrad \left(\delta_h w(t)^{\T} - \sgrad w'(t)\right) \sgrad v.
\end{align*}
Taking $v=\delta_h u(t)-g$ and using Young's inequality with $\epsilon$ multiple times, we find
\begin{align*}
C\norm{\delta_h u(t)-y(t)}{H_0}^2 &\leq \norm{\delta_h w(t) -w'(t)}{H_0}^2 + \norm{\delta_h J^0_t-(J^0_t)'}{L^\infty(\mathcal{M}_0)}^2\norm{u(t)}{}^2 \\ 
&\quad + \norm{\delta_h \mathbf{B}_t^0- (\mathbf{B}_t^0)'}{L^\infty(\mathcal{M}_0)}^2\norm{\sgrad u(t)}{L^2(\mathcal{M}_0)}^2 + \norm{J^0_{t+h}-J_t^0}{L^\infty(\mathcal{M}_0)}^2 \norm{y(t)}{L^2(\mathcal{M}_0)}^2 \\ &\quad +\norm{\mathbf{B}_{t+h}^0-\mathbf{B}_t^0}{L^\infty(\mathcal{M}_0)}^2\norm{\sgrad y(t)}{L^2(\Omega)}^2,
\end{align*}
which shows that $u$ is strongly differentiable; more precisely, $u=\Pi^{-1}w \in C^1([0,T];H_0)$ with $u' = g$.

By the same reasoning as the previous lemma applied to the weak formulation for $y(t)$ (see \eqref{eq:defnOfGEquation}), we obtain in fact that
\[\norm{(\Pi_t^{-1}w (t))'}{W^{2,r}(\mathcal{M}_0)} \leq C(\norm{w'(t)}{W^{2,r}(\mathcal{M}_0)} + \norm{\Pi_t^{-1}w(t)}{W^{2,r}(\mathcal{M}_0)}).\]
Hence $\Pi^{-1}\colon C^1([0,T];X_0) \to \mathcal W^{p,\,p\land q}(X_0,X_0)$ is such that $\Pi^{-1}\colon \mathcal W^{p,q}(X_0,X_0) \to \mathcal W^{p, \, p\land q}(X_0,X_0)$ is bounded. By density, we obtain the result.
\end{proof}
\eseHOne*
\begin{proof}
Having checked all conditions of Theorem \ref{thm:ESEGelfandTriple} above, the result follows.
\end{proof}

\subsubsection{$H^{-1}$ pivot space}

To provide the expression for $\lambda$ in Lemma \ref{lem:defnOfLambdaHMO}, we now verify that Assumption \ref{ass:generalcaseChanges} is satisfied. Given $u \in H_0$, we must check that $\norm{\phi_t u}{H(t)}^2$ is differentiable. Define $w(t)\in H^1_0(\Omega(t))$ by 
\begin{align}\label{eq:defnOfW}
   \mathrm{L}_t w(t) = \phi_t u,
\end{align}
so that, as we argued above,
\begin{equation}\label{eq:gradientW}
    \norm{\phi_t u}{H(t)}^2 
    = \int_{\Omega(t)} |\grad_t w(t)|^2.
\end{equation}
Observe that the right-hand side of \eqref{eq:defnOfW} is clearly in $C^\infty_H$ with zero time derivative, and hence as is the left-hand side, i.e., $\mathrm{L} w \in C^\infty_H$ with
\[\md(\mathrm{L}w) = 0.\]
To prove that \eqref{eq:gradientW} is differentiable, we need $w$ itself to belong to $C^1_{H^1_0}$, which the next lemma shows is the case.  In the proof below we make use of the notation $\delta_h$ again to denote the difference quotient.

\begin{lem}\label{lem:identityForDualDerivative}
Under Assumption \ref{ass:evolv_spaces_examples} and \eqref{eq:extraregularityH1}, for $u \in H_0$, we have  $w \equiv \mathrm{L}_{(\cdot)}^{-1}\phi_{(\cdot)} u \in C^1_{H^1_0}$ and $\dot w$ satisfies, for all $t\in [0,T]$,
\[\int_{\Omega(t)}\grad \dot w(t) \cdot \grad \varphi = -\int_{\Omega(t)}\grad w(t)^{\T}\mathbf{H}(t)\grad  \varphi \qquad \forall \varphi\in H^1_0(\Omega(t)).\]
\end{lem}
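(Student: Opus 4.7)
The plan is to pull back the defining elliptic problem to the fixed domain $\Omega_0$, establish $C^1$-regularity of the solution there, then transfer back and use the transport formula to identify the derivative.

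First I would set $W(t):=\psi_{-t}w(t)\in H^1_0(\Omega_0)$ and rewrite the equation $\mathrm{L}_t w(t)=\phi_t u$ weakly by testing against $\varphi = \psi_t\eta$ for $\eta\in H^1_0(\Omega_0)$. Using the definition of $\phi_t$ from \eqref{eq:actionOfMapH-1} (giving $\langle \phi_t u,\psi_t\eta\rangle = \langle u,\eta\rangle_{H_0,H^1_0(\Omega_0)}$) and changing variables on the left-hand side via \eqref{eq:pullbackOfGradient}, one arrives at
\[
\int_{\Omega_0} J_t^0(\mathbf{A}_t^0)^{-1}\nabla W(t)\cdot\nabla\eta \;=\; \langle u,\eta\rangle_{H_0,H^1_0(\Omega_0)}\qquad\forall\eta\in H^1_0(\Omega_0).
\]
Let $\mathcal{A}(t)\colon H^1_0(\Omega_0)\to H^{-1}(\Omega_0)$ denote the corresponding operator; under Assumption \ref{ass:evolv_spaces_examples} together with \eqref{eq:extraregularityH1}, the coefficient $J_t^0(\mathbf{A}_t^0)^{-1}$ lies in $C^1([0,T];C^1(\overline{\Omega_0};\mathbb{R}^{n\times n}))$ and is uniformly elliptic, so $\mathcal{A}(t)$ is uniformly bounded and coercive on $H^1_0(\Omega_0)$.

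Next I would show $W\in C^1([0,T];H^1_0(\Omega_0))$. Since $u$ is independent of $t$, differencing the identities at $t+h$ and $t$ gives
\[
\mathcal{A}(t+h)\bigl(W(t+h)-W(t)\bigr) \;=\; -\bigl(\mathcal{A}(t+h)-\mathcal{A}(t)\bigr)W(t).
\]
Uniform coercivity and the $C^1$-regularity of the coefficient allow one to pass to the limit $h\to 0$ and identify $\dot W(t)\in H^1_0(\Omega_0)$ as the unique solution of
\[
\int_{\Omega_0} J_t^0(\mathbf{A}_t^0)^{-1}\nabla\dot W(t)\cdot\nabla\eta \;=\; -\int_{\Omega_0}\tfrac{d}{dt}\bigl[J_t^0(\mathbf{A}_t^0)^{-1}\bigr]\nabla W(t)\cdot\nabla\eta\qquad\forall\eta\in H^1_0(\Omega_0).
\]
Standard estimates then show $\delta_h W(t)\to \dot W(t)$ strongly in $H^1_0(\Omega_0)$ uniformly in $t$, and continuity of $\dot W$ in $t$ follows by the same argument, giving $W\in C^1([0,T];H^1_0(\Omega_0))$, i.e.\ $w\in C^1_{H^1_0}$ with strong derivative $\dot w(t)=\psi_t\dot W(t)$.

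Finally, to obtain the stated formula for $\dot w$, I would avoid explicitly computing $\tfrac{d}{dt}[J_t^0(\mathbf{A}_t^0)^{-1}]$ and instead work directly on $\Omega(t)$. Given $\varphi\in H^1_0(\Omega(t))$, set $\eta:=\psi_{-t}\varphi$ and apply the transport formula \eqref{eq:transportFormulaGradient} to $v(s):=\psi_s\eta$ (so that $\dot v\equiv 0$) and $w$: since the left-hand side $\int_{\Omega(s)}\nabla w(s)\cdot\nabla\psi_s\eta=\langle u,\eta\rangle_{H_0,H^1_0(\Omega_0)}$ is independent of $s$, its derivative vanishes, yielding
\[
0 \;=\; \int_{\Omega(t)}\nabla\dot w(t)\cdot\nabla\varphi \;+\; \int_{\Omega(t)}\nabla w(t)^{\T}\mathbf{H}(t)\nabla\varphi,
\]
which is the desired identity. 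Since $\{\psi_t\eta:\eta\in H^1_0(\Omega_0)\}=H^1_0(\Omega(t))$, the formula holds for every test function.

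The main obstacle is the second step: turning the formal difference-quotient identity into a genuine strong $C^1$ statement uniformly in $t$. It requires one to combine uniform coercivity of $\mathcal{A}(t)$ with the $C^1$-dependence of its coefficient in an operator-norm sense, and this is where the extra regularity \eqref{eq:extraregularityH1} (and the additional $C^2$-in-time assumption flagged just before the lemma) is really used.
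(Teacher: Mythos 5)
Your proof is correct, and the first half (establishing $W=\psi_{-(\cdot)}w\in C^1([0,T];H^1_0(\Omega_0))$ via the pulled-back weak formulation, differencing, coercivity, and the difference-quotient convergence argument) is essentially identical to the paper's argument -- you both exploit that $u$ is independent of $t$ so that $\mathcal{A}(t+h)(W(t+h)-W(t))=-(\mathcal A(t+h)-\mathcal A(t))W(t)$ and then pass to the limit using uniform coercivity and $C^1$-in-time regularity of the coefficient $J_t^0(\mathbf A_t^0)^{-1}$ (the paper calls this $\mathbf B_t^0$).

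Where you genuinely diverge is in the identification of the formula for $\dot w$. The paper stays on $\Omega_0$: having characterised $W'(t)$ as the weak solution of the problem with source $-\nabla\cdot\bigl((\mathbf B_t^0)'\nabla W(t)\bigr)$, it pushes forward the bilinear form and then invokes the appendix identity (Lemma~\ref{lem:actionOfMPrime}), which computes $\phi_t(\partial_t\mathbf A_t^0)$ explicitly and recognises the resulting coefficient as $\nabla\cdot\mathbf w\,\mathrm{Id} - (\mathbf D\mathbf w + (\mathbf D\mathbf w)^\T) = \mathbf H$. You instead observe that $t\mapsto\int_{\Omega(t)}\nabla w(t)\cdot\nabla\psi_t\eta = \langle u,\eta\rangle_{H_0,H^1_0(\Omega_0)}$ is constant, then apply the Dirichlet-energy transport formula~\eqref{eq:transportFormulaGradient} with $\dot v\equiv 0$ to extract the identity directly. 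This is cleaner in that it sidesteps the explicit matrix calculus (Lemmas~\ref{lem:derivativeOfM}--\ref{lem:actionOfMPrime}), but it silently assumes~\eqref{eq:transportFormulaGradient} is valid for $w\in C^1_{H^1_0}$ and $v$ the pushforward of a fixed $H^1_0$ element rather than for pointwise-smooth functions; this is fine here because the transport formula at this regularity is precisely what one obtains by differentiating $\int_{\Omega_0}\nabla W(t)^\T\mathbf B_t^0\nabla V(t)$ using Lemma~\ref{lem:derivativeOfM} -- so your route and the paper's are, at bottom, two packagings of the same computation, with yours hiding it inside the already-stated transport formula. It would be worth noting this explicitly, since otherwise a reader might worry that~\eqref{eq:transportFormulaGradient} is being applied outside its stated scope.
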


\begin{proof}
Let us show that $w \in C^1_{H^1_0}$ by proving that $\tilde w:=\psi_{-(\cdot)}w \in C^1((0,T);H^1_0(\Omega_0))$. Due to 
\eqref{eq:pullbackOfGradient} and reusing the notation $\mathbf B_t^0$ from \eqref{operatorB}, we see from \eqref{eq:defnOfW} that $\tilde w$ satisfies
\begin{align*}
    \int_{\Omega_0}\grad \tilde w(t)^{\T}\mathbf{B}_t^0\grad\psi_{-t}\varphi = \langle \psi_{-t}^*u, \varphi \rangle = \langle u, \psi_{-t}\varphi \rangle \qquad 
\forall \varphi \in H^1_0(\Omega(t)).
\end{align*}
Hence
\begin{align}
    \int_{\Omega_0}\grad \tilde w(t)^{\T}\mathbf{B}_t^0\grad\eta  = \langle u, \eta \rangle \qquad \forall \eta \in H^1_0(\Omega_0).\label{eq:toGetSmoothness}
\end{align}
Take two times $t, s \geq 0$ and consider the difference of the above equality at those times:
\begin{align*}
    \int_{\Omega_0}\grad (\tilde w(t)^{\T}-\tilde w(s)^{\T})\mathbf{B}_t^0\grad\eta + \tilde w(s)^{\T}(\mathbf{B}_t^0-\mathbf{B}_s^0)\grad \eta  = 0.
\end{align*}
Taking $\eta = \tilde w(t)-\tilde w(s)$, this implies the bound
\[C\norm{\grad \tilde w(t)-\grad \tilde w(s)}{L^2(\Omega_0)} \leq \norm{\tilde w(s)}{L^2(\Omega_0)}\norm{\mathbf{B}_t^0-\mathbf{A}_s^0}{L^\infty(\Omega_0)},\]
and the right-hand side clearly tends to zero as $t \to s$, proving that $\tilde w \in C^0([0,T];H^1_0(\Omega_0)).$

Regarding the derivative, let $h >0$ and take the difference in \eqref{eq:toGetSmoothness} between times $t+h$ and $t$ and divide by $h$:
\begin{equation}\label{eq:differenceQuotientUW}
    \int_{\Omega_0} \grad\delta_h \tilde w(t) \mathbf{B}_{t+h}^0\grad\eta  + \int_{\Omega_0}\grad \tilde w(t)^{\T}\delta_h \mathbf{B}_t^0\grad\eta = 0.
\end{equation}
We now show that the difference quotient for $\tilde w(t)$ converges to the (unique) solution $v(t) \in H^1_0(\Omega(t))$ of 
\[\int_{\Omega_0}\grad v(t)^{\T}\mathbf{B}_t^0\grad \eta = -\int_{\Omega_0}\grad \tilde w(t)^{\T}(\mathbf{B}_t^0)'\grad \eta \qquad \forall \eta \in H^1_0(\Omega_0).\]
In \eqref{eq:differenceQuotientUW}, if we add and subtract the same term, we see
\begin{align*}
    \int_{\Omega_0}\grad (\delta_h\tilde w(t)- v(t))^{\T}\mathbf{B}_{t+h}^0\grad\eta  + \grad v(t)^{\T} \mathbf{B}_{t+h}^0 \grad \eta +  \grad \tilde w(t)^{\T}\delta_h \mathbf{B}_t^0\grad\eta = 0,
\end{align*}
and here adding and subtracting $\int_{\Omega_0}\grad v(t)^{\T} \mathbf{B}_t^0\grad \eta$ and using the equation defining $v(t),$ we end up with
\begin{align*}
    \int_{\Omega_0}\left(\nabla \delta_h \tilde w(t)-\grad v(t)\right)^{\T}\mathbf{B}_{t+h}^0\grad\eta  + \grad v(t)^{\T} \left(\mathbf{B}_{t+h}^0-\mathbf{B}_t^0\right) \grad \eta 
    + \grad \tilde w(t)^{\T}\left(\delta_h\mathbf{B}_t^0-(\mathbf{B}_t^0)'\right)\grad\eta = 0.
\end{align*}
Taking $\eta$ appropriately, using positive-definiteness and smoothness of $\mathbf{A}$, we get
\begin{align*}
    C\norm{\delta_h \nabla \tilde w(t)-\grad v(t)}{L^2(\Omega_0)}  \leq \norm{\grad v(t)}{L^2(\Omega_0)}&\norm{\mathbf{B}_{t+h}^0-\mathbf{B}_t^0}{L^\infty(\Omega_0)} + \norm{\grad \tilde w(t)}{L^2(\Omega_0)}\norm{\delta_h\mathbf{B}_t^0-(\mathbf{B}_t^0)'}{L^\infty(\Omega_0)},
\end{align*}
and in the limit $h \to 0$, the right-hand side tends to zero and hence
\[\delta_h \tilde w(t) \to v(t) \quad \text{in $H^1_0(\Omega_0)$}\]
but then we must have that $\tilde w'$ exists and $\tilde w' \equiv v.$ By considering the equation defining $v=\tilde w'$ and making a similar argument to how we showed that $\tilde w$ is continuous, we can show that $\tilde w' \in C^0([0,T];H^1_0(\Omega_0))$. Pushing forward the integrals defining $\tilde w'(t)$, 
we see that
\begin{align*}
    \int_{\Omega_0}\grad \tilde w(t)^{\T}(\mathbf{B}_t^0)'\grad \eta 
    &= \int_{\Omega(t)}J^t_0\grad w(t)^{\T}\psi_t(\mathbf{D}\Phi^0_t)\psi_t((\mathbf{B}_t^0)')\psi_t(\mathbf{D}\Phi^0_t)^{\T}\grad \varphi\\
    &= \int_{\Omega(t)}J^t_0\grad w(t)^{\T}(\mathbf{D}\Phi^t_0)^{-1}\psi_t((\mathbf{B}_t^0)')(\mathbf{D}\Phi^t_0)^{-\T}\grad \varphi.
\end{align*}
The identity in Lemma \ref{lem:actionOfMPrime} gives a simplification of the right-hand side above and provides the desired result. 
\end{proof}
\defnOfLambdaHMO*
\begin{proof}
Using the transport formula \eqref{eq:transportFormulaGradient} on \eqref{eq:gradientW} and plugging the result of the previous lemma in, we derive
\begin{align*}
    \frac{d}{dt}\norm{\phi_t u}{H(t)}^2 &= \int_{\Omega(t)}2\grad \dot w(t) \cdot \grad w(t) - \mathbf{H}(t) \grad w(t)\cdot \grad w(t)   = \int_{\Omega(t)} \mathbf{H}(t)\nabla w(t) \cdot \nabla w(t).
\end{align*}
We have then that
\begin{align*}
    \hat\lambda(t; u_0,v_0) &:= \frac{1}{4}\left(\frac{d}{dt}\norm{\phi_t(u_0 + v_0)}{H(t)}^2 - \frac{d}{dt}\norm{\phi_t(u_0 - v_0)}{H(t)}^2\right)\\
    &= \frac{1}{4}\Big(\int_{\Omega(t)} \mathbf{H}(t) \nabla z(t) \cdot \nabla z(t)- \int_{\Omega(t)} \mathbf{H}(t) \nabla y(t) \cdot \nabla y(t)\Big),
\end{align*}
where $z(t)$ and $y(t)$ are defined via
    $\mathrm{L}_t z(t) = \psi_{-t}^* (u_0+v_0)$ and $\mathrm{L}_t y(t) = \psi_{-t}^* (u_0-v_0).$
Defining also
\[\mathrm{L}_t w(t) = \psi_{-t}^* u_0\qquad\text{and}\qquad \mathrm{L}_t v(t) = \psi_{-t}^* v_0,\]
and using linearity, the above simplifies to
\begin{align*}
    \hat\lambda(t; u_0,v_0) = \int_{\Omega(t)} \mathbf{H}(t)\nabla w(t) \cdot \nabla v(t) = \int_{\Omega(t)} \mathbf{H}(t) \nabla (\mathrm{L}_t^{-1}\psi_{-t}^* u_0) \cdot \nabla (\mathrm{L}_t^{-1}\psi_{-t}^* v_0),
\end{align*}
and a simple calculation shows that Assumptions \ref{ass:generalcaseChanges} (ii), (iii) (see Remark \ref{rem:generalcase}) are also satisfied. Pushing forward to $\Omega(t)$ now yields the desired expression.
\end{proof}

We now check the evolving space equivalence result for this example again by verifying the assumptions of Theorem \ref{thm:ESEGelfandTriple}. 

\begin{lem}\label{lem:formulaPiH-1}
Under Assumption \ref{ass:evolv_spaces_examples} and \eqref{eq:extraregularityH1}, for $u\in H_0$, we have 
\[\Pi_t u = \mathrm{L}_0 \psi_{-t} \mathrm{L}_t^{-1} \phi_{t}u\]
and $\Pi_t\colon X_0 \to X_0$ is uniformly bounded and invertible  with uniformly bounded and measurable (in time) inverse.
\end{lem}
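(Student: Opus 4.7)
The formula $\Pi_t u = \mathrm L_0 \psi_{-t} \mathrm L_t^{-1} \phi_t u$ is immediate from the definition $\Pi_t = \phi_t^A \phi_t$ (see \eqref{eq:PiTAsHMap}) together with the explicit expression \eqref{eq:adjoint_H-1} for $\phi_t^A$, so I would simply record this composition as the starting point.

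To show $\Pi_t \colon X_0 \to X_0$ is uniformly bounded, my plan is to exploit the extra regularity available when $\phi_t$ is restricted to $X_0$. For $u \in X_0 \subset L^p(\Omega_0)$, the distribution $\phi_t u$ is identified, via \eqref{eq:L1distribution_H-1}, with the $L^p(\Omega(t))$-function $J^t_0 \psi_t u$; its $L^p$-norm is uniformly controlled by $\|u\|_{L^p(\Omega_0)}$ thanks to the regularity of the flow under Assumption \ref{ass:evolv_spaces_examples} and \eqref{eq:extraregularityH1}. The Calder\'on--Zygmund estimate \eqref{eq:Lpregularity} then promotes $\mathrm L_t^{-1}\phi_t u$ to $W^{2,p}(\Omega(t))$ with a constant uniform in $t$; pulling back with $\psi_{-t}$ preserves $W^{2,p}$-membership by $C^2$-smoothness of the flow map, and finally $\mathrm L_0$ is a bounded operator from $W^{2,p}(\Omega_0)$ into $L^p(\Omega_0)$. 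The $H^{-1}(\Omega_0)$-bound is automatic from the factorisation $\Pi_t = \phi_t^A \phi_t$ as a composition of uniformly bounded operators between Hilbert spaces.

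For invertibility, I would use that $\phi_t \colon H_0 \to H(t)$ is a Hilbert space isomorphism, so $\phi_t^A$ is too, and hence $\Pi_t^{-1} = \phi_{-t}(\phi_t^A)^{-1} = \phi_{-t} \phi_{-t}^A$. The explicit form of $\phi_{-t}^A$ (the time-reversed analogue of \eqref{eq:adjoint_H-1}) combined with the symmetric version of \eqref{eq:L1distribution_H-1} yields
\[\Pi_t^{-1} u = J^0_t \psi_{-t} \mathrm L_t \psi_t \mathrm L_0^{-1} u,\]
and the same Calder\'on--Zygmund chain, run in the opposite direction, gives uniform $X_0 \to X_0$ boundedness of $\Pi_t^{-1}$: $\mathrm L_0^{-1}$ lifts $u \in L^p(\Omega_0)$ to $W^{2,p}(\Omega_0)$, $\psi_t$ transfers the regularity to $\Omega(t)$, $\mathrm L_t$ brings it back to $L^p(\Omega(t))$, and the smooth multiplier $J^0_t$ together with the pullback $\psi_{-t}$ lands in $L^p(\Omega_0)$. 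Measurability of $t \mapsto \Pi_t^{-1} u$ in $X_0$ for fixed $u$ then follows at once, since every constituent operation depends continuously on $t$ when applied to a fixed argument, by the regularity of the flow.

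The main obstacle I foresee is ensuring that the Calder\'on--Zygmund constant in \eqref{eq:Lpregularity} is independent of $t \in [0,T]$, since the domain $\Omega(t)$ itself varies with $t$. The cleanest way to justify uniformity will be to pull each elliptic estimate back to the reference domain $\Omega_0$, after which one is left with a uniformly elliptic divergence-form operator whose coefficients depend continuously on $t$ in a norm sufficient to guarantee a $t$-uniform $L^p$-regularity constant; the uniform $C^2$-bounds on the flow provided by Assumption \ref{ass:evolv_spaces_examples} and \eqref{eq:extraregularityH1} are exactly what makes this reduction work.
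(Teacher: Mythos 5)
Your proposal is correct and follows the same route the paper takes: the factorisation $\Pi_t = \phi_t^A\phi_t$ together with the explicit formula \eqref{eq:adjoint_H-1} for $\phi_t^A$ yields the stated expression, and boundedness (resp.\ invertibility) is read off the composition $\mathrm L_0\psi_{-t}\mathrm L_t^{-1}\phi_t$ (resp.\ $\phi_{-t}\mathrm L_t\psi_t\mathrm L_0^{-1}$) exactly as you trace it, using the Calder\'on--Zygmund estimate \eqref{eq:Lpregularity} and the regularity of the flow. Your closing remark that the $t$-uniformity of the elliptic regularity constant needs to be justified by pulling the estimate back to $\Omega_0$ is a valid and useful fleshing-out of a point the paper merely asserts in a footnote.
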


\begin{proof}
The formula follows directly from \eqref{eq:adjoint_H-1}. 
Recalling that $\phi_{t}$ (resp. $\phi_{-t}$) maps $X_0$ to $X(t)$ (resp. $X(t)$ to $X_0$) and is bounded, we can easily see that $\Pi_t\colon X_0 \to X_0$ is bounded due to the elliptic regularity of \eqref{eq:Lpregularity}. It also has an inverse defined by $\Pi_t^{-1} = \phi_{-t}\mathrm{L}_t\psi_t \mathrm{L}_0^{-1}$ with the same properties. Measurability of $\Pi_t u$, $\Pi_t^{-1} u$, for $u\in H_0$, follows from the fact that the composition of measurable maps is measurable. 
\end{proof}
Thus, we have the fulfilment of \eqref{ass:gtTtrange}, \eqref{ass:gtTtInvertible} and \eqref{ass:gtmeasurabilityOnTtAndInv}.
\begin{lem}
Under Assumption \ref{ass:evolv_spaces_examples} and \eqref{eq:extraregularityH1}, the conditions of Theorem \ref{thm:ESEGelfandTriple} are fulfilled. 
\end{lem}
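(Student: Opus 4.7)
The plan is to combine Lemma \ref{lem:formulaPiH-1}, which already furnishes \eqref{ass:gtTtrange}, \eqref{ass:gtTtInvertible}, and \eqref{ass:gtmeasurabilityOnTtAndInv}, with the alternative criteria of Lemma \ref{lem:altGTCriteria} so as to circumvent the direct verification of \eqref{ass:gtremainsInSpace} (which is cumbersome in this dual setting). Since $H_0 = H^{-1}(\Omega_0)$ is separable, \eqref{ass:forMeasOfAdj} is automatic, and it remains to verify \eqref{ass:newOneForcheck}, \eqref{ass:rangeOfHatLambdaForNewCheck}, and \eqref{ass:TtInvDualSmooth} for the adjoint maps $\xi_t = (\phi_{-t})^A = \mathrm{L}_t \psi_t \mathrm{L}_0^{-1}\colon H_0 \to H(t)$, whose explicit form comes from \eqref{eq:adjoint_H-1}.

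For \eqref{ass:newOneForcheck}, I would use the definition of the $H^{-1}(\Omega(t))$ inner product to rewrite
\[(\xi_t u_0, \xi_t v_0)_{H(t)} = \int_{\Omega(t)} \nabla(\psi_t \mathrm{L}_0^{-1} u_0)\cdot \nabla(\psi_t \mathrm{L}_0^{-1} v_0)\]
and apply the transport formula \eqref{eq:transportFormulaGradient}. Since $\mathrm{L}_0^{-1} u_0$ and $\mathrm{L}_0^{-1} v_0$ are time-independent, the material-derivative terms vanish and only the geometric term involving $\mathbf{H}(t)$ survives, yielding a closed form for $\hat\lambda^\xi$ from which the $C^1$ regularity in $t$, bilinear continuity, and uniform bound on $H_0 \times H_0$ follow using \eqref{eq:extraregularityH1}. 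For \eqref{ass:rangeOfHatLambdaForNewCheck}, I would integrate this expression by parts in $\Omega(t)$ (the boundary term vanishes as $\psi_t \mathrm{L}_0^{-1} v_0 \in H^1_0(\Omega(t))$) and pull back to read off $\hat\Lambda^\xi(t) u_0$ as a pullback of $\nabla \cdot(\mathbf{H}(t)\,\nabla(\psi_t \mathrm{L}_0^{-1} u_0))$ multiplied by the Jacobian. For $u_0 \in X_0 \subset L^p(\Omega_0)$, Calder\'on--Zygmund regularity \eqref{eq:Lpregularity} gives $\mathrm{L}_0^{-1} u_0 \in W^{2,p}(\Omega_0)$, so the subsequent pushforward, multiplication, divergence, and pullback all preserve $L^p$ regularity, and $\hat\Lambda^\xi(t) u_0 \in L^p(\Omega_0) \subset X_0$ with a $t$-uniform bound. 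Finally, for \eqref{ass:TtInvDualSmooth} I would exploit the self-adjointness of $\Pi_t^{-1} = \phi_{-t}^H (\phi_{-t}^H)^A$ on $H_0$: under the Gelfand identifications, $(\Pi_t^{-1})^*$ restricted to $X_0 \hookrightarrow H_0 \equiv H_0^* \hookrightarrow X_0^*$ coincides with $\Pi_t^{-1}|_{X_0}$, whose uniform boundedness $X_0 \to X_0$ is supplied by Lemma \ref{lem:formulaPiH-1}.

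The main obstacle I anticipate is the careful bookkeeping of dual and adjoint identifications peculiar to the $H^{-1}$ pivot setting, where $H_0 \equiv H_0^*$ but $H^1_0(\Omega_0) \not\equiv H^{-1}(\Omega_0)$, so one must resist conflating $(\Pi_t^{-1})^*$ with the duals of $\Pi_t^{-1}$ taken in other natural pairings. The substantive technical core is the verification of \eqref{ass:rangeOfHatLambdaForNewCheck}: propagating $L^p$-regularity through pushforward by $\psi_t$, a second-order differential operator with smooth but time-dependent coefficients, and pullback by $\psi_{-t}$ relies crucially on the $C^2$-in-$t$ regularity of $\Phi_t^0$ from \eqref{eq:extraregularityH1}.
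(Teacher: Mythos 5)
Your proposal is correct and follows the same overall strategy as the paper: invoke Lemma \ref{lem:altGTCriteria} to sidestep a direct verification of \eqref{ass:gtremainsInSpace}, with \eqref{ass:gtTtrange}--\eqref{ass:gtmeasurabilityOnTtAndInv} already supplied by Lemma \ref{lem:formulaPiH-1}, \eqref{ass:forMeasOfAdj} automatic by separability, and \eqref{ass:newOneForcheck} and \eqref{ass:rangeOfHatLambdaForNewCheck} handled exactly as you describe (the reduction $\pi^\xi(t;u,v) = \pi^\psi(t;\mathrm{L}_0^{-1}u,\mathrm{L}_0^{-1}v)$, integration by parts, and Calder\'on--Zygmund regularity \eqref{eq:Lpregularity}).

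The one place you genuinely diverge from the paper is the verification of \eqref{ass:TtInvDualSmooth}. The paper computes the formula $(\Pi_t^{-1})^* f = \mathrm{L}_0^{-1}\phi_{-t}\mathrm{L}_t\phi_{-t}^*f$ explicitly and checks, via \eqref{eq:Lpregularity} and \eqref{eq:adjoint_H-1}, that for $f\in X_0$ the right-hand side stays in $X_0$ with a uniform bound. You instead use the $H_0$-self-adjointness of $\Pi_t^{-1}=\phi_{-t}^H(\phi_{-t}^H)^A$ to observe that, under the Gelfand identification $X_0\hookrightarrow H_0\equiv H_0^*\hookrightarrow X_0^*$, the restriction of $(\Pi_t^{-1})^*$ to $X_0$ is just $\Pi_t^{-1}$ itself, whose uniform $X_0\to X_0$ bound is already in Lemma \ref{lem:formulaPiH-1}. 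This is a cleaner route that avoids the formula manipulation; it is really the same observation the paper exploits when it proves $\Pi_t^\#|_{X_0}\equiv\Pi_t$ in the proof of Theorem \ref{thm:ESEGelfandTriple}, just applied here to $\Pi_t^{-1}$. Both are sound; the paper's version makes the operator concrete (which is useful elsewhere), yours is shorter.

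Two small cautions. First, your phrase describing ``pushforward by $\psi_t$, a second-order differential operator'' is misleading: $\psi_t$ is a zeroth-order composition map; the second-order operators in the chain are $\mathrm{L}_0^{-1}$ and $\nabla\cdot(\mathbf{H}\nabla\,\cdot)$. What matters, as you say, is that the whole composition defining $\hat\Lambda^\xi(t)$ is order zero on $L^p$. Second, the hypothesis in the lemma statement is \eqref{eq:extraregularityH1} ($C^1$ in $t$, $C^3$ in space); the $C^2$-in-$t$ regularity you invoke is the additional assumption imposed in \S\ref{sec:H-1pivotspace} just before Lemma \ref{lem:defnOfLambdaHMO}, not something contained in \eqref{eq:extraregularityH1} --- worth keeping these distinct.
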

\begin{proof}
We shall make use of the alternative criteria provided in Lemma \ref{lem:altGTCriteria} here to verify \eqref{ass:gtremainsInSpace}. First, as we already stated, note that $\left(H^1_0(\Omega(t)), \psi_t\right)_t$ is a compatible pair and furthermore, Assumption \ref{ass:generalcaseChanges} is satisfied (the associated operators $\pi^\psi$, $\hat \lambda^\psi$ satisfy the conditions in Remark \ref{rem:generalcase}). In this setting, we have\footnote{The inner product we use on $H^1_0(\Omega(t))$ has no lower order term and thus no term involving the divergence of the velocity appears in the expression defining $\hat\lambda^\psi$.} (see Definition \ref{eq:h1weakderiv})
\begin{align}\label{eq:lambdapsi}
    \hat\lambda^\psi(t; u, v) 
    = \int_{\Omega(t)}  \mathbf{H}(t)\nabla (\psi_t u) \cdot \nabla (\psi_t v).
\end{align}

\noindent \underline{\eqref{ass:newOneForcheck}}: Defining 
\[\xi_t := (\phi_{-t}^H)^A,\] it follows from separability of $H_0$ that $(H(t), \xi_t)_t$ is a compatible pair. We now observe that, for fixed $u\in H_0$, 
\begin{align*}
   \|\xi_t u\|_{H(t)}^2 = \|\mathrm{L}_t \psi_t \mathrm{L}_0^{-1} u\|_{H(t)}^2 = \|\psi_t \mathrm{L}_0^{-1} u\|_{H^1_0(\Omega(t))}^2,
\end{align*}
and this is continuously differentiable since $\left(H^1_0(\Omega(t)), \psi_t\right)_{t\in[0,T]}$ satisfies Assumption \ref{ass:generalcaseChanges}. The remaining points follow immediately; simply note that from the calculation above we obtain
\begin{align*}
    \pi^\xi(t; u, v) = (\psi_t \mathrm{L}_0^{-1}u, \psi_t\mathrm{L}_0^{-1} v)_{H^1_0(\Omega(t))} = \pi^\psi(t; \mathrm{L}_0^{-1}u, \mathrm{L}_0^{-1}v) \quad \Longrightarrow \quad \hat\lambda^\xi(t; u, v) = \hat\lambda^\psi(t; \mathrm{L}_0^{-1}u, \mathrm{L}_0^{-1}v).
\end{align*}

\  
 
\noindent \underline{\eqref{ass:rangeOfHatLambdaForNewCheck}}: If $u \in X_0$ and $v\in H_0$, then we have from using the relation between $\hat \lambda^\xi$ and $\hat \lambda^\psi$ and the formula for the latter in \eqref{eq:lambdapsi} that
\begin{align*}
    \hat\lambda^\xi(t; u, v) 
    &= -\left\langle\nabla\cdot(\mathbf{H}(t) \nabla (\psi_{t} \mathrm{L}_0^{-1}u)), \psi_{t}\mathrm{L}_0^{-1}v\right\rangle_{H^{-1}(\Omega(t)), \, H^1_0(\Omega(t))} \\
    &= -\left\langle \mathrm{L}_0^{-1} J_0^t \psi_{-t}\nabla\cdot(\mathbf{H}(t) \nabla (\psi_{t} \mathrm{L}_0^{-1}u)), v\right\rangle_{H^{-1}(\Omega_0), \, H^1_0(\Omega_0)}
\end{align*}
where we used \eqref{eq:L1distribution_H-1} (or rather the inverse of the expression given by that formula) since $u \in X_0$ and the fact that $\mathrm{L}_0^{-1}$ is self-adjoint in the above manipulation.
With this, we can identify
\begin{align*}
    \hat\Lambda^\xi(t)u = -\mathrm{L}_0^{-1} J_0^t \psi_{-t}\nabla\cdot(\mathbf{H}(t) \nabla (\psi_{t} \mathrm{L}_0^{-1}u))
\end{align*}
and as $u\in L^p(\Omega_0)$ then \eqref{eq:Lpregularity} implies that also $\hat\Lambda^\xi(t)u\in L^p(\Omega_0),$ proving the claim.

\ 

\noindent \underline{\eqref{ass:TtInvDualSmooth}}: We have already shown that $\Pi_t\colon X_0 \to X_0$ is a bijection with inverse given by 
    $\Pi_t^{-1} u = \phi_{-t} \mathrm L_t \psi_t \mathrm L_0^{-1} u,$
and from this formula we can immediately identify 
\begin{align*}
    (\Pi_t^{-1})^*\colon X_0^*\to X_0^*, \quad (\Pi_t^{-1})^* f = \mathrm L_0^{-1} \phi_{-t} \mathrm L_t \phi_{-t}^* f.
\end{align*}
In particular, if $f\in X_0$, we have
\begin{align*}
    (\Pi_t^{-1})^* f = \mathrm L_0^{-1} \phi_{-t} \mathrm L_t \phi_{-t}^* f = \mathrm L_0^{-1} \phi_{-t} \mathrm L_t \phi_{-t}^A f,
\end{align*}
which is bounded due to \eqref{eq:Lpregularity} and the formula in \eqref{eq:adjoint_H-1}. 
\end{proof}
Now an application of Theorem \ref{thm:ESEGelfandTriple} yields the following.
\eseHMO*

\subsubsection{$L^r(\Gamma(t))\hookrightarrow L^1(\Gamma(t))$}

As in Remark \ref{rem:afterWeakDerDefn}, we have $\Pi_t = \text{Id}_{X_0}$ for all $t$, and, given $p, q\in [1,+\infty]$, a function $u\in L^p_{L^r}$ has weak time derivative $\dot u\in L^q_{L^1}$ if 

$$  \int_0^T \int_{\Gamma(t)} \dot u \eta 
= - \int_0^T \int_{\Gamma(t)} u \dot \eta, ~~ \forall \eta\in \mathcal D_{L^\infty}. $$

The evolving space equivalence property for $\W^{p,q}(L^r, L^1)$ follows immediately. The same is true if we take $X(t)=W^{k,r}(\Gamma(t))$, $Y(t)=W^{k,1}(\Gamma(t))$, for general $k\in \N$.

\subsubsection{$W^{1,r}(\Gamma(t))\hookrightarrow L^1(\Gamma(t))$}

It is clear that both pairs $(X(t), \phi_t^X)_t$ and $(Y(t), \phi_t^Y)_t$ are compatible and it is easy to check that the dual map of $\phi_t^Y$ and its inverse are given by
\begin{align*}
    (\phi_{-t}^Y)^* \colon L^{\infty}(\Gamma_0)\to L^{\infty}(\Gamma(t)), \quad &(\phi_{-t}^Y)^* v = \widetilde J_0^t \phi_t^Y v = \widetilde J_0^t \, v\circ \widetilde\Phi_0^t, \\
   (\phi_t^Y)^*\colon L^{\infty}(\Gamma(t))\to L^{\infty}(\Gamma_0), \quad &(\phi_t^Y)^* v = \widetilde J_t^0 \, \phi_{-t}^Y v = \widetilde J_t^0 \, v\circ\widetilde \Phi_t^0.
\end{align*}
Recall that
\begin{align*}
    \pi(t; u, v) = \left\langle \Pi_t u, v\right\rangle_{L^1(\Gamma_0), \, L^\infty(\Gamma_0)} = \int_{\Gamma_0} \Pi_t u v,
\end{align*}
and since $ \Pi_t = \phi_{-t}^Y \phi_t^X $, we have by the chain rule
\begin{align}\label{eq:difficultchainrule}
\begin{split}
    \dfrac{d}{dt}\left( \Pi_t u\right)=  \dfrac{d}{dt} \left( \phi_{-t}^Y \phi_t^X u\right) &= \phi_{-t}^Y\phi_t^X \sgrad u\cdot \left[ \phi_{-t}^Y(\partial_t\Phi_0^t) + \phi_{-t}^Y(\mathbf D\Phi_0^t) \phi_{-t}^Y \widetilde{\mathbf w}(t) \right] \\
    &= \phi_{-t}^Y \phi_t^X \sgrad u \cdot \phi_{-t}^Y \left[\partial_t\Phi_0^t + \mathbf D\Phi_0^t \widetilde{\mathbf w}(t) \right]
\end{split}
\end{align}
from where we identify 
\begin{align*}
    \hat\lambda(t; u,v) = \dfrac{d}{dt} \pi(t; u, v) = \int_{\Gamma_0} \phi_{-t}^Y \phi_t^X \sgrad u \cdot \phi_{-t}^Y \left[\partial_t\Phi_0^t + \mathbf D\Phi_0^t \, \widetilde{\mathbf w}(t) \right] v.
\end{align*}
Pushing forward then yields, for $u\in X(t)$ and $v\in Y^*(t)$,
\begin{align}
    \lambda(t; u,v) 
    &= \int_{\Gamma(t)} \phi_t^X(\mathbf D\Phi^0_t)^\T \sgrad u \cdot \left( \partial_t\Phi_0^t  +   \mathbf D\Phi_0^t\widetilde{\mathbf w}_\tau(t)\right) v \nonumber.
\end{align}
Let us assume that $\Phi^{(\cdot)}_0 \in C^1([0,T];L^2(\mathbb{R}^d; \mathbb{R}^d))$.
Differentiating with respect to $t$ the identity
\begin{align*}
\Phi_0^t \circ \Phi_t^0 (p) = p, \quad p\in \Gamma_0,    
\end{align*}
we obtain, for all $p\in \Gamma_0$,
\begin{align*}
    0 = (\partial_t \Phi_0^t) (\Phi_t^0(p)) + \mathbf D \Phi_0^t (\Phi_t^0(p)) \partial_t \Phi_t^0(p) = (\partial_t \Phi_0^t) (\Phi_t^0(p)) + \mathbf D \Phi_0^t (\Phi_t^0(p)) \mathbf w (t, \Phi_t^0(p)).
\end{align*}
Pushing forward to $\Gamma(t)$ the above is equivalent to
\begin{align*} 
    \partial_t \Phi_0^t + \mathbf D \Phi_0^t \, \mathbf w  = 0,
\end{align*}
which we plug into the expression above to find
\begin{align*}
    \lambda(t;u,v)&= \int_{\Gamma(t)} \left(\phi_t^X(\mathbf D\Phi^0_t)^\T \sgrad u\right) \cdot \left( \mathbf D\Phi_0^t\widetilde{\mathbf w}_\tau(t)-\mathbf D \Phi_0^t\mathbf w(t)  \right)  v \\
    &= \int_{\Gamma(t)} \left(\phi_t^X(\mathbf D\Phi^0_t)^\T \sgrad u\right) \cdot  \left(\mathbf D\Phi_0^t\left(\widetilde{\mathbf w}_\tau(t)- \mathbf w_\tau(t)\right)  \right)  v.
\end{align*}
We then conclude that, given $p, q\in [1,+\infty]$, a function $u\in L^p_{W^{1,r}}$ has weak time derivative $\dot u\in L^q_{L^1}$ if 
\begin{align*}
    \int_0^T \int_{\Gamma(t)} \dot u  \eta = - \int_0^T \int_{\Gamma(t)} u \dot \eta \, - \int_0^T \int_{\Gamma(t)}\left(\phi_t^X(\mathbf D\Phi^0_t)^\T \sgrad u\right) \cdot  \left(\mathbf D\Phi_0^t\left(\widetilde{\mathbf w}_\tau(t)- \mathbf w_\tau(t)\right)  \right) \eta \quad \forall \eta\in \mathcal D_{L^\infty}.
\end{align*}
We now aim to explore the conditions of Theorem \ref{thm:equivalence}. Assumption \ref{ass:generalcaseChanges} on the regularity of $\lambda$ is easily seen to be true from the expression of $\lambda$ above. The condition \eqref{ass:rangeOfJ} is also satisfied; in fact, it follows from the formula for $\lambda$ that, given $u\in W^{1,r}(\Gamma_0)$, $\hat\Lambda(t)u\in L^1(\Gamma_0)\subset \mathcal J_{L^1}(L^1(\Gamma_0))$. So we are left to verify the remaining conditions stated in Theorem \ref{thm:equivalence}. We note that all the operators involved can be calculated explicitly. Indeed, we have 
\begin{align*}
    \Pi_t \colon W^{1,r}(\Gamma_0) \to L^1(\Gamma_0), \quad & \Pi_t u = u \circ \Phi_0^t \circ \widetilde \Phi_t^0, \\
    \overline\Pi_t \colon L^1(\Gamma_0)\to L^1(\Gamma_0), \quad &\overline\Pi_t u = u \circ \Phi_0^t \circ \widetilde \Phi_t^0, \\
    \overline \Pi_t^{-1}\colon L^1(\Gamma_0)\to L^1(\Gamma_0), \quad &\overline \Pi_t^{-1}u = u \circ \widetilde \Phi_0^t \circ \Phi_t^0,
\end{align*}
which can easily be seen to satisfy \eqref{ass:onrangeTt}, \eqref{ass:measurabilityOnTt}, \eqref{ass:TtInvExistsAndBdd}, \eqref{ass:measurabilityOnTtInv}, as well as the adjoints
\begin{align*}
    \overline \Pi_t ^* \colon L^\infty(\Gamma_0)\to L^\infty(\Gamma_0), \quad &\overline \Pi_t ^* v = J_t^0 \, \widetilde J_0^t\circ \Phi_t^0 \, v \circ \widetilde \Phi_0^t \circ \Phi_t^0, \\
    (\overline \Pi_t ^*)^{-1} \colon L^\infty(\Gamma_0)\to L^\infty(\Gamma_0), \quad &(\overline \Pi_t ^*)^{-1} v = J_0^t\circ \widetilde\Phi_t^0\, \widetilde J_t^0 \, v \circ \Phi_0^t \circ \widetilde\Phi_t^0,
\end{align*}
which satisfy \eqref{ass:remainsInSpace}. It is important to observe that the adjoint operator above is calculated as the $L^1$-$L^\infty$ adjoint, which simplifies its explicit expression (see example below for a more involved case). It then follows that the space $\mathbb W^{p,q}(W^{1,r}, L^1)$ enjoys the evolving space equivalence property.

\subsubsection{$W_0^{2,r}(\Omega(t))\hookrightarrow W_0^{1,1}(\Omega(t))$}

We need to find the adjoint of $\phi_{t}^Y$: given $u\in W^{1,1}_0(\Omega_0)$ and $\underline{\mathbf v}\in W^{-1,\infty}(\Omega_0)$, we have
\begin{align*}
    \left\langle \underline{\mathbf v}, \phi_{t}^Y u\right\rangle_{W^{-1, \infty}(\Omega(t)), W^{1,1}_0(\Omega(t))} &= -\int_{\Omega(t)} \underline{\mathbf v}\cdot \nabla (\phi_{t}^Y u) 
    = - \int_{\Omega_0} \widetilde J_t^0 (\mathbf D\widetilde\Phi_t^0)^{-1} \phi_{-t}^Y \,\underline{\mathbf{v}} \cdot \nabla u
\end{align*}
from where we conclude 
\begin{align*}
    (\phi_{t}^Y)^*\colon W^{-1,\infty}(\Omega(t)) \to W^{-1, \infty}(\Omega_0), \quad (\phi_{t}^Y)^* \underline{\mathbf v} = \widetilde J_t^0 (\mathbf D\widetilde\Phi_t^0)^{-1} \phi_{-t}^Y \, \underline{\mathbf v}.
\end{align*}
We then have 
\begin{align*}
     \pi(t; u, \underline{\mathbf v}) = \int_{\Omega_0} \underline{\mathbf v}\cdot \nabla \Pi_t u,
\end{align*}
and, recalling the formula in \eqref{eq:difficultchainrule}, this leads to 

\begin{align*}
    \hat\lambda(t; u, \underline{\mathbf{v}}) = \dfrac{d}{dt} \pi(t; u, \vv) = \int_{\Omega_0} \vv \cdot \nabla \left( \dfrac{d}{dt}\Pi_t u \right) = \int_{\Omega_0} \vv \cdot \nabla \left( \phi_{-t}^Y \phi_t^X \sgrad u \cdot \phi_{-t}^Y \left[\partial_t\Phi_0^t + \mathbf D\Phi_0^t \widetilde{w}(t) \right] \right).
\end{align*}
We finally push forward to time $t$: given $u\in W_0^{1,r}(\Omega(t))$, $\vv\in W^{-1,\infty}(\Omega(t))$, 
\begin{align*}
    \lambda(t; u, \vv) 
    &= \int_{\Omega(t)} \vv \cdot \nabla \left( (\mathbf D \Phi_0^t)^{-\T} \nabla u \cdot \left[\partial_t\Phi_0^t + \mathbf D\Phi_0^t \widetilde{w}(t) \right] \right).
\end{align*}
Again assuming that $\Phi^{(\cdot)}_0 \in C^1([0,T];L^2(\mathbb{R}^d; \mathbb{R}^d))$, we can reason as in the previous example to obtain
\begin{align*}
    \partial_t \Phi_0^t + \mathbf D\Phi_0^t \mathbf w =0,
\end{align*}
so that
\begin{align}\label{eq:lambdaex3}
    \lambda(t; u, \vv) = \int_{\Omega(t)} \vv \cdot \nabla \left( (\mathbf D \Phi_0^t)^{-\T} \nabla u \cdot \mathbf D\Phi_0^t \left( \widetilde{\mathbf w}_\tau(t)-\mathbf w_\tau(t)  \right) \right).
\end{align}
Hence in this case, given $p, q\in [1,+\infty]$, a function $u\in L^p_{W^{2,r}}$ has weak time derivative $\dot u\in L^q_{W^{1,1}}$ if 
\begin{align*}
    \int_0^T \int_{\Omega(t)} \dot u \,  \underline{\pmb\eta} = - \int_0^T \int_{\Omega(t)} u \, \dot{\underline{\pmb\eta}} \, - \int_0^T \int_{\Omega(t)}\nabla \left( (\mathbf D \Phi_0^t)^{-\T} \nabla u \cdot \mathbf D\Phi_0^t \left( \widetilde{\mathbf w}(t)-\mathbf w(t)  \right) \right) \cdot \underline{\pmb\eta}, \quad \forall \underline{\pmb{\eta}}\in \mathcal D_{W^{-1, \infty}}.
\end{align*}
We now analyse the conditions for the evolving space equivalence. From \eqref{eq:lambdaex3} it follows that
\begin{align*}
    \Lambda(t) u(t) = (\mathbf D \Phi_0^t)^{-\T} \nabla u \cdot \mathbf D\Phi_0^t \left( \widetilde{\mathbf w}_\tau(t)-\mathbf w_\tau(t)\right),
\end{align*}
and thus Assumption \ref{ass:generalcaseChanges} is satisfied. Equation \eqref{ass:rangeOfJ} also holds since, for any $u\in W^{2,r}_0(\Omega_0)$, we have $\hat\Lambda(t) u\in W^{1,1}_0(\Omega_0)\subset \mathcal J_{W_0^{1,1}}(W^{1,1}_0(\Omega_0))$. Now, as in the previous case, the extension of $\Pi_t$ to the larger space is trivial:
\begin{align*}
    \Pi_t \colon W^{2,r}_0(\Omega_0) \to W^{1,1}_0(\Omega), \quad & \Pi_t u = u \circ \Phi_0^t \circ \widetilde \Phi_t^0, \\
    \overline\Pi_t \colon W^{1,1}_0(\Omega_0)\to W_0^{1,1}(\Omega_0), \quad &\overline\Pi_t u = u \circ \Phi_0^t \circ \widetilde \Phi_t^0, \\
    \overline \Pi_t^{-1}\colon W^{1,1}_0(\Omega_0)\to W^{1,1}_0(\Omega_0), \quad &\overline \Pi_t^{-1}u = u \circ \widetilde \Phi_0^t \circ \Phi_t^0,
\end{align*}
which can easily be seen to satisfy \eqref{ass:onrangeTt}, \eqref{ass:measurabilityOnTt}, \eqref{ass:TtInvExistsAndBdd}, \eqref{ass:measurabilityOnTtInv}. We now work to identify the adjoint $$\bar\Pi_t^*\colon W^{-1,\infty}(\Omega_0) \to W^{-1,\infty}(\Omega_0).$$
Let $\underline{\mathbf v}\in W^{-1,\infty}(\Omega_0)$ and $u\in W^{1,1}_0(\Omega_0)$. A careful application of the formulas at the beginning of this chapter shows that 
\begin{align*}
    \langle \vv, \bar\Pi_t u\rangle_{W^{-1,\infty}, \, W^{1,1}_0} 
    &= -\int_{\Omega_0} \left(\left( \widetilde J_0^t\circ\Phi_t^0\right) J_t^0 \left[(\mathbf A_t^0)^{-\mathrm T} \, \mathbf D\widetilde\Phi_t^0\circ \widetilde \Phi_0^t \circ \Phi_t^0 \, \mathbf D\Phi_t^0\right] \underline{\mathbf v} \circ \widetilde\Phi_0^t \circ \Phi_t^0 \right) \cdot \nabla u
\end{align*}
from where we identify 
\begin{align*}
    \bar\Pi_t^*\colon W^{-1,\infty}(\Omega_0) \to W^{-1,\infty}(\Omega_0), \quad \bar\Pi_t^*\vv = \left( \widetilde J_0^t\circ\Phi_t^0\right) J_t^0 \left[(\mathbf A_t^0)^{-\mathrm T} \, \mathbf D\widetilde\Phi_t^0\circ \widetilde \Phi_0^t \circ \Phi_t^0 \, \mathbf D\Phi_t^0\right] \underline{\mathbf v} \circ \widetilde\Phi_0^t \circ \Phi_t^0,
\end{align*}
which is invertible with
\begin{align*}
    (\bar\Pi_t^*)^{-1}\colon W^{-1,\infty}(\Omega_0) \to W^{-1,\infty}(\Omega_0), \quad (\bar\Pi_t^*)^{-1}\vv = 
    \tilde J_t^0 \, J_0^t\circ \Phi_t^0 \, \left[\mathbf D\widetilde\Phi_t^0\circ \widetilde \Phi_0^t \circ \Phi_t^0 \, \mathbf D\Phi_t^0\right]^{-1} \underline{\mathbf v} \circ \Phi_0^t \circ \widetilde\Phi_t^0.
\end{align*}
Under our setting, because the coefficient is uniformly bounded in $t$ again it is easily checked that \eqref{ass:remainsInSpace} is also satisfied. This implies that $\mathbb W^{p,q}(W^{1,r}_0(\Omega_0), W^{1,1}(\Omega_0))$ enjoys the evolving space equivalence property.


\section{Well-posedness for a nonlinear monotone equation}\label{sec:application}
In this final section, we establish some results regarding existence and uniqueness of weak solutions for a class of nonlinear equations in order to illustrate the applicability of the functional framework developed in the text and how it can be used to formulate general problems in a Banach space setting. 


Let $p \in (1,\infty)$. For generality, we consider a family of (not necessarily linear) operators $A(t)\colon X(t) \to X^*(t)$ defined on a separable, reflexive Banach space $X(t)$ satisfying the following properties: for all $u,v \in X(t)$,
\begin{enumerate}[label=(\roman*)]
    \item (Measurability) the map $t \mapsto \langle A(t)u, v \rangle_{X^*(t),X(t)}$ is measurable\vspace{-.1cm}
    \item (Monotonicity) $\left\langle A(t)u - A(t) v, u-v\right\rangle_{X^*(t), \, X(t)}\geq 0$\vspace{-.1cm}
    \item (Hemicontinuity) the map $s \mapsto \langle A(t)(u+sv), v \rangle_{X^*(t), X(t)}$  is continuous (from $\mathbb{R}$ to $\mathbb{R}$)\vspace{-.1cm}
    \item (Boundedness) there exists a constant $C_b>0$ independent of $t$ and $c_b \in L^{p'}(0,T)$ such that 
        \begin{align*}
            \left|\left\langle A(t)u, v\right\rangle_{X^*(t), \, X(t)}\right| \leq C_b \|u\|^{p-1}_{X(t)} \|v\|_{X(t)} + c_b(t)\|v\|_{X(t)}
        \end{align*}
        \vspace{-.5cm}
    \item (Coercivity) there exist $C_c>0$ and $c_c \geq 0$ independent of $t$ such that
        \begin{align*}
            \left\langle A(t)u, u\right\rangle_{X^*(t), \, X(t)} \geq C_c \|u\|_{X(t)}^p - c_c.
            \end{align*}
    \end{enumerate}
These are the standard assumptions that are made for nonlinear monotone problems \cite[\S 30.2]{MR1033498}. We assume a Gelfand triple structure
\[X(t) \subset H(t) \subset X^*(t)\] 
and we suppose that $X(t)$ and $H(t)$ are evolving under a map $\phi_t$  and $X^*(t)$ is evolving under the dual map $\phi_{-t}^*$, such that 
\begin{align*}
    (X(t), \phi_t)_{t \in [0,T]}, \quad (H(t), \phi_t)_{t \in [0,T]}, \quad (X^*(t), \phi_{-t}^*)_{t \in [0,T]}
\end{align*}
are all compatible pairs. Furthermore, we assume the equivalence of $\W(X,X^*)$ and $\mathcal{W}(X_0,X_0^*)$. We refer to the previous section for examples of such spaces and proofs of the evolving space equivalence.

Defining the superposition operator $(Au)(t) = A(t)u(t)$, we consider the equation
\begin{equation}\label{eq:generalEquation}
\begin{aligned}
    \dot u + Au +  \Lambda u  &= f &&\text{in $L^{p'}_{X^*}$}, \\
    u(0) &= u_0 &&\text{in $H_0$}.
\end{aligned}
\end{equation}
\begin{defn}\label{def:weaksolution}
    Given $f\in L^{p'}_{X^*}$ and $u_0\in H_0$, a \textit{weak solution} of \eqref{eq:generalEquation} is a function $u\in \W^{p,p'}(X, X^*)$ satisfying 
    \begin{equation*}
        \begin{aligned}
         \int_0^T \langle \dot u(t), v(t) \rangle_{X^*(t), \, X(t)} + \langle A(t)u(t), v(t) \rangle_{X^*(t), \, X(t)} + \lambda(t;u(t), v(t))
         &= \int_0^T\langle f(t), v(t) \rangle_{X^*(t), \, X(t)} \qquad \forall  v\in L^{p}_{X},\\
         u(0)&=u_0.
    \end{aligned}
    \end{equation*}
\end{defn}
Our aim in this section is to prove the next result.
\begin{theorem}\label{thm:application}
Under the above assumptions (i)-(v), given $f\in L^{p'}_{X^*}$ and $u_0\in H_0$, there exists a unique weak solution $u \in \W^{p,p'}(X,X^*)$ to \eqref{eq:generalEquation}.
\end{theorem}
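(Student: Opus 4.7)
The strategy is a Galerkin approximation in the evolving-space setting, following classical monotone operator theory and making essential use of the transport theorem (Theorem \ref{thm:transportGelfandTriple}) and the weak-derivative characterisation (Proposition \ref{prop:temamTypeGT}). Fix a countable linearly independent set $\{w_j\}_{j\in\mathbb{N}}\subset X_0$ whose span is dense in $X_0$, push forward to $w_j^t := \phi_t w_j$, and seek $u_n(t) = \sum_{j=1}^n a_j^n(t) w_j^t$ solving, for $k=1,\ldots,n$,
\begin{equation*}
\langle \dot u_n(t), w_k^t \rangle_{X^*(t),X(t)} + \langle A(t) u_n(t), w_k^t \rangle_{X^*(t),X(t)} + \lambda(t; u_n(t), w_k^t) = \langle f(t), w_k^t \rangle_{X^*(t),X(t)},
\end{equation*}
with $u_n(0)$ the $H_0$-orthogonal projection of $u_0$ onto $\mathrm{span}(w_1,\ldots,w_n)$. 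Since $w_j^t$ is strongly transported, so that $\md w_j^t = 0$, Proposition \ref{prop:temamTypeGT} reduces the first term to $\sum_j \dot a_j^n(t)\,(w_j^t,w_k^t)_{H(t)}$, giving a standard ODE system with positive-definite Gram matrix $[(w_j^t,w_k^t)_{H(t)}]$, and hence local-in-time existence.

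A priori bounds come from testing with $u_n$, applying Theorem \ref{thm:transportGelfandTriple}, and combining coercivity (v) with the $H$-bound on $\lambda$ guaranteed by Assumption \ref{ass:gelfandTripleCase}:
\begin{equation*}
\tfrac{1}{2}\tfrac{d}{dt}\|u_n(t)\|_{H(t)}^2 + C_c \|u_n(t)\|_{X(t)}^p \leq c_c + \langle f(t), u_n(t)\rangle_{X^*(t),X(t)} + C \|u_n(t)\|_{H(t)}^2.
\end{equation*}
Grönwall and Young then produce uniform bounds on $u_n$ in $L^\infty_H \cap L^p_X$ (ensuring global existence), and the boundedness (iv) together with the Galerkin identity yields a uniform bound on $\dot u_n$ in $L^{p'}_{X^*}$. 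Extracting subsequences (not relabelled), we obtain $u_n \weaklyto u$ in $L^p_X$, $\dot u_n \weaklyto \dot u$ in $L^{p'}_{X^*}$, $A u_n \weaklyto \xi$ in $L^{p'}_{X^*}$, and $u_n(T) \weaklyto u(T)$ in $H(T)$ via the continuous embedding $\W^{p,p'}(X,X^*)\hookrightarrow C^0_H$. Density of $\bigcup_n \mathrm{span}\{w_j^{(\cdot)} : j \leq n\}$ (tensorised with $\mathcal{D}(0,T)$) in $L^p_X$, inherited from the evolving space equivalence, then allows passage to the limit in the Galerkin identity to yield $\dot u + \xi + \Lambda u = f$ in $L^{p'}_{X^*}$.

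The main obstacle is the nonlinear identification $\xi = A u$, handled via Minty's monotonicity trick adapted to the evolving setting. Monotonicity (ii) gives, for any $v \in L^p_X$,
\begin{equation*}
\int_0^T \langle A u_n - A v, u_n - v\rangle_{X^*(t),X(t)} \geq 0.
\end{equation*}
Testing the Galerkin identity with $u_n$ and integrating via the transport formula yields the energy balance
\begin{equation*}
\int_0^T \langle A u_n, u_n\rangle = \int_0^T \langle f, u_n\rangle - \tfrac{1}{2}\|u_n(T)\|_{H(T)}^2 + \tfrac{1}{2}\|u_n(0)\|_{H_0}^2 - \tfrac{1}{2}\int_0^T \lambda(t; u_n, u_n),
\end{equation*}
with the analogous identity holding in the limit for $(u,\xi)$. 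Weak lower semicontinuity of $v \mapsto \|v(T)\|_{H(T)}^2$, strong convergence of the initial data, and convergence $\int_0^T \lambda(t;u_n,u_n) \to \int_0^T \lambda(t;u,u)$ (obtained from strong convergence $u_n \to u$ in $L^2_H$ via the Aubin--Lions result of \S \ref{sec:aubinlions} under the standard compactness hypothesis $X_0 \ctsCompact H_0$, which holds in all examples of \S \ref{sec:examples}) combine to give $\limsup_n \int_0^T \langle A u_n, u_n\rangle \leq \int_0^T \langle \xi, u\rangle$. Substituting $v = u - \epsilon w$ for $w \in L^p_X$, dividing by $\epsilon$, sending $\epsilon \to 0^+$, and invoking hemicontinuity (iii) with boundedness (iv) and dominated convergence then identifies $\xi = A u$.

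Finally, $u(0) = u_0$ follows from $\W^{p,p'}(X,X^*)\hookrightarrow C^0_H$ combined with $u_n(0) \to u_0$ in $H_0$. For uniqueness, if $u_1, u_2$ are two weak solutions, set $w := u_1 - u_2$; applying the transport theorem to $\|w(t)\|_{H(t)}^2$, discarding $\langle A u_1 - A u_2, w\rangle \geq 0$ by monotonicity, and bounding $|\lambda(t;w,w)| \leq C\|w(t)\|_{H(t)}^2$ yields
\begin{equation*}
\tfrac{d}{dt}\|w(t)\|_{H(t)}^2 \leq C \|w(t)\|_{H(t)}^2, \qquad w(0) = 0,
\end{equation*}
whence $w \equiv 0$ by Grönwall. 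An alternative and fully equivalent route, which sidesteps the compactness requirement, is to pull the entire problem back to the fixed reference space $X_0$ via the evolving space equivalence, whereupon it becomes a standard monotone evolution equation on $X_0$ with time-dependent coefficients to which \cite[\S 30.2]{MR1033498} applies directly.
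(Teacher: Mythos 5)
Your Galerkin scheme and a priori estimates follow the paper's own setup, but there are two genuine departures that leave gaps.

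First, you claim that ``boundedness (iv) together with the Galerkin identity yields a uniform bound on $\dot u_n$ in $L^{p'}_{X^*}$.'' This is not immediate: the Galerkin identity only controls $\dot u_n$ when tested against $V_n(t)$, and to promote this to a bound in $L^{p'}_{X^*}$ by the usual duality argument one needs $P_n^t\colon H(t)\to V_n(t)$ to be bounded uniformly in $n$ for $t>0$, and whether this holds is explicitly flagged in the paper as an open question. The paper circumvents the issue by not bounding $\dot u_n$ at all: it passes to the limit directly in the weak formula and then invokes the characterisation of the weak time derivative (Proposition \ref{prop:temamTypeGT}) to deduce $u\in\W^{p,p'}(X,X^*)$ from the limit equation itself.

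Second, and more seriously, your Minty argument requires $\int_0^T \lambda(t;u_n,u_n)\to\int_0^T\lambda(t;u,u)$, and you obtain this from the Aubin--Lions compactness under ``the standard compactness hypothesis $X_0\ctsCompact H_0$, which holds in all examples of \S\ref{sec:examples}.'' That hypothesis is not part of Theorem \ref{thm:application}: the theorem assumes only (i)--(v), the Gelfand triple structure, the compatibility of the flows, and the evolving space equivalence. Without compactness, $u_n\weaklyto u$ in $L^2_H$ does not give convergence of the quadratic functional $\int\lambda(t;u_n,u_n)$, so your $\limsup$ inequality is unjustified. The paper's route is different and avoids this entirely: it introduces the exponential substitution $v(t)=e^{-\gamma t}u(t)$, the scaled operator $A_\gamma(t)\xi=e^{-\gamma t}A(t)e^{\gamma t}\xi$, and the linear operator $\mathcal{L}_\gamma v := (\gamma + \tfrac12\Lambda)v$; thanks to the uniform $H$-bound \eqref{eq:lambdaGF} one can choose $\gamma$ large enough that $\mathcal{L}_\gamma$ is monotone, so $A_\gamma + \mathcal{L}_\gamma$ is monotone. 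The energy balance for the scaled equation then absorbs the $\lambda$-contribution into $\mathcal{L}_\gamma$, and Minty's trick is applied to the genuinely monotone operator $A_\gamma+\mathcal{L}_\gamma$ using only weak lower semicontinuity of $\|\cdot\|_{H(T)}$ and strong convergence of the projected initial data --- no compactness needed. You should replace your identification step with this scaling argument (or supply a non-compact substitute); as it stands your proof establishes a strictly weaker statement. Your closing suggestion to pull back onto $X_0$ would indeed repair this, but it is a different proof than the one you wrote and is precisely what the paper's direct method is designed to avoid.
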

A concrete example of \eqref{eq:generalEquation} is the  evolutionary $p$-Laplace equation as the next example demonstrates. Aside from this, equations of the form \eqref{eq:generalEquation} may arise as regularisations of  PDEs with a more complicated structure. 
\begin{eg}[The $p$-Laplace equation on an evolving surface/domain]
Let $p\in (1,\infty)$ and take $\mathcal{M}(t)$ to be an evolving surface $\Gamma(t)\subset \R^3$ or domain $\Omega(t) \subset \R^2$ under the same regularity assumptions as in \ref{ass:evolv_spaces_examples}. Define
\begin{align*}
    X(t) = \begin{cases}
    W^{1,p}_0(\mathcal{M}(t)) &: \text{if $\mathcal{M}(t)=\Omega(t)$}\\
    W^{1,p}(\mathcal{M}(t)) &: \text{if $\mathcal{M}(t)=\Gamma(t)$}
    \end{cases}
\end{align*}
and the $p$-Laplace operator $-\Delta_{g(t)}^p\colon X(t) \to X^*(t)$ which has the action
\begin{align*}
    \langle -\Delta_{g(t)}^p u, v \rangle_{X^*(t), X(t)} := \int_{\mathcal{M}(t)} |\grad_{g(t)} u|^{p-2}\grad_{g(t)} u \cdot \grad_{g(t)} v.
\end{align*}
Take a constant $\alpha > 0$\footnote{The choice of $\alpha=0$ is also possible if $\mathcal{M}(t)=\Omega(t)$.}. We consider the equation\footnote{Observe that selecting $p=2$ and $\alpha=0$ recovers the heat equation.}
\begin{align*}
\begin{split}
    \dot u - \Delta_g^p u + \alpha u|u|^{p-2} + u \sgrad\cdot\mathbf{w}  &= f, \\
    u(0) &= u_0,
\end{split}
\end{align*}
which, if $\mathcal{M}(t)=\Omega(t)$, we supplement with the boundary condition $u = 0$ on $\partial\Omega(t)$. The operator $A$ is defined by 
$A(t)(u) := \alpha u |u|^{p-2} - \Delta_{g(t)}^p u.$ 

We have the Gelfand triple structure 
\begin{align*}
    X(t) \subset L^2(\mathcal{M}(t)) \subset X^*(t).
\end{align*}
This is obvious if $p\geq 2$, and in the case $1\leq p < 2$, recalling that the dimension of the manifold is $2$, it follows from the Sobolev embedding 
\begin{align*}
    W^{1,p}(\mathcal{M}(t)) \hookrightarrow L^{2p/(2-p)}(\mathcal{M}(t)) \hookrightarrow L^2(\mathcal{M}(t)).
\end{align*}
Evidently, the pivot space is $H(t)=L^2(\mathcal{M}(t))$ and we are in the setting of \S \ref{sec:L2pivotspace} from where we identify the extra term in the definition of the time derivative to be $\Lambda(t) u = u \sgrad\cdot\mathbf{w}$ and we also have the evolving space equivalence property.
\end{eg}

\subsection{Proof of well-posedness}
We now proceed to establish existence, uniqueness and stability of weak solutions via the Faedo--Galerkin method. We start by choosing an orthogonal basis $\{w_j^0\}_{j\in \N}$ for $X_0$ and transport it along the flow to $\{w_j^t:= \phi_t w_j^0\}_{j\in \N}$, which forms a basis for $X(t)$ satisfying the following useful property
\begin{align*}
\dot w_j^t \equiv 0 \quad \forall j\in \N.
\end{align*}
Define the approximation spaces
\begin{align*}
V_n(t) = \mathrm{span}\{w_1^t, \dots, w_n^t\} \quad \text{ and } \quad L^p_{V_n} := \left\{\eta\in L^p_{X} \colon \eta(t)\in V_n(t), \,\, t\in [0,T]\right\}.
\end{align*}
It follows that $\cup_n L^p_{V_n}$ is dense in $L^p_{X}$. We also make use of the projection operator $P_n^t\colon H(t)\to V_n(t) \subset X(t)$ determined by the formula
\begin{align*}
(P_n^t h - h, \varphi)_{H(t)} = 0 \quad \text{ for all }  \varphi\in V_n(t).
\end{align*}
\begin{lem}\label{lem:existenceForGalerkin}
For each $n\in \N$, there exists a unique solution $u_n\in L^p_{V_n}$ to the Galerkin approximation
\begin{equation}\label{eq:approx_p_laplace}
    \begin{aligned}
    \langle \dot u_n(t), v(t) \rangle_{X^*(t), \, X(t)}  + \langle A(t)u_n(t), v(t) \rangle_{X^*(t), \, X(t)}  + \lambda(t;u_n(t),v(t))  &= \langle f(t), v(t) \rangle_{X^*(t), \, X(t)} \quad \forall v\in L^p_{V_n}, \\
    u_n(0)&=P_n^0 u_0,
\end{aligned}
\end{equation}
of the form 
    $u_n(t) = \sum_{j=1}^n u_j^n(t) w_j^t.$
\end{lem}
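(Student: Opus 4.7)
The plan is to convert equation \eqref{eq:approx_p_laplace} into a finite-dimensional Carathéodory ODE system for the coefficients $u_j^n(\cdot)$, solve the resulting system by classical theory, and then close the loop with \textit{a priori} estimates for global extension and a monotonicity-based uniqueness argument.

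Substituting the ansatz $u_n(t)=\sum_{j=1}^n u_j^n(t) w_j^t$ and testing against $v=w_i^t$ for $i=1,\dots,n$ yields, crucially using $\dot w_j^t\equiv 0$ (so that $\dot u_n = \sum_j (u_j^n)'(t) w_j^t$ by linearity of the pullback--pushforward), the system
\[M(t)\vec u'(t) + \vec{\mathcal A}(t,\vec u(t)) + \Lambda(t)\vec u(t) = \vec F(t), \qquad \vec u(0)=\vec u_0^n,\]
where $M(t)_{ij}=(w_j^t,w_i^t)_{H(t)}$, $\vec{\mathcal A}_i(t,\vec u)=\langle A(t)(\sum_j u_j w_j^t),w_i^t\rangle_{X^*(t),X(t)}$, $\Lambda(t)_{ij}=\lambda(t;w_j^t,w_i^t)$, $\vec F_i(t)=\langle f(t),w_i^t\rangle_{X^*(t),X(t)}$, and $\vec u_0^n$ is the coordinate vector of $P_n^0 u_0$. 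Since $\{w_j^t\}_{j=1}^n$ are linearly independent in $X(t)\hookrightarrow H(t)$, the Gram matrix $M(t)$ is positive definite; by Assumption \ref{ass:gelfandTripleCase}(i) it is moreover $C^1$ in $t$, so $M(t)^{-1}$ exists and is uniformly bounded and continuous on $[0,T]$. Rewriting as $\vec u'(t)=M(t)^{-1}[\vec F(t)-\vec{\mathcal A}(t,\vec u(t))-\Lambda(t)\vec u(t)]$, the right-hand side is measurable in $t$ (by (i) and the assumed regularity on $\lambda$, $f$), continuous in $\vec u$ (in $\mathbb{R}^n$, hemicontinuity (iii) combined with boundedness (iv) upgrades to ordinary continuity), and dominated by an $L^{p'}$ function on bounded sets of $\vec u$; so Carathéodory's theorem produces a local absolutely continuous solution on some $[0,T^\star)$.

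To extend to $[0,T]$, I test \eqref{eq:approx_p_laplace} with $v=u_n$. Applying the Gelfand-triple transport theorem (Theorem \ref{thm:transportGelfandTriple}) --- which is applicable because $\phi_{-t}^X u_n\in \mathcal{W}^{p,p'}(X_0,X_0^*)$ is immediate from $\vec u\in W^{1,p'}$ and the smoothness of $\{w_j^0\}$ --- I obtain
\[\tfrac{1}{2}\tfrac{d}{dt}\|u_n(t)\|_{H(t)}^2 + \langle A(t)u_n(t),u_n(t)\rangle_{X^*(t),X(t)} + \tfrac{1}{2}\lambda(t;u_n(t),u_n(t)) = \langle f(t), u_n(t)\rangle_{X^*(t),X(t)}.\]
Coercivity (v) bounds the second term below by $C_c\|u_n\|_{X(t)}^p-c_c$; the bound \eqref{eq:lambdaGF} (together with the equivalence \eqref{eq:equivalenceBetweenLpXAndStdBochner}) gives $|\lambda(t;u_n,u_n)|\leq C\|u_n\|_{H(t)}^2$; and Young's inequality with $\varepsilon$ absorbs $\langle f,u_n\rangle$ into the coercive term at the cost of $C_\varepsilon\|f(t)\|_{X^*(t)}^{p'}$. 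Combined with $\|u_n(0)\|_{H_0}\leq\|u_0\|_{H_0}$, Gronwall's inequality yields a bound on $\|u_n\|_{L^\infty_H}+\|u_n\|_{L^p_X}$ independent of $T^\star$, hence no blow-up and solvability on $[0,T]$.

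Uniqueness follows in the same vein: if $u_n^1,u_n^2$ both solve the approximation, subtracting and testing with $u_n^1-u_n^2$ gives $\tfrac{d}{dt}\|u_n^1-u_n^2\|_{H(t)}^2 = -2\langle Au_n^1-Au_n^2,u_n^1-u_n^2\rangle - \lambda(t;u_n^1-u_n^2,u_n^1-u_n^2) \leq C\|u_n^1-u_n^2\|_{H(t)}^2$ by monotonicity (ii) and the bound on $\lambda$, whence Gronwall and the identical initial data force $u_n^1\equiv u_n^2$. The chief technical point to be careful about is the justification of the transport theorem for $u_n$ and the continuity of $\vec{\mathcal A}(t,\cdot)$, but both are genuinely elementary in the finite-dimensional Galerkin setting and reduce to a routine check using the smoothness of the basis and the standing assumptions on $A$.
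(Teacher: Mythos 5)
Your proof is correct and follows the same route as the paper's: substitute the ansatz, use $\dot w_j^t\equiv 0$, reduce to a finite-dimensional ODE system with Gram matrix $B(t)=M(t)$ and invoke Carathéodory theory. The paper's appendix proof stops at the local-existence invocation of Carathéodory, deferring the a~priori estimates to the main text and leaving uniqueness of the Galerkin solution implicit; you supply these explicitly --- the energy identity with the correct $\tfrac12\lambda$ term, Gronwall for global extension, and the monotonicity-plus-Gronwall uniqueness argument --- so your write-up is a more complete version of the same argument rather than a different one.
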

The proof of this lemma is standard and is relegated to the appendix.

\paragraph{A priori estimates}

Test $v=u_n$ in \eqref{eq:approx_p_laplace} to obtain, using Young's inequality with $\epsilon$ and coercivity of the operator $A$,
\begin{align*}
    \dfrac{1}{2} \dfrac{d}{dt} \|u_n(t)\|^2_{H(t)}+ C_c\|u_n(t)\|_{X(t)}^p
    &\leq C_1 \|f(t)\|_{X^*(t)}^{p'} + \epsilon\|u_n(t)\|_{X(t)}^p + C_2 \|u_n(t)\|^2_{H(t)} + c_c.
\end{align*}
Choosing $\epsilon = C_c/2$ we can manipulate the above and integrate it to get 
\begin{align*}
    \|u_n(t)\|^2_{H(t)} + C_c \int_0^t \|u_n\|^p_{X(t)} \leq \|u_0\|_{H_0}^2 + 2 C_1 \|f\|_{L^{p'}_{X^*}}^{p'} + 2C_2 \int_0^t \|u_n\|^2_{H(t)}+ 2T c_c,
\end{align*}
whence an application of Gronwall's inequality implies that 
\begin{align}
    (u_n)_n \,\, &\text{ is uniformly bounded in } L^\infty_{H} \cap L^p_{X},\label{eq:uniformBoundForU}\\
    \nonumber (u_n(T))_n \,\, &\text{ is uniformly bounded in }  H(T).
\end{align}
Observe also that, due to H\"older's inequality, we have for all $\eta\in L^p_X$
\begin{align*}
    \left| \int_0^T \left\langle A u_n, \eta\right\rangle_{X^*(t), X(t)} \right| &\leq  \int_0^T C_b\|u_n\|_X^{p-1} \|\eta\|_{X} +  c_b(t)\|\eta\|_{X(t)} \leq C_b \|u_n\|_{L^p_X} \|\eta\|_{L^p_X} + \norm{c_b}{L^{p'}(0,T)}\norm{\eta}{L^p_X},
\end{align*}
whence
\begin{align}\label{eq:uniformBoundForNonlinearTerm}
    \left(A u_n\right)_n \,\, &\text{ is uniformly bounded in } \,\, L^{p'}_{X^*}.
\end{align}
\begin{remark}
It is an open question whether $P_n^t\colon V_n(t) \to V_n(t)$ is bounded uniformly in $n$ when $t>0$. Without an affirmative answer, it becomes more challenging to obtain a bound on $\dot u_n$ in the dual space $L^p_{X^*}$ by the usual duality method but it is typically still possible by pulling back the equation onto the reference space and using the boundedness of $P_n^0$.
\end{remark}

\paragraph{Existence, uniqueness, and stability of weak solutions}

For clarity of the argument, we proceed with several separate results. We start by identifying the limits of the approximating sequences. The bounds in \eqref{eq:uniformBoundForU}--\eqref{eq:uniformBoundForNonlinearTerm} give the existence $u\in L^p_{X} \cap L^\infty_H$, $z\in H(T)$ and $\chi\in L^{p'}_{X^*}$ such that, up to a subsequence,
\begin{align*}
    u_n\overset{*}{\rightharpoonup} u & \text{ in } L^\infty_{H}, \quad u_n \rightharpoonup u \text{ in } L^p_{X}, \quad u_n(T) \rightharpoonup z \text{ in } H(T), \quad\text{and}\quad A u_n \rightharpoonup \chi \text{ in } L^{p'}_{X^*}.
\end{align*}
    %
We use these to pass to the limit in the approximating equations \eqref{eq:approx_p_laplace}.

\begin{prop}
The limit function $u\in \W^{p,p'}(X, X^*) \cap C^0_H$ satisfies 
\begin{align*}
    \dot u + \chi + \Lambda u &= f \quad \text{ in } L^{p'}_{X^*},\\
    u(0) &= u_0,\\
    u(T) &= z.
\end{align*}
\end{prop}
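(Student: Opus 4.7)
The plan is to pass to the limit in the Galerkin equation \eqref{eq:approx_p_laplace} tested against a convenient class of functions, identify $\dot u$ from the resulting integral identity, and then isolate the boundary values by a careful integration by parts.

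First I would restrict to test functions of the form $v(t) = \psi(t)w_j^t$ where $\psi \in C^1([0,T])$ and $w_j^t = \phi_t w_j^0$ is a basis element; for each fixed $n$ and $j \leq n$ this lies in $L^p_{V_n}$ and so \eqref{eq:approx_p_laplace} applies. Since by construction $\dot w_j^t = 0$, we have $\dot v(t) = \psi'(t) w_j^t$, and the transport formula (Theorem \ref{thm:transportGelfandTriple}) applied to $u_n$ and $v$ lets me rewrite the derivative term as
\begin{align*}
\int_0^T \langle \dot u_n, v \rangle_{X^*(t), X(t)} &= (u_n(T), v(T))_{H(T)} - (P_n^0 u_0, v(0))_{H_0} - \int_0^T (u_n, \psi' w_j)_{H(t)} - \int_0^T \lambda(t; u_n, v).
\end{align*}
Substituting this into \eqref{eq:approx_p_laplace}, the two $\lambda$-contributions cancel, yielding an identity free of time derivatives of $u_n$.

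Next I would pass to the limit $n\to\infty$ in this identity term by term, using $u_n \rightharpoonup u$ in $L^p_X$, $Au_n \rightharpoonup \chi$ in $L^{p'}_{X^*}$, $u_n(T) \rightharpoonup z$ in $H(T)$, and $P_n^0 u_0 \to u_0$ in $H_0$ (for the last, observe that the $(u_n, \psi' w_j)_{H(t)}$ pairing is continuous on $L^p_X$ since $\psi' w_j \in L^{p'}_{X^*}$). This produces
\begin{equation*}
(z, \psi(T) w_j^T)_{H(T)} - (u_0, \psi(0) w_j^0)_{H_0} = \int_0^T (u, \psi' w_j)_{H(t)} + \int_0^T \langle f - \chi, \psi w_j \rangle_{X^*(t), X(t)}.
\end{equation*}
Choosing $\psi \in \mathcal{D}(0,T)$ eliminates the boundary terms, and by linearity and a density argument (the span of $\{\psi w_j^t\}$ is dense in $\mathcal{D}_X$ and in $L^p_X$) I may conclude that this identity extends to arbitrary $\eta \in \mathcal{D}_X$. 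Comparing with the definition of weak time derivative then gives $\dot u = f - \chi - \Lambda u \in L^{p'}_{X^*}$, so $u \in \W^{p,p'}(X, X^*)$, and by Theorem \ref{thm:transportGelfandTriple}(i) together with the assumed evolving space equivalence, $u \in C^0_H$.

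Finally I would pin down the endpoint values $u(0) = u_0$ and $u(T) = z$. Now that $u \in \W^{p,p'}(X,X^*)$, apply the transport formula a second time, this time to the pair $(u, \psi w_j)$, and substitute the identified value of $\dot u$; again the $\lambda$-terms cancel and one obtains
\begin{equation*}
(u(T), \psi(T) w_j^T)_{H(T)} - (u(0), \psi(0) w_j^0)_{H_0} = \int_0^T (u, \psi' w_j)_{H(t)} + \int_0^T \langle f - \chi, \psi w_j\rangle_{X^*(t), X(t)}.
\end{equation*}
Subtracting this from the earlier limit identity yields $(u(T) - z, \psi(T) w_j^T)_{H(T)} = (u(0) - u_0, \psi(0) w_j^0)_{H_0}$ for every $\psi \in C^1([0,T])$ and every $j$; choosing $\psi$ vanishing at one endpoint and nonzero at the other and invoking density of $\{w_j^0\}$ in $H_0$ (and the pushforward being an isomorphism) isolates the two identities $u(0) = u_0$ in $H_0$ and $u(T) = z$ in $H(T)$. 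The main technical obstacle is the bookkeeping with boundary terms and the need to carefully justify the density argument passing from the Galerkin test space to all of $\mathcal{D}_X$; the crucial algebraic miracle that makes the argument clean is the cancellation of the two occurrences of $\lambda$, one coming from the equation and one from the transport identity.
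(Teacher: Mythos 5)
Your proposal is correct and follows essentially the same route as the paper: testing against functions of the form $\psi(t)w_j^t$, using the transport formula so that the $\lambda$-terms cancel, passing to the limit to obtain the weak form with boundary terms, and re-applying the transport theorem to extract $u(0)=u_0$ and $u(T)=z$. The only cosmetic difference is that the paper identifies $\dot u$ by invoking Proposition~\ref{prop:temamTypeGT} on the tensor-product test functions $\psi(t)\phi_t v$ directly, rather than first extending the integral identity by density to all of $\mathcal D_X$, but both variants are sound.
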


\begin{proof}
For any $v\in \W^{p,p}(V_n, V_n)$ we can integrate by parts in \eqref{eq:approx_p_laplace} and put the time derivative onto the test function:
\begin{align*}
    \dfrac{d}{dt} (u_n(t), v(t))_{H(t)} + \left\langle A u_n(t), v(t) \right\rangle_{X^*(t),\, X(t)} = \left\langle f(t), v(t) \right\rangle_{X^*(t),\, X(t)} + (u_n(t), \dot v(t))_{H(t)}.
    \end{align*}
For $j \leq n$, take $v(t) = \psi(t)w_j^t$ with $\psi \in C^1([0,T])$, which clearly satisfies $v \in \W^{p,p}(V_n,V_n).$ Integrating over time 
and then passing to the limit $n \to \infty$, we obtain 
\begin{align}
\begin{split}\label{eq:initial_cond_12}
    (z, \psi(T)w_j^T)_{H(T)} - (u_0, \psi(0)w_j)_{H_0}  + \int_0^T \left\langle \chi(t), \psi(t)w_j^t \right\rangle_{X^*(t),\, X(t)} &= \int_0^T \left\langle f(t), \psi(t)w_j^t \right\rangle_{X^*(t), \, X(t)}  \\
    &\quad + \int_0^T (u(t), \psi'(t)w_j^t)_{H(t)}.
\end{split}
\end{align}
Since $\{w_j^0\}$ is a basis for $X_0,$ given $v \in X_0$, there exist coefficients $a_j \in \mathbb{R}$ and a sequence $v_n = \sum_{j=1}^n a_jw_j^0$ such that $v_n \to v$ in $X_0.$ Hence $\phi_t v_n = \sum_{j=1}^n a_jw_j^t$ converges to $\phi_t v$ in $X(t).$ Multiplying the above displayed equality by $a_j$ and summing up $j=1, ..., n$ gives
\begin{align*}
    (z, \psi(T)\phi_T v_n)_{H(T)} - (u_0, \psi(0)v_n)_{H_0}  + \int_0^T \left\langle \chi(t), \psi(t)\phi_t v_n \right\rangle_{X^*(t),\, X(t)} &= \int_0^T \left\langle f(t), \psi(t)\phi_t v_n \right\rangle_{X^*(t), \, X(t)}\\
    &\quad+ \int_0^T (u(t), \psi'(t)\phi_t v_n)_{H(t)}.
\end{align*}
Take furthermore $\psi \in \mathcal{D}(0,T)$. Passing to the limit $n \to \infty$ by using the dominated convergence theorem, we obtain
\begin{align*}
    \int_0^T \left\langle \chi(t), \psi(t)\phi_t v \right\rangle_{X^*(t),\, X(t)} = \int_0^T \left\langle f(t), \psi(t)\phi_t v \right\rangle_{X^*(t), \, X(t)} + \int_0^T (u(t), \psi'(t)\phi_t v)_{H(t)}.
\end{align*}
This is exactly the statement
\[\frac{d}{dt}(u(t),\phi_t v)_{H(t)} = \langle f(t)-\chi(t), \phi_t v \rangle_{X^*(t),X(t)}\qquad \forall v \in X_0.\]
Hence, by the characterisation offered in Proposition \ref{prop:temamTypeGT}, 
it follows that $u\in\W^{p,p'}(X,X^*)$ with 
\begin{align*}
    \dot u + \Lambda u - \chi = f
\end{align*}
as desired. The fact that $u\in C^0_H$ follows from the continuous embedding $\W^{p,p'}(X,X^*)\hookrightarrow C^0_{H}$.


To check the initial condition, let $v\in\W^{p,p'}(X, X^*)$. Using  the transport formula in Theorem \ref{thm:transport} and the equation for $u$, we have
\begin{align*}
\begin{split}
    (u(T),v(T))_{H(T)} -  (u(0), v(0))_{H_0} &= \int_0^T \left\langle \dot u(t) + \Lambda(t)u(t), v(t) \right\rangle_{X^*(t), \, X(t)} + \left\langle \dot v(t), u(t) \right\rangle_{X^*(t), \, X(t)} \\
    &= \int_0^T \left\langle \dot v(t), u(t) \right\rangle_{X^*(t), \, X(t)} + \left\langle f(t) -\chi(t), v(t) \right\rangle_{X^*(t), \, X(t)}.
\end{split}
\end{align*}
Taking $v(t) = \psi(t)w_j^t$ for arbitrary $j \in \N$ and $\psi \in C^1([0,T])$, this becomes
\begin{align*}
    (u(T),\psi(T)w_j^T)_{H(T)} - (u(0), \psi(0)w_j)_{H_0} 
    &= \int_0^T \left\langle \psi'(t)w_j^t, u(t) \right\rangle_{X^*(t), X(t)} + \left\langle f(t) -\chi(t), \psi(t)w_j^t \right\rangle_{X^*(t),  X(t)}.
\end{align*}
Comparing this with \eqref{eq:initial_cond_12}, we see that
\[(u(T),\psi(T)w_j^T)_{H(T)} - (u(0), \psi(0)w_j)_{H_0}  = (z, \psi(T)w_j^T)_{H(T)} - (u_0, \psi(0)w_j)_{H_0}.\]
Picking $\psi$ such that $\psi(T) = 0$ removes the first term on both sides and then the density of $\{w_j^0\}$ in $H_0$ implies that $u(0) = u_0$. A similar argument gives the final condition.
\end{proof}

The final step for existence is now to identify the nonlinear term $\chi$ in the equation. In the classical setting, the proof of the statement relies on the monotonicity of the operator $A$, see for example \cite[Lemma 30.6]{MR1033498}. In our case, the presence of $\Lambda$ in the equation (and integration by parts formulae) means that in general, the elliptic operator $A+\Lambda$ is non-monotone. However, as $\Lambda$ is a lower order term, we are able to mitigate its effects by using an exponential scaling trick. 

\begin{prop}
We have $\chi = A u$ in $L^{p'}_{X^*}$.
\end{prop}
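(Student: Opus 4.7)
The standard Minty monotonicity trick requires showing that
\[
\limsup_{n\to\infty} \int_0^T \langle A u_n(t), u_n(t)\rangle_{X^*(t),X(t)}\,\mathrm{d}t \;\le\; \int_0^T \langle \chi(t), u(t)\rangle_{X^*(t),X(t)}\,\mathrm{d}t,
\]
and then, testing monotonicity against $v = u-\varepsilon w$ and passing $\varepsilon\to 0$ using hemicontinuity, concluding $\chi = Au$. In our setting, the obstacle to this standard plan is the extra term $\lambda(t;u_n,u_n)$ appearing in the energy identity that comes from testing the equation with $u_n$: it is a \emph{quadratic} expression in $u_n$, and since we only have weak convergence $u_n\rightharpoonup u$ in $L^p_X$ (no compactness hypothesis is at hand), we cannot directly pass to the limit in $\int_0^T \lambda(t;u_n,u_n)$.

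My plan is to implement the exponential scaling trick advertised in the text as follows. Using boundedness of $\lambda$ (Assumption~\ref{ass:generalcaseChanges}(iii)) pick $\kappa>0$ so large that
\[
\kappa\|v\|_{H(t)}^2 + \tfrac{1}{2}\lambda(t;v,v)\ge 0 \qquad\forall\,v\in H(t),\ \text{a.a. } t\in[0,T].
\]
Then the quadratic form $Q(t;v) := \kappa\|v\|_{H(t)}^2 + \tfrac{1}{2}\lambda(t;v,v)$ is non-negative, hence convex, hence weakly lower semicontinuous on $L^2_H$ (and in particular on $L^p_X$, via the continuous embedding). The weight $e^{-2\kappa t}$ enters through the transport-theorem identity
\[
\tfrac{d}{dt}\bigl(e^{-2\kappa t}\|u_n(t)\|^2_{H(t)}\bigr) = e^{-2\kappa t}\bigl(2\langle \dot u_n, u_n\rangle_{X^*(t),X(t)} + \lambda(t;u_n,u_n) - 2\kappa\|u_n\|_{H(t)}^2\bigr),
\]
which, when combined with the Galerkin equation tested against $e^{-2\kappa t}u_n$, yields the key identity
\[
\int_0^T e^{-2\kappa t}\langle Au_n,u_n\rangle + \int_0^T e^{-2\kappa t}Q(t;u_n) = \int_0^T e^{-2\kappa t}\langle f, u_n\rangle - \tfrac{1}{2}e^{-2\kappa T}\|u_n(T)\|^2_{H(T)} + \tfrac{1}{2}\|u_n(0)\|^2_{H_0}.
\]
The exact same manipulation applied to the limit equation $\dot u + \chi + \Lambda u = f$ with test $e^{-2\kappa t}u$ (legitimate by Theorem~\ref{thm:transportGelfandTriple}, since $u\in\W^{p,p'}(X,X^*)$) gives
\[
\int_0^T e^{-2\kappa t}\langle \chi,u\rangle + \int_0^T e^{-2\kappa t}Q(t;u) = \int_0^T e^{-2\kappa t}\langle f, u\rangle - \tfrac{1}{2}e^{-2\kappa T}\|u(T)\|^2_{H(T)} + \tfrac{1}{2}\|u_0\|^2_{H_0}.
\]

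Now I would take $\limsup_{n\to\infty}$ in the first identity. The right-hand side converges (in $\limsup$) to the right-hand side of the second identity, using $u_n\rightharpoonup u$ weakly in $L^p_X$, $u_n(0) = P_n^0 u_0 \to u_0$ in $H_0$, and the weak-lower-semicontinuity bound $\|u(T)\|^2_{H(T)}\le \liminf\|u_n(T)\|^2_{H(T)}$. Using $\limsup(a_n+b_n)\ge \limsup a_n + \liminf b_n$ together with the weak lower semicontinuity of the nonnegative quadratic form, $\liminf \int_0^T e^{-2\kappa t}Q(t;u_n)\ge \int_0^T e^{-2\kappa t}Q(t;u)$, the two $Q$-terms cancel and I arrive at
\[
\limsup_{n\to\infty}\int_0^T e^{-2\kappa t}\langle Au_n,u_n\rangle \le \int_0^T e^{-2\kappa t}\langle \chi, u\rangle.
\]
From here, monotonicity of $A(t)$ gives $0\le \int_0^T e^{-2\kappa t}\langle Au_n - Av, u_n - v\rangle$ for every $v\in L^p_X$; passing to $\limsup$, expanding and using the bound above and weak convergence of $Au_n\rightharpoonup \chi$ in $L^{p'}_{X^*}$ together with $u_n\rightharpoonup u$ in $L^p_X$, I obtain the weighted Minty inequality
\[
\int_0^T e^{-2\kappa t}\langle \chi - Av, u - v\rangle_{X^*(t),X(t)}\ge 0 \qquad \forall v\in L^p_X.
\]
Testing with $v = u - \varepsilon w$ for $w\in L^p_X$ and $\varepsilon>0$, dividing by $\varepsilon$, and sending $\varepsilon\downarrow 0$ via hemicontinuity of $A(t)$ and dominated convergence (using the boundedness assumption (iv)), we reach $\int_0^T e^{-2\kappa t}\langle \chi - Au, w\rangle = 0$ for all $w\in L^p_X$. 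Since $e^{-2\kappa t}>0$, this forces $\chi = Au$ in $L^{p'}_{X^*}$, concluding the proof. The main obstacle, as indicated above, is the need to extract a non-negative quadratic form out of the $\lambda(u_n,u_n)$ contribution so that its weak lower semicontinuity becomes available; the exponential weight is precisely the device that packages $\lambda$ together with a sufficiently large multiple of $\|\cdot\|_H^2$ to achieve this.
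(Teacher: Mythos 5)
Your proposal is correct and follows essentially the same route as the paper: both use the exponential weight to absorb the quadratic $\lambda(t;\cdot,\cdot)$ contribution into a nonnegative form so that the Minty monotonicity trick applies. The paper packages $\gamma\,\mathrm{Id}+\tfrac12\Lambda$ together with the scaled operator $A_\gamma$ as a single monotone operator, whereas you treat the nonnegative quadratic $Q$ separately and cancel it via weak lower semicontinuity before invoking Minty for $A$ alone --- a purely cosmetic distinction.
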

\begin{proof}

Let us define, for $\gamma>0$ to be chosen later, the functions 
\begin{align*}
    v(t) = e^{-\gamma t} u(t) \quad \text{ and } \quad v_n(t) = e^{-\gamma t} u_n(t).
\end{align*}
Since $e^{-\gamma t}$ belongs to $L^\infty(0,T)$,  $v_n\in L^p_{V_n}$ and we have 
\begin{align*}
    v_n\overset{*}{\rightharpoonup} v \quad \text{ in } L^\infty_{H} \quad\text{and}\quad v_n\rightharpoonup v \quad \text{ in } L^p_{X}.
\end{align*}
Define also $\chi_\gamma(t) = e^{-\gamma t } \chi(t)$ and $A_\gamma(t) \xi = e^{-\gamma t} A(t) e^{\gamma t} \xi$, which is still a monotone operator. Noting that 
\begin{align*}
    \dot v_n(t) = -\gamma v_n(t) + e^{-\gamma t} \dot u_n(t) \quad \text{ and } \quad \dot v(t) = -\gamma v(t) + e^{-\gamma t} \dot u(t),
\end{align*}
it follows that the new approximations $(v_n)_n$ and the function $v$ satisfy 
\begin{align}
    \nonumber \langle \dot v_n + A_\gamma v_n, \eta \rangle_{X^*(t), \, X(t)} + \langle (\gamma + \Lambda) v_n,  \eta\rangle_{X^*(t),X(t)}  &= \langle e^{-\gamma t} f, v \rangle_{X^*(t), \, X(t)} \qquad \forall \eta\in L^p_{V_n},\\
    \langle \dot v + \chi_\gamma, \eta \rangle_{X^*(t), \, X(t)} + \langle (\gamma + \Lambda) v,  \eta\rangle_{X^*(t),X(t)} &= \langle e^{-\gamma t} f, v \rangle_{X^*(t), \, X(t)}\qquad \forall \eta\in L^p_X.\label{eq:v_p_laplace}
\end{align}
Now define 
$$\mathcal{L}_\gamma(t) \colon X(t)\to X^*(t)\quad \text{ by } \quad \left\langle \mathcal{L}_\gamma(t) v, \eta \right\rangle_{X^*(t), \, X(t)} =  \dfrac{1}{2}\langle (2\gamma + \Lambda) v,  \eta\rangle_{X^*(t),X(t)}.$$ 
We choose the constant $\gamma$ in such a way that $\mathcal{L}_\gamma$ is monotone (in the case of the $p$-Laplace equation, any $\gamma$ satisfying $2\gamma \geq \|\sgrad\cdot\mathbf{w}\|_{L^\infty}$ works, and in general such a choice is possible due to Assumption \ref{ass:generalcaseChanges} (iii), see also the third condition in Remark \ref{rem:generalcase}). Now, on the one hand, testing \eqref{eq:v_p_laplace} with $\eta=v$ leads to 
\begin{align*}
    \dfrac{1}{2}\dfrac{d}{dt} \|v\|^2_{H(t)} + \left\langle \chi_\gamma + \mathcal{L}_\gamma v, v\right\rangle_{X^*(t), \, X(t)} = \left\langle e^{-\gamma t} f, v \right\rangle_{X^*(t), \, X(t)},
\end{align*}
which we integrate over $[0,T]$ to obtain 
\begin{align}\label{eq:chi_eq1}
    \int_0^T \left\langle \chi_\gamma+\mathcal{L}_\gamma v, v\right\rangle_{X^*(t), \, X(t)}  = \int_0^T \left\langle e^{-\gamma t}f, v \right\rangle_{X^*(t), \, X(t)} + \dfrac{\|u_0\|^2_{H_0}}{2} - \dfrac{e^{-\gamma T}\|u(T)\|^2_{H(T)}}{2}.
\end{align}
On the other hand, the same calculation for the approximation $v_n$ now gives
\begin{align*}
    \int_0^T \left\langle  (\mathcal L_\gamma+A_\gamma) v_n, v_n \right\rangle_{X^*(t), \, X(t)} = \int_0^T\left\langle e^{-\gamma t} f, v_n \right\rangle_{X^*(t), \, X(t)} + \dfrac{\|P_n u_0\|^2_{H_0}}{2} - \dfrac{e^{-\gamma T}\|u_n(T)\|^2_{H(T)}}{2},
\end{align*}
whence taking the limit superior and
using the weak lower-semicontinuity of norms,
we obtain
\begin{align}\label{eq:chi_eq2}
    \limsup_n \int_0^T \left\langle (A_\gamma + \mathcal L_\gamma) v_n), v_n \right\rangle_{X^*(t), \, X(t)} \leq \int_0^T\left\langle e^{-\gamma t} f, v \right\rangle_{X^*(t), \, X(t)} + \dfrac{\|u_0\|^2_{H_0}}{2} - \dfrac{e^{-\gamma T}\|u(T)\|^2_{H(T)}}{2}.
\end{align}
Combining \eqref{eq:chi_eq1} with \eqref{eq:chi_eq2} then gives
\begin{align}\label{eq:chi_eq3}
    \int_0^T \left\langle \chi_\gamma + \mathcal L_\gamma v, v\right\rangle_{X^*(t), \, X(t)} \geq \limsup_n \int_0^T \left\langle (A_\gamma + \mathcal L_\gamma) v_n, v_n \right\rangle_{X^*(t), \, X(t)}.
\end{align}
Now take an arbitrary $\eta\in L^p_{X}$. Monotonicity of $A_\gamma + \mathcal L_\gamma$ implies that 
    $\left\langle (A_\gamma+\mathcal L_\gamma)(v_n) - (A_\gamma+\mathcal L_\gamma)(\eta), v_n-\eta \right\rangle \geq 0,$ 
which we can expand to obtain 
\begin{align*}
    \int_0^T \left\langle (A_\gamma + \mathcal L_\gamma) v_n, v_n \right\rangle_{X^*(t), \, X(t)} \geq \left\langle (A_\gamma + \mathcal L_\gamma) v_n, \eta \right\rangle_{X^*(t), \, X(t)} + \left\langle (A_\gamma + \mathcal L_\gamma) \eta, v_n-\eta \right\rangle_{X^*(t), \, X(t)}.
\end{align*}
Taking the limit superior and using \eqref{eq:chi_eq3} on the left-hand side and the convergence results on the right-hand side, we get
\begin{align*}
    \int_0^T \left\langle \chi_\gamma + \mathcal L_\gamma v, v\right\rangle_{X^*(t), \, X(t)} \geq \left\langle \chi_\gamma + \mathcal L_\gamma v, \eta \right\rangle_{X^*(t), \, X(t)} + \left\langle (A_\gamma + \mathcal L_\gamma) \eta, v-\eta \right\rangle_{X^*(t), \, X(t)}.
\end{align*}
This reads
\begin{align*}
    \int_0^T \left\langle \chi_\gamma + \mathcal{L}_\gamma v -A_\gamma \eta - \mathcal{L}_\gamma \eta , v-\eta\right\rangle_{X(t), \, X^*(t)} \geq 0.
\end{align*}
To conclude the proof we apply the well-known Minty's monotonicity trick, which gives $\chi_\gamma = A_\gamma v$ and hence $\chi = Au$. 
\end{proof}

All in all, combining the previous results shows that the limit function $u$ is indeed a weak solution as per Definition \ref{def:weaksolution}. Finally, the result below establishes stability of solutions with respect to initial conditions and uniqueness follows as a consequence, concluding the proof of Theorem \ref{thm:application}.

\begin{prop}
If $u_1$ and $u_2$ are weak solutions of \eqref{eq:generalEquation} corresponding to initial data $u_{10}$ and $u_{20}$, then 
\begin{align*}
    \|u_1(t)-u_2(t)\|_{H(t)} \leq e^{C_\mathbf{w}t\slash 2}\|u_{10}-u_{20}\|_{H_0}.
\end{align*}
In particular, weak solutions are unique.
\end{prop}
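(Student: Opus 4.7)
The plan is to derive a Gronwall-type inequality for $\|u_1(t) - u_2(t)\|_{H(t)}^2$ by combining the transport theorem with the monotonicity of $A(t)$, and then integrate.

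First I would set $w := u_1 - u_2 \in \W^{p,p'}(X,X^*)$, which by linearity of the weak time derivative (and of $\Lambda$) satisfies
\[\dot w(t) + A(t)u_1(t) - A(t)u_2(t) + \Lambda(t) w(t) = 0 \quad\text{in $L^{p'}_{X^*}$},\]
with $w(0) = u_{10} - u_{20}$ in $H_0$. Since the evolving space equivalence between $\W^{p,p'}(X,X^*)$ and $\mathcal{W}^{p,p'}(X_0,X_0^*)$ is in force, Theorem \ref{thm:transportGelfandTriple} applies (taking $u = v = w$ there), giving that $t \mapsto \|w(t)\|_{H(t)}^2$ is absolutely continuous with
\[\frac{d}{dt}\|w(t)\|_{H(t)}^2 = 2\langle \dot w(t), w(t)\rangle_{X^*(t), X(t)} + \lambda(t; w(t), w(t))\]
for almost every $t \in [0,T]$.

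Next I would test the equation for $w$ against $w$ itself (in the duality $X^*(t),X(t)$) and use the monotonicity of $A(t)$:
\[\langle \dot w(t), w(t)\rangle_{X^*(t), X(t)} = -\langle A(t)u_1(t) - A(t)u_2(t), w(t)\rangle_{X^*(t),X(t)} - \lambda(t; w(t), w(t)) \leq -\lambda(t; w(t), w(t)),\]
where I have used that $\langle \Lambda(t)w, w\rangle_{X^*(t),X(t)} = \lambda(t; w, w)$ by construction of $\Lambda$ (see Definition \ref{defn:generalOperators} and the discussion before it). Substituting into the transport identity yields
\[\frac{d}{dt}\|w(t)\|_{H(t)}^2 \leq -\lambda(t; w(t), w(t)).\]
By Assumption \ref{ass:gelfandTripleCase} (ii) (see in particular \eqref{eq:lambdaGF}), $|\lambda(t; w(t), w(t))| \leq C_\mathbf{w}\|w(t)\|_{H(t)}^2$ for some $C_\mathbf{w}>0$, so
\[\frac{d}{dt}\|w(t)\|_{H(t)}^2 \leq C_\mathbf{w}\,\|w(t)\|_{H(t)}^2.\]

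Finally I would apply Gronwall's inequality to conclude
\[\|w(t)\|_{H(t)}^2 \leq e^{C_\mathbf{w} t}\|w(0)\|_{H_0}^2,\]
which upon taking square roots gives the claimed stability estimate. Uniqueness follows by taking $u_{10} = u_{20}$. No real obstacle is expected here since all ingredients are in place; the only subtle point is to verify that the application of Theorem \ref{thm:transportGelfandTriple} is legitimate (which requires $w \in \W^{p,p'}(X,X^*)$ and the evolving space equivalence, both granted) and that the $\lambda$ bound is uniform in $t$, which is precisely \eqref{eq:lambdaGF}.
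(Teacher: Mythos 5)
Your proof is correct and follows essentially the same route as the paper: test the equation for the difference $w=u_1-u_2$ against $w$ itself, rewrite $\langle \dot w,w\rangle$ via the transport theorem (Theorem \ref{thm:transportGelfandTriple}), drop the monotone term, bound $\lambda(t;w,w)$ by $C_\mathbf{w}\|w\|^2_{H(t)}$ using \eqref{eq:lambdaGF}, and close with Gronwall. The only cosmetic difference is that you invoke monotonicity before substituting into the transport identity rather than after; the resulting differential inequality and conclusion are identical.
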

\begin{proof}
By testing the equation for both $u_1$ and $u_2$ with $v=u_1-u_2$ and subtracting we obtain
\begin{align*}
    \dfrac{1}{2} \dfrac{d}{dt} \|u_1-u_2\|^2_{H(t)} + \left\langle Au_1-Au_2, u_1-u_2\right\rangle_{X^*(t), \, X(t)}
    &\leq \dfrac{C_\mathbf{w}}{2} \|u_1-u_2\|_{H(t)}^2.
\end{align*}
Monotonicity of $A$ implies that we can neglect the second term on the left-hand side and then  
Gronwall's inequality gives the result.
\end{proof}



\appendix
\section{Technical results}
\begin{lem}\label{lem:derivativeOfM}The derivative of $\mathbf{A}^0_t=J^0_t(\mathbf{D}\Phi^0_t)^{-1}(\mathbf{D}\Phi^0_t)^{-\T}$ satisfies
\begin{align*}
\partial_t \mathbf{A}^0_t &= \phi_{-t}(\grad \cdot \mathbf{w}(t))\mathbf{A}^0_t - J^0_t(\mathbf{D}\Phi^0_t)^{-1}(\phi_{-t}(\mathbf{D}\mathbf{w}(t)) + (\phi_{-t}(\mathbf{D}\mathbf{w}(t)))^{\T})(\mathbf{D}\Phi^0_t)^{-\T}.
\end{align*}
\end{lem}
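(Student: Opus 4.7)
The proof is a direct computation via the product rule applied to the three factors $J^0_t$, $(\mathbf{D}\Phi^0_t)^{-1}$, and $(\mathbf{D}\Phi^0_t)^{-\T}$. The first ingredient is already at hand: identity \eqref{eq:derivativeOfJ0t} gives $\partial_t J^0_t = \phi_{-t}(\grad \cdot \mathbf{w}(t)) J^0_t$, and this will account for the first term on the right-hand side of the claimed formula.

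The next step is to compute $\partial_t \mathbf{D}\Phi^0_t$. Differentiating the flow ODE
\[\partial_t \Phi^0_t(p) = \mathbf{w}(t, \Phi^0_t(p))\]
with respect to the spatial variable $p$ and using the chain rule, I obtain
\[\partial_t \mathbf{D}\Phi^0_t = (\mathbf{D}\mathbf{w})(t, \Phi^0_t) \, \mathbf{D}\Phi^0_t = \phi_{-t}(\mathbf{D}\mathbf{w}(t)) \, \mathbf{D}\Phi^0_t.\]
From here, differentiating the identity $\mathbf{D}\Phi^0_t \, (\mathbf{D}\Phi^0_t)^{-1} = I$ yields
\[\partial_t (\mathbf{D}\Phi^0_t)^{-1} = -(\mathbf{D}\Phi^0_t)^{-1} \, \partial_t(\mathbf{D}\Phi^0_t) \, (\mathbf{D}\Phi^0_t)^{-1} = -(\mathbf{D}\Phi^0_t)^{-1} \phi_{-t}(\mathbf{D}\mathbf{w}(t)),\]
where the $\mathbf{D}\Phi^0_t$ produced by the previous step cancels with one of the inverses. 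Transposing gives
\[\partial_t (\mathbf{D}\Phi^0_t)^{-\T} = -(\phi_{-t}(\mathbf{D}\mathbf{w}(t)))^{\T}(\mathbf{D}\Phi^0_t)^{-\T}.\]

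Combining these three derivative formulas via the product rule, the first term contributes $\phi_{-t}(\grad\cdot\mathbf{w}(t)) \mathbf{A}^0_t$, while the second and third terms coalesce into
\[-J^0_t (\mathbf{D}\Phi^0_t)^{-1}\bigl[\phi_{-t}(\mathbf{D}\mathbf{w}(t)) + (\phi_{-t}(\mathbf{D}\mathbf{w}(t)))^{\T}\bigr](\mathbf{D}\Phi^0_t)^{-\T},\]
which is exactly the claimed identity. The only mildly subtle point is keeping the order of multiplication straight when differentiating $(\mathbf{D}\Phi^0_t)^{-1}$ and $(\mathbf{D}\Phi^0_t)^{-\T}$ since the matrices do not commute; everything else is bookkeeping and invocation of \eqref{eq:derivativeOfJ0t}.
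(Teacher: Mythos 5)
Your proof is correct and follows essentially the same route as the paper's: both apply the product rule to the three factors of $\mathbf{A}^0_t$, using \eqref{eq:derivativeOfJ0t} for $\partial_t J^0_t$, the matrix identity $(M^{-1})' = -M^{-1}M'M^{-1}$, and $\partial_t \mathbf{D}\Phi^0_t = \phi_{-t}(\mathbf{D}\mathbf{w}(t))\mathbf{D}\Phi^0_t$. The only cosmetic difference is that you differentiate $(\mathbf{D}\Phi^0_t)^{-1}$ and $(\mathbf{D}\Phi^0_t)^{-\T}$ separately (and justify the $\partial_t\mathbf{D}\Phi^0_t$ formula by differentiating the flow ODE in $p$), whereas the paper treats the product $(\mathbf{D}\Phi^0_t)^{-1}(\mathbf{D}\Phi^0_t)^{-\T}$ as a single factor; the bookkeeping and conclusion are identical.
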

\begin{proof}
To ease presentation, we define $\mathbf{D}_t = \mathbf{D}\Phi^0_t$. We begin with
\begin{align*}
    \partial_t \mathbf{A}^0_t 
    &= J^0_t\phi_{-t}(\grad \cdot \mathbf{w}(t))(\mathbf{D}_t)^{-1}(\mathbf{D}_t)^{-\T} + J^0_t\partial_t((\mathbf{D}_t)^{-1}(\mathbf{D}_t)^{-\T}).
\end{align*}
To simplify the second term, using the formula $(M^{-1})' = -M^{-1}M'M^{-1}$ for differentiating the inverse of a matrix $M$, and the identity
\[\partial_t \mathbf{D}_t = \phi_{-t}(\mathbf{D}\mathbf{w}(t))\mathbf{D}_t,\]
we get
\begin{align*}
    \partial_t((\mathbf{D}_t)^{-1}(\mathbf{D}_t)^{-\T}) 
     &= -(\mathbf{D}_t)^{-1}\phi_{-t}(\mathbf{D}\mathbf{w}(t))(\mathbf{D}_t)^{-\T} - (\mathbf{D}_t)^{-1}(\phi_{-t}(\mathbf{D}\mathbf{w}(t)))^{\T}(\mathbf{D}_t)^{-\T}.
\end{align*}
Hence
\begin{align*}
\partial_t \mathbf{A}^0_t &= J^0_t\phi_{-t}(\grad \cdot \mathbf{w}(t))(\mathbf{D}_t)^{-1}(\mathbf{D}_t)^{-\T} - J^0_t((\mathbf{D}_t)^{-1}\phi_{-t}(\mathbf{D}\mathbf{w}(t))(\mathbf{D}_t)^{-\T} + (\mathbf{D}_t)^{-1}(\phi_{-t}(\mathbf{D}\mathbf{w}(t)))^{\T}(\mathbf{D}_t)^{-\T})\\
&= \phi_{-t}(\grad \cdot \mathbf{w}(t))\mathbf{A}^0_t - J^0_t(\mathbf{D}_t)^{-1}(\phi_{-t}(\mathbf{D}\mathbf{w}(t)) + (\phi_{-t}(\mathbf{D}\mathbf{w}(t)))^{\T})(\mathbf{D}_t)^{-\T}.\qedhere
\end{align*}
\end{proof}
Let us now give an expression $\partial_t \mathbf{A}^0_t$ and see how it acts.
\begin{lem}\label{lem:actionOfMPrime}
For $v \in H^1_0(\Omega(t))$, we have the identity
\begin{align*}
    \int_{\Omega(t)}J^t_0\grad  v(t)^{\T}(\mathbf{D}\Phi^t_0)^{-1}\phi_t(\partial_t \mathbf{A}^0_t)(\mathbf{D}\Phi^t_0)^{-\T}\grad \psi &=  \int_{\Omega(t)}\grad v(t)^{\T}\grad \psi \grad \cdot \mathbf{w}(t) - \grad v(t)^{\T} (\mathbf{D}\mathbf{w}(t) + (\mathbf{D}\mathbf{w}(t))^{\T})\grad \psi
\end{align*}
for all 
$\psi \in H^1_0(\Gamma(t))$.
\end{lem}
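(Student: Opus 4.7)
The plan is to substitute the explicit formula for $\partial_t \mathbf{A}^0_t$ provided by Lemma~\ref{lem:derivativeOfM} into the left-hand side of the claimed identity, push everything forward via $\phi_t$, and then use the chain rule identity $\phi_t((\mathbf{D}\Phi^0_t)^{-1}) = \mathbf{D}\Phi^t_0$ (which follows from differentiating $\Phi^t_0\circ\Phi^0_t = \mathrm{Id}$) together with $\phi_t(J^0_t) = 1/J^t_0$ to collapse the composition of matrices. There is no PDE-level difficulty; the proof is a linear-algebraic manipulation, and the only ``obstacle'' is careful bookkeeping of which Jacobians/pushforwards appear where.

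Concretely, I would first apply $\phi_t$ to the expression in Lemma~\ref{lem:derivativeOfM} to obtain
\[
\phi_t(\partial_t \mathbf{A}^0_t) = (\grad\cdot\mathbf{w}(t))\,\phi_t(\mathbf{A}^0_t) - \phi_t(J^0_t)\,\mathbf{D}\Phi^t_0\bigl(\mathbf{D}\mathbf{w}(t)+(\mathbf{D}\mathbf{w}(t))^{\T}\bigr)(\mathbf{D}\Phi^t_0)^{\T},
\]
using that pushforwards commute with transposes and that $\phi_t(\phi_{-t}(\cdot)) = \mathrm{Id}$. Next, I would record the auxiliary identity $\phi_t(\mathbf{A}^0_t) = \phi_t(J^0_t)\,\mathbf{D}\Phi^t_0\,(\mathbf{D}\Phi^t_0)^{\T} = (J^t_0)^{-1}\mathbf{D}\Phi^t_0(\mathbf{D}\Phi^t_0)^{\T}$, again via the chain rule identity and $\phi_t(J^0_t)=1/J^t_0$.

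With these in hand, the key step is to sandwich the resulting expression between $J^t_0 (\mathbf{D}\Phi^t_0)^{-1}$ on the left and $(\mathbf{D}\Phi^t_0)^{-\T}$ on the right. The first term becomes
\[
J^t_0(\mathbf{D}\Phi^t_0)^{-1}\cdot(\grad\cdot\mathbf{w}(t))\,\tfrac{1}{J^t_0}\mathbf{D}\Phi^t_0(\mathbf{D}\Phi^t_0)^{\T}\cdot(\mathbf{D}\Phi^t_0)^{-\T} = (\grad\cdot\mathbf{w}(t))\,I,
\]
where all the Jacobian factors telescope. The second term is
\[
-J^t_0(\mathbf{D}\Phi^t_0)^{-1}\cdot\tfrac{1}{J^t_0}\mathbf{D}\Phi^t_0\bigl(\mathbf{D}\mathbf{w}(t)+(\mathbf{D}\mathbf{w}(t))^{\T}\bigr)(\mathbf{D}\Phi^t_0)^{\T}\cdot(\mathbf{D}\Phi^t_0)^{-\T} = -\bigl(\mathbf{D}\mathbf{w}(t)+(\mathbf{D}\mathbf{w}(t))^{\T}\bigr),
\]
which again is pure cancellation.

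Substituting this back into the integrand on the left-hand side yields
\[
\grad v(t)^{\T}\Bigl[(\grad\cdot\mathbf{w}(t))\,I - \bigl(\mathbf{D}\mathbf{w}(t)+(\mathbf{D}\mathbf{w}(t))^{\T}\bigr)\Bigr]\grad\psi,
\]
which integrated over $\Omega(t)$ is precisely the right-hand side of the claimed identity. The main thing to be careful about is getting the direction of the transposes and inverses right, and verifying that the overall $J^t_0/J^t_0$ cancellation occurs cleanly; once that is checked, no further argument is required.
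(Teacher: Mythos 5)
Your proof is correct and is essentially the paper's own argument: substitute the formula from Lemma~\ref{lem:derivativeOfM} for $\partial_t \mathbf{A}^0_t$, apply $\phi_t$, use $\phi_t\bigl((\mathbf{D}\Phi^0_t)^{-1}\bigr)=\mathbf{D}\Phi^t_0$ and $\phi_t(J^0_t)=1/J^t_0$, and watch the Jacobian factors telescope in both terms to give $(\nabla\cdot\mathbf{w})I - (\mathbf{D}\mathbf{w}+(\mathbf{D}\mathbf{w})^{\T})$ pointwise under the integral. The paper phrases this as a change-of-variables from the $\Omega_0$ integral while you simplify the $\Omega(t)$-side expression directly, but the mechanics are the same.
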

\begin{proof}
The formula $\phi_t(\grad \phi_{-t} v) = \phi_t(\mathbf{D}\Phi^0_t)^{T}\grad v$ and Lemma \ref{lem:derivativeOfM} allows us to write
\begin{align*}
    \int_{\Omega_0}\grad \tilde v(t)^{\T}\partial_t \mathbf{A}^0_t\grad \varphi 
    &= \int_{\Omega(t)}\grad v(t)^{\T}\grad \psi \grad \cdot \mathbf{w}(t) - \grad v(t)^{\T} (\mathbf{D}\mathbf{w}(t) + (\mathbf{D}\mathbf{w}(t))^{\T})\grad \psi\qedhere.
\end{align*}
\end{proof}
\begin{proof}[Proof of Lemma \ref{lem:existenceForGalerkin}]
Denoting the solution vector $U_n(t) = (u_1^n(t), \dots, u_n^n(t)),$ 
the linear terms
\begin{align*}
    B(t)_{ij} = (w_i^t, w_j^t)_{H(t)}, \quad 
    G(t)_{ij} = \lambda(t;w_i^t,w_j^t), \quad 
    F(t)_j = \left\langle f(t), w_j^t\right\rangle_{X^*(t), \, X(t)},
\end{align*}
and the nonlinear term 
\begin{align*}
    A(t; U_n(t))_{j} = \left\langle A(t)u_n(t), w_j^t\right\rangle_{X^*(t), \, X(t)},
\end{align*}
the problem \eqref{eq:approx_p_laplace} is equivalent to the system of ODEs 
\begin{align*}
    B(t) \dot U_n(t) + A(t; U_n(t)) + G(t) U_n(t) = F(t)&, \\
    U_n(0) = (\alpha_1, \dots, \alpha_n)&, 
\end{align*}
where $\{\alpha_j\}$ are the coefficients of $P_n u_0 = \sum_{j=1}^n \alpha_j w_j^0.$ Since $B(t)$ is a Gram matrix (and hence invertible) and the operators defining the lower-order terms are measurable in time and continuous in `space', the conclusion follows from the classical Carath\'eodory existence theory.
\end{proof}

\section*{Acknowledgements}
AA was partially supported by the DFG through the DFG SPP 1962 Priority Programme \emph{Non-smooth and Complementarity-based Distributed Parameter Systems: Simulation and Hierarchical Optimization} within project 10. ADj was supported by the DFG under Germany’s Excellence Strategy – The Berlin Mathematics Research Center MATH+ and CRC 1114 “Scaling Cascades in Complex Systems''.
\bibliographystyle{abbrv}
\bibliography{biblio}

\end{document}